\documentclass[a4paper]{article}

\usepackage[utf8]{inputenc}
\usepackage[width=15cm,height=22cm]{geometry}
\usepackage{amsmath,amsfonts,amssymb,amsthm,mathrsfs}
\usepackage[hidelinks,pdfusetitle]{hyperref}
\usepackage[capitalise]{cleveref}
\usepackage[dvipsnames]{pstricks}
\usepackage{bbm}
\usepackage{graphicx}
\usepackage{stmaryrd} 
\usepackage{tikz}
\usepackage{enumitem}
\crefname{equation}{}{}

\newtheorem{proposition}{Proposition}[section]
\newtheorem{theorem}[proposition]{Theorem}

\newtheorem{definition}[proposition]{Definition}
\newtheorem{corollary}[proposition]{Corollary}
\newtheorem{lemma}[proposition]{Lemma}
\newtheorem{remark}[proposition]{Remark}
\numberwithin{equation}{section}

\newcommand{\dd}{\,\mathrm{d} }

\newcommand{\sgn}{\mathrm{sgn}}

\newif\ifarxiv
\arxivtrue      

\begin{document}

\begin{center}
    \uppercase{\textbf{Fractional quadratic obstructions to the local controllability of the Burgers equation}}

    \vspace{\baselineskip}

    \large{\textsc{Thomas Perrin\footnote{\textit{Université de Rennes, ENS Rennes, INRIA, CNRS, IRMAR - UMR 6625, F-35000 Rennes, France.}}}}
\end{center}

\begin{abstract}
    We study the local controllability near zero of the Burgers equation with a scalar control and a fixed space-dependent source profile, in the case where the linearized system fails to be controllable and a second-order analysis is therefore required. We prove that quadratic obstructions to finite-time controllability can be quantified by Sobolev norms of the control with fractional negative exponents ranging over a full interval. To our knowledge, this is the first example, for a natural physical PDE, of a continuous scale of fractional quadratic obstructions, and the first such continuous scale for finite-time local controllability. Our explicit constructions shed light on the origin of fractional obstructions for partial differential equations, by relating the obstruction exponent to the regularity, and in some cases to the physical-space singularity, of the source profile.
    
    We identify the natural structural conditions on the source profile leading to obstructions quantified by the $H^{-1}$ and $H^{-5/4}$ norms of the control, thereby providing a general framework in which the previously studied case of a constant source profile fits naturally. In this constant-profile case, we improve existing results by identifying the arithmetic condition on the Fourier mode which ensures that a small-time obstruction actually persists in finite time. 
    
    Finally, we derive sharp nonlinear remainder estimates adapted to the precise regularity of the source profile. These estimates make most of our obstruction results optimal with respect to the smallness assumption imposed on the control.
\end{abstract}

\begingroup
\renewcommand{\thefootnote}{}
\footnotetext{
    \textit{Keywords:} small-time local controllability, finite-time local controllability, Burgers equation, degenerate linearization, quadratic-order controllability, scalar-input system, fractional Sobolev norms.
    
    \hspace{0.1cm} \textit{MSC2020:} 93B05, 93C20, 35K58, 35K10, 35Q93
}
\endgroup

\setcounter{tocdepth}{1}
\tableofcontents

\hspace{2cm}

\section{Introduction}

\subsection{Accessible version of the main results}

In this article, we study the local controllability around zero of the viscous Burgers equation with scalar control
\begin{equation}\label{eq:Burgers_intro}
    \left\{
    \begin{array}{lll}
        \partial_t y - \partial_x^2 y + y \partial_x y = u(t) \mu(x)
        & \quad t \in (0, T), & \quad x \in (0,1), \\
        y(t,x) = 0
        & \quad t \in (0, T),  & \quad x \in \{0, 1\},\\
        y(0,x)=y_0
        & & \quad x \in (0, 1),
    \end{array}
    \right.
\end{equation}
where $T>0$, $\mu$ is a fixed function (which we sometimes call the \emph{source profile}), $u$ is the control, $y_0 \in L^2(0,1)$ is the initial state, and $y$ is the unknown. In this article, all functions are real-valued. Once a source profile $\mu$ is fixed, we sometimes write $t \mapsto y(t; y_0, u)$ for the solution of the Burgers equation \eqref{eq:Burgers_intro} with initial data $y_0$ and control $u$. 

Write $Ay := - \frac{\mathrm{d}^2}{\mathrm{d} x^2} y$ for the Dirichlet Laplacian on $(0,1)$, of domain $D(A) = H^2 \cap H_0^1 (0,1) \subset L^2((0,1); \mathbb{R})$, and $\langle \cdot, \cdot \rangle$ for the $L^2(0, 1)$-scalar product. We denote by $\lambda_j = ( j\pi )^2$ for $j \in \mathbb{N}^\ast$ the eigenvalues of $A$, and by $\varphi_j(x) = \sqrt{2} \sin(j \pi x)$ the corresponding $L^2(0,1)$-orthonormal eigenfunctions.

The purpose of this article is twofold. First, we prove general obstruction results for local controllability which put the case $\mu \equiv 1$, previously studied in \cite{Nguyen2025Burgers, Marbach2018}, into a broader and more systematic framework. In particular, our analysis identifies the precise structural conditions on the source profile $\mu$ which are responsible for the obstruction quantified by the $H^{-\frac{5}{4}}(0,T)$-norm of the control. More precisely, our main results imply the following theorem.

\begin{theorem}[The case $\mu \equiv 1$: accessible version]
    The obstruction to small-time local controllability quantified by the $H^{-\frac{5}{4}}(0,T)$-norm of the control, previously obtained in the case $\mu \equiv 1$ in \cite{Nguyen2025Burgers, Marbach2018}, is not specific to that case. It arises for any $\mu$ such that, for some $k \geq 1$, $\langle \mu, \varphi_k \rangle = \langle (\mu^2)^\prime, \varphi_k \rangle = 0$ and $\mu(0)^2 + (-1)^k \mu(1)^2 \neq 0$. In addition, when $\mu \equiv 1$, the obstruction to finite-time local controllability established in \cite{Nguyen2025Burgers} for $k = 2$ and $k = 10$ actually holds if $k$ is even and satisfies $k \in \left( \sqrt{2}, 2 \sqrt{2} \right) + 2 \sqrt{2} \mathbb{Z}$.
\end{theorem}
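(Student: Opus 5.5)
The plan is to deduce the statement from a second-order (power series) expansion of the solution map of \eqref{eq:Burgers_intro} around $(y_0,u)=(0,0)$, projected onto the lost direction $\varphi_k$. Write $y = y_1 + y_2 + r$, where
\begin{itemize}
  \item $\partial_t y_1 + A y_1 = u\mu$ with $y_1(0)=0$,
  \item $\partial_t y_2 + Ay_2 = -\tfrac12 \partial_x (y_1^2)$ with $y_2(0)=0$,
  \item $r$ is the remainder.
\end{itemize}
The hypothesis $\langle \mu,\varphi_k\rangle = 0$ gives $\langle y_1(T),\varphi_k\rangle = 0$ for every $u$, so $\varphi_k$ is not reached at first order. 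Hence
\begin{equation*}
  \langle y(T),\varphi_k\rangle = Q_T(u) + \langle r(T),\varphi_k\rangle,
  \qquad
  Q_T(u) = \frac12\int_0^T e^{-\lambda_k(T-t)}\langle y_1(t)^2,\varphi_k'\rangle \dd t .
\end{equation*}
This formula for $Q_T$ follows after one integration by parts in $x$, using $y_1 = 0$ at $x\in\{0,1\}$. The whole proof then reduces to two estimates:
\begin{equation*}
  Q_T(u) \geq c\,\|u\|_{H^{-5/4}(0,T)}^2
  \qquad\text{and}\qquad
  |\langle r(T),\varphi_k\rangle| = o\bigl(\|u\|_{H^{-5/4}}^2\bigr)
\end{equation*}
for controls that are small in a suitable norm. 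Together they give a sign, say $\langle y(T;0,u),\varphi_k\rangle \geq 0$, which forbids reaching $-\delta\varphi_k$ for small $\delta>0$ with small controls.

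To analyse $Q_T$, I would pass to the primitive $u_1(t)=\int_0^t u$ and split $y_1 = u_1\mu + z$, where $z$ solves $\partial_t z + Az = u_1 \mu''$ (with $\mu''$ understood as $-A\mu$ in the distributional, Dirichlet sense). Expanding $y_1^2$ gives three types of terms.
\begin{itemize}
  \item The pure term $u_1^2\mu^2$ contributes $\int e^{-\lambda_k(T-t)}u_1^2\langle \mu^2,\varphi_k'\rangle$. This vanishes because $\langle (\mu^2)',\varphi_k\rangle = 0$, once boundary terms are included. This is exactly where the second hypothesis is used.
  \item The cross terms $u_1\langle \mu z,\varphi_k'\rangle$ are lower order.
  \item The genuinely nonlocal part comes from the boundary layer. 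When $\mu(0)\neq 0$ or $\mu(1)\neq 0$, $u_1\mu$ violates the Dirichlet condition, so $z$ contains a corrector localised in a layer of width $|\omega|^{-1/2}$ at frequency $\omega$.
\end{itemize}
Since $\varphi_k'(0)=\sqrt2 k\pi$ and $\varphi_k'(1)=(-1)^k\sqrt2 k\pi$, this layer contributes a term of the form
\begin{equation*}
  c_0 k\bigl(\mu(0)^2+(-1)^k\mu(1)^2\bigr)\int |\hat u_1(\omega)|^2 |\omega|^{-1/2}\dd\omega ,
\end{equation*}
which is comparable to $\|u_1\|_{H^{-1/4}}^2 \simeq \|u\|_{H^{-5/4}}^2$. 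All other contributions are $O(\|u\|_{H^{-3/2}}^2)$ or smaller. For small $T$ they are absorbed by the usual interpolation and time-rescaling argument, which gives the small-time obstruction. The sign of $\mu(0)^2+(-1)^k\mu(1)^2$ fixes the sign of the drift.

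For the remainder, I would use standard energy estimates for $r$, which solves a Burgers-type equation with source $-(y_1y_2)_x$ plus cubic terms. The goal is a bound of the form $\|u\|_{H^{-5/4}}^2$ times a small factor, for instance a small $\|u_1\|_{L^\infty}$ or a small $H^{-1}$-type norm of $u$; this is the usual $W^{-1,\infty}$ smallness.

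The finite-time statement for $\mu\equiv1$ is the main obstacle. There the lower-order kernel terms cannot be absorbed by letting $T\to0$, so one needs coercivity of the full quadratic form. I would compute its symbol explicitly for $\mu\equiv1$: $\langle 1,\varphi_j\rangle$ vanishes for even $j$ and equals $2\sqrt2/(j\pi)$ for odd $j$. This expresses $Q_T$ via a kernel $\sum_{j,l \text{ odd}}(\cdots)$ with denominators involving $\lambda_j-\lambda_k$, and after a Fourier transform in time gives a symbol $m_k(\omega)$. The step I expect to be hardest is proving $m_k(\omega)>0$, or $m_k(\omega)\gtrsim \langle\omega\rangle^{-5/2}$, for all $\omega$, including $\omega$ near $0$ and near the resonances $\omega \sim \lambda_j-\lambda_k$. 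I would first try to sum the series in closed form using cotangent or trigonometric identities, which should produce expressions like $\cos(\sqrt{2}\,\cdot)$ in $k$. The low-frequency sign condition should then reduce to the arithmetic condition $k\in(\sqrt2,2\sqrt2)+2\sqrt2\mathbb{Z}$. Positivity at large $\omega$ would come from the boundary-layer asymptotics already obtained above.
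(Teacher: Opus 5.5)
Your boundary-layer route differs from the paper's. The paper never works in physical space: it writes $\langle y_2(T),\varphi_k\rangle$ through the kernel $K_{\mu,k}$ of Proposition~\ref{prop:decomposition_y_2_kernel} and reads off the $\omega^{-5/2}$ term from the explicit sine coefficients of the affine part $\ell$. Your route has the merit of explaining the $|\omega|^{-1/2}$ as a layer width, but two of its steps fail as stated.

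First, the cross term $2u_1\mu z$ is \emph{not} lower order: the layer part of $z$ enters it at the same scale as in $z^2$. Near $x=0$ one has $\hat z(\omega,x)\approx-\mu(0)\hat u_1(\omega)e^{-\sqrt{i\omega}\,x}$. Integrating the layer in $x$, and factoring out $\mu(0)^2\varphi_k'(0)|\hat u_1|^2/(2\pi)$, gives the symbols $\frac{1}{\sqrt2}|\omega|^{-1/2}$ for $z^2$ and $-\sqrt2|\omega|^{-1/2}$ for $2u_1\mu z$. Only the sum reproduces the paper's coefficient $-\pi^2\sqrt2\,(\mu(0)^2+(-1)^k\mu(1)^2)\,\omega^{-5/2}$ in \eqref{eq:prop:asymptotic_estimate_kernel_affine+rest_1}. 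With $z^2$ alone you get the opposite sign, and your claimed bound on the cross terms cannot hold.

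Second, the coercivity $Q_T(u)\ge c\|u\|^2_{H^{-5/4}}$ is false. Take $u\ge0$ with mass $m$ concentrated in $(T-\tau,T)$: then $\|u\|_{\widetilde H^{-5/4}}\to c|m|$ as $\tau\to0$, while $|Q_T(u)|\lesssim\tau m^2$. Relatedly, $\|u_1\|_{\widetilde H^{-1/4}}\simeq\|u\|_{\widetilde H^{-5/4}}$ only holds up to $|u_1(T)|$. The offending terms are $|u_1(T)|^2$, the moments $\int_0^T u\,e^{(\lambda_k-\lambda_n)t}\dd t$, and $\|y_1(T)\|_{H^{-1}}^2$ (see \eqref{eq:prop:decomposition_y_2}). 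Interpolation and $T\to0$ do not remove them: Lemma~\ref{lem:appendix:weak_norm_power_T_new} leaves exactly $|u_1(T)|$.

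What is missing is the closed-loop step. You bound these terms by $\|y_1(T)\|_{H^{-1}}$ (Lemma~\ref{lem:closed_loop}), and then by the distance between the final state and the free evolution. This is why the drift estimate \eqref{eq:thm:main:kernel_implies_drift_small_time} carries the term $\|y(T;y_0,u)-y(T;y_0,0)\|^2_{H^{-1}}$. Passing from your reachability formulation to null controllability from $\eta\varphi_k$ also needs the decoupling Lemma~\ref{lem:decoupling_estimate}.

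For $\mu\equiv1$ in finite time, your plan stops before the two places where the work lies. A genuine Fourier symbol on $(0,T)$ only appears after conjugating by $\rho_k(t)=e^{\lambda_k t/2}$. The quadratic form in $v=\rho_k u$ then has the translation-invariant kernel $e^{-\lambda_{n,k}|t-s|}$ with $\lambda_{n,k}=\lambda_n-\frac{\lambda_k}{2}\neq0$. So the poles of $K_{1,k}$ sit at $\pm i\lambda_{n,k}$, and there are no real resonances at $\lambda_j-\lambda_k$. The leftover terms depend on $y_1(T)$, plus moments for $2n^2<k^2$ (Lemma~\ref{lem:fourier_alpha_negatif}); they again require the closed-loop step, together with Lemma~\ref{lem:u_1_finite_time}.

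More importantly, knowing the sign at $\omega=0$ and as $\omega\to\infty$ does not give a global sign. The paper sums the series with the Mittag-Leffler expansion of $\tan$ to obtain $f(\omega)=\frac{8\pi}{(k^4+\omega^2)^2}\Re\bigl(z(k^2+z^2)\tan\frac{\pi z}{2}\bigr)$, where $z=a+ib$ and $z^2=\frac{k^2+i\omega}{2}$. It then proves $a\sin(\pi a)(3a^2-5b^2)-b\sinh(\pi b)(5a^2-3b^2)<0$ for every $\omega>0$ by splitting on the sign of $\sin(\pi a)$. The argument uses $\pi b<\sinh(\pi b)$ and $a>k/\sqrt2$. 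When $\sin(\pi a)\ge0$, it also uses the arithmetic condition to get $2m\ge k/\sqrt2$ for the even integer $2m\le a$. This intermediate-frequency argument is the real content of the finite-time claim, and your proposal does not supply it. Note also that for these $k$ the kernel is \emph{negative}, consistent with the corrected layer sign above, not positive as you anticipated.
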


Second, we establish what appears to be the first obstruction result for a natural physical system in which the obstruction to local controllability is quantified by the $H^{r}(0,T)$-norm of the control, with $r$ varying over a full interval. Fractional obstructions cannot occur for ordinary differential equations, but they have been observed for certain partial differential equations, notably in \cite{Nguyen2025Burgers, Marbach2018}, mentioned above, in \cite{coronKoenigNguyen} for the Korteweg-de Vries equation, and in \cite{coron2024lack} for the water tank equation. In those works, a fractional exponent appears and plays a role in the proof of the obstruction, but the origin of its precise value generally remains unclear. To our knowledge, the only previous result exhibiting obstructions quantified by fractional norms across an interval is \cite{BeauchardMarbach2020}, where an academic model is specifically designed to produce this phenomenon. By contrast, the present work shows that such a fractional scale of obstructions also arises in a natural Burgers-type system, and sheds light on the mechanisms underlying this type of obstruction. Moreover, our results seem to be the first to provide obstructions to \textit{finite-time} local controllability quantified by fractional norms across an interval. An accessible version of our results is the following.

\begin{theorem}[Fractional obstructions: accessible version]
    For every $r \in \left(-\frac{3}{2}, -\frac{1}{2}\right)$, there exists $\mu \in H^{-1}(0,1)$ for which an obstruction to local controllability quantified by the $H^{r}(0,T)$-norm of the control occurs. Moreover, we provide two complementary families of explicit examples. First, we construct such profiles $\mu$ in the physical space, showing that fractional obstructions to small-time local controllability are related to singularities of the source profile in the physical domain. Second, we construct such profiles $\mu$, on the Fourier side, leading to fractional obstructions to finite-time local controllability.
\end{theorem}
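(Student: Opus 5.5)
The plan is the standard quadratic-obstruction scheme: expand the solution to second order, find a direction in which the quadratic term has a sign, and quantify that sign by a fractional norm of the control whose exponent is set by the decay of the Fourier coefficients of $\mu$.

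\textbf{Expansion and direction.} Write $y = y_1 + y_2 + \dots$ with
\[
\partial_t y_1 + A y_1 = u\mu, \qquad \partial_t y_2 + A y_2 = -y_1\partial_x y_1 = -\tfrac12 \partial_x (y_1^2),
\]
both with zero initial data. I fix a mode $k$ with $\langle \mu,\varphi_k\rangle = 0$, so that $\langle y_1(T),\varphi_k\rangle = 0$ and the linearization misses the direction $\varphi_k$. Projecting $y_2$ on $\varphi_k$ gives a quadratic form
\[
\langle y_2(T),\varphi_k\rangle = \int_0^T\!\!\int_0^T K(t,s)\,u(t)u(s)\dd t\dd s .
\]
I expand $y_1(t) = \sum_j \mu_j \int_0^t e^{-\lambda_j(t-s)}u(s)\dd s\,\varphi_j$ with $\mu_j = \langle\mu,\varphi_j\rangle$. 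The kernel is then built from $\sum_{j,l}\mu_j\mu_l\,\langle \varphi_j\varphi_l',\varphi_k\rangle$ times explicit exponentials. Passing to the Fourier transform in time, with $u$ extended by zero, this becomes a symbol $\int \hat K(\tau)|\hat u(\tau)|^2\dd\tau$.

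\textbf{The exponent.} The key computation is the large-$|\tau|$ asymptotics of $\hat K(\tau)$. The regime $|\tau|\sim\lambda_j$ dominates, so the weight behaves like $\sum_j \mu_j^2\,\lambda_j^{-1}(\lambda_j+i\tau)^{-1}\cdots$. Two cases drive the construction.
\begin{itemize}
\item If $\mu_j \sim j^{-\alpha}$, then $\hat K(\tau)\sim c\,|\tau|^{2r}$ with $r$ an affine function of $\alpha$. The range $\alpha$ compatible with $\mu\in H^{-1}$ and the convergence of the series should sweep exactly $r\in(-\tfrac32,-\tfrac12)$.
\item For $\mu$ with a physical singularity, say $\mu(x) = |x-x_0|^{\beta}$ or an edge behaviour, the same decay is read off from the singularity.
\end{itemize}
In the smooth case, the boundary term $\mu(0)^2+(-1)^k\mu(1)^2$ arising after integrating $\langle(\mu^2)',\varphi_k\rangle$ by parts gives the $H^{-5/4}$ scale; this is the endpoint calibration. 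For each target $r$, I then choose $\alpha$, or $\beta$, together with a finite correction on low modes ensuring $\mu_k=0$ and a nonzero leading constant with a definite sign.

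\textbf{Coercivity and closing.} Next I prove the coercivity
\[
\Big|\langle y_2(T),\varphi_k\rangle\Big| \ge c\,\|u\|_{H^{r}(0,T)}^2 - C\,\|u\|_{H^{r'}}^2, \qquad r'<r,
\]
and show the lower-order part is harmless. For small time this comes by scaling. For finite time I would instead construct $\mu$ on the Fourier side so that the symbol has a sign for every $\tau$; here I need something like Marbach's drift lemma, or a choice of the $\mu_j$ avoiding resonances $\lambda_j-\lambda_l=\pm\lambda_k$. Finally, the cubic remainder $\|y-y_1-y_2\|$ must be bounded by $o(\|u\|_{H^r}^2)$ under a smallness assumption on $u$ in a suitable norm. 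Given these three ingredients, the target $y_0=0$, $y(T)=-\delta\varphi_k$ (with the sign opposite to the quadratic drift) is unreachable.

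\textbf{Main obstacle.} I expect two difficulties. The first is the precise symbol asymptotics with a \emph{definite sign}: the off-diagonal interaction coefficients $\langle\varphi_j\varphi_l',\varphi_k\rangle$ are nonzero only for $|j-l|=k$, so the sum is along shifted diagonals $l=j\pm k$, and cancellation must be tracked carefully. The second is the remainder estimate at the low regularity $\mu\in H^{-1}$; for it I would use energy estimates in $H^{-1}$ and interpolate against $\|u\|_{H^r}$.
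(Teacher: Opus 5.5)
You have the right overall scheme, but the exponent step has a genuine gap: power-law coefficients give only half of the interval. The kernel lives on the shifted diagonals $\mu_n\mu_{n\pm k}$. If you take $\mu_n\sim n^{-\alpha}$ of one sign for $n>k$, then $K_{\mu,k}(\omega)\approx 2\pi^2\sum_n n^{-2\alpha}/(\pi^4n^4+\omega^2)$. This behaves like $\omega^{-(3+2\alpha)/2}$ only while $2\alpha<1$, i.e. $s=\alpha-\frac12\in(-1,0)$ and $r=-1-\frac s2\in(-1,-\frac12)$. Once $\alpha>\frac12$ (so $\mu\in L^2$, exactly the regime $r<-1$), the series is summable. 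Proposition~\ref{prop:asymptotic_estimate_kernel_L2} then gives $K_{\mu,k}(\omega)=A_k(\mu)\,\omega^{-2}+o(\omega^{-2})$ with $A_k(\mu)=\frac{\pi\sqrt2}{k}\langle\mu^2,\varphi_k'\rangle$. For your profiles $A_k(\mu)=2\pi^2\sum_n\mu_n\mu_{n+k}>0$, so you get the $\widetilde{H}^{-1}$ drift whatever $\alpha$ is. To reach $r\in(-\frac32,-1)$ you must impose the extra condition $\langle\mu^2,\varphi_k'\rangle=0$, and read the exponent off the next-order term $-\omega^{-2}\sum_n(\cdots)\lambda_{n,k}^3/(\lambda_{n,k}^2+\omega^2)\sim\omega^{-2-s}$. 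A low-mode correction that only enforces $\mu_k=0$ does not achieve this.

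Your account of the $\widetilde{H}^{-5/4}$ scale reflects the same issue. $\langle(\mu^2)',\varphi_k\rangle=-\langle\mu^2,\varphi_k'\rangle$ has no boundary term, since $\varphi_k(0)=\varphi_k(1)=0$. The factor $\mu(0)^2+(-1)^k\mu(1)^2$ comes from the $1/n$ tails of $\mu_n$ inside that second-order term, after the $\omega^{-2}$ coefficient has been cancelled. For the physical-space family, the paper kills both $\langle\mu,\varphi_k\rangle$ and $\langle\mu^2,\varphi_k'\rangle$ by adding $\psi\in C_c^\infty$ built from two localized bumps plus an intermediate value argument (Lemma~\ref{lem:correction_moment_and_first_drift}). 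Two further points on that family:
\begin{enumerate}
\item The coefficients of $(x-x_0)^{s-1/2}$ oscillate like $n^{-s-1/2}\sin(\pi nx_0+\rho_s)$. The leading constant is the mean of $\sin(\theta n+\rho_s)\sin(\theta(n+k)+\rho_s)$, i.e. $\frac12\cos(\pi kx_0)$, which must not vanish; the paper takes $x_0=1/k$.
\item For $s<-\frac12$ the profile must be defined as a finite part.
\end{enumerate}

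The finite-time part has the same hole in a sharper form. A global sign is easy for $r>-1$: put $\mu_n$ on $N\mathbb{N}^*\cup(N\mathbb{N}^*+k)$ with $N>2k$, so only products $\mu_{Nn}\mu_{Nn+k}$ of one sign occur, with $\lambda_{n,k}>0$. For $r<-1$, however, you need $K_{\mu,k}$ of one sign everywhere while $\omega^2K_{\mu,k}(\omega)\to0$. So positivity cannot hold summand by summand: nonnegative summands $c_n\lambda_{n,k}/(\lambda_{n,k}^2+\omega^2)$ would give $\omega^2K_{\mu,k}(\omega)\to\sum_nc_n\lambda_{n,k}>0$ by monotone convergence. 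Avoiding resonances $\lambda_j-\lambda_l=\pm\lambda_k$ is not the issue. Proposition~\ref{prop:global_sign_AND_asymptotic_estimates} pairs the blocks $\{2nN,2nN+k\}$ and $\{(2n+1)N,(2n+1)N+k\}$ with a weight $\gamma_n$ tuned so that each pair's $\omega^{-2}$ parts cancel. It then shows each pair stays positive for every $\omega$ by comparing weighted averages of $1/(\lambda^2+\omega^2)$, leaving pairs of size $n^{3-2s}/(n^4+\omega^2)^2$.

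Two smaller points.
\begin{enumerate}
\item In small time the lower-order part is not removable by scaling. For $r<-\frac12$, a unit-mass bump on $(0,T)$ has $\Vert u\Vert_{\widetilde{H}^{r}}\sim1$ as $T\to0$, so $\Vert u\Vert_{\widetilde{H}^{r-\nu}}\le CT^{\nu_0}\Vert u\Vert_{\widetilde{H}^r}$ fails. You must split off $\int_0^Tu$ (Lemma~\ref{lem:appendix:weak_norm_power_T_new}). You must also handle the moment and low-mode terms of Proposition~\ref{prop:decomposition_y_2_kernel}: the quadratic form is a Fourier multiplier only in $ue^{\lambda_kt/2}$, and only up to terms involving $y_1(T)$. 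Both are bounded via the closed-loop estimate of Lemma~\ref{lem:closed_loop} in terms of $\Vert y_1(T)\Vert_{H^{-1}}$.
\item Unreachability of $-\delta\varphi_k$ from $0$ is not failure of local null-controllability, which is the notion used here. For that you start from $y_0=\eta\varphi_k$ and need the decoupling estimate of Lemma~\ref{lem:decoupling_estimate}.
\end{enumerate}
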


\subsection{Definitions and well-posedness}

\subsubsection{Functional spaces}

For $n \in \mathbb{N}^\ast$, set $H_{(0)}^n(0,1) = D(A^{\frac{n}{2}})$, with $\Vert y \Vert_{H_{(0)}^n}^2 := \sum_{j \geq 1} \lambda_j^n \langle y, \varphi_j \rangle^2$. Recall that for $n \geq 0$ and $\mu \in H_{(0)}^{2n + 1}(0,1)$, one has $\mu^{(2m)}(0) = \mu^{(2m)}(1) = 0$ for all $m \in \llbracket 0, n \rrbracket$. 
Set also $D(A^\infty) = \bigcap_{n \geq 0} D(A^{\frac{n}{2}})$. For $s \in \mathbb{R}$, the space $H_{(0)}^s(0,1)$ is defined as the closure of $D(A^{\infty})$ for the norm
\begin{equation}
     \Vert \mu \Vert_{H_{(0)}^s}^2 := \sum_{j \geq 1} \lambda_j^s \langle \mu, \varphi_j \rangle^2.
\end{equation}
Smooth functions on $[0,1]$ with non-vanishing boundary traces belong to $H_{(0)}^s(0,1)$ for all $s < \frac{1}{2}$, but fail to belong to $H_{(0)}^{\frac{1}{2}}(0,1)$.
We thus introduce a family of functional spaces that precisely captures the maximal regularity of such functions.

\begin{definition}[Definition of $\mathcal{H}_p^{s}(0,1)$]\label{def:mathcal_Hs}
    Let $s \in \mathbb{R}$ and $p \in [1, +\infty]$. We define $\mathcal{H}_p^{s}(0,1)$ as the closure of $D(A^{\infty})$ for the norm
    \begin{equation}
        \left\Vert \mu \right\Vert_{\mathcal{H}_p^{s}} := \left\Vert \left( \lambda_j^{\frac{s}{2}} \langle \mu, \varphi_j \rangle \right)_{j \geq 1} \right\Vert_{\ell^p(\mathbb{N}^\ast)} .
    \end{equation}
\end{definition}

We will only work with $\mu \in \mathcal{H}_p^{s}(0,1)$ for $p = 2$ or $p = + \infty$. By definition, one has $\mathcal{H}_2^{s}(0,1) = H_{(0)}^s(0,1)$, and smooth functions belong to $\mathcal{H}_\infty^{1}(0,1)$. Note that 
\begin{equation}
    \mathcal{H}_2^{s}(0,1) \hookrightarrow \mathcal{H}_\infty^{s}(0,1) \quad \text{ and } \quad \mathcal{H}_\infty^{s}(0,1) \hookrightarrow \mathcal{H}_2^{s - \frac{1}{2} - \varepsilon}(0,1), 
\end{equation}
for all $s \in \mathbb{R}$ and $\varepsilon > 0$. For negative Sobolev regularity of controls, we use the following definition.

\begin{definition}[Definition of $\widetilde{H}^{s}(0,T)$]\label{def:widetilde_Hs}
    Let $s < \frac{1}{2}$. We write $\widetilde{H}^{s}(0,T)$ for the closure of $C^\infty_\mathrm{c}((0, T); \mathbb{R})$ for the norm
    \begin{equation}
        \Vert u \Vert_{\widetilde{H}^{s}(0,T)} := \left\Vert \mathbbm{1}_{(0,T)} u \right\Vert_{H^s(\mathbb{R})},
    \end{equation}
    where the $H^s(\mathbb{R})$-norm is defined using the Fourier transform.
\end{definition}

Our conventions for the Fourier transform and Sobolev spaces are
\begin{equation}
    \widehat{u}(\omega) := \int_{\mathbb{R}} u(t) e^{-i \omega t} \dd t, \quad \left\Vert u \right\Vert_{H^s(\mathbb{R})}^2 := \int_{\mathbb{R}} (1 + \omega^2)^s \left\vert \widehat{u}(\omega) \right\vert^2 \dd \omega,
\end{equation}
and we sometimes use the notation $\langle \omega \rangle := (1 + \omega^2)^{\frac{1}{2}}$. When the context is clear, we sometimes write $\Vert u \Vert_{\widetilde{H}^{s}}$ instead of $\Vert u \Vert_{\widetilde{H}^{s}(0,T)}$. Since some estimates naturally involve the control's primitive, we introduce the following definition. Heuristically, one has $\Vert u_1 \Vert_{\widetilde{H}^{s}(0,T)} \approx \Vert u \Vert_{\widetilde{H}^{s-1}(0,T)}$.

\begin{definition}[Primitive of the control]\label{def:primitive}
    Let $T > 0$ and $u \in L^2(0,T)$. For $t \in [0, T]$, we write $u_1(t) := \int_0^t u(\tau) \dd \tau$ for the primitive of $u$ vanishing at $t = 0$.
\end{definition}

\subsubsection{Well-posedness} 

\ifarxiv
    The controlled Burgers equation \eqref{eq:Burgers_intro} can be solved for $\mu \in H^{-1}(0,1)$ and $u \in L^2(0,T)$, as established by the following result, whose proof is provided in Appendix \ref{sec:lem:burgers_weak_wellposedness}.
\else 
    The controlled Burgers equation \eqref{eq:Burgers_intro} is well posed for $\mu \in H^{-1}(0,1)$ and $u \in L^2(0,T)$. More precisely, we have the following result. Its proof is omitted, since it follows from Lemma \ref{lem:heat_weak_wellposedness}, together with standard compactness arguments and energy estimates (see, for instance, \cite{MarbachTimeIteration}).
\fi

\begin{lemma}[Well-posedness for the Burgers equation]\label{lem:burgers_weak_wellposedness}
    For $T > 0$, $f \in L^2((0, T), H^{-1}(0,1))$ and $y_0 \in L^2(0,1)$, there exists a unique solution 
    \begin{equation}
        y \in C^0([0, T]; L^2(0,1)) \ \cap \ L^2((0,T); H_0^1(0,1)) \ \cap \ H^1((0,T); H^{-1}(0,1))
    \end{equation}
    of the Burgers equation 
    \begin{equation}\label{eq:burgers_weak_wellposedness}
        \left\{
        \begin{array}{cll}
            \partial_t y - \partial_x^2 y + y \partial_x y = f
            & \quad t \in (0, T), & \quad x \in (0,1), \\
            y(t,0) = y(t,1) = 0
            & & \quad t \in (0, T), \\
            y(0,x)=y_0
            & & \quad x \in (0, 1).
        \end{array}
        \right.
    \end{equation}
    In addition, there exists an absolute constant $C > 0$ such that
    \begin{equation}\label{eq:lem:Burgers_weak_wellposedness}
        \begin{split}
            & \Vert y \Vert_{L^\infty((0,T); L^2)} + \Vert y \Vert_{L^2((0,T); H_0^1)} + \Vert y \Vert_{H^{1}((0,T); H^{-1})} \\
            \leq \ & C \left( \Vert y_0 \Vert_{L^2} + \Vert f \Vert_{L^2((0, T); H^{-1})} + \Vert y_0 \Vert_{L^2}^2 + \Vert f \Vert_{L^2((0, T); H^{-1})}^2 \right).
        \end{split}
    \end{equation}
\end{lemma}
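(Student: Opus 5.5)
We argue in three steps: an a priori energy estimate, existence by Galerkin approximation, and uniqueness by a one-dimensional Gronwall argument. The only structural ingredient is the cancellation $\int_0^1 y^2 \partial_x y \dd x = \frac{1}{3}[y^3]_0^1 = 0$ for $y \in H_0^1(0,1)$. We work in the Gelfand triple $H_0^1 \subset L^2 \subset H^{-1}$ and call $y$ a solution if $\partial_t y = \partial_x^2 y - \frac{1}{2}\partial_x(y^2) + f$ in $L^2((0,T);H^{-1})$. This makes sense because $y \in L^\infty L^2 \cap L^2 H_0^1$ gives $y^2 \in L^2((0,T); L^2)$, since $\Vert y^2 \Vert_{L^2} \leq \Vert y \Vert_{L^\infty_x}\Vert y\Vert_{L^2} \lesssim \Vert y \Vert_{H_0^1}\Vert y \Vert_{L^2}$. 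Hence $\partial_x(y^2) \in L^2 H^{-1}$.

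\textbf{A priori bound.} Testing the equation against $y$ and using the cancellation gives
\begin{equation*}
\frac{1}{2}\frac{\dd}{\dd t}\Vert y\Vert_{L^2}^2 + \Vert \partial_x y\Vert_{L^2}^2 = \langle f, y\rangle_{H^{-1},H_0^1} \leq \frac{1}{2}\Vert f\Vert_{H^{-1}}^2 + \frac{1}{2}\Vert \partial_x y\Vert_{L^2}^2 .
\end{equation*}
Integrating in time yields $\Vert y\Vert_{L^\infty L^2} + \Vert y\Vert_{L^2 H_0^1} \leq C(\Vert y_0\Vert_{L^2} + \Vert f\Vert_{L^2H^{-1}})$, with $C$ independent of $T$. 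For the time derivative, we read $\partial_t y$ off the equation:
\begin{equation*}
\Vert \partial_t y\Vert_{L^2H^{-1}} \leq \Vert y\Vert_{L^2H_0^1} + C\Vert y\Vert_{L^\infty L^2}\Vert y\Vert_{L^2H_0^1} + \Vert f\Vert_{L^2H^{-1}} .
\end{equation*}
By the previous bound, the product term is at most $C(\Vert y_0\Vert_{L^2}+\Vert f\Vert_{L^2 H^{-1}})^2$, which accounts for the quadratic terms in \eqref{eq:lem:Burgers_weak_wellposedness}. The $L^2H^{-1}$ part of the $H^1((0,T);H^{-1})$ norm is bounded by $\sqrt{T}\,\Vert y \Vert_{L^\infty L^2}$, and this is the only point where the constant could depend on $T$. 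If it must be absolute, we take the $H^1 H^{-1}$ seminorm there, or bound $\Vert y\Vert_{L^2 H^{-1}} \le \Vert y\Vert_{L^2 H_0^1}$ using the Poincaré inequality. With the latter, no $T$-dependence arises at all.

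\textbf{Existence.} We project the equation onto $\mathrm{span}(\varphi_1,\dots,\varphi_N)$. The resulting ODE has a locally Lipschitz nonlinearity, and the energy identity above holds exactly for the Galerkin system, since $P_N$ is self-adjoint and commutes with testing against $y_N$. The approximations are therefore global and satisfy the bounds uniformly in $N$. By Aubin–Lions, $\{y_N\}$ is relatively compact in $L^2((0,T);L^2)$, and we extract a subsequence converging weakly in $L^2H_0^1 \cap H^1H^{-1}$ and strongly in $L^2 L^2$. Strong convergence gives $y_N^2 \to y^2$ in $L^1L^1$, and hence in distributions, which is enough to pass to the limit in the nonlinear term. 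Continuity $y\in C^0([0,T];L^2)$ follows from the Lions–Magenes lemma, since $y \in L^2H_0^1$ and $\partial_t y \in L^2H^{-1}$. The initial condition passes to the limit through weak continuity in $H^{-1}$. The estimate persists by weak lower semicontinuity of the norms.

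\textbf{Uniqueness.} This is the main technical point, because the nonlinear term must be controlled with only $L^2H^1$ regularity. Let $w=y-z$ be the difference of two solutions. Then
\begin{equation*}
\partial_t w - \partial_x^2 w + \frac{1}{2}\partial_x\bigl((y+z)w\bigr)=0 .
\end{equation*}
Testing against $w$, which is justified by Lions–Magenes, and integrating by parts gives
\begin{equation*}
\Bigl|\int (y+z)\,w\,\partial_x w\Bigr| \leq \Vert y+z\Vert_{L^\infty}\Vert w\Vert_{L^2}\Vert \partial_x w\Vert_{L^2} \leq \frac{1}{2}\Vert \partial_x w\Vert_{L^2}^2 + C\Vert y+z\Vert_{H_0^1}^2\Vert w\Vert_{L^2}^2 .
\end{equation*}
The weight $\Vert y+z\Vert_{H_0^1}^2$ is in $L^1(0,T)$, so Gronwall's lemma gives $w\equiv0$. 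Uniqueness then also implies that the whole Galerkin sequence converges.
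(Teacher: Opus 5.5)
Your proof is correct, and its analytic core is the same as the paper's. Both use the energy identity based on $\int_0^1 y^2\partial_x y \dd x=0$, read $\partial_t y$ off the equation to produce the quadratic terms, and prove uniqueness the same way: test the difference equation against $w$, use $H^1(0,1)\hookrightarrow L^\infty(0,1)$ and Young, then apply Gr\"onwall with the weight $\Vert y+z\Vert_{H^1}^2\in L^1(0,T)$.

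The genuine difference is in existence. Instead of a Galerkin scheme, the paper applies Schaefer's fixed-point theorem on $E=L^4((0,T)\times(0,1))$, where $\Gamma(Y)$ is the solution of the linear heat equation with source $f-\frac{1}{2}\partial_x(Y^2)$ given by Lemma \ref{lem:heat_weak_wellposedness}. Compactness of $\Gamma$ comes from a compact embedding of $C^0(L^2)\cap L^2(H_0^1)\cap H^1(H^{-1})$ into $L^4$: Simon's compactness into $L^2(L^\infty)$, then interpolation with $L^\infty(L^2)$. The energy estimate is only needed for solutions of $y=\lambda\Gamma(y)$. These already have full regularity, so testing against $y$ is rigorous at once.

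The fixed-point route is shorter and reuses the linear lemma directly. However, it needs the stronger $L^4$ compactness so that $Y\mapsto Y^2$ is continuous into $L^2(L^2)$. Your Galerkin route is more elementary and needs only Aubin--Lions into $L^2(L^2)$, since the nonlinear term only has to converge in distributions. The price is the finite-dimensional ODE step and the passage to the limit through $P_N$. That limit is handled by testing against $\psi(t)\varphi_j$ with $N\geq j$, where $\langle P_N(y_N\partial_x y_N),\varphi_j\rangle=-\frac{1}{2}\int_0^1 y_N^2\varphi_j^\prime\dd x$.

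You also address a point the paper leaves implicit. You bound the non-derivative part $\Vert y\Vert_{L^2((0,T);H^{-1})}$ of the $H^1((0,T);H^{-1})$ norm by Poincar\'e rather than by $\sqrt{T}\,\Vert y\Vert_{L^\infty((0,T);L^2)}$, which is what keeps the constant absolute.
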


\begin{remark}
    If $\mu$ is more regular than $H^{-1}(0,1)$, namely if $\mu \in \mathcal{H}_2^s(0,1) \cup \mathcal{H}_\infty^{s + \frac{1}{2}}(0,1)$ for some $s \in (-1, 1)$, then \eqref{eq:Burgers_intro} can be solved for weaker controls. For simplicity, however, we restrict ourselves in this article to controls $u \in L^2(0,T)$. Note that this is only a regularity assumption; when studying local controllability, the smallness of $u$ will be assumed in an appropriate regularity space that depends on the regularity of $\mu$ (see Definitions \ref{def:intro_STLC_FTLC} below).
\end{remark}

\subsubsection{Power series expansion}

Assume that $y_0 = 0$. We consider the power series expansion $y = y_1 + y_2 + \cdots$, where $y_1$ and $y_2$ solve, respectively, the linearized and quadratic control problem 
\begin{equation}\label{eq:Burgers_y_1_intro}
    \left\{
    \begin{array}{lll}
        \partial_t y_1 - \partial_x^2 y_1 = u(t) \mu
        & \quad t \in (0, T), & \quad x \in (0, 1), \\
        y_1(t,x) = 0
        & \quad t \in (0, T),  & \quad x \in \{0, 1\},\\
        y_1(0,x)=0
        & & \quad x \in (0, 1),
    \end{array}
    \right.
\end{equation}
and
\begin{equation}\label{eq:Burgers_y_2_intro}
    \left\{
    \begin{array}{lll}
        \partial_t y_2 - \partial_x^2 y_2 + y_1 \partial_x y_1 = 0
        & \quad t \in (0, T), & \quad x \in (0, 1), \\
        y_2(t,x) = 0
        & \quad t \in (0, T), & \quad x \in \{0, 1\},\\
        y_2(0,x)=0
        & & \quad x \in (0, 1).
    \end{array}
    \right.
\end{equation}
Once a source profile $\mu$ is fixed, we sometimes write $y_1(t; u)$ and $y_2(t; u)$ for the solutions of \eqref{eq:Burgers_y_1_intro} and \eqref{eq:Burgers_y_2_intro} with control $u$. Note that in this article, we do not consider power series expansions of $u \mapsto y(\cdot ; y_0, u)$ for $y_0 \neq 0$, and that the notations $y_1$ and $y_2$ will always refer to functions that vanish at $t=0$.

Note that by definition, $y_1$ and $y_2$ are given by 
\begin{equation}\label{eq:y_1_Fourier_intro}
    y_1(t,x) = \sum_{j \geq 1} \left( \int_0^t e^{-\lambda_j (t-\tau)} u(\tau) \dd \tau \right) \left\langle \mu, \varphi_j \right\rangle \varphi_j(x)
\end{equation}
and
\begin{equation}\label{eq:y_2_Fourier_intro}
    y_2(t,x) = \frac{1}{2} \sum_{j \geq 1} \left( \int_0^t e^{-\lambda_j (t-\tau)} \left\langle (y_1(\tau))^2, \varphi_j^\prime \right\rangle \dd \tau \right) \varphi_j(x).
\end{equation}

\subsection{Detailed results}

\subsubsection{Definition of controllability and obstructions}

We use the following definitions for local controllability.

\begin{definition}\label{def:intro_STLC_FTLC}
    Let $\sigma < \frac{1}{2}$. For $T > 0$, we say that \eqref{eq:Burgers_intro} is \emph{locally null-controllable at time $T > 0$}, with controls small in $\widetilde{H}^{\sigma}$, if the following property holds: for any $\eta > 0$, there exists $\delta > 0$ such that, for any $y_0 \in L^2(0,1)$ satisfying $\Vert y_0 \Vert_{L^2} \leq \delta$, there exists a control $u \in L^2(0,T) \cap \widetilde{H}^\sigma(0,T)$ satisfying $\Vert u \Vert_{\widetilde{H}^{\sigma}} \leq \eta$, such that the corresponding solution satisfies $y(T; y_0, u) = 0$.
    
    We say that \eqref{eq:Burgers_intro} is \emph{small-time locally null-controllable} (in short, \emph{STLC}), with controls small in $\widetilde{H}^{\sigma}$, if for all sufficiently small $T>0$, \eqref{eq:Burgers_intro} is locally null-controllable at time $T > 0$ with controls small in $\widetilde{H}^{\sigma}$. We say that \eqref{eq:Burgers_intro} is \emph{finite-time locally null-controllable} (in short, \emph{FTLC}), with controls small in $\widetilde{H}^{\sigma}$, if there exists $T>0$ such that \eqref{eq:Burgers_intro} is locally null-controllable at time $T > 0$ with controls small in $\widetilde{H}^{\sigma}$.
\end{definition}


We shall also use the following terminology. Let $r \in \left[-\frac{3}{2},-\frac{1}{2}\right]$ and $\sigma < \frac{1}{2}$, with $r \leq \sigma$. We say that there is an \emph{obstruction to STLC with controls small in $\widetilde{H}^{\sigma}$, quantified by the $\widetilde{H}^{r}$-norm of the control}, if STLC with controls small in $\widetilde{H}^{\sigma}$ is prevented by a systematic small-time drift estimate involving the $\widetilde{H}^{r}(0,T)$-norm of the control. More precisely, this means an estimate of the form
\begin{equation}
    \left\langle y(T;y_0,u), \varphi_k \right\rangle = \left\langle y(T; y_0, 0), \varphi_k \right\rangle + \alpha_k \left\Vert u \right\Vert_{\widetilde{H}^{r}(0,T)}^2 + \text{ remainder terms},
\end{equation}
for some $\alpha_k \neq 0$, whenever $T$, $\left\Vert u \right\Vert_{\widetilde{H}^{r}(0,T)}$, and $y_0$ are sufficiently small. Here $\varphi_k$ is a direction lost at the linear level, in the sense that $\langle y_1(T), \varphi_k \rangle = \langle \mu, \varphi_k \rangle = 0$. If an estimate of this type holds for some fixed $T>0$, we say that there is an \emph{obstruction to FTLC quantified by the $\widetilde{H}^{r}$-norm of the control}. Precise statements are given in Theorems \ref{thm:main:kernel_implies_drift_small_time} and \ref{thm:main:kernel_implies_drift_finite_time}. We prove that drift estimates are indeed obstructions to controllability in Corollaries \ref{cor:obstruction_small_time} and \ref{cor:obstruction_finite_time}.

For $s \in [-1, 1]$, we define $\sigma(s)$ by
\begin{equation}\label{eq:defn_sigma(s)}
\sigma(s) := - \frac{1+s}{2} \text{ if } s \in \left[-1, \frac{1}{2} \right], \quad \text{ and } \quad \sigma(s) := -1 + \frac{s}{2} \text{ if } s \in \left[ \frac{1}{2}, 1 \right].
\end{equation}

To help the reader keep track of the exponents appearing in the statements of our results, we gather here the natural correspondence between the regularity of $\mu$, the norm of the control involved in the drift, and the norm of the control involved in the smallness assumption. We expect these exponents to be optimal, as explained in Remarks \ref{rq:optimal_y_1} and \ref{rq:optimal_y_2}. They are as follows. 

If $\mu \in \mathcal{H}^s_2(0,1)$ or $\mu \in \mathcal{H}^{s+\frac{1}{2}}_\infty(0,1)$ for some $s \in [-1,1]$, we aim to prove obstructions to controllability quantified by the $\widetilde{H}^{-1-\frac{s}{2}}(0,T)$-norm of the control, for controls small in $\widetilde{H}^{\sigma(s)}(0,T)$.
Equivalently, writing $r = -1-\frac{s}{2}$, that is, $s = -2-2r$, then $r \in \left[ - \frac{3}{2}, - \frac{1}{2} \right]$, and, for $\mu \in \mathcal{H}^{-2-2r}_2(0,1)$ or $\mu \in \mathcal{H}^{-\frac{3}{2} - 2r}_\infty(0,1)$, we aim to prove an obstruction to controllability quantified by the $\widetilde{H}^{r}(0,T)$-norm of the control, for controls small in $\widetilde{H}^{\sigma(-2-2r)}(0,T)$.

\subsubsection{The linearized system}

Let $\mu \in H^{-1}(0,1)$, and set $\mu_n := \langle \mu, \varphi_n \rangle$. On the one hand, if $\mu_k = 0$ for some $k \geq 1$, then $\langle y_1(t; u), \varphi_k \rangle = 0$ for every control $u$. We say that the direction $\varphi_k$ is \textit{lost at the linear level}. In this case, one has
\begin{equation}
    \langle y(t; 0, u), \varphi_k \rangle = \langle y_2(t; u), \varphi_k \rangle + \text{ remainder terms,}
\end{equation}
so that the behavior of the controlled system in the direction $\varphi_k$ is governed by the quadratic expansion. This is precisely the situation considered in this article.

On the other hand, if $\mu_n \neq 0$ for every $n \geq 1$, and if $(\mu_n)_n$ does not decay too fast, for instance if $\sup_{n \geq 1} \frac{-\ln \vert \mu_n \vert}{n} < + \infty$, then one can prove that STLC holds with controls small in $L^2(0,T)$. Indeed, the moment method gives null controllability of the scalar controlled heat equation with cost $e^{C/T}$. Moreover, by arguments similar to those used in the proofs of Lemma \ref{lem:burgers_weak_wellposedness} and Lemma \ref{lem:decoupling_estimate}, one can show that the difference between the nonlinear Burgers flow $u \mapsto y(\cdot; y_0, u)$ and its linearization is quadratic in $y_0$ and $u$. Applying the Liu-Takahashi-Tucsnak time-iteration method \cite{LiuTakahashiTucsnak2013}, in the black-box form given in \cite{MarbachTimeIteration}, then yields the result. We omit the details and refer to \cite{MarbachTimeIteration}.

\subsubsection{Quadratic kernel}

Most of the results of this article are based on Proposition \ref{prop:decomposition_y_2_kernel}, which establishes that for all $\mu \in H^{-1}(0,1)$ and $k \geq 1$, there exists a continuous function $\omega \mapsto K_{\mu,k}(\omega)$, which we call the \emph{quadratic kernel}, such that for all $T > 0$ and all $u \in L^2(0,T)$, 
\begin{equation}\label{eq:kernel_intro}
\left\langle y_2(T,u), \varphi_k \right\rangle = \frac{k e^{-\lambda_k T}}{4 \pi^2 \sqrt{2}} \int_{\mathbb{R}} \left\vert \widehat{u \rho_k}(\omega) \right\vert^2 K_{\mu,k}(\omega) \dd \omega + \text{remainder terms,}
\end{equation}
where $\rho_k : t \mapsto e^{\frac{\lambda_k}{2}t}$, and where $u$ is extended by zero outside $[0,T]$. An analogous formula was obtained in \cite{Nguyen2025Burgers} in the case $\mu \equiv 1$, under the additional assumption that $y_1(T)=0$. Under this assumption, the remainder terms in \eqref{eq:kernel_intro} vanish. Although this gives a cleaner quadratic expansion, it also forces one to use linear corrections in order to apply the formula to prove controllability obstructions.

Compared with \cite{Nguyen2025Burgers}, where the authors establish obstructions to controllability without deriving a systematic drift estimate, our approach makes the link between kernel properties and drift estimates transparent. More precisely, in Theorem \ref{thm:main:kernel_implies_drift_small_time}, we show that a high-frequency asymptotic formula for $K_{\mu,k}$ of the form $\frac{1}{\omega^{2+s}}$ yields an obstruction to STLC quantified by the $\widetilde{H}^{-1-\frac{s}{2}}$-norm of the control, and we prove in Theorem \ref{thm:main:kernel_implies_drift_finite_time} that this obstruction is in fact an obstruction to FTLC if, in addition, $K_{\mu,k}$ satisfies a global sign property.

With these results in hand, it remains only to study the connection between the source profile $\mu$ and the properties of $K_{\mu,k}$. We prove the following results.

\begin{enumerate}[label=(\roman*)]
    \item Asymptotic equivalents of the form $\frac{1}{\omega^{2}}$, which are typical of $H^{-1}$-drifts in small time, arise as soon as $\mu \in L^2(0,1)$ satisfies $\langle \mu^2, \varphi_k^\prime \rangle \neq 0$, by Proposition \ref{prop:asymptotic_estimate_kernel_L2}.
    
    \item Asymptotic equivalents of the form $\frac{1}{\omega^{{5}/{2}}}$, which are typical of $H^{-\frac{5}{4}}$-drifts in small time, arise when $\mu$ has a well-defined non-vanishing trace at the boundary and the coefficient of the $H^{-1}$-drift vanishes, that is, when $\langle \mu^2, \varphi_k^\prime \rangle = 0$. A precise statement is given in Proposition \ref{prop:asymptotic_estimate_kernel_affine+rest}, which in particular includes the case $\mu \equiv 1$.
    
    \item For $s$ varying over suitable intervals, there exists a source profile $\mu$ for which the quadratic kernel satisfies an asymptotic equivalent of the form $\frac{1}{\omega^{2+s}}$, which is typical of a $\widetilde{H}^{-1-\frac{s}{2}}$-drift in small time. In addition, such source profiles can be constructed explicitly in the physical space, thereby shedding light on the connection between the regularity of $\mu$ and fractional drifts, as established in Proposition \ref{prop:asymptotic_estimates_fractional_drifts}.
    
    \item For any $s \in [-1, 1]$, there exists a source profile $\mu$ for which the quadratic kernel satisfies both an asymptotic equivalent of the form $\frac{1}{\omega^{2+s}}$ and a global sign property, as established in Proposition \ref{prop:global_sign_AND_asymptotic_estimates}. These two properties are used to establish a $\widetilde{H}^{-1-\frac{s}{2}}$-drift estimate that holds for all $T>0$, yielding an obstruction to FTLC. The corresponding source profiles are constructed by prescribing their Fourier coefficients.

    \item When $\mu \equiv 1$, the asymptotic equivalent of (ii) can be strengthened into a global sign property if and only if $k$ is even and belongs to $\left( \sqrt{2}, 2 \sqrt{2} \right) + 2 \sqrt{2} \mathbb{Z}$, yielding a $H^{-\frac{5}{4}}$-drift estimate that holds for all $T>0$, and thus the corresponding obstruction to FTLC, as established in Proposition \ref{prop:mu_constant_sign_global}.
\end{enumerate}

These results, combined with Theorems \ref{thm:main:kernel_implies_drift_small_time} and \ref{thm:main:kernel_implies_drift_finite_time}, yield obstructions to local controllability, as detailed in the next sections.

\subsubsection{Obstructions to STLC}

We now state our main obstruction results for STLC. They are obtained by combining the asymptotic properties of the quadratic kernel established in Propositions \ref{prop:asymptotic_estimate_kernel_L2}, \ref{prop:asymptotic_estimate_kernel_affine+rest}, and \ref{prop:asymptotic_estimates_fractional_drifts}, with the small-time kernel-to-drift principle of Theorem \ref{thm:main:kernel_implies_drift_small_time}.

\begin{theorem}[Obstructions to STLC]\label{thm:main_obstruction_STLC}
    We have the following results.
    \begin{enumerate}[label=(\roman*)]
        \item \emph{(Obstruction quantified by the $\widetilde{H}^{-1}$-norm of the control.)} 
            For all sufficiently small $\nu > 0$, $\mu \in \mathcal{H}_2^\nu(0,1)$ and $k \geq 1$, such that $\langle \mu, \varphi_k \rangle = 0$ and $\langle \mu^2, \varphi_k^\prime \rangle \neq 0$, there is an obstruction to STLC with controls small in $\widetilde{H}^{-\frac{1}{2} + \nu}$, quantified by the $\widetilde{H}^{-1}$-norm of the control.
        
        \item \emph{(Obstruction quantified by the $\widetilde{H}^{-\frac{5}{4}}$-norm of the control.)} 
            Assume that $\mu := \ell + \widetilde{\mu}$, with $\ell : [0, 1] \rightarrow \mathbb{R}$ an affine function, and $\widetilde{\mu} \in H_{(0)}^s(0,1)$, for some $s \in \left(\frac{1}{2}, 2\right]$. Then $\mu$ has a well-defined trace at the boundary. Let $k \geq 1$. Assume that $\langle \mu, \varphi_k \rangle = \langle \mu^2, \varphi_k^\prime \rangle = 0$ and $\mu(0)^2 + (-1)^k \mu(1)^2 \neq 0$. Then, for all $\varepsilon > 0$ sufficiently small, there is an obstruction to STLC with controls small in $\widetilde{H}^{-\frac{3}{4}+\varepsilon}$, quantified by the $\widetilde{H}^{-\frac{5}{4}}$-norm of the control. In addition, if $\widetilde{\mu} \in \mathcal{H}^1_\infty(0,1)$, or simply if $\widetilde{\mu}=0$, then the $\varepsilon$-loss can be removed: the same obstruction holds for controls small in $\widetilde{H}^{-\frac{3}{4}}$.
            
        \item \emph{(Obstruction quantified by a continuous range of fractional norms of the control.)} 
            For $r \in \left( - \frac{3}{2}, - \frac{1}{2} \right)$, with $r \notin \left\{- 1, -\frac{3}{4} \right\}$, there exists $\mu \in \mathcal{H}^{- \frac{3}{2} - 2 r}_\infty(0,1)$, which is constructed explicitly in the physical space and involves a singularity of the form $x \mapsto (x - x_0)^{- \frac{5}{2} - 2 r}$, for some $x_0 \in (0,1)$, such that there is an obstruction to STLC with controls small in $\widetilde{H}^{\sigma(-2-2r)}$, quantified by the $\widetilde{H}^{r}$-norm of the control.
    \end{enumerate}
\end{theorem}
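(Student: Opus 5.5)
The proof assembles results announced in the introduction. For each of the three items, we check that $\mu$ satisfies the hypotheses of the relevant kernel asymptotic proposition. We then feed the resulting high-frequency equivalent $K_{\mu,k}(\omega)\sim c\,\omega^{-(2+s)}$, with $c\neq 0$, into the small-time kernel-to-drift principle of Theorem \ref{thm:main:kernel_implies_drift_small_time}. That theorem yields a drift estimate of the form
\begin{equation*}
\langle y(T;y_0,u),\varphi_k\rangle=\langle y(T;y_0,0),\varphi_k\rangle+\alpha_k\Vert u\Vert_{\widetilde H^{-1-s/2}}^2+\text{remainder},
\end{equation*}
valid for $T$ small and $u$ small in $\widetilde H^{\sigma(s)}$, where $\mu\in\mathcal H_2^s$ or $\mu\in\mathcal H_\infty^{s+1/2}$. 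Corollary \ref{cor:obstruction_small_time} then converts the drift into a genuine obstruction to STLC. So for each item I only need to identify $s$, the regularity class of $\mu$, and the corresponding smallness exponent $\sigma(s)$.

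\emph{Item (i).} Take $\mu\in\mathcal H_2^\nu\subset L^2$ with $\langle\mu,\varphi_k\rangle=0$. Proposition \ref{prop:asymptotic_estimate_kernel_L2} gives $K_{\mu,k}(\omega)\sim c\,\omega^{-2}$ with $c$ proportional to $\langle\mu^2,\varphi_k'\rangle\neq0$, so $s=0$ and the drift is in $\widetilde H^{-1}$. For the smallness space, the regularity $\mathcal H_2^\nu$ with $\nu>0$ small gives $\sigma(\nu)=-\frac{1+\nu}{2}$. The statement instead uses $-\frac12+\nu$, which is a stronger smallness assumption. I would either apply the theorem with regularity index $\nu$ and note monotonicity of the smallness space, or relabel $\nu$. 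The restriction to small $\nu$ is needed to stay in the range where $\sigma(s)=-(1+s)/2$.

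\emph{Item (ii).} Write $\mu=\ell+\widetilde\mu$. The affine part lies in $\mathcal H_\infty^1$, since its sine coefficients decay like $1/j$. The part $\widetilde\mu$ lies in $H^s_{(0)}$ with $s>\frac12$, which gives the boundary trace. Proposition \ref{prop:asymptotic_estimate_kernel_affine+rest} then gives $K_{\mu,k}\sim c\,\omega^{-5/2}$, with $c$ proportional to $\mu(0)^2+(-1)^k\mu(1)^2$, so $s=\frac12$ and the drift norm is $\widetilde H^{-5/4}$. The regularity of $\mu$ as a whole is $\mathcal H_\infty^1$, i.e. "$s+\frac12=1$", plus $\mathcal H_2^{s'}$ for $s'<\frac12$. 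With $s'=\frac12-2\varepsilon$ we get $\sigma=-\frac34+\varepsilon$, which gives the $\varepsilon$-loss version. If $\widetilde\mu\in\mathcal H_\infty^1$, then $\mu\in\mathcal H^1_\infty$ exactly, and the $\mathcal H_\infty$ branch of the theorem gives $\sigma(\frac12)=-\frac34$ with no loss.

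\emph{Item (iii).} Set $s=-2-2r\in(-1,1)$, excluding $s=0$ and $s=\frac12$. Take the explicit profile from Proposition \ref{prop:asymptotic_estimates_fractional_drifts}, with singularity $(x-x_0)^{-5/2-2r}=(x-x_0)^{s-1/2}$. Such a singularity has Fourier coefficients of order $j^{-(s+1/2)}$, so $\mu\in\mathcal H_\infty^{s+1/2}=\mathcal H_\infty^{-3/2-2r}$. The proposition should provide $\langle\mu,\varphi_k\rangle=0$ and $K_{\mu,k}\sim c\,\omega^{-(2+s)}$ with $c\ne0$. Theorem \ref{thm:main:kernel_implies_drift_small_time} then gives the $\widetilde H^{r}$ drift with smallness in $\widetilde H^{\sigma(s)}$.

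The main obstacle is bookkeeping rather than analysis. I must make sure that the hypotheses of Theorem \ref{thm:main:kernel_implies_drift_small_time} match exactly the regularity class, $\mathcal H_2$ versus $\mathcal H_\infty$, delivered in each case, especially the loss-free endpoint in (ii). I must also check that the excluded values $r=-1,-\frac34$ are precisely the values where Proposition \ref{prop:asymptotic_estimates_fractional_drifts} fails, namely where the construction degenerates into the integer or boundary-trace regimes.
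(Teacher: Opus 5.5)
Your strategy is the paper's own: for each item, check the hypotheses of Proposition \ref{prop:asymptotic_estimate_kernel_L2}, \ref{prop:asymptotic_estimate_kernel_affine+rest} or \ref{prop:asymptotic_estimates_fractional_drifts}, feed the kernel asymptotics into Theorem \ref{thm:main:kernel_implies_drift_small_time}, and conclude with Corollary \ref{cor:obstruction_small_time}. The problems are in the bookkeeping you flagged. Most of them come from your summary of Theorem \ref{thm:main:kernel_implies_drift_small_time}, which merges its two parameters. These are $s$, the regularity of $\mu$ that drives the remainder estimates, and $s_0\geq s$, the decay rate of $K_{\mu,k}$. The drift is in $\widetilde H^{-1-\frac{s_0}{2}}$, and the smallness is in $\widetilde H^{\sigma(s)+s_0-s}$.

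In (i), you cannot apply the theorem with $s=\nu$. The kernel decays like $\omega^{-2}$, so $s_0=0<s$, which violates $s_0\geq s$. The correct choice is $s=s_0=0$, using $\mathcal H_2^\nu\hookrightarrow\mathcal H_2^0$. The extra regularity $\nu$ is used only to get the $\mathcal O(\omega^{-2-\nu})$ error in Proposition \ref{prop:asymptotic_estimate_kernel_L2}, which the theorem requires. This choice gives smallness in $\widetilde H^{-\frac12}$. The stated version then follows from $\Vert u\Vert_{\widetilde H^{-\frac12}}\le\Vert u\Vert_{\widetilde H^{-\frac12+\nu}}$. The smallness of $\nu$ only ensures that $\widetilde H^{-\frac12+\nu}$ is defined; it has nothing to do with keeping $\sigma(s)=-\frac{1+s}{2}$.

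In (ii), your claim that $\mu\in\mathcal H^1_\infty$ as a whole is false when $s<1$, since an element of $H^s_{(0)}$ need not satisfy $\sup_n n\vert\langle\widetilde\mu,\varphi_n\rangle\vert<\infty$. You only use it under the extra assumption $\widetilde\mu\in\mathcal H^1_\infty$, and there your loss-free argument ($p=\infty$, $s=s_0=\frac12$) is correct. In the $\varepsilon$-loss case you must take $p=2$. Moreover $\ell\notin\mathcal H_2^{\frac12}$ because $\ell$ has a nonzero boundary value, so you need $s'<\frac12=s_0$. The smallness exponent is therefore $\sigma(s')+s_0-s'$, not $\sigma(s')$. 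With your choice $s'=\frac12-2\varepsilon$ it equals $-\frac34+3\varepsilon$; taking $s'=\frac12-\frac{2\varepsilon}{3}$ gives $-\frac34+\varepsilon$. The shift $s_0-s'$ is not cosmetic. It reflects the interpolation of Lemma \ref{lem:remainder_K_decays_too_fast}, which turns the remainder $\Vert u\Vert^2_{\widetilde H^{-1-\frac{s'}{2}}}\Vert u\Vert_{\widetilde H^{\sigma(s')}}$ into a quantity controlled by the $\widetilde H^{-\frac54}$ drift.

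In (iii), $r=-\frac34$ corresponds to $s=-2-2r=-\frac12$, not $s=\frac12$. The excluded set $s\in\{-\frac12,0\}$ is exactly the set excluded in Proposition \ref{prop:asymptotic_estimates_fractional_drifts}. At $s=-\frac12$, the finite-part definition of $\mathcal M$ produces a logarithm (Remark \ref{rem:after:prop:asymptotic_estimates_fractional_drifts}). At $s=0$, the proposition does not apply, and the $p=\infty$ case of the theorem requires $s\notin\{-1,0\}$. Boundary traces play no role here, and $s=\frac12$ (that is, $r=-\frac54$) is covered. With $p=\infty$ and $s=s_0=-2-2r$, the rest of your argument for (iii) is fine, and with these corrections the whole assembly works as you describe.
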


We provide some comments on Theorem \ref{thm:main_obstruction_STLC}. First, the assumption $\mu \in \mathcal{H}^\nu(0,1)$ of Theorem \ref{thm:main_obstruction_STLC} \emph{(i)} is not needed to prove an asymptotic equivalent of the form $\frac{1}{\omega^2}$ for $K_{\mu,k}$; the natural assumption for this is $\mu \in L^2(0,1)$. However, assuming $\mu \in \mathcal{H}^\nu(0,1)$ yields a sharper remainder term, which allows us to derive the drift estimate directly from our general result Theorem \ref{thm:main:kernel_implies_drift_small_time}. We nevertheless believe that, under the sole assumption $\mu \in L^2(0,1)$, one can still prove an obstruction to STLC with controls small in $\widetilde{H}^{-\frac{1}{2}}$, quantified, in a slightly different way, by the $\widetilde{H}^{-1}$-norm of the control.

Second, we choose $\mu$ of the form $\mu := \ell + \widetilde{\mu}$ in Theorem \ref{thm:main_obstruction_STLC} \emph{(ii)} in order to ensure that $\mu$ has a well-defined trace at the boundary, while still including the case $\mu \equiv 1$. Further comments on the case $\mu \equiv 1$ are given after Theorem \ref{thm:main_obstruction_mu_constant} below. Note that the condition $\widetilde{\mu} \in \mathcal{H}^{1}_\infty(0,1)$ is satisfied, for instance, if $\mu = \ell + \widetilde{\mu} \in C^1([0,1],\mathbb{R})$.
 
Finally, Theorem \ref{thm:main_obstruction_STLC} \emph{(iii)} appears to be the first example of a natural PDE for which obstructions quantified by an exponent varying over an interval occur. The closest existing result seems to be \cite{BeauchardMarbach2020}, where an academic model is constructed to exhibit this phenomenon, and where the corresponding source profiles are built by prescribing their Fourier coefficients. In contrast, we show here that a singularity in the physical space leads to such an obstruction, with an exact correspondence between the singularity exponent and the obstruction exponent. Note that when $r > -\frac{3}{4}$, the profile $x \mapsto (x - x_0)^{- \frac{5}{2} - 2 r}$ is understood as a distribution (see Proposition \ref{prop:asymptotic_estimates_fractional_drifts}), and that the case $r =  -\frac{3}{4}$ is excluded mostly for simplicity (see Remark \ref{rem:after:prop:asymptotic_estimates_fractional_drifts}).




\subsubsection{Obstructions to FTLC} 

We now state our main obstruction results for FTLC. They are obtained by combining the asymptotic and global sign properties of the quadratic kernel established in Proposition \ref{prop:global_sign_AND_asymptotic_estimates}, with the finite-time kernel-to-drift principle of Theorem \ref{thm:main:kernel_implies_drift_finite_time}.

\begin{theorem}[Obstructions to FTLC quantified by a continuous range of fractional norms]\label{thm:main_obstruction_FTLC}
    For $r \in \left( - \frac{3}{2}, - \frac{1}{2} \right)$, there exist $\mu \in H^{-1}(0,1)$ and $\theta < \frac{1}{2}$ such that there is an obstruction to FTLC with controls small in $\widetilde{H}^{\theta}$, quantified by the $\widetilde{H}^{r}$-norm of the control. More precisely, the following additional properties hold:
    \begin{enumerate}[label=(\roman*)]
        \item if $r \in \left( -1, - \frac{1}{2} \right)$, then one can choose $\mu \in \mathcal{H}_\infty^{-\frac{3}{2} - 2r}(0,1)$ and $\theta = \frac{1}{2} + r$,
        \item if $r = -1$, then one can choose $\mu \in D(A^\infty)$, and $\theta = - \frac{1}{2}$,
        \item if $r \in \left( - \frac{3}{2}, -1 \right)$, then one can choose $\mu \in \mathcal{H}_\infty^{-2 - 2r}(0,1)$ and $\theta = \sigma\left(-\frac{5}{2} - 2r\right) + \frac{1}{2}$.
    \end{enumerate}
\end{theorem}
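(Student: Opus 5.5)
The plan is the route announced in the introduction: combine Proposition \ref{prop:global_sign_AND_asymptotic_estimates} with the finite-time kernel-to-drift principle of Theorem \ref{thm:main:kernel_implies_drift_finite_time}, and then turn the resulting drift estimate into a lack of controllability via Corollary \ref{cor:obstruction_finite_time}. Throughout I use the correspondence $s = -2-2r$. For $r \in (-\frac{3}{2},-\frac{1}{2})$ this gives $s \in (-1,1)$, and the target drift is then quantified by $\widetilde{H}^{-1-\frac{s}{2}} = \widetilde{H}^{r}$.

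\emph{Step 1: the kernel.} Given $s$, Proposition \ref{prop:global_sign_AND_asymptotic_estimates} provides $\mu$, defined by prescribed Fourier coefficients, and a lost direction $k$ with $\mu_k = 0$. For this $\mu$ the kernel $K_{\mu,k}$ has two properties. First, $K_{\mu,k}(\omega) \sim c/\omega^{2+s}$ as $|\omega|\to\infty$ with $c\neq 0$. Second, it has a global sign, namely $\pm K_{\mu,k}(\omega) \geq c'\langle\omega\rangle^{-2-s}$ for all $\omega$. I would construct $\mu$ as a sum over two families of modes with coefficients of size $\lambda_j^{-\alpha}$, tuned so that the quadratic interactions defining $K_{\mu,k}$ are dominated by a single sign-definite contribution. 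The regularity class of $\mu$ is then read off from the decay of the coefficients:
\begin{itemize}
\item for $r\in(-1,-\frac{1}{2})$, i.e.\ $s\in(-1,0)$, one gets $\mu\in\mathcal{H}_\infty^{s+\frac{1}{2}}=\mathcal{H}_\infty^{-\frac{3}{2}-2r}$;
\item for $r=-1$, i.e.\ $s=0$, a finite combination of eigenfunctions suffices, so $\mu\in D(A^\infty)$; this uses Proposition \ref{prop:asymptotic_estimate_kernel_L2}-type behaviour together with a sign check;
\item for $r\in(-\frac{3}{2},-1)$, the construction lands in $\mathcal{H}_\infty^{-2-2r}=\mathcal{H}_\infty^{s}$, which loses half a derivative compared with the optimal class.
\end{itemize}

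\emph{Step 2: the drift estimate.} Inserting these kernel properties into \eqref{eq:kernel_intro}, Theorem \ref{thm:main:kernel_implies_drift_finite_time} yields, for a fixed $T$,
\[
\langle y(T;y_0,u),\varphi_k\rangle=\langle y(T;y_0,0),\varphi_k\rangle+\alpha_k\Vert u\Vert_{\widetilde{H}^r}^2+\text{remainder}.
\]
Here $\Vert u\rho_k\Vert_{\widetilde{H}^r}\simeq\Vert u\Vert_{\widetilde{H}^r}$ on a fixed interval, since $\rho_k$ is smooth and bounded above and below. The remainder terms are the linear corrections and the cubic terms. They are controlled by the sharp nonlinear estimates under the smallness assumption $\Vert u\Vert_{\widetilde{H}^\theta}\le\eta$, with $\theta$ the exponent attached to the regularity of $\mu$:
\begin{itemize}
\item for $\mu\in\mathcal{H}_\infty^{s+\frac{1}{2}}$ with $s\le\frac{1}{2}$, one has $\theta=\sigma(s)=-\frac{1+s}{2}=\frac{1}{2}+r$;
\item for $s=0$ the same formula gives $\theta=-\frac{1}{2}$;
\item in the third regime, $\mathcal{H}_\infty^{s}=\mathcal{H}_\infty^{(s-\frac{1}{2})+\frac{1}{2}}$, so $\theta=\sigma(s-\frac{1}{2})+\frac{1}{2}=\sigma(-\frac{5}{2}-2r)+\frac{1}{2}$, matching the statement.
\end{itemize}
Finally, Corollary \ref{cor:obstruction_finite_time} converts the drift into the obstruction, for instance by taking $y_0$ with $\langle y(T;y_0,0),\varphi_k\rangle$ of the sign opposite to $-\alpha_k$.

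\emph{Main obstacle.} The hardest part is Step 1 for $s\neq 0$, and in particular the global sign. The asymptotic behaviour $\omega^{-2-s}$ comes from summing many modal contributions. Keeping the sign uniform at low and intermediate frequencies, while respecting $\mu_k=0$, requires a careful choice of the coefficients, for example one dominant pair of modes plus a small tail. The second delicate point is checking that the remainder estimates are compatible with $\theta<\frac{1}{2}$ in the regime $r<-1$, which is where the half-derivative loss appears.
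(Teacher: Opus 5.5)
Your proposal is correct and follows the paper's argument. With $s_0=-2-2r$, Proposition \ref{prop:global_sign_AND_asymptotic_estimates} already provides $\mu$ with both the asymptotic behaviour and the global sign, so the construction you flag as the main obstacle need not be redone. Theorem \ref{thm:main:kernel_implies_drift_finite_time} and Corollary \ref{cor:obstruction_finite_time}, applied with $(p,s)=(\infty,s_0)$, $(2,0)$ and $(\infty,s_0-\frac12)$ in cases (i), (ii), (iii), then give $\theta=\sigma(s)+s_0-s$ as stated, and the shift $s_0-s=\frac12$ is the source of the extra $\frac12$ in (iii).

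One point, which the paper's one-line combination also leaves implicit: the theorem excludes $s=0$ when $p=\infty$. Case (ii) must therefore be run with $p=2$, as in the assignment above, which is fine since $\mu\in D(A^\infty)$. At the single value $r=-\frac54$ of (iii), however, $s_0-\frac12=0$, so taking $s=-\varepsilon$ yields only every $\theta>0$ rather than the stated $\theta=0$.
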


Theorem \ref{thm:main_obstruction_FTLC} seems to be the first example of a PDE for which obstructions \textit{in finite time}, quantified by an exponent varying over an interval, occur. In contrast with Theorem \ref{thm:main_obstruction_STLC} \emph{(iii)}, the proof of Theorem \ref{thm:main_obstruction_FTLC} relies on a construction of $\mu$ by prescribing its Fourier coefficients, since ensuring both an asymptotic property and a global sign property is not straightforward. Moreover, to obtain an obstruction quantified by the $\widetilde{H}^{r}$-norm of the control when $r < -1$, one has to ensure that the $\widetilde{H}^{-1}$-drift appearing in Theorem \ref{thm:main_obstruction_STLC} \emph{(i)} vanishes. Note that Theorem \ref{thm:main_obstruction_FTLC} \emph{(iii)} is a case where the kernel $K_{\mu,k}$ decays faster than what one would expect from the regularity of $\mu$ (see Remark \ref{rq:kernel_decays_faster_than_expected}). This leads to an obstruction result whose smallness assumption is stronger than what the size of the drift would suggest.


\subsubsection{The case of a constant source profile} 

Our main result for the case $\mu \equiv 1$, previously studied in \cite{Nguyen2025Burgers, Marbach2018}, is the following theorem. It follows from Proposition \ref{prop:asymptotic_estimate_kernel_affine+rest} and from Theorems \ref{thm:main:kernel_implies_drift_small_time} and \ref{thm:main:kernel_implies_drift_finite_time}.

\begin{theorem}[Obstructions in the case $\mu \equiv 1$]\label{thm:main_obstruction_mu_constant}
    Assume that $\mu \equiv 1$. Then, for all $k \geq 1$ even, one has $\langle \mu, \varphi_k \rangle = 0$, and there is an obstruction to STLC quantified by a $\widetilde{H}^{-\frac{5}{4}}$-drift along the direction $\varphi_k$, for controls small in $\widetilde{H}^{-\frac{3}{4}}$. In addition, the following properties hold:
    \begin{enumerate}[label=(\roman*)]
        \item if $k \geq 1$ is even and belongs to $\left( \sqrt{2}, 2 \sqrt{2} \right) + 2 \sqrt{2} \mathbb{Z}$, then this obstruction is in fact an obstruction to FTLC,
        \item if $k \geq 1$ is even and belongs to $\left( 0,  \sqrt{2} \right) + 2 \sqrt{2} \mathbb{Z}$, then the quadratic kernel $K_{\mu, k}$ is not globally signed.
    \end{enumerate}
\end{theorem}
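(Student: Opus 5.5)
The plan is to reduce Theorem \ref{thm:main_obstruction_mu_constant} to the general machinery announced above: the asymptotic kernel result Proposition \ref{prop:asymptotic_estimate_kernel_affine+rest}, the global sign analysis of Proposition \ref{prop:mu_constant_sign_global}, and the kernel-to-drift principles of Theorems \ref{thm:main:kernel_implies_drift_small_time} and \ref{thm:main:kernel_implies_drift_finite_time}, with Corollaries \ref{cor:obstruction_small_time} and \ref{cor:obstruction_finite_time} converting drifts into obstructions.

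\textbf{Step 1: check the hypotheses for $\mu \equiv 1$ and $k$ even.} Compute $\langle 1, \varphi_k \rangle = \sqrt{2}\,(1-(-1)^k)/(k\pi)$, which vanishes for $k$ even. Next, $\langle \mu^2, \varphi_k' \rangle = \int_0^1 \varphi_k' = \varphi_k(1)-\varphi_k(0) = 0$, so the $\widetilde{H}^{-1}$-drift coefficient vanishes. Finally, $\mu(0)^2 + (-1)^k \mu(1)^2 = 2 \neq 0$.

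\textbf{Step 2: STLC obstruction.} Write $\mu = \ell + \widetilde{\mu}$ with $\ell \equiv 1$ and $\widetilde{\mu} = 0$. Theorem \ref{thm:main_obstruction_STLC} (ii), in its case without $\varepsilon$-loss, then applies directly. Equivalently, Proposition \ref{prop:asymptotic_estimate_kernel_affine+rest} gives $K_{1,k}(\omega) \sim c_k \vert\omega\vert^{-5/2}$ with $c_k \neq 0$, proportional to $\mu(0)^2+\mu(1)^2$. Theorem \ref{thm:main:kernel_implies_drift_small_time} with $s = \frac12$ then yields the $\widetilde{H}^{-5/4}$-drift for controls small in $\widetilde{H}^{\sigma(1/2)} = \widetilde{H}^{-3/4}$. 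Corollary \ref{cor:obstruction_small_time} converts this drift into the obstruction to STLC.

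\textbf{Step 3: items (i) and (ii).} Both rest on an explicit computation of $K_{1,k}$. Inserting \eqref{eq:y_1_Fourier_intro} into \eqref{eq:y_2_Fourier_intro}, with $\mu_j = 2\sqrt{2}/(j\pi)$ for $j$ odd, expresses $K_{1,k}(\omega)$ as a double series over odd $j, j'$. The series involves the triple products $\langle \varphi_j \varphi_{j'}, \varphi_k' \rangle$, which are nonzero only when $j \pm j' = \pm k$, and resolvent factors such as $1/((\lambda_j - \lambda_k/2)^2 + \omega^2)$.

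I would try to sum this series in closed form. One route is to solve the ODE in $x$ satisfied by $(A - \lambda_k/2 + i\omega)^{-1} 1$, which is explicit using $\cosh$ of $\sqrt{i\omega - \lambda_k/2}\,(x - \frac12)$. This should give $K_{1,k}$ as an explicit combination of hyperbolic and trigonometric functions of $z = \sqrt{\lambda_k/2 \pm i\omega}$. At $\omega = 0$ the relevant frequency is $\sqrt{\lambda_k/2} = k\pi/\sqrt{2}$, which is where the arithmetic condition enters: the sign of terms like $\sin(k\pi/\sqrt{2})$, or the relative position of $k/\sqrt{2}$ modulo $2$, decides whether $K_{1,k}$ changes sign at low frequency.

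\emph{Item (ii).} Here I would exhibit an $\omega$, most likely $\omega = 0$ or near it, where $K_{1,k}$ has the sign opposite to $c_k$. By Step 2, $c_k$ is the sign of $K_{1,k}$ at infinity, so $K_{1,k}$ is not globally signed.

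\emph{Item (i).} Here I would prove $K_{1,k}(\omega)$ has the sign of $c_k$ for all $\omega$, which is the content of Proposition \ref{prop:mu_constant_sign_global}. Combined with the asymptotic equivalent, this feeds into Theorem \ref{thm:main:kernel_implies_drift_finite_time}, and Corollary \ref{cor:obstruction_finite_time} then gives the FTLC obstruction.

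\textbf{Main obstacle.} The hard step is the global sign in (i). Positivity must be shown uniformly over all $\omega \in \mathbb{R}$, not just asymptotically. I would first establish a lower bound of the form $K_{1,k}(\omega) \geq c\langle\omega\rangle^{-5/2}$ for large $\vert\omega\vert$ with explicit constants. I would then treat the bounded range by monotonicity in $\omega^2$ of the closed form, reducing to the sign of an explicit expression at $\omega = 0$ controlled by $k \bmod 2\sqrt{2}$.
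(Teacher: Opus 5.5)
Your reduction is the paper's. Step 1 checks exactly the needed hypotheses. Step 2 is Theorem~\ref{thm:main_obstruction_STLC}(ii) with $\ell\equiv1$, $\widetilde{\mu}=0$, that is, Proposition~\ref{prop:asymptotic_estimate_kernel_affine+rest}, then Theorem~\ref{thm:main:kernel_implies_drift_small_time} with $p=\infty$, $s=s_0=\frac12$, then Corollary~\ref{cor:obstruction_small_time}. Items (i)--(ii) come from the sign statement of Proposition~\ref{prop:mu_constant_sign_global}, combined with Theorem~\ref{thm:main:kernel_implies_drift_finite_time}, Corollary~\ref{cor:obstruction_finite_time} and the negative asymptotics $K_{1,k}(\omega)\sim-2\pi^2\sqrt{2}\,\omega^{-5/2}$. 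Citing that proposition closes the proof.

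The gap is in your own plan for the global sign, which is the real content of (i).
\begin{itemize}
\item A sign slip: $K_{1,k}$ is negative at infinity, so the large-frequency bound must be $K_{1,k}(\omega)\le-c\langle\omega\rangle^{-5/2}$, not a lower bound.
\item More seriously, $K_{1,k}$ is not monotone in $\omega^2$, so the sign at $\omega=0$ cannot be propagated to bounded frequencies. Take $k=10$, which satisfies the arithmetic condition. Summing the series gives $K_{1,10}(0)\approx-1.95\cdot10^{-4}$, $K_{1,10}(141)\approx-5.55\cdot10^{-6}$ and $K_{1,10}(290)\approx-5.93\cdot10^{-6}$. The kernel rises toward $0$ and then falls again.
\item Heuristically, write $z=a+ib$ with $z^2=\frac{k^2}{2}+\frac{i\omega}{\pi^2}$. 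In the range $\log k\lesssim b\ll k$ (roughly $k\log k\lesssim\omega\lesssim k^2$), the kernel is dominated by a negative term of size about $b/k^{6}$, which grows with $\omega$. This window lies below the regime where the $\omega^{-5/2}$ asymptotics is accurate, so your large-$\omega$ step cannot absorb it either.
\end{itemize}

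What is needed is a sign argument valid at every $\omega>0$, in which the arithmetic condition is used pointwise rather than only through $K_{1,k}(0)$. The paper proceeds as follows.
\begin{itemize}
\item It writes $K_{1,k}(\omega)=\frac{8}{\pi^4}f(\tau)$ with $\tau=\frac{2\omega}{\pi^2}$ and $f(\tau)=\frac{8\pi}{(k^4+\tau^2)^2}\,\mathrm{Re}\big(z(k^2+z^2)\tan(\frac{\pi z}{2})\big)$, where $z^2=\frac{k^2+i\tau}{2}$ is the same $z$ as above. This comes from partial fractions and the Mittag-Leffler expansion of $\tan$; your resolvent computation should lead to the same formula.
\item The sign of $f$ is that of $g=a\sin(\pi a)(3a^2-5b^2)-b\sinh(\pi b)(5a^2-3b^2)$, and the paper splits according to the sign of $\sin(\pi a)$.
\item When $\sin(\pi a)\ge0$, the hypothesis $\frac{k}{\sqrt2}\in(2n+1,2n+2)$ together with $a>\frac{k}{\sqrt2}$ forces $a\in[2m,2m+1]$ with $2m\ge\frac{k}{\sqrt2}$. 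Hence $\sin(\pi a)\le\pi(a-\frac{k}{\sqrt2})$. Since $b^2=(a-\frac{k}{\sqrt2})(a+\frac{k}{\sqrt2})$ and $\sinh(\pi b)>\pi b$, the hyperbolic term then dominates.
\item The case $\sin(\pi a)\le0$ is a similar elementary comparison, using $a>\sqrt2$ and convexity in $b^2/a^2$.
\end{itemize}
This domination of the trigonometric part by the hyperbolic part, uniformly in $\omega$, is the idea your sketch is missing.
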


We give more precise comments on the differences between Theorem \ref{thm:main_obstruction_mu_constant} and the results of \cite{Nguyen2025Burgers, Marbach2018}. In \cite{Marbach2018}, the author proves an obstruction to small-time local controllability for the Burgers equation \eqref{eq:Burgers_intro} in the case $\mu \equiv 1$, involving the $H^{-5/4}(0,T)$-norm of the control. From a historical point of view, this seems to be the first obstruction to small-time local controllability involving a fractional Sobolev norm of the control. In \cite{Marbach2018}, the quadratic term is not tested against an eigenfunction $\varphi_k$, but against an explicit function $x \mapsto \rho(x)$, chosen with convenient homogeneous boundary conditions simplifying the computations. The proof of the obstruction is quite involved and relies on \textit{weakly singular integral operators}. The obstruction is proved under the assumptions $\Vert u \Vert_{L^2(0,T)} \leq 1$ and $T>0$ small.

More recently, the local controllability of \eqref{eq:Burgers_intro} in the case $\mu \equiv 1$ was studied in \cite{Nguyen2025Burgers}. The authors improve the original result of \cite{Marbach2018} in two directions. First, they show that the obstruction to small-time local controllability can be strengthened into an obstruction to \textit{finite-time} local controllability, when the system is projected onto the directions $\varphi_2$ or $\varphi_{10}$. Second, they prove this under a smallness assumption on the $H^{-3/4}(0,T)$-norm of the control, which is weaker than the smallness assumptions of \cite{Marbach2018}. Their proof relies on Fourier analysis. Whereas \cite{Nguyen2025Burgers} uses the specific choices $\varphi_2$ and $\varphi_{10}$ to obtain signed kernels, Theorem \ref{thm:main_obstruction_mu_constant} gives a precise characterization of the integers $k$ for which the corresponding kernel is globally signed, thereby identifying the arithmetic structure behind these particular choices. Moreover, the smallness assumption in Theorem \ref{thm:main_obstruction_mu_constant} is consistent with that of \cite{Nguyen2025Burgers}, since $\widetilde{H}^{-3/4}(0,T)$ is the natural optimal regularity expected for the control in this setting; see Remark \ref{rq:optimal_y_1}.


\subsection{Additional bibliographical comments}

\textbf{Integer obstructions.} 
In finite dimension, that is, for local controllability problems for ordinary differential equations with scalar controls, only obstructions quantified by integer negative Sobolev norms of the control can occur. These obstructions are related to iterated Lie brackets. We refer, for instance, to the recent developments in \cite{BeauchardMarbach2018,KBFM24Unified}, and to the references therein. Obstructions of this type have also been observed for partial differential equations, notably in \cite{BeauchardMorancey2014,Bournissou2023_Quad,Coron2006} for bilinear Schrödinger equations with Dirichlet boundary conditions, in \cite{BeauchardMarbachPerrin} for bilinear Schrödinger equations with Neumann boundary conditions, in \cite{BeauchardMarbach2020} for a parabolic model, in \cite{CoronKoenigNguyen2024} for a fluid mechanics model, and in \cite{NiuXiang2025} for a Korteweg-de Vries system.

The classical method for establishing such obstructions consists in performing integrations by parts in the quadratic term $\langle y_2(T;u),\varphi_k\rangle$, so as to make successive primitives of the control appear. This idea is used, for instance, in \cite[Lemma 3.1]{BeauchardMorancey2014} and \cite[Proposition 5.1]{Bournissou2023_Quad} for Schrödinger equations with Dirichlet boundary conditions, in \cite[Appendix A]{BeauchardMarbachPerrin} for bilinear Schrödinger equations with Neumann boundary conditions, in \cite[Proposition 3.3]{BeauchardMarbach2020} for a parabolic system, and in \cite[Lemma 3.8]{CoronKoenigNguyen2024} for a water-tank system.

In some situations, however, this method cannot be applied. This is the case, for instance, in \cite{BeauchardMarbachPerrin} for bilinear Schrödinger equations with Neumann boundary conditions, due to the presence of infinitely many singularities in the quadratic kernel, and in \cite{NiuXiang2025} for certain critical lengths of the Korteweg-de Vries system. In such cases, one relies instead on a method involving the Fourier transform of the control, closer to the one often used to derive fractional obstructions.

\textbf{Fractional obstructions.} 
Since the discovery of the $H^{-5/4}$ drift for the Burgers system in the case $\mu \equiv 1$ in \cite{Marbach2018}, several other fractional drifts have been observed. Most of them rely on variants of the argument described around \eqref{eq:kernel_intro}. This is the case, for instance, in \cite{CoronKoenigNguyen2022} and \cite{Nguyen2025}, where $H^{-2/3}$ and $H^{-1/6}$ drifts are obtained for Korteweg-de Vries systems, and in \cite{Nguyen2025Burgers}, where the $H^{-5/4}$ drift is obtained in finite time for the Burgers system in the case $\mu=1$. Finally, in \cite{BeauchardMarbach2020}, a similar method is used to obtain $H^{-s}$ drifts for an academic parabolic system, with $s$ varying over an interval.

\textbf{Other types of controllability results for the Burgers equation.} 
The controllability of the Burgers equation has been studied extensively, for different types of controls, boundary conditions, and controllability notions. Since the present paper focuses specifically on quadratic obstruction mechanisms for scalar-input source terms, we do not attempt to give a complete survey of this broader literature. We refer instead to the bibliographical discussions in \cite{Marbach2018,Nguyen2025Burgers}. 

\subsection{Organization of the paper}

In Section \ref{sec:expression_quad_fourier}, we derive the Fourier representation of the quadratic expansion and define the kernel $K_{\mu,k}$. In Section \ref{sec:properties_of_kernels}, we study the asymptotic and sign properties of this kernel for several classes of source profiles. Section \ref{sec:proof_obstructions} is devoted to the kernel-to-drift principles and to the proof of the small-time and finite-time obstruction results. Section \ref{sec:remainder_estimates} contains the nonlinear remainder estimates needed to establish the drift estimates. Finally, various technical tools used throughout the paper are gathered in the appendix, together with well-posedness results for the heat and Burgers equations.

\section{Expression of the quadratic expansion in frequency}\label{sec:expression_quad_fourier}

In this section, we derive the Fourier-domain expression of the quadratic term $y_2$, projected onto a direction $\varphi_k$. We do not assume that $y_1(T)=0$; as a result, the formula holds only up to remainder terms, which we identify explicitly. For $n, k \geq 1$, set 
\begin{equation}\label{eq:def:lambda_n_k}
    \lambda_{n,k} := \lambda_n - \frac{\lambda_k}{2}.
\end{equation}
    
\begin{proposition}\label{prop:decomposition_y_2_kernel}
    Recall that $y_2$ is the solution of \eqref{eq:Burgers_y_2_intro}. Let $k \geq 1$. There exists $C_k > 0$ such that the following property holds. Let $\mu \in H^{-1}(0,1)$, $T > 0$, and $u \in L^2(0,T)$. For $\omega \in \mathbb{R}$, set
    \begin{equation}\label{eq:def:K_mu_k}
    K_{\mu, k}(\omega) = \sum_{n \geq 1} \frac{\langle \mu, \varphi_n \rangle}{n}  \left( \frac{\langle \mu, \varphi_{n+k} \rangle}{n+k} + \frac{\langle \mu, \varphi_{\vert n-k \vert} \rangle}{\vert n-k \vert} \mathbbm{1}_{n \neq k} \right) \frac{\lambda_{n,k}}{ \lambda_{n,k}^2 + \omega^2}
    \end{equation}
    and 
    \begin{equation}\label{eq:def:v_k_T}
        v_{k,T}(t) :=  u(t) e^{\frac{\lambda_k t}{2}} \mathbbm{1}_{(0,T)}(t).
    \end{equation}
    Then
    \begin{equation}\label{eq:prop:decomposition_y_2}
        \begin{split}
            & \left\vert \left\langle y_2(T), \varphi_k \right\rangle - \frac{k e^{-\lambda_k T}}{4 \pi^2 \sqrt{2}} \int_{\mathbb{R}} \left\vert \widehat{v_{k,T}}(\omega) \right\vert^2 K_{\mu,k}(\omega) \dd \omega \right\vert \\
            \leq \ & C_k \Vert y_1(T) \Vert_{H^{-1}}^2 + C_k \Vert \mu \Vert_{H^{-1}} \sum_{n\geq 1} \mathbbm{1}_{n^2 < \frac{k^2}{2}} \left\vert \langle y_1(T), \varphi_n \rangle \right\vert \left\vert \int_0^T u(t) e^{\left(\lambda_k - \lambda_n \right) t} \dd t \right\vert. 
        \end{split} 
    \end{equation}
\end{proposition}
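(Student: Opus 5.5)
The plan is to expand everything in the eigenbasis and identify the quadratic form in $u$ exactly, then reorganize it via Plancherel. First compute the product $\varphi_n\varphi_m$, where $2\sin a\sin b=\cos(a-b)-\cos(a+b)$. Pair it with $\varphi_k' = \sqrt2 k\pi\cos(k\pi x)$ to get the coefficients $\langle \varphi_n\varphi_m,\varphi_k'\rangle$. These are nonzero only when $m = n\pm k$ or $m = k-n$, and they are explicit multiples of $k$.

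Write $a_n(t) := \int_0^t e^{-\lambda_n(t-\tau)}u(\tau)\dd\tau$, so that $y_1(t)=\sum_n a_n(t)\mu_n\varphi_n$. Formula \eqref{eq:y_2_Fourier_intro} then gives
\begin{equation*}
\langle y_2(T),\varphi_k\rangle = \frac12\int_0^T e^{-\lambda_k(T-t)}\sum_{n,m}\mu_n\mu_m\, a_n(t)a_m(t)\,\langle\varphi_n\varphi_m,\varphi_k'\rangle\dd t .
\end{equation*}
After symmetrization, this is a sum over pairs $(n,n+k)$ together with the pairs $(n,k-n)$ for $n<k$; the latter are the source of the $\mathbbm{1}_{n\neq k}$ term and of the absolute value $|n-k|$.

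The key step is to rewrite each $a_n$ using the weight $e^{\lambda_k t/2}$. Since $\lambda_m-\lambda_n$ relates to $k$ through $(n+k)^2-n^2=k(2n+k)$, I expect a cancellation that produces the factors $\frac{1}{n(n+k)}$ once the terms are combined.

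For the time integral I would use the identity $\frac{d}{dt}(a_na_m)=-(\lambda_n+\lambda_m)a_na_m+u(a_n+a_m)$ and integrate by parts in $t$. This trades the product $a_na_m$ for terms of the form $u(t)a_n(t)$ plus a boundary term at $t=T$, namely $a_n(T)a_m(T)$, which is $\langle y_1(T),\cdot\rangle$ and therefore produces the error term $C_k\|y_1(T)\|_{H^{-1}}^2$. The factor $1/(\lambda_n+\lambda_m-\lambda_k)$ arising from the integration by parts should combine with $nm$ to give the $\frac1n\cdot\frac1{n+k}$ weights. The remaining terms have the form
\begin{equation*}
\int_0^T e^{\lambda_k t}u(t)\int_0^t e^{-\lambda_n(t-\tau)}u(\tau)\dd\tau\dd t=\int\!\!\int_{\tau<t} v(t)v(\tau)e^{-\lambda_{n,k}(t-\tau)}\dd\tau\dd t ,
\end{equation*}
with $v=v_{k,T}$. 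This is a convolution of $v$ with the one-sided kernel $\mathbbm{1}_{s>0}e^{-\lambda_{n,k}s}$.

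When $\lambda_{n,k}>0$, I will symmetrize the double integral to $\frac12\iint v(t)v(\tau)e^{-\lambda_{n,k}|t-\tau|}$ and apply Plancherel, using $\widehat{e^{-\lambda|s|}}=\frac{2\lambda}{\lambda^2+\omega^2}$. This yields $\frac{1}{2\pi}\int|\hat v|^2\frac{\lambda_{n,k}}{\lambda_{n,k}^2+\omega^2}$, which accounts for the prefactor $\frac{k e^{-\lambda_kT}}{4\pi^2\sqrt2}$.

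When $n^2<k^2/2$, so that $\lambda_{n,k}<0$, the kernel grows and the Fourier formula fails. The identity $e^{-\lambda|s|}$ versus $e^{-\lambda s}$ must instead be handled by writing
\begin{equation*}
\iint_{\tau<t}=\tfrac12\Bigl(\int_0^T v\,e^{-\lambda_{n,k}t}\Bigr)\Bigl(\int_0^T v\,e^{\lambda_{n,k}\tau}\Bigr)\text{-type corrections}.
\end{equation*}
Here $\int_0^T v\, e^{\lambda_{n,k}\tau}\dd\tau$ is proportional to $e^{\lambda_nT}\langle y_1(T),\varphi_n\rangle/\mu_n$, while the other factor is $\int_0^T u\,e^{(\lambda_k-\lambda_n)t}\dd t$. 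This produces exactly the second remainder term, and it holds for finitely many $n$, with constants depending on $k$.

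Finally I will justify the interchange of sums and integrals and the convergence of $K_{\mu,k}$. Since $|\mu_n|/n\in\ell^2$ when $\mu\in H^{-1}$, and $\lambda_{n,k}/(\lambda_{n,k}^2+\omega^2)\le 1/|\lambda_{n,k}|$ is bounded, the series converges uniformly in $\omega$, and $K_{\mu,k}$ is continuous. This justifies the computation by density from smooth $\mu$ and $u$.

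The main obstacle will be the exact bookkeeping of the three index families $m=n+k$, $m=n-k$, $m=k-n$, and of their signs, in order to land exactly on \eqref{eq:def:K_mu_k} with the stated constant.
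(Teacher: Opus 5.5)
Your plan is correct and is essentially the paper's proof. Integrating $e^{-\lambda_k(T-t)}a_na_m$ by parts in time is legitimate, since $\lambda_k-\lambda_n-\lambda_m=\pm 2\pi^2 nm\neq 0$ whenever $\langle \varphi_n\varphi_m,\varphi_k^\prime\rangle\neq 0$. It reproduces exactly the paper's splitting $H=H_1+H_2$, which the paper gets instead by computing the $\tau$-integral explicitly: the boundary term at $t=T$ gives the $C_k\Vert y_1(T)\Vert_{H^{-1}}^2$ remainder, and the other term is symmetrized and treated by Plancherel as you describe.

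The one step you should state precisely is the case $\lambda_{n,k}<0$, because these $n$ still contribute to $K_{\mu,k}$. Write the integral of $v(t)v(\tau)e^{-\lambda_{n,k}(t-\tau)}$ over $\{\tau<t\}$ as its integral over $\mathbb{R}^2$, namely $\bigl(\int_0^T u e^{(\lambda_k-\lambda_n)t}\dd t\bigr)\bigl(\int_0^T u e^{\lambda_n \tau}\dd\tau\bigr)$, minus its integral over $\{\tau>t\}$, where the kernel is $e^{-\vert\lambda_{n,k}\vert\,\vert t-\tau\vert}$ (equivalently, use Lemma \ref{lem:fourier_alpha_negatif}). This shows the Fourier part keeps the same form $\frac{1}{2\pi}\int_{\mathbb{R}}\frac{\lambda_{n,k}}{\lambda_{n,k}^2+\omega^2}\vert\widehat{v}(\omega)\vert^2\dd\omega$, and that the product correction has coefficient $1$, not $\frac12$. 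The prefactor $e^{-\lambda_kT}e^{\lambda_nT}\le 1$ then keeps $C_k$ independent of $T$.
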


\begin{remark}
    In particular, if $y_1(T) = 0$, then Proposition \ref{prop:decomposition_y_2_kernel} gives
    \begin{equation}
        \left\langle y_2(T), \varphi_k \right\rangle = \frac{k e^{-\lambda_k T}}{4 \pi^2 \sqrt{2}} \int_{\mathbb{R}} \left\vert \widehat{v_{k,T}}(\omega) \right\vert^2 K_{\mu,k}(\omega) \dd \omega.
    \end{equation}
    In the case $\mu \equiv 1$ and $y_1(T) = 0$, a different formula for $\left\langle y_2(T), \varphi_k \right\rangle$ was derived in \cite{Nguyen2025Burgers}.
\end{remark}

\begin{proof}
    For $n \geq 1$, we write $\mu_n := \langle \mu, \varphi_n \rangle$. By definition
    \begin{equation}
        \left\langle y_2(T), \varphi_k \right\rangle = \frac{1}{2} \int_0^T e^{-\lambda_k (T-\tau)} \left\langle (y_1(\tau))^2, \varphi_k^\prime \right\rangle \dd \tau,
    \end{equation}
    and $y_1(\tau) = \int_0^\tau u(t) \Phi (\tau - t) \dd t$, where $\Phi(t) := S(t) \mu$, with $S$ the heat semigroup. In particular, this gives
    \begin{equation}\label{eq:proof:prop:decomposition_y_2}
        \left\langle y_2(T), \varphi_k \right\rangle = \frac{1}{2} \int_0^T \int_0^T u(t) u(s) H(t, s) \dd s \dd t,
    \end{equation}
    with
    \begin{equation}
        H(t, s) := \int_{s \vee t}^T e^{-\lambda_k (T-\tau)} \left\langle \Phi (\tau - t) \Phi (\tau - s) , \varphi_k^\prime \right\rangle \dd \tau.
    \end{equation}

    \textbf{Step 1: decomposition of $\left\langle y_2(T), \varphi_k \right\rangle$ into two terms.}
    To lighten the computations, we introduce the shorthand
    \begin{equation}
        B_k(n,m) := \frac{\left\langle \varphi_n \varphi_m , \varphi_k^\prime \right\rangle}{\lambda_k - \lambda_n - \lambda_m}. 
    \end{equation}
    if $\lambda_k - \lambda_n - \lambda_m \neq 0$, and $B_k(n,m) = 0$ otherwise. One has 
    \begin{equation}\label{eq:expression_varphi_varphi_varphiprime}
        \left\langle \varphi_n \varphi_m, \varphi_k^\prime \right\rangle = \frac{k \pi}{\sqrt{2}} \left( \mathbbm{1}_{\vert n - m \vert = k} - \mathbbm{1}_{n + m = k} \right).
    \end{equation}
    Note that $(n \pm m)^2 = k^2$ implies $\lambda_k - \lambda_n - \lambda_m = \pm 2 \pi^2 mn$, yielding
    \begin{equation}\label{eq:expression_varphi_varphi_varphiprime_bis}
        B_k(n,m) = - \frac{k}{2 \pi \sqrt{2}} \left( \frac{ \mathbbm{1}_{\vert n - m \vert = k} + \mathbbm{1}_{n + m = k} }{nm} \right),
    \end{equation}
    a formula which will be used repeatedly below. We decompose $H$ into two terms by writing
    \begin{equation}
        \begin{split}
            H(t, s) 
            & = \sum_{n, m \geq 1} \mu_n \mu_m \left\langle \varphi_n \varphi_m , \varphi_k^\prime \right\rangle \int_{s \vee t}^T e^{-\lambda_k (T-\tau) - \lambda_n (\tau - t) - \lambda_m (\tau - s)}  \dd \tau 
             =: H_1(t,s) + H_2(t,s) ,
        \end{split}
    \end{equation}
    with $H_1(t,s) = \sum_{n, m \geq 1} \mu_n \mu_m B_k(n,m) e^{- \lambda_n (T - t) - \lambda_m (T - s)}$, and
    \begin{equation}
        \begin{split}
            H_2(t,s) 
            & = - \sum_{n, m \geq 1} \mu_n \mu_m B_k(n,m)  e^{-\lambda_k \left( T- (s \vee t) \right) - \lambda_n \left( (s \vee t) - t \right) - \lambda_m  \left( (s \vee t) - s \right)} \\
            & = - e^{-\lambda_k \left( T- (s \vee t) \right)}  \sum_{n, m \geq 1} \mu_n \mu_m B_k(n,m) \left( \mathbbm{1}_{s \geq t} e^{- \lambda_n \vert s - t \vert} + \mathbbm{1}_{s < t} e^{- \lambda_m \vert s - t \vert} \right) \\
            & = - e^{-\lambda_k \left( T- (s \vee t) \right)}  \sum_{n, m \geq 1} \mu_n \mu_m B_k(n,m)  e^{- \lambda_n \vert s - t \vert} \\
            & = - e^{-\lambda_k \left( T- \frac{t+s}{2} \right)}  \sum_{n, m \geq 1} \mu_n \mu_m B_k(n,m)  e^{- \left( \lambda_n - \frac{\lambda_k}{2} \right) \vert s - t \vert} . \label{eq:Hconv}
        \end{split}
    \end{equation}
    
    \textbf{Step 2: Fourier expression of the convolution kernel $H_2$.}
    For simplicity, write $v(t) := v_{k,T}(t) = u(t) e^{\frac{\lambda_k t}{2}} \mathbbm{1}_{(0,T)}$, so that
    \begin{equation}
        \begin{split}
            & \int_0^T \int_0^T u(t) u(s) H_2(t, s) \dd s \dd t 
            = - e^{-\lambda_k T}  \sum_{n, m \geq 1} \mu_n \mu_m B_k(n,m) \int_0^T \int_0^T v(t) v(s) e^{- \lambda_{n, k} \vert s - t \vert} \dd s \dd t .
        \end{split}
    \end{equation} 
    
    We use the following lemma, whose proof is given below. 
    
    \begin{lemma}\label{lem:fourier_alpha_negatif}
        For $\alpha \in \mathbb{R}$, and $u \in L^2(\mathbb{R})$ compactly supported, one has
        \begin{equation}\label{eq:lem:fourier_alpha_negatif}
            \int_{\mathbb{R}} \int_{\mathbb{R}} u(t) u(s) e^{-\alpha \vert t-s \vert} \dd s \dd t = \frac{1}{\pi} \int_{\mathbb{R}} \frac{\alpha}{\alpha^2 + \omega^2} \left\vert \widehat{u}(\omega) \right\vert^2 \dd \omega + 2 \mathbbm{1}_{\alpha < 0} \widehat{u}(i \alpha) \widehat{u}(- i \alpha) + \mathbbm{1}_{\alpha = 0} \widehat{u}(0)^2.
        \end{equation}
    \end{lemma}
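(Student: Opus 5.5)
The identity is a Plancherel computation. I will split into the three cases $\alpha>0$, $\alpha=0$, $\alpha<0$. In each case the idea is the same: express the kernel $e^{-\alpha|t-s|}$, or a suitable modification of it, as a convolution whose Fourier transform is explicit. Throughout, $u$ is compactly supported and in $L^2$, so $u \in L^1$. Hence $\widehat{u}$ extends to an entire function, which makes $\widehat{u}(\pm i\alpha)$ meaningful. Note that $\widehat u(i\alpha)=\int u(t)e^{\alpha t}\dd t$ and $\widehat u(-i\alpha)=\int u(t)e^{-\alpha t}\dd t$.

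\textbf{Case $\alpha>0$.} Here $g(x)=e^{-\alpha|x|}\in L^1$ and $\widehat g(\omega)=\frac{2\alpha}{\alpha^2+\omega^2}$. The left-hand side equals $\int u\,(g*u)$. By Parseval (with the convention $\widehat{u}(\omega)=\int u e^{-i\omega t}$, so $\int f\bar h=\frac1{2\pi}\int\widehat f\,\overline{\widehat h}$) it equals $\frac{1}{2\pi}\int \widehat g|\widehat u|^2 = \frac1\pi\int\frac{\alpha}{\alpha^2+\omega^2}|\widehat u|^2$. This uses that $u$ is real, so $\overline{\widehat u(\omega)}=\widehat u(-\omega)$ and $|\widehat u|^2$ is even.

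\textbf{Case $\alpha<0$.} Write $\beta=-\alpha>0$. The kernel $e^{\beta|t-s|}$ is not integrable, so I decompose
\[
e^{\beta|x|} = -e^{-\beta|x|} + 2\cosh(\beta x).
\]
The first part is handled by the previous case with $\beta$, which gives
\[
-\frac1\pi\int\frac{\beta}{\beta^2+\omega^2}|\widehat u|^2 = \frac1\pi\int\frac{\alpha}{\alpha^2+\omega^2}|\widehat u|^2 .
\]
For the second part,
\[
2\cosh(\beta(t-s)) = e^{\beta t}e^{-\beta s}+e^{-\beta t}e^{\beta s}.
\]
It is a sum of separable kernels, so its double integral is $2\,\widehat u(-i\beta)\widehat u(i\beta)=2\,\widehat u(i\alpha)\widehat u(-i\alpha)$, which is the claimed boundary term.

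\textbf{Case $\alpha=0$.} The left-hand side is directly $(\int u)^2=\widehat u(0)^2$. The integral term vanishes because its integrand is $0$. There is a subtlety, which I expect to be the only real obstacle: the case $\alpha=0$ must be read literally, with the integrand identically zero. It should not be read as a limit, since $\frac{\alpha}{\alpha^2+\omega^2}\to\pi\sgn(\alpha)\,\delta_0$ in the limit, and that would double count. The formula as stated is consistent with the literal reading.

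Finally, all manipulations are justified. Fubini applies because $u\in L^1$ with compact support and every kernel is locally bounded. The Parseval step is justified because $g\in L^1$, so $g*u\in L^2$.
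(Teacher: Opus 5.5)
Your proof is correct and follows the paper's argument essentially verbatim. For $\alpha>0$ you use Plancherel with $\widehat{e^{-\alpha|\cdot|}}(\omega)=\frac{2\alpha}{\alpha^2+\omega^2}$. For $\alpha<0$ you use the identity $e^{\beta|x|}=-e^{-\beta|x|}+e^{\beta x}+e^{-\beta x}$ to reduce to the positive case plus a separable kernel producing $2\widehat{u}(i\alpha)\widehat{u}(-i\alpha)$, and the case $\alpha=0$ is immediate in both.
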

    
    For all $n \geq 1$, note that $\lambda_{n, k} = \lambda_n - \frac{\lambda_k}{2} \neq 0$.
    Lemma \ref{lem:fourier_alpha_negatif} thus gives
    \begin{equation}
        \int_0^T \int_0^T u(t) u(s) H_2(t, s) \dd s \dd t = I_1 + I_2,
    \end{equation}
    with
    \begin{equation}
        I_1 := - \frac{e^{-\lambda_k T}}{\pi} \int_{\mathbb{R}} \left\vert \widehat{v}(\omega) \right\vert^2 \sum_{n, m \geq 1} \mu_n \mu_m B_k(n,m) \frac{\lambda_{n,k}}{ \lambda_{n,k}^2 + \omega^2} \dd \omega,
    \end{equation}
    and 
    \begin{equation}
        \begin{split}
             I_2 := -2 e^{-\lambda_k T}  \sum_{n, m \geq 1} \mu_n \mu_m B_k(n,m)  \mathbbm{1}_{\lambda_n < \frac{\lambda_k}{2}} \left(\int_{\mathbb{R}} v(t) e^{\lambda_{n, k} t} \dd t \right) \left(\int_{\mathbb{R}} v(t) e^{- \lambda_{n, k} t} \dd t \right).
        \end{split}
   \end{equation}
    
    \textbf{Expression of the main term $I_1$.}
    By definition, one has
    \begin{equation}
        I_1 = \frac{k e^{-\lambda_k T}}{2 \pi^2 \sqrt{2}} \int_{\mathbb{R}} \left\vert \widehat{v_{k,T}}(\omega) \right\vert^2 K_{\mu,k}(\omega) \dd \omega ,
    \end{equation}
    where $K_{\mu,k}$ is given by 
    \begin{equation}\label{eq:ancienne_formula_K_mu_k}
        K_{\mu,k}(\omega) = \sum_{n \geq 1} \frac{\mu_n}{n}  \left( \sum_{m \geq 1} \frac{\mu_m}{m} \left( \mathbbm{1}_{\vert n - m \vert = k} + \mathbbm{1}_{n + m = k} \right) \right) \frac{\lambda_{n,k}}{ \lambda_{n,k}^2 + \omega^2}.
    \end{equation}
    Since 
    \begin{equation}
        \begin{split}
            K_{\mu, k}(\omega) & = \sum_{n \geq 1} \frac{\mu_n}{n}  \left( \frac{\langle \mu, \varphi_{n+k} \rangle}{n+k} + \frac{\langle \mu, \varphi_{n-k} \rangle}{n-k} \mathbbm{1}_{n \geq k + 1} + \frac{\langle \mu, \varphi_{k-n} \rangle}{k-n} \mathbbm{1}_{n \leq k - 1} \right) \frac{\lambda_{n,k}}{ \lambda_{n,k}^2 + \omega^2} \\
            & = \sum_{n \geq 1} \frac{\mu_n}{n}  \left( \frac{\langle \mu, \varphi_{n+k} \rangle}{n+k} + \frac{\langle \mu, \varphi_{\vert n-k \vert} \rangle}{\vert n-k \vert} \mathbbm{1}_{n \neq k} \right) \frac{\lambda_{n,k}}{ \lambda_{n,k}^2 + \omega^2}, 
        \end{split}
    \end{equation}
    the kernel defined in \eqref{eq:ancienne_formula_K_mu_k} coincides with the one defined in \eqref{eq:def:K_mu_k}. Keeping track of the factor $\frac{1}{2}$ in \eqref{eq:proof:prop:decomposition_y_2}, one finds that $I_1$ gives the main term in \eqref{eq:prop:decomposition_y_2}.

    \textbf{Estimation of the low frequency term $I_2$.}
    By definition of $v$, one has $v(t) e^{\lambda_{n, k} t} = u(t) e^{\lambda_n t}$ and $v(t) e^{- \lambda_{n, k} t} = u(t) e^{\left(\lambda_k - \lambda_n \right) t}$. Hence, using \eqref{eq:y_1_Fourier_intro}, one finds
    \begin{equation}
        \begin{split}
            I_2 = & -2 e^{-\lambda_k T} \sum_{n \geq 1} \mathbbm{1}_{\lambda_n < \frac{\lambda_k}{2}} e^{\lambda_n T} \langle y_1(T), \varphi_n \rangle \left(\int_{\mathbb{R}} u(t) e^{\left(\lambda_k - \lambda_n \right) t} \dd t \right) 
            \left( \sum_{m \geq 1} \mu_m B_k(n,m) \right).
        \end{split}
    \end{equation}
    For each $n$, the sum over the index $m$ contains at most three terms. Since $\frac{\mu_m^2}{\lambda_m} \leq \Vert \mu \Vert_{H^{-1}}^2$, this gives 
    \begin{equation}
        \left\vert \sum_{m \geq 1} \mu_m B_k(n,m) \right\vert
        \leq \frac{k}{2 n \pi \sqrt{2}} \left\vert \sum_{m \geq 1} \frac{\mu_m}{m} \left( \mathbbm{1}_{\vert n - m \vert = k} + \mathbbm{1}_{n + m = k} \right) \right\vert 
        \leq \frac{C k}{n} \Vert \mu \Vert_{H^{-1}},
    \end{equation}
    for some absolute constant $C > 0$. In addition, for $n$ such that $1 \leq n^2 < \frac{k^2}{2}$, one has $\frac{e^{-\lambda_k T} e^{\lambda_n T}}{n} \leq 1$. Hence, one obtains
    \begin{equation}
        \vert I_2 \vert \leq C_k \Vert \mu \Vert_{H^{-1}} \sum_{n\geq 1} \mathbbm{1}_{n^2 < \frac{k^2}{2}} \left\vert \langle y_1(T), \varphi_n \rangle \right\vert \left\vert \int_0^T u(t) e^{\left(\lambda_k - \lambda_n \right) t} \dd t \right\vert,
    \end{equation}
    for some constant $C_k > 0$, which depends only on $k$.  
    
    \textbf{Step 3: estimation of the term involving the moments, $H_1$.}
    Using \eqref{eq:y_1_Fourier_intro}, one finds
    \begin{equation}
        \begin{split}
            & \int_0^T \int_0^T u(t) u(s) H_1(t, s) \dd s \dd t 
            = \sum_{n, m \geq 1} B_k(n,m) \left\langle y_1(T), \varphi_n \right\rangle \left\langle y_1(T), \varphi_m \right\rangle.
        \end{split}
    \end{equation}
    The Cauchy-Schwarz inequality gives
    \begin{equation}
        \begin{split}
            \left\vert \int_0^T \int_0^T u(t) u(s) H_1(t, s) \dd s \dd t \right\vert 
            & \leq \frac{k}{2 \pi \sqrt{2}} \sum_{n, m \geq 1} \frac{ \left( \mathbbm{1}_{\vert n - m \vert = k} + \mathbbm{1}_{n + m = k} \right)^2}{n^2} \left\langle y_1(T), \varphi_n \right\rangle^2 \\
            & \leq \frac{3 k}{2 \pi \sqrt{2}} \sum_{n \geq 1} \frac{1}{n^2} \left\langle y_1(T), \varphi_n \right\rangle^2 
            \leq C k \Vert y_1(T) \Vert_{H^{-1}}^2,
        \end{split}
    \end{equation}
    for some absolute constant $C > 0$. Hence, the term involving $H_1$ is a remainder term of \eqref{eq:prop:decomposition_y_2}. This completes the proof of Proposition \ref{prop:decomposition_y_2_kernel}.
\end{proof}

It remains to prove Lemma \ref{lem:fourier_alpha_negatif}.

\begin{proof}[Proof of Lemma \ref{lem:fourier_alpha_negatif}.]
    Note that \eqref{eq:lem:fourier_alpha_negatif} holds true when $\alpha = 0$.
    Next, assume $\alpha > 0$. For $\omega \in \mathbb{R}$, one has
    \begin{equation}
        \int_{\mathbb{R}} e^{- \alpha \vert t \vert} e^{- i t \omega} \dd t = \frac{2 \alpha}{\alpha^2 + \omega^2}.
    \end{equation}
    Writing
    \begin{equation}
        \int_{\mathbb{R}} \int_{\mathbb{R}} u(t) u(s) e^{-\alpha \vert t-s \vert} \dd s \dd t = \left\langle e^{-\alpha \vert \cdot \vert} \ast u, u \right\rangle_{L^2(\mathbb{R})} = \frac{1}{2 \pi} \int_{\mathbb{R}} \left\vert \widehat{u}(\omega) \right\vert^2 \widehat{e^{-\alpha \vert \cdot \vert}}(\omega) \dd \omega,
    \end{equation}
    this gives \eqref{eq:lem:fourier_alpha_negatif}. Finally, if $\alpha < 0$, then the identity $e^{\alpha \vert t \vert} + e^{-\alpha \vert t \vert} = e^{\alpha t} + e^{-\alpha t}$ yields
    \begin{equation}
        \int_{\mathbb{R}} \int_{\mathbb{R}} u(t) u(s) e^{-\alpha \vert t-s \vert} \dd s \dd t
        = - \int_{\mathbb{R}} \int_{\mathbb{R}} u(t) u(s) e^{\alpha \vert t-s \vert} \dd s \dd t
        + 2 \left(\int_{\mathbb{R}} u(t) e^{-\alpha t} \dd t \right) \left(\int_{\mathbb{R}} u(t) e^{+\alpha t} \dd t \right),
    \end{equation}
    which implies \eqref{eq:lem:fourier_alpha_negatif}, by \eqref{eq:lem:fourier_alpha_negatif} applied with $- \alpha$. 
\end{proof}

\section{Asymptotic and sign properties of kernels}\label{sec:properties_of_kernels}

As established in Proposition \ref{prop:decomposition_y_2_kernel}, the projection of the quadratic expansion $y_2$ onto a direction $\varphi_k$ is governed by the kernel $K_{\mu,k}$ defined in \eqref{eq:def:K_mu_k}. The purpose of this section is to understand how the source profile $\mu$ influences the properties of this kernel. We first derive sufficient conditions leading to $\widetilde{H}^{-1}$ and $\widetilde{H}^{-5/4}$ drifts, and then construct source profiles in physical space that produce fractional drifts over a full range of Sobolev regularities. Second, we construct source profiles by prescribing their Fourier coefficients, in such a way that $K_{\mu,k}$ satisfies both a global sign property and a fractional asymptotic estimate. Finally, we study the classical case $\mu \equiv 1$, where we extend previous results by characterizing the directions $\varphi_k$ for which a global sign property holds.

We gather in Lemma~\ref{lem:asymptotic_sum_appendix} of the appendix several elementary estimates for sums that will be used throughout this section.

\subsection{Asymptotic properties of quadratic kernels}

\subsubsection{Obstructions quantified by the $H^{-1}(0,T)$-norm of the control}

We begin with the following result, which shows that the projection of the quadratic expansion is close to the square of the $\widetilde{H}^{-1}(0,T)$-norm of the control, under the sole assumption that $\mu \in L^2(0,1)$.

\begin{proposition}[Key asymptotic estimate for the $H^{-1}$-drift]\label{prop:asymptotic_estimate_kernel_L2}
    Let $\mu \in L^2(0,1)$ and $k \geq 1$. Then
    \begin{equation}\label{eq:prop:asymptotic_estimate_kernel_L2_1}
        K_{\mu,k}(\omega) = \frac{\pi \sqrt{2} \left\langle \mu^2, \varphi_k^\prime \right\rangle}{k \omega^{2}} + o\left( \frac{1}{ \omega^{2}} \right), \quad \text{     when $\omega \rightarrow + \infty$.}
    \end{equation}
    If $\mu \in H_{(0)}^\nu(0,1)$ for some $\nu \in \left(0, 2\right]$, then
    \begin{equation}\label{eq:prop:asymptotic_estimate_kernel_L2_2}
        K_{\mu,k}(\omega) = \frac{\pi \sqrt{2} \left\langle \mu^2, \varphi_k^\prime \right\rangle}{k \omega^{2}} + \mathcal{O}\left( \frac{1}{ \omega^{2+\nu}} \right), \quad \text{ when $\omega \rightarrow + \infty$.}
    \end{equation}
\end{proposition}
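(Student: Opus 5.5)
The plan is to expand the rational factor in \eqref{eq:def:K_mu_k} in powers of $\omega^{-2}$ and to identify the leading coefficient with $\langle \mu^2, \varphi_k^\prime \rangle$. Write $K_{\mu,k}(\omega)=\sum_{n\geq 1} a_n \frac{\lambda_{n,k}}{\lambda_{n,k}^2+\omega^2}$ with
\begin{equation*}
a_n := \frac{\mu_n}{n}\left(\frac{\mu_{n+k}}{n+k}+\frac{\mu_{\vert n-k\vert}}{\vert n-k\vert}\mathbbm{1}_{n\neq k}\right).
\end{equation*}
Use the exact identity
\begin{equation*}
\frac{\lambda}{\lambda^2+\omega^2}=\frac{\lambda}{\omega^2}-\frac{1}{\omega^2}\,\frac{\lambda^3}{\lambda^2+\omega^2}.
\end{equation*}
This gives $K_{\mu,k}(\omega)=\omega^{-2}S-\omega^{-2}R(\omega)$, where $S:=\sum_n a_n\lambda_{n,k}$ and $R(\omega):=\sum_n a_n\lambda_{n,k}^3/(\lambda_{n,k}^2+\omega^2)$.

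\textbf{Absolute convergence.} Since $\vert\lambda_{n,k}\vert\leq \pi^2 n^2+\lambda_k$, one has $\vert a_n\lambda_{n,k}\vert\leq C_k(\vert\mu_n\mu_{n+k}\vert+\vert\mu_n\mu_{\vert n-k\vert}\vert)$. By Cauchy--Schwarz this is summable as soon as $\mu\in L^2(0,1)$.

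\textbf{Identification of $S$.} This is the algebraic step and the one requiring the most care. I would symmetrize over pairs $\{n,m\}$ with either $\vert n-m\vert=k$ or $n+m=k$.

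For $m=n+k$, the pair receives a contribution from index $n$ (via $\mu_{n+k}$) and from index $m$ (via $\mu_{\vert m-k\vert}=\mu_n$). These add up to
\begin{equation*}
\frac{\mu_n\mu_m}{nm}\left(\lambda_{n,k}+\lambda_{m,k}\right)=\pi^2\frac{\mu_n\mu_m}{nm}\left(n^2+m^2-k^2\right)=2\pi^2\mu_n\mu_m,
\end{equation*}
since $m^2+n^2-k^2=2nm$ when $m-n=k$.

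For $n+m=k$ one gets similarly $n^2+m^2-k^2=-2nm$. Care is needed for ordered versus unordered pairs, including the diagonal case $n=m=k/2$.

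Summing these contributions, $S$ should become $2\pi^2\sum_{n,m}\mu_n\mu_m\cdot\frac12\left(\mathbbm{1}_{\vert n-m\vert=k}-\mathbbm{1}_{n+m=k}\right)$ or a similar combination. Comparing with \eqref{eq:expression_varphi_varphi_varphiprime}, which expands $\langle\mu^2,\varphi_k^\prime\rangle=\sum_{n,m}\mu_n\mu_m\langle\varphi_n\varphi_m,\varphi_k^\prime\rangle$ (valid for $\mu\in L^2$ by absolute convergence), this should give $S=\frac{\pi\sqrt2}{k}\langle\mu^2,\varphi_k^\prime\rangle$. The main risk is bookkeeping of factors of $2$ and the constant $\frac{k\pi}{\sqrt2}$, which I would check on $\mu=\varphi_1+\varphi_{1+k}$.

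\textbf{The $o(\omega^{-2})$ estimate.} Each term of $R(\omega)$ is bounded by $\vert a_n\lambda_{n,k}\vert$, which is summable, and tends to $0$ as $\omega\to\infty$. Dominated convergence gives $R(\omega)\to0$, hence \eqref{eq:prop:asymptotic_estimate_kernel_L2_1}.

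\textbf{The $\mathcal O(\omega^{-2-\nu})$ estimate.} For $\nu\in(0,2]$, use
\begin{equation*}
\frac{\lambda^2}{\lambda^2+\omega^2}\leq\min\left(1,\frac{\lambda^2}{\omega^2}\right)\leq\left(\frac{\vert\lambda\vert}{\omega}\right)^{\nu}.
\end{equation*}
This yields $\vert R(\omega)\vert\leq\omega^{-\nu}\sum_n\vert a_n\vert\,\vert\lambda_{n,k}\vert^{1+\nu}$. Up to constants depending on $k$, the sum is bounded by $\sum_n n^{2\nu}\left(\vert\mu_n\mu_{n+k}\vert+\vert\mu_n\mu_{\vert n-k\vert}\vert\right)$. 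Writing $n^{2\nu}\vert\mu_n\mu_{n\pm k}\vert\lesssim_k (n^\nu\vert\mu_n\vert)\,((n\pm k)^\nu\vert\mu_{n\pm k}\vert)$, Cauchy--Schwarz bounds it by $C_k\Vert\mu\Vert_{H_{(0)}^\nu}^2<\infty$. Finitely many small $n$ are trivially harmless. This gives \eqref{eq:prop:asymptotic_estimate_kernel_L2_2}.
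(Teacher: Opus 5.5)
Your proposal is correct and is essentially the paper's proof. You use the same splitting $\frac{\lambda}{\lambda^2+\omega^2}=\frac{\lambda}{\omega^2}-\frac{\lambda^3}{\omega^2(\lambda^2+\omega^2)}$, and the same pairing of the indices $n$ with $n+k$ and $n$ with $k-n$, which gives $S=\pi^2\sum_{n,m}\mu_n\mu_m\left(\mathbbm{1}_{\vert n-m\vert=k}-\mathbbm{1}_{n+m=k}\right)$ over ordered pairs; your tentative bookkeeping does close, with the diagonal term $n=k/2$ contributing exactly $-\pi^2\mu_{k/2}^2$. You then use dominated convergence for the $o(\omega^{-2})$ term, and for the $\mathcal{O}(\omega^{-2-\nu})$ term a bound equivalent to the paper's boundedness of $x\mapsto x^{2-\nu}/(x^2+1)$, followed by Cauchy--Schwarz.

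The only cosmetic difference is that the paper proves the identity for $S$ for finitely supported $(\mu_n)$ and then passes to the limit, whereas you rearrange the absolutely convergent series directly. Either way, identifying the result with $\langle\mu^2,\varphi_k^\prime\rangle$ uses the $L^2$-continuity of $\mu\mapsto\langle\mu^2,\varphi_k^\prime\rangle$, not absolute convergence alone.
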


\begin{proof}
    In this proof, we use the shorthand $\mu_n := \langle \mu, \varphi_n \rangle$, for $n \geq 1$. Formally, one has $K_{\mu, k}(\omega) \approx \frac{A_{k}(\mu)}{\omega^2}$, where
    \begin{equation}
        A_k(\mu) := \sum_{n \geq 1} \frac{\mu_n}{n} \left( \frac{\mu_{n+k}}{n+k} + \frac{\mu_{\vert n-k \vert}}{\vert n-k \vert} \mathbbm{1}_{n \neq k} \right) \lambda_{n,k}.
    \end{equation}
    
    \textbf{Step 1: we prove that $A_k(\mu) = \frac{\pi \sqrt{2}}{k} \left\langle \mu^2, \varphi_k^\prime \right\rangle$.} 
    We first prove this under the additional assumption that there exists $N \geq 1$ such that $\mu_n = 0$ for all $n > N$, which implies that all the sums involved in this computation are finite. By \eqref{eq:expression_varphi_varphi_varphiprime}, one has
    \begin{equation}
        \left\langle \mu^2, \varphi_k^\prime \right\rangle = \frac{k \pi}{\sqrt{2}} \sum_{n,m \geq 1} \mu_n \mu_m \left( \mathbbm{1}_{\vert n - m \vert = k} - \mathbbm{1}_{n + m = k} \right).
    \end{equation}
    We claim that 
    \begin{equation}\label{eq_proof:prop:asymptotic_estimate_kernel_L2_step_1_1}
        \sum_{n,m \geq 1} \mu_n \mu_m \mathbbm{1}_{n + m = k} = - \frac{1}{\pi^2} \sum_{n \geq 1} \frac{\mu_n}{n} \left( \frac{\mu_{k-n}}{k-n} \mathbbm{1}_{k-n\geq 1} \right) \lambda_{n,k},
    \end{equation}
    and
    \begin{equation}\label{eq_proof:prop:asymptotic_estimate_kernel_L2_step_1_2}
        \sum_{n,m \geq 1} \mu_n \mu_m  \mathbbm{1}_{\vert n - m \vert = k} = \frac{1}{\pi^2} \sum_{n \geq 1} \frac{\mu_n}{n} \left( \frac{\mu_{n+k}}{n+k} + \frac{\mu_{n-k}}{n-k} \mathbbm{1}_{n-k \geq 1} \right) \lambda_{n,k},
    \end{equation}
    yielding $A_k(\mu) = \frac{\pi \sqrt{2}}{k} \left\langle \mu^2, \varphi_k^\prime \right\rangle$.
    
    \underline{Proof of \eqref{eq_proof:prop:asymptotic_estimate_kernel_L2_step_1_1}.} The change of index $n^\prime = k-n$ gives
    \begin{equation}
        \sum_{n = 1}^{k-1} \frac{\mu_n \mu_{k-n}}{n(k-n)} \left( n^2 - \frac{k^2}{2} \right)
        = \sum_{n = 1}^{k-1} \frac{\mu_{k-n} \mu_n}{(k-n)n} \left( n^2 - 2 kn + \frac{k^2}{2} \right).
    \end{equation}
    Averaging, one finds
    \begin{equation}
        \begin{split}
            \sum_{n \geq 1} \frac{\mu_n}{n} \left( \frac{\mu_{k-n}}{k-n} \mathbbm{1}_{k-n\geq 1} \right) \lambda_{n,k}
            = \ &  \pi^2 \sum_{n = 1}^{k-1} \frac{\mu_{k-n} \mu_n}{(k-n)n} \left( \frac{1}{2} \left( n^2 - \frac{k^2}{2} \right) + \frac{1}{2} \left( n^2 - 2 kn + \frac{k^2}{2} \right)  \right) \\
            = \ & - \pi^2 \sum_{n,m \geq 1} \mu_n \mu_m \mathbbm{1}_{n + m = k}. \\
        \end{split}
    \end{equation}

    \underline{Proof of \eqref{eq_proof:prop:asymptotic_estimate_kernel_L2_step_1_2}.} The change of index $n^\prime = n-k$ gives
    \begin{equation}
        \begin{split}
            \sum_{n \geq 1} \frac{\mu_n}{n} \left( \frac{\mu_{n+k}}{n+k} + \frac{\mu_{n-k}}{n-k} \mathbbm{1}_{n-k \geq 1} \right) \lambda_{n,k}
            & = \sum_{n \geq 1} \frac{\mu_n \mu_{n+k}}{n(n+k)} \lambda_{n,k} + \sum_{n \geq k+1} \frac{\mu_{n} \mu_{n-k}}{n(n-k)} \lambda_{n,k} \\
            & = \sum_{n \geq 1} \frac{\mu_n \mu_{n+k}}{n(n+k)} \left( \lambda_{n,k} + \lambda_{n+k,k} \right) \\
            & = 2 \pi^2 \sum_{n \geq 1} \mu_n \mu_{n+k} 
            = \pi^2 \sum_{n,m \geq 1} \mu_n \mu_m  \mathbbm{1}_{\vert n - m \vert = k}.
        \end{split}
    \end{equation}
    
    It remains to prove $A_k(\mu) = \frac{\pi \sqrt{2}}{k} \left\langle \mu^2, \varphi_k^\prime \right\rangle$ in the general case. Let $N \geq 1$. The preceding computation gives
    \begin{equation}
        \sum_{n \geq 1} \frac{\widetilde{\mu}_n}{n} \left( \frac{\widetilde{\mu}_{n+k}}{n+k} + \frac{\widetilde{\mu}_{\vert n-k \vert}}{\vert n-k \vert} \mathbbm{1}_{n \neq k} \right) \lambda_{n,k}
        = \pi^2 \sum_{n,m \geq 1} \widetilde{\mu}_n \widetilde{\mu}_m \left( \mathbbm{1}_{\vert n - m \vert = k} - \mathbbm{1}_{n + m = k} \right),
    \end{equation}
    where $\widetilde{\mu}_n := \mu_n \mathbbm{1}_{n \leq N}$. Since $(\mu_n) \in \ell^2(\mathbb{N}^\ast)$, the identity follows by letting $N \rightarrow + \infty$.
    
    \textbf{Step 2: we prove \eqref{eq:prop:asymptotic_estimate_kernel_L2_1}.} By Step 1, one has
    \begin{equation}
        \left\vert K_{\mu,k}(\omega) - \frac{\pi \sqrt{2} \left\langle \mu^2, \varphi_k^\prime \right\rangle}{k \omega^{2}} \right\vert
        \leq \frac{1}{\omega^2} \sum_{n \geq 1} \frac{\vert \mu_n \vert}{n} \left( \frac{ \vert \mu_{n+k} \vert}{n+k} + \frac{\vert \mu_{\vert n-k \vert} \vert}{\vert n-k \vert} \mathbbm{1}_{n \neq k} \right) \frac{ \vert \lambda_{n,k} \vert^3}{\lambda_{n,k}^2 + \omega^2}.
    \end{equation}
    For all $\omega > 0$, one has $\frac{ \vert \lambda_{n,k} \vert^3}{\lambda_{n,k}^2 + \omega^2} \leq \vert \lambda_{n,k} \vert$. Since $\mu \in L^2(0,1)$, the Cauchy-Schwarz inequality gives
    \begin{equation}
        \sum_{n \geq 1} \frac{\vert \mu_n \vert}{n} \left( \frac{ \vert \mu_{n+k} \vert}{n+k} + \frac{\vert \mu_{\vert n-k \vert} \vert}{\vert n-k \vert} \mathbbm{1}_{n \neq k} \right) \vert \lambda_{n,k} \vert 
        \leq C \sum_{n \geq 1} \vert \mu_n \vert \left( \vert \mu_{n+k} \vert + \vert \mu_{\vert n-k \vert} \vert \right) \leq 2 C \Vert \mu \Vert_{L^2}^2,
    \end{equation}
    for some $C>0$ depending only on $k$. Hence, the dominated convergence theorem yields 
    \begin{equation}
         \sum_{n \geq 1} \frac{\vert \mu_n \vert}{n} \left( \frac{ \vert \mu_{n+k} \vert}{n+k} + \frac{\vert \mu_{\vert n-k \vert} \vert}{\vert n-k \vert} \mathbbm{1}_{n \neq k} \right) \frac{ \vert \lambda_{n,k} \vert^3}{\lambda_{n,k}^2 + \omega^2} \longrightarrow 0,
    \end{equation}
    when $\omega \rightarrow + \infty$. This completes the proof of \eqref{eq:prop:asymptotic_estimate_kernel_L2_1}.

    \textbf{Step 3: we prove \eqref{eq:prop:asymptotic_estimate_kernel_L2_2}.} Assume that $\mu \in H_{(0)}^\nu(0,1)$ for some $\nu \in \left(0, 2 \right]$. By Step 2, it suffices to show
    \begin{equation}
        \sum_{n \geq 1} \frac{\vert \mu_n \vert}{n} \left( \frac{ \vert \mu_{n+k} \vert}{n+k} + \frac{\vert \mu_{\vert n-k \vert} \vert}{\vert n-k \vert} \mathbbm{1}_{n \neq k} \right) \frac{ \vert \lambda_{n,k} \vert^3}{\lambda_{n,k}^2 + \omega^2}
        \leq \frac{C}{\omega^{\nu}},
    \end{equation}
    for some $C > 0$ depending on $\mu$, $\nu$ and $k$, and for all $\omega > 0$ sufficiently large. For $\omega > 0$ and $n \geq 1$, one has 
    \begin{equation}
        \frac{ \vert \lambda_{n,k} \vert^{2 - \nu}}{ \lambda_{n,k}^2 + \omega^2 } 
        = \frac{1}{\omega^{\nu}} \chi \left( \frac{\vert \lambda_{n,k} \vert}{\omega} \right),
    \end{equation}
    with $\chi(x) := \frac{x^{2-\nu}}{x^2 + 1}$, for $x \geq 0$. Since $\nu \in [0, 2]$, the function $\chi$ is bounded, implying that 
    \begin{equation}
        \sum_{n \geq 1} \frac{\vert \mu_n \vert}{n} \left( \frac{ \vert \mu_{n+k} \vert}{n+k} + \frac{\vert \mu_{\vert n-k \vert} \vert}{\vert n-k \vert} \mathbbm{1}_{n \neq k} \right) \frac{ \vert \lambda_{n,k} \vert^3}{\lambda_{n,k}^2 + \omega^2}
        \leq \frac{C}{\omega^{\nu}} \sum_{n \geq 1} \frac{\vert \mu_n \vert}{n} \left( \frac{ \vert \mu_{n+k} \vert}{n+k} + \frac{\vert \mu_{\vert n-k \vert} \vert}{\vert n-k \vert} \mathbbm{1}_{n \neq k} \right)  \vert \lambda_{n,k} \vert^{1+\nu},
    \end{equation}
    for some constant $C>0$ which depends only on $\nu$. Moreover, there exists $C> 0$, which depends only on $k$ and $\nu$, such that 
    \begin{equation}
        \frac{\vert \lambda_{n,k} \vert^{1+\nu}}{n(n+k)} \leq C n^\nu (n+k)^\nu, \quad \text{for all $n \geq 1$,}
    \end{equation}
    and 
    \begin{equation}
        \frac{\vert \lambda_{n,k} \vert^{1+\nu}}{n \vert n - k \vert} \leq C n^\nu \vert n - k \vert^\nu, \quad \text{for all $n \geq 1$, with $n \neq k$.}
    \end{equation}
    Hence, one obtains
    \begin{equation}
        \sum_{n \geq 1} \frac{\vert \mu_n \vert}{n} \left( \frac{ \vert \mu_{n+k} \vert}{n+k} + \frac{\vert \mu_{\vert n-k \vert} \vert}{\vert n-k \vert} \mathbbm{1}_{n \neq k} \right) \frac{ \vert \lambda_{n,k} \vert^3}{\lambda_{n,k}^2 + \omega^2}
        \leq \frac{C}{\omega^{\nu}} \sum_{n \geq 1} n^{2\nu} \mu_n^2,
    \end{equation}
    by the Cauchy-Schwarz inequality, for some $C> 0$ which depends only on $k$ and $\nu$. This completes the proof.
\end{proof}

\subsubsection{Obstructions quantified by the $H^{-5/4}(0,T)$-norm of the control}

Next, we prove the following result, which will be one of the main ingredients in the proof of the $H^{-5/4}$-drift in small time. When $\left\langle \mu^2, \varphi_k^\prime \right\rangle = 0$, \eqref{eq:prop:asymptotic_estimate_kernel_affine+rest_1} implies that the projection along the direction $\varphi_k$ of the solution of the quadratic problem is close to the square of the $\widetilde{H}^{-\frac{5}{4}}(0, T)$-norm of the control. Note that an estimate similar to \eqref{eq:prop:asymptotic_estimate_kernel_affine+rest_2} was obtained in \cite[Proposition 3.1]{Nguyen2025Burgers}, with a different formula for the kernel.

\begin{proposition}[Key asymptotic estimate for the $H^{-5/4}$-drift]\label{prop:asymptotic_estimate_kernel_affine+rest}
    Let $\ell : [0, 1] \rightarrow \mathbb{R}$ be an affine function; in particular, $\ell \in \mathcal{H}^1_{\infty}(0,1)$. Let $k \geq 1$, $s \in \left(\frac{1}{2}, 2\right]$ and $\widetilde{\mu} \in\mathcal{H}^s_2(0,1)$. Set $\mu := \ell + \widetilde{\mu}$. One has
    \begin{equation}\label{eq:prop:asymptotic_estimate_kernel_affine+rest_1}
        K_{\mu,k}(\omega) = \frac{\pi \sqrt{2} \left\langle \mu^2, \varphi_k^\prime \right\rangle}{k \omega^{2}} - \frac{\pi^2 \sqrt{2} \left(\mu(0)^2 + (-1)^k \mu(1)^2 \right)}{\omega^{\frac{5}{2}}} + \mathcal{O}\left( \frac{1}{\omega^{\frac{5}{2}+\nu}} \right)
    \end{equation}
    when $\omega \rightarrow + \infty$, where $\nu := \frac{s}{2} - \frac{1}{4} > 0$. If $\mu \equiv 1$ and $k$ is even, then
    \begin{equation}\label{eq:prop:asymptotic_estimate_kernel_affine+rest_2}
        K_{\mu,k}(\omega) = - \frac{2 \pi^2 \sqrt{2}}{\omega^{\frac{5}{2}}} + \mathcal{O}\left( \frac{1}{\omega^{\frac{7}{2}}} \right), \quad \text{ when $\omega \rightarrow + \infty$.}
    \end{equation}
\end{proposition}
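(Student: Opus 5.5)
The plan is to redo Step 2 of the proof of Proposition \ref{prop:asymptotic_estimate_kernel_L2}, but now keeping the next term of the expansion instead of only bounding it. Writing $\mu_n := \langle \mu, \varphi_n\rangle$ and $c_n := \frac{\mu_n}{n}\big(\frac{\mu_{n+k}}{n+k} + \frac{\mu_{|n-k|}}{|n-k|}\mathbbm{1}_{n\neq k}\big)$, Step 1 of that proof gives exactly
\begin{equation*}
K_{\mu,k}(\omega) - \frac{\pi\sqrt2\langle \mu^2,\varphi_k'\rangle}{k\omega^2} = -\frac{1}{\omega^2}\sum_{n\ge1} c_n \frac{\lambda_{n,k}^3}{\lambda_{n,k}^2+\omega^2}.
\end{equation*}
This identity holds since $\mu \in L^2$. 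So everything reduces to showing that the sum on the right equals $\pi^2\sqrt2\,(\mu(0)^2+(-1)^k\mu(1)^2)\,\omega^{-1/2} + \mathcal O(\omega^{-1/2-\nu})$.

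The first step is to split the Fourier coefficients. An integration by parts gives, for the affine part,
\begin{equation*}
\langle \ell,\varphi_n\rangle = \frac{a_n}{n}, \qquad a_n := \frac{\sqrt2}{\pi}\big(\ell(0)-(-1)^n\ell(1)\big).
\end{equation*}
Moreover $\ell(0)=\mu(0)$ and $\ell(1)=\mu(1)$, because $\widetilde\mu\in\mathcal H^s_2$ with $s>\frac12$ has zero trace. Hence $\mu_n = a_n/n + \widetilde\mu_n$. Expanding $c_n$ bilinearly produces three kinds of terms: affine $\times$ affine, affine $\times$ $\widetilde\mu$, and $\widetilde\mu\times\widetilde\mu$.

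For the affine $\times$ affine term, $c_n = a_n(a_{n+k}+a_{n-k})/n^4\,(1+\mathcal O(1/n))$ and $\lambda_{n,k} = \pi^2 n^2(1+\mathcal O(n^{-2}))$. The product $a_n(a_{n+k}+a_{|n-k|})$ depends only on the parity of $n$, and its average over the two parities is $\frac{4}{\pi^2}\big(\mu(0)^2+(-1)^k\mu(1)^2\big)$. Up to lower-order corrections, I am therefore led to sums of the form
\begin{equation*}
\sum_{n \text{ of fixed parity}} \frac{\pi^6 n^2}{\pi^4 n^4+\omega^2}.
\end{equation*}
With the substitution $n = \omega^{1/2}x/\pi$, I will compare each of these with the integral $\omega^{-1/2}\pi^{-3}\int_0^\infty \frac{x^2}{1+x^4}\,dx$. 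The comparison error is controlled by the total variation of $x\mapsto x^2/(1+x^4)$ at scale $\omega^{-1/2}$, which gives an error $\mathcal O(\omega^{-1})$ relative to $\omega^{-1/2}$. The $1/n$ corrections in $c_n$ and $\lambda_{n,k}$ contribute sums like $\sum n/(n^4+\omega^2) = \mathcal O(\omega^{-1})$, which are harmless. I expect the constants to combine to exactly $\pi^2\sqrt2$, but I will double-check them, in particular the factor $2$ coming from the $n+k$ and $|n-k|$ branches. For $\mu\equiv1$ and $k$ even, $\widetilde\mu=0$ and $a_n$ is explicit, with $a_n = 2\sqrt2/\pi$ for $n$ odd and $0$ for $n$ even. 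Every correction is then an explicit rational function of $n$ summed over odd $n$, and Euler–Maclaurin (the summand is even and smooth in $n$) yields the sharper error $\mathcal O(\omega^{-7/2})$ overall. This gives \eqref{eq:prop:asymptotic_estimate_kernel_affine+rest_2}, with $\mu(0)^2+\mu(1)^2=2$.

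The main obstacle is the cross term affine $\times$ $\widetilde\mu$, which must be $\mathcal O(\omega^{-1/2-\nu})$ with precisely $\nu=\frac s2-\frac14$. Its summand is bounded by $C|\widetilde\mu_{n'}|\,n^3/(n^4+\omega^2)$, with $n'\in\{n,n\pm k\}$. Cauchy–Schwarz bounds the sum by
\begin{equation*}
\Big(\sum n^{2s}\widetilde\mu_n^2\Big)^{1/2}\Big(\sum_n \frac{n^{6-2s}}{(n^4+\omega^2)^2}\Big)^{1/2}.
\end{equation*}
The same scaling $n\sim\omega^{1/2}$ shows the second factor is $\mathcal O(\omega^{-1/4-s/2})$, using $s\le 2$ for summability near $n$ small and $6-2s>-1$. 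That is exactly $\omega^{-1/2-\nu}$. The $\widetilde\mu\times\widetilde\mu$ term is bounded as in Step 3 of Proposition \ref{prop:asymptotic_estimate_kernel_L2}: its summand is $\lesssim |\widetilde\mu_n||\widetilde\mu_{n'}|\,n^2/(n^4+\omega^2)$, and splitting $n^2/(n^4+\omega^2)\le \omega^{-\theta}n^{2s}\cdot n^{2-2s-4+2\theta}$ with a suitable $\theta$ gives a decay at least $\omega^{-1/2-\nu}$, since $s>\frac12$. Multiplying all these bounds by $\omega^{-2}$ gives \eqref{eq:prop:asymptotic_estimate_kernel_affine+rest_1}.
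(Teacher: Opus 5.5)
\textbf{Gap for $s\in(\frac32,2]$.} Your architecture matches the paper's. You start from the exact identity of Step 1 of Proposition~\ref{prop:asymptotic_estimate_kernel_L2} and isolate the affine$\times$affine part (the paper's $K_1$). You then bound the cross and $\widetilde\mu\times\widetilde\mu$ parts by Cauchy--Schwarz and a sup bound (the paper's $K_2$). Your parity splitting is equivalent to the paper's decomposition into a full sum plus an alternating sum.

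The problem is the error bookkeeping in the affine$\times$affine part. As you state them, two errors each leave a term of size $\omega^{-1}$ in the inner sum, hence $\mathcal O(\omega^{-3})$ in $K_{\mu,k}$:
\begin{itemize}
\item the ``$1+\mathcal O(1/n)$'' corrections, which give $\sum n/(n^4+\omega^2)\sim\omega^{-1}$;
\item the first-order (total variation) Riemann-sum comparison.
\end{itemize}
But $\omega^{-3}=\mathcal O(\omega^{-5/2-\nu})$ only if $\nu\le\frac12$, i.e.\ $s\le\frac32$. For $s\in(\frac32,2]$ you need at least $\mathcal O(\omega^{-13/4})$. So as written, your argument does not prove \eqref{eq:prop:asymptotic_estimate_kernel_affine+rest_1} on that range.

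\textbf{What is missing.} The paper obtains $K_1=-\pi^2\sqrt2\,(\mu(0)^2+(-1)^k\mu(1)^2)\,\omega^{-5/2}+\mathcal O(\omega^{-7/2})$ for every affine $\ell$, using two ingredients.

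First, there are no $1/n$ corrections. Since $a_{n+k}=a_{n-k}$, the two branches combine as $\frac{1}{(n+k)^2}+\frac{1}{(n-k)^2}=\frac{2}{n^2}+\mathcal O(n^{-4})$, with no $n^{-3}$ term. Also $\lambda_{n,k}=\pi^2n^2+\mathcal O(1)$. Hence for $n>k$ the summand differs from $a_na_{n+k}\,\frac{2\pi^6n^2}{\pi^4n^4+\omega^2}$ by $\mathcal O\big(1/(n^4+\omega^2)\big)$, and $\sum_n 1/(n^4+\omega^2)=\mathcal O(\omega^{-3/2})$.

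Second, you need a second-order quadrature for $g(\tau)=\tau^2/(\pi^4\tau^4+1)$, which satisfies $g(0)=0$ and $g''\in L^1$.
\begin{itemize}
\item The trapezoidal rule gives $\sum_{n\ge1}\frac{n^2}{\pi^4n^4+\omega^2}=\frac{1}{2\sqrt2\pi^2\sqrt\omega}+\mathcal O(\omega^{-3/2})$.
\item Pairing terms into second differences gives $\sum_{n\ge1}(-1)^n\frac{n^2}{\pi^4n^4+\omega^2}=\mathcal O(\omega^{-3/2})$.
\item In your parity language, this is the trapezoid rule on even $n$ and the midpoint rule on odd $n$, each with mesh $2$.
\end{itemize}
This is the same evenness/Euler--Maclaurin mechanism you already invoke for $\mu\equiv1$; you must run it for general $\ell$ too.

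\textbf{Two minor slips.}
\begin{itemize}
\item The $\widetilde\mu\times\widetilde\mu$ summand is $\lesssim|\widetilde\mu_n||\widetilde\mu_{n'}|\,n^4/(n^4+\omega^2)$, not $n^2/(n^4+\omega^2)$. Your splitting with $\theta=s$ (which uses $s\le2$) still gives $\mathcal O(\omega^{-s})$. This suffices because $s>\frac s2+\frac14$.
\item Each parity sum of $\pi^6n^2/(\pi^4n^4+\omega^2)$ is asymptotic to $\frac{\pi^3}{2}\,\omega^{-1/2}\int_0^\infty\frac{x^2}{1+x^4}\,dx$, not $\omega^{-1/2}\pi^{-3}\int_0^\infty\frac{x^2}{1+x^4}\,dx$. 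With this normalization the constant does come out as $\pi^2\sqrt2$.
\end{itemize}
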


\begin{proof}
    In this proof, the symbol $\lesssim$ is used for constants which may depend only on $k$, $\mu$ and $s$, and all $\mathcal{O}$ estimates are understood as $\omega \rightarrow +\infty$. For $n \geq 1$, write $\mu_n := \langle \mu, \varphi_n \rangle$, $\ell_n := \langle \ell, \varphi_n \rangle$ and $\widetilde{\mu}_n := \langle \widetilde{\mu}, \varphi_n \rangle$. One has
    \begin{equation}
        \ell_n = \frac{\sqrt{2} \left( \ell(0) - (-1)^n \ell(1) \right)}{\pi n} \text{ for all $n \geq 1$, and } \sum_{n \geq 1} n^{2s} \widetilde{\mu}_n^2 < + \infty.
    \end{equation}
    Note that $\ell(0) = \mu(0)$ and $\ell(1) = \mu(1)$, since $s > \frac{1}{2}$. By the proof of Proposition \ref{prop:asymptotic_estimate_kernel_L2}, one has
    \begin{equation}
        K_{\mu, k}(\omega) - \frac{\pi \sqrt{2} \left\langle \mu^2, \varphi_k^\prime \right\rangle}{k \omega^{2}} = - \frac{1}{\omega^2} \sum_{n \geq 1} \frac{\mu_n}{n} \left( \frac{\mu_{n+k} }{n+k} + \frac{ \mu_{\vert n-k \vert} }{\vert n-k \vert} \mathbbm{1}_{n \neq k} \right) \frac{\lambda_{n,k}^3}{ \lambda_{n,k}^2 + \omega^2}. 
    \end{equation}
    Set
    \begin{equation}
        K_1(\omega) := - \frac{1}{\omega^2} \sum_{n \geq 1} \frac{\ell_n}{n} \left( \frac{\ell_{n+k} }{n+k} + \frac{\ell_{\vert n-k \vert} }{\vert n-k \vert} \mathbbm{1}_{n \neq k} \right) \frac{\lambda_{n,k}^3}{ \lambda_{n,k}^2 + \omega^2},
    \end{equation}
    and $K_2(\omega) := K_{\mu, k}(\omega) - \frac{\pi \sqrt{2} \left\langle \mu^2, \varphi_k^\prime \right\rangle}{k \omega^{2}} - K_1(\omega)$.
    
    \textbf{Step 1: we prove that}
    \begin{equation}\label{eq:prop:asymptotic_estimate_kernel_affine+rest_step1_1}
        K_1(\omega) = - \frac{\pi^2 \sqrt{2} \left(\mu(0)^2 + (-1)^k \mu(1)^2 \right)}{\omega^{\frac{5}{2}}} + \mathcal{O}\left( \frac{1}{\omega^{\frac{7}{2}}} \right),
    \end{equation}
    when $\omega \rightarrow + \infty$. The contribution of the terms with $n \leq k$ is $\mathcal{O}(\omega^{-4})$, implying
    \begin{equation}
        K_1(\omega) = - \frac{1}{\omega^2} \sum_{n > k} \frac{\ell_n}{n} \left( \frac{\ell_{n+k} }{n+k} + \frac{\ell_{n-k} }{n-k} \right) \frac{\lambda_{n,k}^3}{ \lambda_{n,k}^2 + \omega^2} + \mathcal{O} \left( \frac{1}{\omega^4} \right).
    \end{equation}
    One has 
    \begin{equation}\label{eq:prop:asymptotic_estimate_kernel_affine+rest_step1_2}
        \begin{split}
            K_1(\omega) 
            = \ & - \frac{2}{\pi^2 \omega^2} \left( \ell(0)^2 + (-1)^k \ell(1)^2 \right) \sum_{n > k} \frac{1}{n^2} \left( \frac{1}{(n+k)^2} + \frac{1}{(n-k)^2} \right) \frac{\lambda_{n,k}^3}{ \lambda_{n,k}^2 + \omega^2} \\
            & + \frac{2}{\pi^2 \omega^2} \ell(0) \ell(1) \left( 1 + (-1)^k \right) \sum_{n > k} \frac{(-1)^n}{n^2} \left( \frac{1}{(n+k)^2} + \frac{1}{(n-k)^2} \right)  \frac{\lambda_{n,k}^3}{ \lambda_{n,k}^2 + \omega^2} \\
            & + \mathcal{O} \left( \frac{1}{\omega^4} \right).
        \end{split}
    \end{equation}
    For $n > k$, one has $n^2 - \frac{k^2}{2} \geq \frac{n^2}{2}$, implying that for all $\omega > 0$, 
    \begin{equation}\label{eq:prop:asymptotic_estimate_kernel_affine+rest_step1_2_bis}
        \left\vert \frac{1}{ \lambda_{n,k}^2 + \omega^2} - \frac{1}{\pi^4 n^4 + \omega^2} \right\vert \lesssim \frac{n^2}{\left( n^4 + \omega^2 \right)^2} \leq \frac{1}{n^2 \left( n^4 + \omega^2 \right)}.
    \end{equation}
    Using also
    \begin{equation}
        \left\vert \left( \frac{1}{(n+k)^2} + \frac{1}{(n-k)^2} \right) - \frac{2}{n^2} \right\vert \lesssim \frac{1}{n^4},
    \end{equation}
    one obtains
    \begin{equation}\label{eq:prop:asymptotic_estimate_kernel_affine+rest_step1_3}
        \left\vert \frac{1}{n^2} \left( \frac{1}{(n+k)^2} + \frac{1}{(n-k)^2} \right) \frac{\lambda_{n,k}^3}{ \lambda_{n,k}^2 + \omega^2} - \frac{2 \pi^6 n^2}{\pi^4 n^4 + \omega^2} \right\vert \lesssim \frac{1}{n^4 + \omega^2},
    \end{equation}
    for all $n > k$ and $\omega > 0$.

    Using \eqref{eq:prop:asymptotic_estimate_kernel_affine+rest_step1_3} and \eqref{eq:lem:asymptotic_sum_appendix_1} with $\alpha = 0$, in \eqref{eq:prop:asymptotic_estimate_kernel_affine+rest_step1_2}, one finds 
    \begin{equation}
        \begin{split}
            K_1(\omega) 
            = \ & - \frac{4\pi^4}{\omega^2} \left( \ell(0)^2 + (-1)^k \ell(1)^2 \right) \sum_{n > k} \frac{n^2}{\pi^4 n^4 + \omega^2} \\
            & + \frac{4 \pi^4}{\omega^2} \ell(0) \ell(1) \left( 1 + (-1)^k \right) \sum_{n > k} (-1)^n \frac{n^2}{\pi^4 n^4 + \omega^2}
            + \mathcal{O} \left( \frac{1}{\omega^{\frac{7}{2}}} \right).
        \end{split}
    \end{equation}
    Using \eqref{eq:lem:asymptotic_sum_appendix_3} and \eqref{eq:lem:asymptotic_sum_appendix_4}, this gives \eqref{eq:prop:asymptotic_estimate_kernel_affine+rest_step1_1}. In particular, \eqref{eq:prop:asymptotic_estimate_kernel_affine+rest_2} holds true.
    
    \textbf{Step 2: we prove that $K_2(\omega) = \mathcal{O}\left( \frac{1}{\omega^{\frac{5}{2}+\nu}} \right)$, where $\nu := \frac{s}{2} - \frac{1}{4} > 0$.}
    As above, the terms with $n \leq k$ yield a remainder term of size $\mathcal{O}\left( \frac{1}{\omega^4} \right)$, so they may be removed from the sums. Using also $\vert \ell_n \vert \lesssim \frac{\Vert \ell \Vert_{L^{\infty}}}{n}$ and $n^2 \geq n^2 - \frac{k^2}{2} \geq \frac{n^2}{2}$, for $n > k$, one finds
    \begin{equation}
        \begin{split}
            \left\vert K_2(\omega) \right\vert 
            \lesssim \ & \frac{1}{\omega^2} \sum_{n > k} \frac{n^6}{n^4 + \omega^2} 
            \left\{ 
                \left( \frac{\Vert \ell \Vert_{L^\infty}}{n^2} + \frac{\left\vert \widetilde{\mu}_n \right\vert}{n} \right) \left( \frac{\left\vert \widetilde{\mu}_{n+k} \right\vert}{n + k} + \frac{\left\vert \widetilde{\mu}_{n-k} \right\vert}{n - k} \right) \right. \\
                & \hspace{3cm} \left. + \frac{\left\vert \widetilde{\mu}_n \right\vert}{n} \left( \frac{\Vert \ell \Vert_{L^\infty}}{(n + k)^2} + \frac{\Vert \ell \Vert_{L^\infty}}{(n - k)^2} \right) 
            \right\} + \frac{1}{\omega^4} \\
            \lesssim \ & \frac{\Vert \ell \Vert_{L^\infty}}{\omega^2} \sum_{n \geq 1} \frac{n^3 \left\vert \widetilde{\mu}_n \right\vert }{n^4 + \omega^2} + \frac{1}{\omega^2} \sum_{n \geq 1} \frac{n^4 \left( \widetilde{\mu}_n \right)^2 }{n^4 + \omega^2}  + \frac{1}{\omega^4}.
        \end{split}
    \end{equation}
    First, the Cauchy-Schwarz inequality gives 
    \begin{equation}
        \sum_{n \geq 1} \frac{n^3 \left\vert \widetilde{\mu}_n \right\vert }{n^4 + \omega^2}
        \lesssim \left\Vert \widetilde{\mu} \right\Vert_{H_{(0)}^s} \left( \sum_{n \geq 1} \frac{n^{6-2s}}{\left( n^4 + \omega^2 \right)^2} \right)^{\frac{1}{2}} ,
    \end{equation}
    yielding
    \begin{equation}
        \sum_{n \geq 1} \frac{n^3 \left\vert \widetilde{\mu}_n \right\vert }{n^4 + \omega^2} = \mathcal{O}\left( \frac{1}{\omega^{\frac{s}{2} + \frac{1}{4}}} \right),
    \end{equation}
    by \eqref{eq:lem:asymptotic_sum_appendix_2}, applied with $\beta = 6 - 2s \in [2, 5)$. Second, one has
    \begin{equation}
        \sum_{n \geq 1} \frac{n^4 \left( \widetilde{\mu}_n \right)^2 }{n^4 + \omega^2} 
        \lesssim \frac{1}{\omega^{s}} \sup_{n \geq 1} \left( \frac{ \left(\frac{n}{\sqrt{\omega}}\right)^{4-2s} }{\left(\frac{n}{\sqrt{\omega}}\right)^4 + 1} \right) \left\Vert \widetilde{\mu} \right\Vert_{H_{(0)}^s}^2 \lesssim \frac{1}{\omega^{s}},
    \end{equation}
    since $s \leq 2$. Hence, one obtains 
    \begin{equation}
         K_2(\omega) = \mathcal{O}\left( \frac{1}{\omega^{2 + \frac{s}{2} + \frac{1}{4}}} \right) + \mathcal{O}\left( \frac{1}{\omega^{2 + s}} \right) + \mathcal{O}\left( \frac{1}{\omega^{4}} \right) = \mathcal{O}\left( \frac{1}{\omega^{\frac{5}{2} + \nu}} \right),
    \end{equation}
    since $s > \frac{1}{2}$.
\end{proof}

\subsubsection{Obstructions quantified by other fractional norms of the control}
 
To obtain fractional drifts, we need to consider source profiles $\mu$ with a sharp threshold of regularity, namely such that
\begin{equation}
    \mu \in \mathcal{H}^s_p(0, 1) \quad \text{ and } \quad \mu \notin \bigcup_{s^\prime > s} \mathcal{H}^{s^\prime}_p(0, 1).
\end{equation}
A simple way to construct such a profile is to prescribe its Fourier coefficients. For instance, one may set $\langle \mu, \varphi_n \rangle := \frac{1}{n^\alpha}$ if $p = \infty$, and $\langle \mu, \varphi_n \rangle := \frac{1}{n^\alpha \ln(n) }$ if $p = 2$, and then 
\begin{equation}
    \mu := \sum_{n \geq 1} \langle \mu, \varphi_n \rangle \varphi_n,
\end{equation}
where $\alpha$ is chosen so that the series converges in $\mathcal{H}^s_p(0,1)$. This purely spectral construction can be used to obtain asymptotic estimates for $K_{\mu,k}$ with fractional order $-2-s$, for any $s\in[-1,1]$, with a logarithmic factor in some cases. However, this construction gives no structural explanation of which features of the source profile $\mu$ are responsible for fractional drifts. We therefore replace this artificial prescription of Fourier coefficients by a more intrinsic construction, based on source profiles with a singularity of the form $x^\alpha$ in the physical space. When $\alpha<-1$, these profiles are no longer locally integrable near the singularity, and their Fourier coefficients have to be understood in the sense of finite-part distributions (see, for instance, \cite[Section~3.2]{HormanderI}).  The assumptions $x_0 = 1/k$ and $k \geq 2$ are made only to simplify the computations. The argument can be adapted to the case $k \geq 1$, and there are other admissible choices of $x_0$ for which \eqref{eq:prop:asymptotic_estimates_fractional_drifts} remains valid.
 
\begin{proposition}[Sufficient condition for the fractional drifts in small time]\label{prop:asymptotic_estimates_fractional_drifts}
    Let $s \in [-1, 1]$, with $s \notin \left\{- \frac{1}{2}, 0 \right\}$, $k \geq 2$, $x_0 := \frac{1}{k} \in (0, 1)$ and $\chi \in C^\infty([0, 1], \mathbb{R})$ be such that $\chi = 1$ in a neighborhood of $x_0$ and $\chi = 0$ in a neighborhood of $1$. For $x \in [0,1]$, set
    \begin{equation}
        \mathcal{M}(x) := 
        \begin{cases}
             0 & \text{ if } x \in [0, x_0] \\
             (x - x_0)^{s - \frac{1}{2}} \chi(x) & \text{ if } x \in (x_0, 1] 
        \end{cases} .
    \end{equation}
    If $s > - \frac{1}{2}$, then $\mathcal{M} \in L^1(0,1)$, and we naturally define $\langle \mathcal{M}, \varphi_n \rangle := \int_0^1 \mathcal{M} \varphi_n \dd x$. If $s < -\frac{1}{2}$, we then define $\langle \mathcal{M}, \varphi \rangle$ for $\varphi \in C^\infty([0, 1], \mathbb{R})$ by setting
    \begin{equation}\label{eq:def:mu_distribution_finite_part_2}
        \langle \mathcal{M}, \varphi \rangle := \lim_{\varepsilon \rightarrow 0^+} \left( \int_{x_0 + \varepsilon}^1 (x - x_0)^{s - \frac{1}{2}} \chi(x) \varphi(x) \dd x + \frac{\varphi(x_0) \varepsilon^{s + \frac{1}{2}}}{s + \frac{1}{2}} \right) .
    \end{equation}
    Then $\mathcal{M} \in \mathcal{H}^{s+\frac{1}{2}}_\infty(0,1)$. 
    There exists $\psi \in C_\mathrm{c}^\infty((0, 1), \mathbb{R})$ such that, setting $\mu := \mathcal{M} + \psi$, one has $\langle \mu, \varphi_k \rangle = 0$ if $s < 0$, and one has $\langle \mu, \varphi_k \rangle = \langle \mu^2, \varphi_k^\prime \rangle = 0$ if $s > 0$. Then
    \begin{equation}\label{eq:prop:asymptotic_estimates_fractional_drifts}
        K_{\mu,k}(\omega) = \frac{\alpha_s}{\omega^{2 + s}} + \mathcal{O}\left( \frac{1}{\omega^{2+s+\nu}} \right)
    \end{equation}
    when $\omega \rightarrow + \infty$, for some $\alpha_s \neq 0$, and some $\nu > 0$.
\end{proposition}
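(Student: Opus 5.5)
We work throughout with the Fourier coefficients $\mu_n = \mathcal{M}_n + \psi_n$. Since $\psi$ is smooth and compactly supported, $\psi_n = \mathcal{O}(n^{-N})$ for every $N$. The first task is a sharp asymptotic expansion of $\mathcal{M}_n := \langle \mathcal{M}, \varphi_n\rangle$.

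\textbf{Step 1: Fourier coefficients of $\mathcal{M}$.} Write $\varphi_n(x) = \sqrt{2}\,\mathrm{Im}\, e^{in\pi x}$. The classical Fourier asymptotics of a one-sided homogeneous singularity $(x-x_0)_+^{a}$ with $a = s-\frac12$, cut off by $\chi$, give
\begin{equation*}
\mathcal{M}_n = \sqrt{2}\,\mathrm{Im}\Big( e^{in\pi x_0}\,\Gamma(a+1)\,(-in\pi)^{-a-1}\Big) + \mathcal{O}(n^{-N}).
\end{equation*}
The endpoint $x=1$ contributes nothing because $\chi=0$ near $1$. This is obtained by Watson-type integration by parts, or by the finite-part regularization \eqref{eq:def:mu_distribution_finite_part_2} when $a<-1$; that regularization is exactly analytic continuation in $a$ of the Gamma formula, valid for $a\notin -\mathbb{N}$, i.e.\ $s\neq -\frac12$. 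Hence $\mathcal{M}_n = c\, n^{-s-\frac12}\sin(n\pi x_0 - \tfrac{\pi}{2}(s+\tfrac12)) + \mathcal{O}(n^{-N})$ for an explicit constant $c\neq0$, which in particular gives $\mathcal{M}\in\mathcal{H}_\infty^{s+\frac12}$. The choice $x_0=1/k$ makes the phase $e^{in\pi x_0}$ periodic in $n$ with period $2k$, so that $e^{i(n\pm k)\pi x_0} = -e^{in\pi x_0}$. The products $\mu_n\mu_{n\pm k}$ therefore have an explicit trigonometric structure.

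\textbf{Step 2: choice of $\psi$.} The maps $\psi\mapsto \langle\psi,\varphi_k\rangle$ and $\psi\mapsto\langle(\mathcal M+\psi)^2,\varphi_k'\rangle$ are used to cancel the unwanted lower-order drifts.
\begin{itemize}
\item If $s<0$, choose $\psi$ supported away from $x_0$ with $\langle\psi,\varphi_k\rangle = -\mathcal{M}_k$.
\item If $s>0$, then $\mathcal M\in L^2$, and $\mathcal{M}\psi$ makes sense. Take $\psi = a\psi_1 + b\psi_2$, with $\psi_1,\psi_2$ supported in $(0,x_0)$, where $\mathcal{M}=0$, so that $\langle\mu^2,\varphi_k'\rangle = \langle \mathcal{M}^2,\varphi_k'\rangle + \langle \psi^2,\varphi_k'\rangle$. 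Impose the linear condition $\langle\psi,\varphi_k\rangle=-\mathcal M_k$ and the quadratic condition $\langle\psi^2,\varphi_k'\rangle = -\langle\mathcal M^2,\varphi_k'\rangle$.
\end{itemize}
For the quadratic condition we need $\varphi_k'$ to take both signs on $(0,x_0)=(0,1/k)$. It does, since $\cos(k\pi x)$ changes sign at $x=1/(2k)$. Choose $\psi_2$ with $\langle\psi_2,\varphi_k\rangle=0$, with $\psi_2^2$ weighted where $\varphi_k'$ has the appropriate sign, and with support disjoint from that of $\psi_1$. The system then reduces to $a$ determined linearly and $b^2\langle\psi_2^2,\varphi_k'\rangle$ equal to a prescribed real number, which is solvable.

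\textbf{Step 3: asymptotics of $K_{\mu,k}$.} When $s>0$, $\mu\in L^2$, so by the proof of Proposition \ref{prop:asymptotic_estimate_kernel_L2} and $\langle\mu^2,\varphi_k'\rangle=0$,
\begin{equation*}
K_{\mu,k}(\omega) = -\frac{1}{\omega^2}\sum_n \frac{\mu_n}{n}\Big(\frac{\mu_{n+k}}{n+k}+\frac{\mu_{|n-k|}}{|n-k|}\mathbbm{1}_{n\neq k}\Big)\frac{\lambda_{n,k}^3}{\lambda_{n,k}^2+\omega^2}.
\end{equation*}
When $s<0$, we use the definition \eqref{eq:def:K_mu_k} directly.

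Insert the Step 1 expansion into whichever formula applies. The leading product is
\begin{equation*}
\frac{\mu_n}{n}\Big(\frac{\mu_{n+k}}{n+k}+\frac{\mu_{n-k}}{n-k}\Big) \approx \frac{2c^2}{n^{2s+3}}\,\sin\theta_n\,\sin(\theta_n+k\pi x_0),
\qquad \theta_n = n\pi x_0-\tfrac{\pi}{2}(s+\tfrac12),
\end{equation*}
and by Step 1, $\sin(\theta_n+k\pi x_0) = -\sin\theta_n$. The product $\sin\theta_n\sin(\theta_n+k\pi x_0)$ thus equals $-\sin^2\theta_n = -\frac12 + \frac12\cos(2\theta_n)$. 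It splits into a non-oscillating mean $-\frac12$, plus an oscillating part that is periodic in $n$ with zero mean over a period.

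The non-oscillating part produces sums of the type $\sum_n n^{\beta}/(\pi^4n^4+\omega^2)$. By Lemma \ref{lem:asymptotic_sum_appendix} these behave like $\omega^{(\beta-3)/2}\int_0^\infty x^\beta/(\pi^4x^4+1)\,\dd x$, which yields $\alpha_s\omega^{-2-s}$ with $\alpha_s$ a nonzero multiple of $c^2$ times a Beta-function value. Exclusion of $s=0$ avoids a $\log$ from the $\omega^{-2}$ threshold. The oscillating part gives only $\mathcal{O}$ corrections of smaller order, by summation by parts or by the alternating-type estimates of the same lemma. Corrections from $(n\pm k)^{-s-1/2}$ versus $n^{-s-1/2}$ and from $\lambda_{n,k}$ versus $\pi^2n^2$ gain a factor $n^{-2}$, hence $\omega^{-1}$. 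Contributions of $\psi_n$ and of small $n$ are $\mathcal{O}(\omega^{-2-s-\nu})$ by the Cauchy–Schwarz argument of Proposition \ref{prop:asymptotic_estimate_kernel_affine+rest}, Step 2.

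\textbf{Main obstacle.} The hardest step is verifying that $\alpha_s\neq0$ while controlling the convergence of the sums. For $s<0$ the raw sum in \eqref{eq:def:K_mu_k} is only borderline convergent. The sign of $\lambda_{n,k}/(\lambda_{n,k}^2+\omega^2)$ combined with the $\omega^{-2}$ term must match the cancellation; here the identity $A_k=0$ is unavailable since $\mu\notin L^2$. The approach would be to split the sum at $n\sim\sqrt\omega$ and compute the scaling limit as an integral, checking that the resulting Mellin-type constant, roughly $\Gamma(s+\tfrac12)^2$ times a trigonometric factor, does not vanish away from $s\in\{-\frac12,0\}$.
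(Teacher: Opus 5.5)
Your proposal is correct and follows essentially the paper's route. The shared steps are: Gamma-function asymptotics for $\langle \mathcal{M},\varphi_n\rangle$ (the paper handles $s<-\frac12$ by one integration by parts rather than by analytic continuation); the phase flip $\sin(\theta_n\pm\pi)=-\sin\theta_n$ coming from $x_0=\frac1k$; the split $\sin^2\theta_n=\frac12-\frac12\cos(2\theta_n)$, treated by Riemann-sum asymptotics and by zero mean over a period; and the $\lambda_{n,k}^3$ representation when $s>0$. Your choice of $\psi$ supported in $(0,x_0)$, where $\mathcal{M}=0$, is a slightly simpler variant of the paper's Lemma~\ref{lem:correction_moment_and_first_drift}, which allows cross terms and uses the intermediate value theorem.

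The worry in your last paragraph is unnecessary. For $s<0$ the series defining $K_{\mu,k}$ converges absolutely for each fixed $\omega$; only the would-be $\omega^{-2}$ coefficient diverges. Every $\mathcal{O}(\omega^{-2})$ contribution is then simply of lower order than $\omega^{-2-s}$, so no cancellation is needed. Moreover, $\alpha_s\neq 0$ is immediate, since the period average of $\sin^2\theta_n$ is $\frac12$ and $\Gamma(s+\frac12)\neq 0$. Finally, your phase should read $\theta_n=n\pi x_0+\frac{\pi}{2}(s+\frac12)$, which changes nothing in the argument.
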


\begin{remark}
    Directly from \eqref{eq:def:mu_distribution_finite_part_1} and \eqref{eq:def:mu_distribution_finite_part_2}, one finds the equivalent definition
    \begin{equation}\label{eq:def:mu_distribution_finite_part_4}
        \langle \mathcal{M}, \varphi \rangle = \int_{x_0}^1 (x - x_0)^{s - \frac{1}{2}} \left( \chi(x) \varphi(x) - \varphi(x_0) \right) \dd x 
        + \frac{\varphi(x_0) (1-x_0)^{s + \frac{1}{2}}}{s + \frac{1}{2}}  \quad \text{ if $s \in \left[-1, -\frac{1}{2} \right)$.}
    \end{equation}
\end{remark}

\begin{remark}\label{rem:after:prop:asymptotic_estimates_fractional_drifts}
    If $s=-\frac{1}{2}$, then one can also define $\mathcal{M}$ naturally by
    \begin{equation}\label{eq:def:mu_distribution_finite_part_1}
            \langle \mathcal{M}, \varphi \rangle := \lim_{\varepsilon \rightarrow 0^+} \left( \int_{x_0 + \varepsilon}^1 (x - x_0)^{s - \frac{1}{2}} \chi(x) \varphi(x) \dd x + \varphi(x_0) \ln(\varepsilon) \right).
    \end{equation}
    However, the corresponding Fourier coefficients contain a logarithmic factor, and the same is true for the kernel. One could introduce a modified version of \eqref{eq:def:mu_distribution_finite_part_1} leading to the asymptotic behavior $\omega^{-(2+s)}$ when $s=-\frac{1}{2}$, but for simplicity we choose to exclude this case.
\end{remark}
    
\begin{proof}
    \textbf{Step 1: a general identity.}
    We prove that for $\alpha > -1$ and $N > \alpha + 1$, there exists $C_N > 0$ such that for all $n$ sufficiently large, one has
    \begin{equation}\label{eq:proof:prop:asymptotic_estimates_fractional_drifts_step_1_1}
        \left\vert \int_{x_0}^1 (x-x_0)^\alpha \chi(x) e^{i n \pi x} \dd x - \frac{\Gamma(\alpha + 1)}{(\pi n)^{\alpha + 1}} e^{ i\pi n x_0} e^{i \pi \frac{(\alpha + 1)}{2} } \right\vert \leq \frac{C_N}{n^N},
    \end{equation}
    where $\Gamma(z):= \int_0^\infty t^{z-1}e^{-t} \dd t$ is the standard $\Gamma$ function. 
    To ensure convergence of oscillatory integrals, write
    \begin{equation}\label{eq:proof:prop:asymptotic_estimates_fractional_drifts_step_1_1_bis}
        \begin{split}
            \int_{x_0}^1 (x-x_0)^\alpha \chi(x) e^{i\pi n x} \dd x 
            & = \frac{e^{i n \pi x_0}}{(n\pi)^{\alpha+1}} \int_0^{n\pi(1-x_0)} x^\alpha \chi\left( x_0 + \frac{x}{n\pi} \right) e^{i x } \dd x \\
            & = \frac{e^{i n \pi x_0}}{(n\pi)^{\alpha+1}} \lim_{\varepsilon \to 0^+} \int_0^{n\pi(1-x_0)} x^\alpha \chi\left( x_0 + \frac{x}{n\pi} \right) e^{i x - \varepsilon x } \dd x.
        \end{split}
    \end{equation}
    Let $\varepsilon > 0$. One has
    \begin{equation}
        \begin{split}
            & \int_0^{n\pi(1-x_0)} x^\alpha \chi\left( x_0 + \frac{x}{n\pi} \right) e^{i x - \varepsilon x } \dd x \\
            = \ & \int_0^{\infty} x^\alpha e^{i x - \varepsilon x } \dd x + \int_0^{\infty} x^\alpha \left( \chi\left( x_0 + \frac{x}{n\pi} \right) - 1 \right) e^{i x - \varepsilon x } \dd x
            =: A_\varepsilon + B_\varepsilon,
        \end{split}
    \end{equation}
    where $\chi(x) := 0$ for $x\geq 1$. Classical contour integration gives
    \begin{equation}
        \int_0^{\infty} x^\alpha e^{i x - \varepsilon x } \dd x 
        = \frac{1}{(\varepsilon - i)^{\alpha +1}} \int_0^{\infty} x^\alpha e^{-x} \dd x ,
    \end{equation}
    implying 
    \begin{equation}
        \lim_{\varepsilon \to 0^+} A_\varepsilon = e^{\frac{i\pi(\alpha+1)}{2}} \Gamma(\alpha + 1).
    \end{equation}
    We prove below that for all $N > \alpha + 1$, there exists $C_N > 0$ independent of $\varepsilon$ and $n$ such that 
    \begin{equation}\label{eq:proof:prop:asymptotic_estimates_fractional_drifts_step_1_2}
        \left\vert B_\varepsilon \right\vert \leq \frac{C}{n^{N-\alpha-1}}.
    \end{equation}
    Hence, letting $\varepsilon \rightarrow 0^+$ in the estimate
    \begin{equation}
        \left\vert \int_0^{n\pi(1-x_0)} x^\alpha \chi\left( x_0 + \frac{x}{n\pi} \right) e^{i x - \varepsilon x } \dd x - A_\varepsilon \right\vert \leq \vert B_\varepsilon \vert \leq \frac{C}{n^{N-\alpha-1}},
    \end{equation}
    one finds \eqref{eq:proof:prop:asymptotic_estimates_fractional_drifts_step_1_1} (remember the $\frac{1}{n^{\alpha + 1}}$ factor from \eqref{eq:proof:prop:asymptotic_estimates_fractional_drifts_step_1_1_bis}).
    
    It remains to prove \eqref{eq:proof:prop:asymptotic_estimates_fractional_drifts_step_1_2}. Note that the function
    \begin{equation}
        \eta: x \in (0, +\infty) \mapsto x^\alpha \left( \chi\left( x_0 + \frac{x}{n\pi} \right) - 1 \right) 
    \end{equation}
    is smooth and equals $0$ on $(0, n \pi \delta)$, for some $\delta > 0$. Performing $N$ integrations by parts thus gives
    \begin{equation}
        \left\vert B_\varepsilon \right\vert \leq \frac{1}{\vert \varepsilon - i \vert^{N}}  \int_0^\infty \left\vert \eta^{(N)}(x) \right\vert \dd x \leq \int_0^\infty \left\vert \eta^{(N)}(x) \right\vert \dd x.
    \end{equation}
    If $\alpha - N < -1$, then 
    \begin{equation}
        \begin{split}
            & \int_0^\infty \left\vert \eta^{(N)}(x) \right\vert \dd x \\
            \leq \ & C \int_{n \pi \delta}^\infty \left\vert \chi\left( x_0 + \frac{x}{n\pi} \right) - 1 \right\vert x^{\alpha - N} \dd x  + C \sum_{p = 1}^N \frac{1}{n^p} \int_{n \pi \delta}^\infty \left\vert \chi^{(p)}\left( x_0 + \frac{x}{n\pi} \right) \right\vert x^{\alpha - N + p} \dd x \\
            = \ & C n^{\alpha - N + 1} \int_{\pi \delta}^\infty \left\vert \chi\left( x_0 + \frac{x}{\pi} \right) - 1 \right\vert x^{\alpha - N} \dd x + C n^{\alpha - N + 1} \sum_{p = 1}^N \int_{\pi \delta}^\infty \left\vert \chi^{(p)}\left( x_0 + \frac{x}{\pi} \right) \right\vert x^{\alpha - N + p} \dd x \\
            \leq \ & C^\prime n^{\alpha - N + 1},
        \end{split}
    \end{equation}
    for some $C, C^\prime > 0$ independent of $\varepsilon$ and $n$. This gives \eqref{eq:proof:prop:asymptotic_estimates_fractional_drifts_step_1_2}.
    
    \textbf{Step 2: computation of the Fourier coefficients of $\mathcal{M}$.} 
    Here, all $\mathcal{O}$ estimates are understood as $n \rightarrow +\infty$. If $s > - \frac{1}{2}$, then Step 1 gives
    \begin{equation}\label{eq:proof:prop:asymptotic_estimates_fractional_drifts_step_2_1}
        \langle \mathcal{M}, \varphi_n \rangle = \frac{\sqrt{2} \Gamma\left( s + \frac{1}{2} \right)}{(\pi n)^{s + \frac{1}{2}}} \sin\left( \pi n x_0 + \pi \left( \frac{s}{2} + \frac{1}{4} \right) \right) + \mathcal{O}\left(\frac{1}{n^N}\right).
    \end{equation}
    Now, assume that $s \in \left[-1, -\frac{1}{2} \right)$. Integration by parts in \eqref{eq:def:mu_distribution_finite_part_4} gives
    \begin{equation}
        \langle \mathcal{M}, \varphi_n \rangle = - \frac{1}{s + \frac{1}{2}} \int_{x_0}^1 (x - x_0)^{s + \frac{1}{2}} \left( \chi^\prime(x) \varphi_n(x) + \chi(x) \varphi_n^\prime(x) \right) \dd x .
    \end{equation}
    Step 1 gives 
    \begin{equation}
        \begin{split}
            \int_{x_0}^1 (x - x_0)^{s + \frac{1}{2}}  \chi(x) \varphi_n^\prime(x) \dd x 
            & = \sqrt{2} n \pi \Re \int_{x_0}^1 (x - x_0)^{s + \frac{1}{2}} \chi(x) e^{i \pi n x} \dd x \\
            & = \sqrt{2} n \pi \frac{\Gamma(s + \frac{3}{2})}{(\pi n)^{s + \frac{3}{2}}} \cos \left( \pi n x_0 + \frac{\pi \left( s + \frac{3}{2} \right)}{2} \right) + \mathcal{O}\left( \frac{1}{n^N} \right).
        \end{split}
    \end{equation}
    Using $\cos \left( \pi n x_0 + \frac{\pi \left( s + \frac{3}{2} \right)}{2} \right) = - \sin \left( \pi n x_0 + \frac{\pi \left( s + \frac{1}{2} \right)}{2} \right)$ and $\Gamma(z+1) = z \Gamma(z)$, one obtains that 
    \begin{equation}
        - \frac{1}{s + \frac{1}{2}} \int_{x_0}^1 (x - x_0)^{s + \frac{1}{2}} \chi(x) \varphi_n^\prime(x) \dd x 
        = \frac{\sqrt{2} \Gamma\left( s + \frac{1}{2} \right)}{(\pi n)^{s + \frac{1}{2}}} \sin\left( \pi n x_0 + \pi \left( \frac{s}{2} + \frac{1}{4} \right) \right) + \mathcal{O}\left(\frac{1}{n^N}\right).
    \end{equation}
    Since $\chi^\prime = 0$ in a neighbourhood of $x_0$, performing $N$ integrations by parts gives
    \begin{equation}
        - \frac{1}{s + \frac{1}{2}} \int_{x_0}^1 (x - x_0)^{s + \frac{1}{2}} \chi^\prime(x) \varphi_n(x) \dd x = \mathcal{O}\left(\frac{1}{n^N}\right),
    \end{equation}
    implying that \eqref{eq:proof:prop:asymptotic_estimates_fractional_drifts_step_2_1} also holds when $s \in \left[-1, -\frac{1}{2} \right)$. 

    \textbf{Step 3: proof of the existence of $\psi$ satisfying the desired properties.}
    Assume that $s < 0$, and let $\psi \in C^\infty_{\mathrm{c}}((0,1); \mathbb{R})$ be such that $\langle \psi, \varphi_k \rangle \neq 0$. Then
    \begin{equation}
        \mu := \mathcal{M} - \frac{\langle \mathcal{M}, \varphi_k \rangle}{\langle \psi, \varphi_k \rangle } \psi
    \end{equation}
    satisfies $\langle \mu, \varphi_k \rangle = 0$, which is the only required property. If $s > 0$, we use the following general result, whose proof is given below.
    
    \begin{lemma}\label{lem:correction_moment_and_first_drift}
        Let $\mu \in L^2((0, 1); \mathbb{R})$ and $k \geq 1$. There exists $\psi \in C^\infty_{\mathrm{c}}((0,1); \mathbb{R})$ such that $\langle \mu+\psi, \varphi_k \rangle = \left\langle \left( \mu + \psi \right)^2, \varphi_k^\prime \right\rangle = 0$.
    \end{lemma}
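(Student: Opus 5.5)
We have to solve two equations, one linear in $\psi$ and one quadratic:
\[
F_1(\psi) := \langle \mu+\psi,\varphi_k\rangle = 0, \qquad F_2(\psi) := \langle (\mu+\psi)^2,\varphi_k'\rangle = 0 .
\]
The plan is to look for $\psi$ in a small finite-dimensional family of bump functions. Concretely, fix $\psi_1,\psi_2,\psi_3 \in C^\infty_{\mathrm c}((0,1))$ and set $\psi = a\psi_1 + b\psi_2 + c\psi_3$. We then need to solve, in $(a,b,c)\in\mathbb{R}^3$, one linear equation
\[
\langle\mu,\varphi_k\rangle + a\langle\psi_1,\varphi_k\rangle + b\langle\psi_2,\varphi_k\rangle + c\langle\psi_3,\varphi_k\rangle = 0
\]
together with one quadratic equation
\[
\langle\mu^2,\varphi_k'\rangle + 2\langle\mu\psi,\varphi_k'\rangle + \langle\psi^2,\varphi_k'\rangle = 0 .
\]

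The simplest route is to exploit disjoint supports and the sign changes of $\varphi_k'$. Since $\varphi_k'(x) = \sqrt2 k\pi\cos(k\pi x)$, which is positive near $0$, we can pick two tiny open intervals $J_+$, $J_-$ on which $\varphi_k'>0$ and $\varphi_k'<0$ respectively. (For $k=1$, $\varphi_1'$ changes sign at $1/2$, so such intervals exist for every $k$.) Take $\psi_2 = \beta_+$ supported in $J_+$ and $\psi_3 = \beta_-$ supported in $J_-$, and use the scaling $b = t$, $c=\tau$ large.

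The quadratic equation becomes
\[
P(b,c) := \langle\mu^2,\varphi_k'\rangle + 2\langle \mu (a\psi_1 + b\beta_+ + c\beta_-),\varphi_k'\rangle + a^2\langle\psi_1^2,\varphi_k'\rangle + b^2 p_+ - c^2 p_- + (\text{cross terms}) = 0,
\]
where $p_\pm := \pm\langle\beta_\pm^2,\varphi_k'\rangle > 0$. The cross terms vanish if $\psi_1$, $\beta_+$, $\beta_-$ have pairwise disjoint supports. For $(b,c)$ large, the quadratic part $b^2 p_+ - c^2 p_-$ is indefinite, so the equation should be solvable along a curve, while the linear constraint should fix one more parameter.

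To handle both constraints cleanly, I will choose $\beta_\pm$ so that $\langle\beta_\pm,\varphi_k\rangle$ are not both zero. This is possible by shifting the bumps slightly, since $\varphi_k$ is nonzero on most of the small intervals. I also take $\psi_1$ with $\langle\psi_1,\varphi_k\rangle\ne0$, supported away from the other bumps.

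Next, I use the linear equation to solve for $a$ as an affine function of $(b,c)$: $a = a_0 + a_1 b + a_2 c$. Substituting into $F_2$ gives a quadratic polynomial
\[
Q(b,c) = q_{bb}b^2 + 2q_{bc}bc + q_{cc}c^2 + (\text{linear}) + \text{const}.
\]
The key step is to show that its quadratic part is indefinite, because an indefinite real quadratic form in two variables takes arbitrarily large values of both signs along rays. By continuity, $Q$ then has a real zero regardless of the lower-order terms. To make the indefiniteness easy to check, I will arrange $\langle\beta_\pm,\varphi_k\rangle = 0$ individually; this can be done by choosing each $\beta_\pm$ orthogonal to $\varphi_k$ on its tiny interval, for instance by taking the derivative of a bump times a suitable function. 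Then $a$ is constant, and the quadratic part is exactly $p_+b^2 - p_-c^2$, which is indefinite.

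The main obstacle is ensuring that $\beta_\pm$ can be chosen simultaneously with $\langle\beta_\pm,\varphi_k\rangle = 0$ and $\langle\beta_\pm^2,\varphi_k'\rangle \gtrless 0$. The second condition is automatic from the support choice, since the integrand $\beta_\pm^2\varphi_k'$ has a fixed sign on $J_\pm$. The first is an orthogonality condition on a small interval, satisfied for example by $\beta = \gamma'$ with $\gamma$ chosen so that $\int \gamma'\varphi_k = -\int\gamma\varphi_k' = 0$. This is a single linear condition on $\gamma$, which holds for many choices of $\gamma$, e.g. by combining two bumps.

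Finally, $\mu\in L^2$ guarantees that every pairing $\langle\mu\psi,\varphi_k'\rangle$ is finite, so the polynomial $Q$ has finite coefficients. Choosing $(b,c)$ on its zero set then yields the desired $\psi$.
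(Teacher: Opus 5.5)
Your argument is correct and follows the same strategy as the paper. You absorb $\langle \mu,\varphi_k\rangle$ with one fixed bump, then add multiples of bumps orthogonal to $\varphi_k$ whose squares pair with $\varphi_k'$ with a prescribed sign, and conclude by continuity.

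The difference lies in how the sign-controlled orthogonal bump is built. The paper reduces to $k=1$ and takes $\psi_+=\rho_1-c\rho_2$. Here $\rho_1$ is concentrated on $(0,\delta)$ and $\rho_2$ on $(\frac{1}{2},\frac{1}{2}+\delta)$, which lies on the side where $\varphi_1'<0$. The paper therefore needs small-$\delta$ expansions of four moments to check that the contribution $c_3\delta$ of $\rho_1$ dominates, and then gets $\psi_-$ by the reflection $x\mapsto 1-x$.

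You instead put the whole support of $\beta_\pm$ inside an interval where $\varphi_k'$ has a fixed sign, and impose $\langle\beta_\pm,\varphi_k\rangle=0$ by a sign-changing combination on that same interval. Then $\pm\langle\beta_\pm^2,\varphi_k'\rangle>0$ is immediate. This is more elementary: it needs no asymptotics, no reduction to $k=1$ and no reflection. Your two-parameter indefinite-form argument is equivalent to the paper's one-parameter intermediate value argument. Setting $c=0$ or $b=0$ according to the sign of $Q(0,0)$ already suffices, so only one of $\beta_\pm$ is actually needed.

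Two small clean-ups. First, delete the sentence asking that $\langle\beta_\pm,\varphi_k\rangle$ be ``not both zero''; it contradicts your later choice $\langle\beta_\pm,\varphi_k\rangle=0$. Second, the detour through $\beta=\gamma'$ is unnecessary. A nontrivial linear combination of two linearly independent functions in $C^\infty_{\mathrm{c}}(J_\pm)$ that is orthogonal to $\varphi_k$ works directly.
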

    
    \textbf{Step 4: proof of \eqref{eq:prop:asymptotic_estimates_fractional_drifts}.} 
    Here, all $\mathcal{O}$ estimates are understood as $\omega \rightarrow +\infty$. Let $\psi \in C_\mathrm{c}^\infty((0, 1), \mathbb{R})$, and set $\mu := \mathcal{M} + \psi$. For all $N \geq 1$, there exists $C_N > 0$ such that for all $n$ sufficiently large,
    \begin{equation}\label{eq:proof:prop:asymptotic_estimates_fractional_drifts_step_3_1}
        \left\vert \langle \psi, \varphi_n \rangle \right\vert \leq \frac{C_N}{n^N}.
    \end{equation}
    Set $A_s := \frac{\sqrt{2} \Gamma\left( s + \frac{1}{2} \right)}{\pi^{s + \frac{1}{2}}} \neq 0$, $\theta = \pi x_0 = \frac{\pi}{k}$, and $\rho_s := \pi \left( \frac{s}{2} + \frac{1}{4} \right)$. 
    
    \underline{Proof of \eqref{eq:prop:asymptotic_estimates_fractional_drifts} in the case $s < 0$.} Assume $s < 0$. Using \eqref{eq:proof:prop:asymptotic_estimates_fractional_drifts_step_3_1}, one finds 
    \begin{equation}
        K_{\mu, k}(\omega) = \sum_{n \geq 1} \frac{\langle \mathcal{M}, \varphi_n \rangle}{n}  \left( \frac{\langle \mathcal{M}, \varphi_{n+k} \rangle}{n+k} + \frac{\langle \mathcal{M}, \varphi_{\vert n-k \vert} \rangle}{\vert n-k \vert} \mathbbm{1}_{n \neq k} \right) \frac{\lambda_{n,k}}{ \lambda_{n,k}^2 + \omega^2} + \mathcal{O}\left( \frac{1}{\omega^2} \right).
    \end{equation}
    Using \eqref{eq:proof:prop:asymptotic_estimates_fractional_drifts_step_2_1}, one finds
    \begin{equation}
        \begin{split}
            K_{\mu, k}(\omega) = \ & A_s^2 \sum_{n \geq 1} \frac{\sin(\theta n + \rho_s)}{n^{s + \frac{3}{2}}}  \left( \frac{\sin\left(\theta(n+k) + \rho_s\right)}{(n+k)^{s + \frac{3}{2}}} + \frac{\sin\left(\theta \vert n - k \vert + \rho_s\right)}{\vert n-k \vert^{s + \frac{3}{2}}} \mathbbm{1}_{n \neq k} \right) \frac{\lambda_{n,k}}{ \lambda_{n,k}^2 + \omega^2} \\
            & + \mathcal{O}\left( \frac{1}{\omega^2} \right).
        \end{split}
    \end{equation}
    Since 
    \begin{equation}
        \sum_{n = 1}^{k-1} \frac{\sin(\theta n + \rho_s)}{n^{s + \frac{3}{2}}} \frac{\sin\left(\theta \vert n - k \vert + \rho_s\right)}{\vert n-k \vert^{s + \frac{3}{2}}} \frac{\lambda_{n,k}}{ \lambda_{n,k}^2 + \omega^2} = \mathcal{O}\left( \frac{1}{\omega^2} \right),
    \end{equation}
    one has
    \begin{equation}
        K_{\mu, k}(\omega) = A_s^2 \sum_{n \geq 1} \frac{\sin(\theta n + \rho_s) \sin\left(\theta(n+k) + \rho_s\right)}{n^{s + \frac{3}{2}} (n+k)^{s + \frac{3}{2}}}  \left( \frac{\lambda_{n,k}}{ \lambda_{n,k}^2 + \omega^2} + \frac{\lambda_{n+k,k}}{ \lambda_{n+k,k}^2 + \omega^2} \right) + \mathcal{O}\left( \frac{1}{\omega^2} \right).
    \end{equation} 
    For $n > k$, using 
    \begin{equation}
        \left\vert \left( 1 + \frac{k}{n} \right)^{s + \frac{1}{2}} - 1 \right\vert \leq \frac{C}{n},
    \end{equation}
    for some $C>0$ independent of $n$, and arguing as in the proof of Proposition \ref{prop:asymptotic_estimate_kernel_affine+rest}, one finds
    \begin{equation}
        \left\vert \frac{1}{n^{s + \frac{3}{2}} (n+k)^{s + \frac{3}{2}}}  \left( \frac{\lambda_{n,k}}{ \lambda_{n,k}^2 + \omega^2} + \frac{\lambda_{n+k,k}}{ \lambda_{n+k,k}^2 + \omega^2} \right) -  \frac{2 \pi^2 }{ n^{2s + 1} \left( \pi^4 n^4 + \omega^2 \right)} \right\vert
        \leq \frac{C^\prime}{ n^{2s + 2} \left( n^4 + \omega^2 \right)},
    \end{equation}
    for some $C^\prime > 0$ independent of $n$. By \eqref{eq:lem:asymptotic_sum_appendix_1} with $\alpha = - 2s - 1 \in (-1, 1]$, this gives
    \begin{equation}
        K_{\mu, k}(\omega) = 2 \pi^2 A_s^2 \sum_{n \geq 1} \frac{\sin(\theta n + \rho_s) \sin\left(\theta(n+k) + \rho_s\right)}{ n^{2s + 1} \left( \pi^4 n^4 + \omega^2 \right)} + \mathcal{O}\left( \frac{1}{\omega^{\frac{5}{2}+s}} + \mathcal{O}\left( \frac{1}{\omega^2} \right) \right).
    \end{equation} 
    Since $x_0 = \frac{1}{k}$, one has $\sin(\theta n + \rho_s) \sin\left(\theta(n+k) + \rho_s\right) = - \sin(\theta n + \rho_s)^2$. Hence, \eqref{eq:lem:asymptotic_sum_appendix_5} yields
    \begin{equation}
        K_{\mu, k}(\omega) = - \frac{\pi^2 A_s^2}{\omega^{s+2}} \int_0^\infty \frac{1}{ t^{2s + 1} \left( \pi^4 t^4 + 1 \right)} \dd t + \mathcal{O}\left( \frac{1}{\omega^{2+s+\nu}} \right),
    \end{equation} 
    for some $\nu > 0$, which is \eqref{eq:prop:asymptotic_estimates_fractional_drifts}.
    
    \underline{Proof of \eqref{eq:prop:asymptotic_estimates_fractional_drifts} in the case $s > 0$.} Assume that $s > 0$, and that $\langle \mu^2, \varphi_k^\prime \rangle = 0$. As in the proof of Proposition \ref{prop:asymptotic_estimate_kernel_affine+rest}, it implies
    \begin{equation}
        K_{\mu, k}(\omega) = - \frac{1}{\omega^2} \sum_{n \geq 1} \frac{\langle \mu, \varphi_n \rangle}{n}  \left( \frac{\langle \mu, \varphi_{n+k} \rangle}{n+k} + \frac{\langle \mu, \varphi_{\vert n-k \vert} \rangle}{\vert n-k \vert} \mathbbm{1}_{n \neq k} \right) \frac{\lambda_{n,k}^3}{ \lambda_{n,k}^2 + \omega^2},
    \end{equation}
    yielding
    \begin{equation}
        K_{\mu, k}(\omega) = - \frac{1}{\omega^2} \sum_{n \geq 1} \frac{\langle \mathcal{M}, \varphi_n \rangle}{n}  \left( \frac{\langle \mathcal{M}, \varphi_{n+k} \rangle}{n+k} + \frac{\langle \mathcal{M}, \varphi_{\vert n-k \vert} \rangle}{\vert n-k \vert} \mathbbm{1}_{n \neq k} \right) \frac{\lambda_{n,k}^3}{ \lambda_{n,k}^2 + \omega^2} + \mathcal{O}\left( \frac{1}{\omega^4} \right),
    \end{equation}
    by \eqref{eq:proof:prop:asymptotic_estimates_fractional_drifts_step_3_1}. Arguing as above, this gives
    \begin{equation}
        K_{\mu, k}(\omega) = \frac{2 \pi^6 A_s^2}{\omega^2} \sum_{n \geq 1} \frac{\sin(\theta n + \rho_s)^2}{ n^{2s - 3} \left( \pi^4 n^4 + \omega^2 \right)} + \mathcal{O}\left( \frac{1}{\omega^{2+s+\nu}} \right),
    \end{equation} 
    for some $\nu > 0$, where we have used
    \begin{equation}
        \frac{1}{\omega^2} \sum_{n \geq 1} \frac{1}{ n^{2s-2} \left( n^4 + \omega^2 \right) } = \mathcal{O}\left( \frac{1}{\omega^{2+s+\nu}} \right),
    \end{equation}
    which is \eqref{eq:lem:asymptotic_sum_appendix_1} applied with $\alpha = - 2s + 2 \in [0, 2)$. Using \eqref{eq:lem:asymptotic_sum_appendix_5}, applied with $s^\prime = s - 2 \in (-2, -1]$, one obtains \eqref{eq:prop:asymptotic_estimates_fractional_drifts}. This completes the proof of Proposition \ref{prop:asymptotic_estimates_fractional_drifts}.
\end{proof}

It remains to prove Lemma \ref{lem:correction_moment_and_first_drift}.

\begin{proof}[Proof of Lemma \ref{lem:correction_moment_and_first_drift}.]
    Let $\psi_0 \in C^\infty_{\mathrm{c}}((0,1); \mathbb{R})$ be such that $\langle \psi_0, \varphi_k \rangle \neq 0$, and set
    \begin{equation}
        \mu_0 := \mu - \frac{\langle \mu, \varphi_k \rangle}{\langle \psi_0, \varphi_k \rangle } \psi_0.
    \end{equation}
    We prove below the existence of $\psi_-, \psi_+ \in C^\infty_{\mathrm{c}}((0,1); \mathbb{R})$ such that $\langle \psi_\pm, \varphi_k \rangle = 0$, $\left\langle (\psi_-)^2, \varphi_k^\prime \right\rangle < 0$ and $\left\langle (\psi_+)^2, \varphi_k^\prime \right\rangle > 0$. This is sufficient to complete the proof. Indeed, the intermediate value theorem then implies the existence of $\lambda \geq 0$ such that $\left\langle \left( \mu_0 + \lambda \psi_\pm \right)^2, \varphi_k^\prime \right\rangle = 0$, where the sign $\pm$ is chosen according to the sign of $\left\langle \mu_0^2, \varphi_k^\prime \right\rangle$.  
    
    We construct $\psi_-$ and $\psi_+$ supported in $\left(0, \frac{1}{k}\right)$, so that we can in fact assume $k = 1$ without loss of generality. Once the function $\psi_+$ is constructed, the function $\psi_-$ can be defined by $\psi_-(x):=\psi_+(1-x)$. To construct $\psi_+$, the idea is to choose $\psi_+$ as a linear combination of two small cutoff functions, one localized on $(0, \delta)$, and the other localized in $\left(\frac{1}{2}, \frac{1}{2}+\delta \right)$, as the following picture suggests.
    
    \begin{center}
        \begin{tikzpicture}[x=5cm,y=1.35cm,>=stealth]
            \draw[->] (-0.04,0) -- (1.05,0) node[right] {$x$};
            \draw[->] (0,-1.35) -- (0,1.35) ;

            \draw (0,0) node[below left] {$0$};
            \draw (1,0) node[below] {$1$};
            \draw (0.5,0) node[below] {$\frac{1}{2}$};
            \draw (0.18,0.025) -- (0.18,-0.025) node[below] {$\delta$};
        
            \draw[thick,domain=0:1,samples=200] plot (\x,{sin(180*\x)});
            \draw[thick,dashed,domain=0:1,samples=200] plot (\x,{1.2*cos(180*\x)});
            \draw (0.9,0.8) node {$\sin(\pi x)$};
            \draw (1.1,-0.9) node {$\pi \cos(\pi x)$};
        
            \draw[thin,domain=0.02:0.16,samples=100] plot (\x,{0.4*exp(-1/(1-((\x-0.09)/0.07)^2))/exp(-1)}); 
            \draw[thin,domain=0.52:0.66,samples=100] plot (\x,{0.35*exp(-1/(1-((\x-0.59)/0.07)^2))/exp(-1)});  
        \end{tikzpicture}
    \end{center}
    
    Let $\delta \in \left(0, \frac{1}{2} \right)$, $\rho \in C^\infty_{\mathrm{c}}((0,1); \mathbb{R}_+)$, and set 
    \begin{equation}
        \psi_+ := \rho_1 - \frac{\langle \rho_1, \varphi_1 \rangle}{\langle \rho_2, \varphi_1 \rangle} \rho_2 \quad \text{ with } \rho_1(x) := \rho \left( \frac{x}{\delta} \right) \text{ and } \rho_2(x) := \rho \left( \frac{x}{\delta} - \frac{1}{2 \delta} \right).
    \end{equation}
    Then, by definition, $\langle \psi_+, \varphi_1 \rangle = 0$. We prove that if $\delta$ is sufficiently small, then $\left\langle (\psi_+)^2, \varphi_1^\prime \right\rangle > 0$. Straightforward computations give
    \begin{equation}
        \begin{split}
            & \langle \rho_1, \varphi_1 \rangle = c_1 \delta^2 + o(\delta^2), \quad \langle \rho_2, \varphi_1 \rangle = c_2 \delta + o(\delta), \\
            & \langle (\rho_1)^2, \varphi_1^\prime \rangle = c_3 \delta + o(\delta), \quad \langle (\rho_2)^2, \varphi_1^\prime \rangle = - c_4 \delta^2 + o(\delta^2),
        \end{split}
    \end{equation}    
    for some $c_1, c_2, c_3, c_4 > 0$. Since the support of $\rho_1$ and $\rho_2$ are disjoint, this gives
    \begin{equation}
        \left\langle (\psi_+)^2, \varphi_1^\prime \right\rangle = \left\langle (\rho_1)^2, \varphi_1^\prime \right\rangle + \frac{\langle \rho_1, \varphi_1 \rangle^2}{\langle \rho_2, \varphi_1 \rangle^2} \left\langle (\rho_2)^2, \varphi_1^\prime \right\rangle = c_3 \delta + o(\delta).
    \end{equation}
    This completes the proof.
\end{proof}

\subsection{Kernels with global sign properties}

In Proposition \ref{prop:global_sign_AND_asymptotic_estimates} below, we prove the existence of $\mu$ for which the kernel $K_{\mu, k}$ satisfies both an asymptotic estimate and a global sign property. 

\begin{remark}\label{rq:kernel_decays_faster_than_expected}
    Note that the regularity assumption $\mu \in \mathcal{H}_\infty^{s+\frac{1}{2}}(0,1)$ typically gives rise to asymptotic behaviors of the form $K_{\mu,k} \approx \frac{1}{\omega^{2+s}}$. Thus, the weaker assumption $\mu \in \mathcal{H}_{\infty}^{s}(0,1)$, with $s \in (0,1]$, appearing in Proposition \ref{prop:global_sign_AND_asymptotic_estimates} \textit{(i)} below may at first seem surprising. It is, however, natural: heuristically, a profile $\mu$ with low regularity, and hence slowly decaying Fourier coefficients, can nevertheless produce a kernel that decays rapidly at infinity because of cancellations, typically arising from alternating signs.
\end{remark}

\begin{proposition}\label{prop:global_sign_AND_asymptotic_estimates}
    Let $s \in [-1, 1]$, $\varepsilon_{\sgn} \in \{-1, 1\}$ and $k \geq 1$. There exists $\mu$ satisfying $\langle \mu, \varphi_k \rangle = 0$ and the following properties.
    \begin{enumerate}[label=(\roman*)]
        \item One has $\mu \in \mathcal{H}_\infty^{s+\frac{1}{2}}(0,1)$ if $s \in [-1, 0)$; $\mu$ is a linear combination of two eigenfunctions $\varphi_n$ if $s = 0$; and $\mu \in \mathcal{H}_{\infty}^{s}(0,1)$ if $s \in (0, 1]$.
        \item There exists $\alpha > 0$ and $\nu > 0$ such that 
            \begin{equation}\label{eq:prop:global_sign_AND_asymptotic_estimates}
                K_{\mu,k}(\omega) = \varepsilon_{\sgn} \frac{\alpha}{\langle \omega \rangle^{2+s}} + \mathcal{O}\left( \frac{1}{\langle \omega \rangle^{2+s+\nu}} \right), \quad \text{when $\omega \rightarrow + \infty$.}
            \end{equation}
        \item For all $\omega \in \mathbb{R}$, $\varepsilon_{\sgn} K_{\mu,k}(\omega) > 0$.
    \end{enumerate}
\end{proposition}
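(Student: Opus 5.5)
The plan is to build $\mu$ directly from its Fourier coefficients, supported on a single arithmetic chain $n_j := m_0 + jk$ for $j \geq 0$. I choose $m_0 \in (k, 2k)$ with $m_0 \not\equiv 0 \pmod k$ (for $k=1$, take $m_0 = 2$).

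\textbf{Structure of the kernel on the chain.} With this choice, $\langle \mu, \varphi_k \rangle = 0$ automatically, and every index in the chain satisfies $n^2 > k^2/2$, so $\lambda_{n,k} > 0$. In the formula \eqref{eq:def:K_mu_k}, the term involving $\mu_{\vert n-k \vert}$ survives only when $n, n-k$ both lie in the chain. Since $m_0 - k \notin$ chain, regrouping as in the proof of Proposition \ref{prop:asymptotic_estimates_fractional_drifts} gives
\begin{equation*}
K_{\mu,k}(\omega) = \sum_{j \geq 0} c_j \left( \frac{\lambda_{n_j,k}}{\lambda_{n_j,k}^2+\omega^2} + \frac{\lambda_{n_{j+1},k}}{\lambda_{n_{j+1},k}^2+\omega^2} \right), \qquad c_j := \frac{\mu_{n_j}\mu_{n_{j+1}}}{n_j n_{j+1}}.
\end{equation*}
The signs of the $c_j$ can be prescribed independently, by choosing the sign of each $\mu_{n_{j+1}}$ inductively. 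Equivalently, $K_{\mu,k}(\omega) = \sum_j a_j \lambda_j/(\lambda_j^2+\omega^2)$ with $\lambda_j := \lambda_{n_j,k}$ increasing in $j$, $a_0 = c_0$, and $a_j = c_{j-1}+c_j$ for $j \geq 1$.

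\textbf{Case $s \in [-1,0]$.} For $s = 0$, I take $\mu = \varphi_{m_0} \pm \varphi_{m_0+k}$. Then $K_{\mu,k}$ is an explicit sum of two positive Lorentzians times $\pm 1/(m_0(m_0+k))$; it has a sign for all $\omega$ and behaves like $\omega^{-2}$. For $s \in [-1,0)$, I take $\mu_{n_j} := \varepsilon_j\, n_j^{-(s+1/2)}$ with signs $\varepsilon_j$ chosen so that $\sgn(c_j) = \varepsilon_{\sgn}$ for all $j$. This gives $\mu \in \mathcal{H}^{s+\frac12}_\infty(0,1)$. All terms then have sign $\varepsilon_{\sgn}$, which proves (iii). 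For (ii), I compare with $2\pi^2 \sum_j n_j^{-2s-1}/(\pi^4 n_j^4 + \omega^2)$ using the error bounds from the proof of Proposition \ref{prop:asymptotic_estimates_fractional_drifts}. The appendix sum asymptotics (Lemma \ref{lem:asymptotic_sum_appendix}, the Riemann-sum estimate \eqref{eq:lem:asymptotic_sum_appendix_5}, adapted to an arithmetic progression of step $k$) then yield $\alpha\, \omega^{-2-s}$ with $\alpha = \frac{2\pi^2}{k}\int_0^\infty t^{-2s-1}(\pi^4t^4+1)^{-1}\dd t > 0$. This integral converges precisely because $-1 < -2s-1 \leq 1$ when $s \in [-1,0)$.

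\textbf{Case $s \in (0,1]$.} Here the $\omega^{-2}$ coefficient $A := \sum_j a_j \lambda_j$ must vanish, so the $a_j$ cannot all share a sign. I take $c_j = -\varepsilon_{\sgn} b_j$ with $b_j > 0$ for $j \geq 1$, where $b_j \asymp n_j^{-2s-3}$ (equivalently $\vert \mu_{n_j} \vert \asymp n_j^{-s-1/2}$, a summable choice since $s>0$). I then tune $c_0 = \varepsilon_{\sgn}\beta_0$, with $\beta_0 > 0$, so that $A = 0$. This is possible since $A$ is affine in $c_0$ with slope $\lambda_0 + \lambda_1 > 0$; the constraint fixes $\vert \mu_{n_0} \mu_{n_1} \vert$, and I realise it by scaling $\mu_{n_0}$. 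Writing $p_j := a_j \lambda_j$ with $\sum_j p_j = 0$, one has
\begin{equation*}
K_{\mu,k}(\omega) = -\frac{1}{\omega^2}\sum_j p_j \frac{\lambda_j^2}{\lambda_j^2+\omega^2}.
\end{equation*}
Only $p_0 = c_0\lambda_0$ has sign $\varepsilon_{\sgn}$, since each $a_j$ with $j \geq 1$ is $-\varepsilon_{\sgn}(b_{j-1} + b_j)$ apart from $a_1 = c_0 - \varepsilon_{\sgn} b_1$. If $a_1$ has the wrong sign, I enlarge $m_0$ or modify $b_1$ so that every $p_j$ with $j \geq 1$ has sign $-\varepsilon_{\sgn}$. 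Since $\lambda \mapsto \lambda^2/(\lambda^2+\omega^2)$ is increasing and $\lambda_0$ is the smallest eigenvalue, the zero-sum condition gives
\begin{equation*}
\varepsilon_{\sgn}\sum_j p_j\frac{\lambda_j^2}{\lambda_j^2+\omega^2} \leq \vert p_0\vert\left(\frac{\lambda_0^2}{\lambda_0^2+\omega^2} - \frac{\lambda_1^2}{\lambda_1^2+\omega^2}\right) < 0.
\end{equation*}
This proves (iii) for all $\omega$. For (ii), using $\sum_j p_j = 0$ again, $K_{\mu,k}(\omega) = \omega^{-2}\sum_j p_j\, \omega^2/(\lambda_j^2+\omega^2)$. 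The tail $\sum_j \vert p_j \vert /(\lambda_j^2+\omega^2)$ with $\vert p_j \vert \asymp n_j^{-2s-1}$ behaves like $\omega^{-s-3/2}$ by the same sum lemma, so it dominates the $\omega^{-2}$ terms from finitely many indices. Hence $K_{\mu,k}(\omega) = -\varepsilon_{\sgn} \cdot (\text{negative constant}) \cdot \omega^{-2-s} + \mathcal{O}(\omega^{-2-s-\nu})$ with the correct sign $\varepsilon_{\sgn}$.

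\textbf{Main obstacle.} I expect the bookkeeping in the $s>0$ case to be the hardest part. One has to arrange the sign pattern of the $a_j$ (not the $c_j$) exactly as above, with a single index carrying the opposite sign at the lowest frequency, while keeping $A = 0$ and the decay rate. Doing so with coefficients $\vert \mu_n \vert \asymp n^{-s-1/2}$ would even give $\mu \in \mathcal{H}^{s+\frac12}_\infty \subset \mathcal{H}^s_\infty$, which is consistent with (i). If the induced signs of $a_1$ are problematic, my fallback is to use two interlaced chains, or alternating signs as suggested in Remark \ref{rq:kernel_decays_faster_than_expected}; these only guarantee $\mathcal{H}^s_\infty$.
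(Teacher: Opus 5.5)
For $s\in[-1,0]$ your argument is essentially the paper's: you use a single chain with all $c_j$ of sign $\varepsilon_{\sgn}$, where the paper uses isolated pairs $(Nn,Nn+k)$. It is fine. For $s\in(0,1]$, however, two steps fail as written.

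\textbf{The sign pattern.} The configuration you aim for cannot be reached. The zero-sum condition reads
\[
\beta_0(\lambda_0+\lambda_1)=\sum_{j\ge1}b_j(\lambda_j+\lambda_{j+1})>b_1(\lambda_1+\lambda_2)>b_1(\lambda_0+\lambda_1).
\]
Hence $\beta_0>b_1$, and $a_1=\varepsilon_{\sgn}(\beta_0-b_1)$ \emph{always} has sign $\varepsilon_{\sgn}$. This uses only $b_j>0$ and $\lambda_2>\lambda_0$, so enlarging $m_0$ or changing $b_1$ cannot help, and your bound by $|p_0|(\cdots)$ is unjustified. The repair is easy: the monotonicity argument only needs every $\varepsilon_{\sgn}$-signed $p_j$ to sit below every $(-\varepsilon_{\sgn})$-signed one in frequency. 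Work directly with $K_{\mu,k}(\omega)=\sum_j p_j/(\lambda_j^2+\omega^2)$ and use $|p_0|+|p_1|=\sum_{j\ge2}|p_j|$. This gives
\begin{equation*}
\varepsilon_{\sgn} K_{\mu,k}(\omega) \geq \bigl( \vert p_0 \vert + \vert p_1 \vert \bigr) \left( \frac{1}{\lambda_1^2 + \omega^2} - \frac{1}{\lambda_2^2 + \omega^2} \right) > 0
\end{equation*}
for every $\omega\in\mathbb{R}$. This also covers $\omega=0$, where your division by $\omega^2$ is not allowed.

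\textbf{The asymptotic step (ii).} Your derivation is incorrect. Since $|p_j|\asymp n_j^{-2s-1}$ is summable for $s>0$, the sum $\sum_j|p_j|/(\lambda_j^2+\omega^2)$ is of order $\omega^{-2}$, not $\omega^{-s-3/2}$. The low-index terms are also of order $\omega^{-2}$, so nothing ``dominates'' them. The $\omega^{-2-s}$ law appears only after using the cancellation:
\[
K_{\mu,k}(\omega)=-\omega^{-2}\sum_j p_j\lambda_j^2/(\lambda_j^2+\omega^2).
\]
Here the indices $j\in\{0,1\}$ contribute $\mathcal{O}(\omega^{-4})$. For the tail you need an exact profile, not just $b_j\asymp n_j^{-2s-3}$. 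For instance, taking $|\mu_{n_j}|=n_j^{-s-\frac12}$ for $j\ge1$ gives $|p_j|\lambda_j^2=2\pi^6 n_j^{3-2s}(1+\mathcal{O}(n_j^{-2}))$ for $j\ge2$. Then Lemma~\ref{lem:asymptotic_sum_appendix}~(i) with $\alpha=3-2s\in[1,3)$, adapted to the progression $m_0+k\mathbb{N}$, yields $K_{\mu,k}(\omega)=\varepsilon_{\sgn}c\,\omega^{-2-s}+\mathcal{O}(\omega^{-2-s-\nu})$ with $c>0$.

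\textbf{Comparison with the paper.} With these corrections your construction is a valid alternative to the paper's Step~3, and in fact a stronger one. The paper uses blocks $\{2nN,2nN+k,(2n+1)N,(2n+1)N+k\}$, tunes $\gamma_n$ to kill the $\omega^{-2}$ coefficient of each block, and proves positivity block by block via weighted averages. This is local and needs no global bookkeeping. But the cancellation is between neighbouring frequencies, so in the relevant range $n^2\sim\omega$ each block is smaller than an uncancelled pair by a factor $n^{-1}$. That is why the paper needs coefficients of size $n^{-s}$, and hence only gets $\mathcal{H}^s_\infty$ (cf.\ Remark~\ref{rq:kernel_decays_faster_than_expected}). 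Your global cancellation against a single low-frequency compensator, with positivity coming from the gap between $\{\lambda_0,\lambda_1\}$ and the tail, loses no such factor and gives $\mu\in\mathcal{H}^{s+\frac12}_\infty$. Fed into Theorem~\ref{thm:main:kernel_implies_drift_finite_time} with $s_0=s$, this would even improve Theorem~\ref{thm:main_obstruction_FTLC}~(iii) to $\mu\in\mathcal{H}^{-\frac32-2r}_\infty$ with smallness in $\widetilde{H}^{\sigma(-2-2r)}$. Your fallback with interlaced chains is unnecessary.
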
 
    
\begin{proof}
    In this proof, we construct $\mu$ by prescribing its Fourier coefficients $\mu_n := \langle \mu, \varphi_n \rangle$. In all constructions below, $\mu_1 = \cdots = \mu_k = 0$, implying
    \begin{equation}
        K_{\mu, k}(\omega) = \sum_{n \geq 1} \frac{\mu_n \mu_{n+k}}{n(n+k)}  \left( \frac{\lambda_{n,k}}{ \lambda_{n,k}^2 + \omega^2} + \frac{\lambda_{n+k,k}}{ \lambda_{n+k,k}^2 + \omega^2} \right).
    \end{equation}
    Note also that $n > k$ implies $\lambda_{n,k} > 0$.
    
    \textbf{Step 1: case $s=0$.}
    Set $\mu_{k+1} = \varepsilon_{\sgn}$ and $\mu_{2k+1} = 1$, and set $\mu_n=0$ for all other indices $n$. Then
    \begin{equation} 
        K_{\mu, k}(\omega) = \frac{\varepsilon_{\sgn}}{(k+1)(2k+1)}  \left( \frac{\lambda_{k+1,k}}{ \lambda_{k+1,k}^2 + \omega^2} + \frac{\lambda_{2k+1,k}}{ \lambda_{2k+1,k}^2 + \omega^2} \right).
    \end{equation}
    Since $\lambda_{2k+1,k} > \lambda_{k+1,k} > 0$, the profile $\mu$ satisfies all the desired properties.

    \textbf{Step 2: case $s \in [-1, 0)$.} Let $N > 2k$, and set $\mu_n = \frac{\varepsilon_{\sgn}}{n^{\frac{1}{2}+s}}$ if $n \in N \mathbb{N}^\ast$, $\mu_n = \frac{1}{n^{\frac{1}{2}+s}}$ if $n \in N \mathbb{N}^\ast + k$, and $\mu_n=0$ for all other indices $n$. Then
    \begin{equation}
        K_{\mu, k}(\omega) = \varepsilon_{\sgn} \sum_{n \geq 1} \frac{1}{(Nn)^{\frac{3}{2}+s} (Nn+k)^{\frac{3}{2}+s}}  \left( \frac{\lambda_{Nn,k}}{ \lambda_{Nn,k}^2 + \omega^2} + \frac{\lambda_{Nn+k,k}}{ \lambda_{Nn+k,k}^2 + \omega^2} \right).
    \end{equation}
    Arguing as in the proof of Proposition \ref{prop:asymptotic_estimates_fractional_drifts}, one finds $K_{\mu, k}(\omega) = A(\omega)+B(\omega)$, with
    \begin{equation}
        A(\omega) = C_N \varepsilon_{\sgn} \sum_{n \geq 1} \frac{1}{ n^{1+2s} \left( \pi^4 n^4 + \omega^2\right) },
        \quad \text{ and } \quad
        \left\vert B(\omega) \right\vert \leq C_N^\prime \sum_{n \geq 1} \frac{1}{ n^{2+2s} \left( n^4 + \omega^2\right) },
    \end{equation}
    for some $C, C^\prime > 0$ depending only on $N$, $k$, and $s$. By \eqref{eq:lem:asymptotic_sum_appendix_1}, applied with $\alpha = -1-2s \in (-1, 1]$, one has
    \begin{equation}
        A(\omega) = \frac{c \varepsilon_{\sgn}}{\omega^{2+s}} + \mathcal{O}\left( \frac{1}{\omega^{2+s+\nu}} \right),
    \end{equation}
    for some $c, \nu > 0$. If $s \in \left[-1, -\frac{1}{2} \right)$ then \eqref{eq:lem:asymptotic_sum_appendix_1}, applied with $\alpha = -2-2s \in (-1, 0]$, yields
    \begin{equation}
        B(\omega) = \mathcal{O}\left( \frac{1}{\omega^{\frac{5}{2}+s}} \right).
    \end{equation}
    If $s \in \left( -\frac{1}{2}, 0 \right)$, then one has
    \begin{equation}
        \left\vert B(\omega) \right\vert \leq \frac{C_N^\prime}{\omega^2} \sum_{n \geq 1} \frac{1}{ n^{2+2s} }.
    \end{equation}
    Finally, if $s = - \frac{1}{2}$, a straightforward estimate gives $B(\omega) = \mathcal{O}\left( \frac{1}{\omega^{2-\varepsilon}} \right)$, for all $\varepsilon > 0$. Hence, in any case, the profile $\mu$ satisfies all the desired properties.

    \textbf{Step 3: case $s \in (0,1]$.} 
    The construction is slightly more involved, since we must ensure that $\langle \mu^2, \varphi_k^\prime \rangle = 0$, in order to remove the $\frac{1}{\omega^2}$ term in the asymptotic expansion, while still preserving the global sign property. The idea is as follows. Instead of summing terms which are individually of order $\mathcal{O}\left(\frac{1}{\omega^2}\right)$, as above, we first group the terms in pairs and choose the coefficients so that the $\frac{1}{\omega^2}$ contributions cancel. This reduces the problem to summing terms of order $\mathcal{O}\left(\frac{1}{\omega^4}\right)$. The key point is to perform this cancellation while ensuring that each resulting summand has the desired sign globally. We now give the details. 
    
    \underline{Definition of $\mu$.} 
    Let $N > 2k$. For $n \geq 1$, let $\gamma_n > 0$, which will be chosen explicitly below, to ensure that each summand is of order  $\mathcal{O}\left(\frac{1}{\omega^4}\right)$. For $n \in \mathbb{N}^\ast$, set
    \begin{equation}
        \begin{split}
            & \text{$\mu_{2nN} = \frac{\varepsilon_{\sgn}}{(2nN)^{s}}$  , \quad
            $\mu_{2nN+k} = \frac{1}{(2nN+k)^{s}}$ ,} \\
            & \text{$\mu_{(2n+1)N} = -\frac{\varepsilon_{\sgn} \gamma_n}{\left( (2n+1)N \right)^{s}}$ , \quad
            $\mu_{(2n+1)N+k} = \frac{1}{\left( (2n+1)N + k \right)^{s}}$ , }
        \end{split}
    \end{equation}
    and $\mu_n=0$ for all other indices $n$. Then $K_{\mu, k}(\omega) = \varepsilon_{\sgn} \sum_{n \geq 1} L_n(\omega)$, with
    \begin{equation}
        \begin{split}
            L_n(\omega) := \ & \frac{1}{n^{2+2s} \alpha_n}  \left( \frac{\lambda_{2nN,k}}{ \lambda_{2nN,k}^2 + \omega^2} + \frac{\lambda_{2nN+k,k}}{ \lambda_{2nN+k,k}^2 + \omega^2} \right) \\
            & - \frac{\gamma_n}{n^{2+2s} \beta_n}  \left( \frac{\lambda_{(2n+1)N,k}}{ \lambda_{(2n+1)N,k}^2 + \omega^2} + \frac{\lambda_{(2n+1)N+k,k}}{ \lambda_{(2n+1)N+k,k}^2 + \omega^2} \right),
        \end{split}
    \end{equation}
    \begin{equation}
        \alpha_n := \frac{(2nN)^{1+s} (2nN+k)^{1+s}}{n^{2+2s}}, \quad \text{ and } \quad \beta_n := \frac{((2n+1)N)^{1+s} ((2n+1)N+k)^{1+s}}{n^{2+2s}}.
    \end{equation}
    We define $\gamma_n > 0$ such that 
    \begin{equation}
        \frac{\lambda_{2nN,k} + \lambda_{2nN+k,k}}{\alpha_n} - \gamma_n \frac{\lambda_{(2n+1)N,k} + \lambda_{(2n+1)N+k,k} }{\beta_n}  = 0.
    \end{equation}
    
    \underline{Global sign property.} 
    Let $0 < a_n < b_n < c_n < d_n$ be given by 
    \begin{equation}
        a_n:=\lambda_{2nN,k}, \quad b_n := \lambda_{2nN+k,k}, \quad c_n := \lambda_{(2n+1)N,k} , \quad d_n :=\lambda_{(2n+1)N+k,k}.
    \end{equation}
    Then
    \begin{equation}\label{eq:proof:prop:global_sign_AND_asymptotic_estimates_step3_1}
       \frac{\alpha_n n^{2+2s} L_n(\omega)}{a_n + b_n} =  \frac{1}{ a_n + b_n }  \left( \frac{a_n}{ a_n^2 + \omega^2 } + \frac{b_n}{ b_n^2 + \omega^2 } \right) 
        - \frac{1}{c_n + d_n}  \left( \frac{c_n}{ c_n^2 + \omega^2 } + \frac{d_n}{ d_n^2 + \omega^2 } \right).
    \end{equation}
    The comparison of weighted averages gives
    \begin{equation}
         \frac{1}{ a_n + b_n }  \left( \frac{a_n}{ a_n^2 + \omega^2 } + \frac{b_n}{ b_n^2 + \omega^2 } \right) > \frac{1}{ b_n^2 + \omega^2 }
         > \frac{1}{ c_n^2 + \omega^2 } > \frac{1}{c_n + d_n}  \left( \frac{c_n}{ c_n^2 + \omega^2 } + \frac{d_n}{ d_n^2 + \omega^2 } \right),
    \end{equation}
    implying that $L_n(\omega) > 0$ for all $\omega \in \mathbb{R}$, and thus $\varepsilon_{\sgn} K_{\mu, k}(\omega) > 0$. 
    
    \underline{Asymptotic estimate.} It remains to prove \eqref{eq:prop:global_sign_AND_asymptotic_estimates}. We use the symbol $\lesssim$ for constants which may depend only on $s$, $k$ and $N$. Let $n\geq 1$ and $\omega \in \mathbb{R}$. One has
    \begin{equation}
        \frac{1}{ a_n + b_n }  \left( \frac{a_n}{ a_n^2 + \omega^2 } + \frac{b_n}{ b_n^2 + \omega^2 } \right) = \frac{\omega^2 + a_n b_n}{ (a_n^2 + \omega^2 )( b_n^2 + \omega^2)}.
    \end{equation}
    Write $M(x,y):=\frac{x+y}{2}$. A straightforward calculation gives
    \begin{equation}
        \begin{split}
            \left\vert \frac{\omega^2 + a_n b_n}{ (a_n^2 + \omega^2 )( b_n^2 + \omega^2)} - \frac{1}{M(a_n,b_n)^2 + \omega^2} \right\vert
            & \ = \frac{(b_n - a_n)^2 \left\vert 3 \omega^2 - a_n b_n \right\vert}{ 4 (a_n^2 + \omega^2 )( b_n^2 + \omega^2) \left( M(a_n,b_n)^2 + \omega^2 \right)} \\
            & \ \lesssim \frac{n^2}{(n^4+\omega^2)^2} , 
        \end{split} 
    \end{equation}
    with a similar estimate with $a_n$ and $b_n$ replaced by $c_n$ and $d_n$. In addition, one has
    \begin{equation}
        \left\vert \frac{1}{M(a_n,b_n)^2 + \omega^2} - \frac{1}{M(c_n,d_n)^2 + \omega^2} - \frac{ 32 \pi^4 N^4 n^3 }{\left( (2\pi n N)^4 + \omega^2 \right)^2} \right\vert
        \lesssim \frac{n^2}{(n^4+\omega^2)^2}.
    \end{equation}
    Together with \eqref{eq:proof:prop:global_sign_AND_asymptotic_estimates_step3_1}, it gives
    \begin{equation}
        \left\vert \frac{\alpha_n n^{2+2s} L_n(\omega)}{a_n + b_n} - \frac{ 32 \pi^4 N^4 n^3 }{\left( (2\pi n N)^4 + \omega^2 \right)^2}  \right\vert
        \lesssim \frac{n^2}{(n^4+\omega^2)^2}.
    \end{equation}
    Since
    \begin{equation} 
        \left\vert \frac{a_n + b_n}{\alpha_n} - \frac{2 \pi^2 n^2}{(2N)^{2s}} \right\vert \lesssim n, 
    \end{equation} 
    this yields $L_n(\omega) = A_n(\omega) + B_n(\omega)$, with 
    \begin{equation}
        A_n(\omega) = C \frac{n^{3-2s}}{\left( (2\pi n N)^4 + \omega^2 \right)^2}, \quad \text{ and } \quad \left\vert B_n(\omega) \right\vert \lesssim \frac{n^{2-2s}}{\left( n^4 + \omega^2 \right)^2},
    \end{equation}
    for some $C > 0$ independent of $n$ and $\omega$. By \eqref{eq:lem:asymptotic_sum_appendix_2} applied with $\beta = 3 - 2s \in [1, 3)$, and with $\beta = 2 - 2s \in [0, 2)$, this gives \eqref{eq:prop:global_sign_AND_asymptotic_estimates}.
\end{proof}

\subsection{The case of a constant source profile}

Here, we consider the case $\mu \equiv 1$, previously studied in \cite{Nguyen2025Burgers, Marbach2018}. In that case, one has $\langle \mu, \varphi_n \rangle = 0$ if $n \geq 1$ is even, and $\langle \mu, \varphi_n \rangle = \frac{2 \sqrt{2}}{n \pi}$ if $n \geq 1$ is odd. Hence, if $k$ is odd, then $K_{\mu,k}(\omega)=0$, and if $k$ is even, then
\begin{equation}
    K_{\mu,k}(\omega) = \frac{8}{\pi^2} \sum_{\substack{n \geq 1 \\ n \ \mathrm{odd}}} \frac{1}{n^2} \left( \frac{1}{(n+k)^2} + \frac{1}{(n-k)^2} \right) \frac{\lambda_{n,k}}{ \lambda_{n,k}^2 + \omega^2} .
\end{equation}
We prove the following result. The cases $k = 2$ and $k = 10$ were previously obtained in \cite[Proposition 3.1]{Nguyen2025Burgers}, with a different formula for the kernel.

\begin{proposition}\label{prop:mu_constant_sign_global}
     Let $\mu \equiv 1$ and $k \geq 1$ even. Then $K_{\mu,k}(\omega) < 0$ for all $\omega \in \mathbb{R}$ if and only if $k \in \left( \sqrt{2}, 2 \sqrt{2} \right) + 2 \sqrt{2} \mathbb{Z}$. 
\end{proposition}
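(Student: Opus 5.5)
The plan is to compute $K_{\mu,k}$ in closed form and read off the arithmetic condition from a tangent appearing in it. Write $a := \frac{k^2}{2}$, so that $\lambda_{n,k} = \pi^2 (n^2 - a)$. For fixed $\omega$, factor
\begin{equation}
\lambda_{n,k}^2 + \omega^2 = \pi^4 (n^2 - z)(n^2 - \bar z), \qquad z := a + \frac{i \omega}{\pi^2}.
\end{equation}
Then
\begin{equation}
\frac{\lambda_{n,k}}{\lambda_{n,k}^2+\omega^2} = \frac{1}{2\pi^2}\left( \frac{1}{n^2 - z} + \frac{1}{n^2 - \bar z}\right) = \frac{1}{\pi^2} \Re \frac{1}{n^2 - z}.
\end{equation}
I would then decompose the rational weight $\frac{1}{n^2}\bigl(\frac{1}{(n+k)^2} + \frac{1}{(n-k)^2}\bigr)\frac{1}{n^2-z}$ into partial fractions in $n$. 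Summing over odd $n$ with the classical Mittag-Leffler type identities gives closed forms:
\begin{equation}
\sum_{n\ \mathrm{odd}} \frac{1}{n^2 - z} = \frac{\pi}{4\sqrt z}\tan\left(\frac{\pi \sqrt z}{2}\right),
\end{equation}
together with $\sum_{n \text{ odd}} n^{-2} = \pi^2/8$, $\sum_{n \text{ odd}} n^{-4} = \pi^4/96$, and the shifted sums over $n \pm k$. Since $k$ is even, $n \pm k$ is odd whenever $n$ is, so the shifted sums reduce to the same odd-integer sums. The outcome should be a formula of the form
\begin{equation}
K_{\mu,k}(\omega) = \Re\left[ R_k(z) + Q_k(z) \frac{\tan(\pi\sqrt z/2)}{\sqrt z}\right],
\end{equation}
with explicit rational functions $R_k$ and $Q_k$.

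\textbf{Necessity.} I would evaluate the formula at $\omega = 0$, where $z = a$ and $\sqrt z = k/\sqrt 2$. The sign of $K_{\mu,k}(0)$ should be governed by the sign of $\tan\left(\frac{\pi k}{2\sqrt 2}\right)$, with the rational part either vanishing or having a fixed sign that one checks. The tangent is finite and nonzero because $k/\sqrt2$ is irrational. Now $\tan\left(\frac{\pi k}{2\sqrt2}\right) < 0$ exactly when $\frac{k}{\sqrt 2} \in (1,2) + 2\mathbb{Z}$, that is, when $k \in (\sqrt2, 2\sqrt2) + 2\sqrt2\mathbb{Z}$. When this fails, $K_{\mu,k}(0) > 0$. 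Combined with the asymptotics $K_{\mu,k}(\omega) \sim -2\pi^2\sqrt2\,\omega^{-5/2}$ from Proposition \ref{prop:asymptotic_estimate_kernel_affine+rest}, the kernel then changes sign, which also yields item (ii) of Theorem \ref{thm:main_obstruction_mu_constant}.

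\textbf{Sufficiency.} This is the main obstacle: showing $K_{\mu,k}(\omega) < 0$ for every $\omega$ when the condition holds. Only finitely many terms of the original series are negative, namely those with $n^2 < k^2/2$; all terms with $n > k/\sqrt2$ are positive. So a crude term-by-term argument cannot work, and the cancellation encoded by the tangent must be used.

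What I would try first is to view $x := \omega^2 \mapsto K_{\mu,k}(\sqrt x)$ as a Stieltjes-type function $\sum_n c_n \frac{\lambda_{n,k}}{\lambda_{n,k}^2 + x}$. I would split the sum into $\sum_{n^2 < a}$, which consists of negative terms, and $\sum_{n^2 > a}$, which consists of positive terms. I would then use the closed form to show that the function $x \mapsto (1+x)^{5/4} K_{\mu,k}(\sqrt x)$, or a similar normalization, cannot vanish. The argument would go as follows: a zero at some $\omega_0 > 0$ would force an identity between $\Re$ of the tangent term and the rational term, and I would rule this out by a monotonicity argument in $\Im \sqrt z$. Here I would use that $\Im \tan(\pi(\alpha + i\beta)/2)$ has a fixed sign and that $\Re \tan$ keeps the sign of $\tan(\pi\alpha/2)$ along the curve $\sqrt{a + i\omega/\pi^2}$, since $\Re \sqrt z$ increases from $k/\sqrt2$ while staying in the same interval modulo $2$ for the relevant range.

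If this complex-analytic route becomes unwieldy, the fallback is to bound the finitely many negative terms explicitly and compare. The comparison would be against the positive tail evaluated through the exact sum, in the regimes $\omega \lesssim k^2$ and $\omega \gg k^2$ separately, using the asymptotic expansion in the large-$\omega$ regime.
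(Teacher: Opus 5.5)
Your strategy matches the paper's: complexify $\lambda_{n,k}/(\lambda_{n,k}^2+\omega^2)$, sum over odd $n$ with the tangent Mittag-Leffler formula, and read necessity off $\omega=0$. The hedges in your necessity step resolve favourably. Summing the even summand over all odd $n\in\mathbb{Z}$, the shifted $n^{-2}$-type sums cancel, and the $\sum (k^2-n^2)^{-1}$ piece is a multiple of $\tan(\pi k/2)=0$. So $R_k\equiv 0$, and $K_{\mu,k}(\omega)$ is a positive multiple of $\Re\bigl(\sqrt z\,(k^2+z)\tan(\pi\sqrt z/2)\bigr)$. Its prefactor equals $3k^3/(2\sqrt2)>0$ at $\omega=0$. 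You must actually carry out this computation, but it is routine.

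The genuine gap is sufficiency. Write $\sqrt z=\alpha+i\beta$. For $\omega>0$ one has $\cos(\pi\alpha)+\cosh(\pi\beta)>0$, so the sign of $K_{\mu,k}(\omega)$ is that of
\[
g=\alpha\sin(\pi\alpha)\,(3\alpha^2-5\beta^2)-\beta\sinh(\pi\beta)\,(5\alpha^2-3\beta^2).
\]
Your monotonicity argument assumes $\alpha$ stays in the interval $(2n+1,2n+2)$ containing $k/\sqrt2$. In fact $\alpha$ increases to $+\infty$ with $\omega$, so $\sin(\pi\alpha)$, and hence the sign of $\Re\tan$, changes infinitely often along your curve. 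Your sign argument therefore covers only a bounded range of $\omega$.

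Moreover, the factor $3\alpha^2-5\beta^2$ of the rotating prefactor also changes sign, since $\beta/\alpha\to1$. In the regimes $\{\sin(\pi\alpha)\ge0,\ 3\alpha^2>5\beta^2\}$ and $\{\sin(\pi\alpha)\le0,\ 3\alpha^2<5\beta^2\}$ the two terms of $g$ compete. No bookkeeping on the signs of $\Re\tan$ and $\Im\tan$ can decide there.

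What is needed is a quantitative domination of the $\sinh$-term. The paper uses $\sinh(\pi\beta)>\pi\beta$. When $\sin(\pi\alpha)\ge0$ with $\alpha\in[2m,2m+1]$, it also uses the bound $\sin(\pi\alpha)\le\pi(\alpha-2m)\le\pi(\alpha-k/\sqrt2)$. This is exactly where the arithmetic condition enters for $\omega>0$, since it forces $2m>k/\sqrt2$. Combined with $\beta^2=(\alpha-k/\sqrt2)(\alpha+k/\sqrt2)$, each regime reduces to the negativity of an explicit affine or convex function of $r=\beta^2/\alpha^2$ on $(0,\tfrac35)$ or $(\tfrac35,1)$.

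Your fallback does not close the gap either. The statement covers infinitely many $k$, and the remainder in Proposition \ref{prop:asymptotic_estimate_kernel_affine+rest} is not uniform in $k$. It also gives no mechanism by which the condition at $\omega=0$ propagates to $\omega>0$.
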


\begin{proof}
    By \eqref{eq:prop:asymptotic_estimate_kernel_affine+rest_2}, the kernel $K_{\mu,k}$ is asymptotically negative. The idea of the proof is to show that if $K_{\mu,k}(0) < 0$, then $K_{\mu,k}$ is negative on $\mathbb{R}$, and that the condition $K_{\mu,k}(0) < 0$ is equivalent to $k \in \left( \sqrt{2}, 2 \sqrt{2} \right) + 2 \sqrt{2} \mathbb{Z}$. Equivalently, we study 
    \begin{equation}
        f(\omega) := \sum_{\substack{n \in \mathbb{Z} \\ n \ \mathrm{odd}}} \frac{1}{n^2} \left( \frac{1}{(n+k)^2} + \frac{1}{(n-k)^2} \right) \frac{2 n^2 - k^2}{ (2 n^2 - k^2)^2 + \omega^2},
    \end{equation}
    which satisfies $K_{\mu,k}(\omega) = \frac{8}{\pi^4} f \! \left( \frac{2 \omega}{\pi^2} \right)$.
    We use the decomposition
    \begin{equation}
        \begin{split}
            \frac{1}{n^2} \left( \frac{1}{(n+k)^2} + \frac{1}{(n-k)^2} \right) \frac{2 n^2 - k^2}{ (2 n^2 - k^2)^2 + \omega^2}
            = & - \frac{8 \left( (\omega^2 - 3k^4) (2 n^2 - k^2) + 4 k^2 \omega^2 \right)}{\left(k^{4} + \omega^{2}\right)^{2} \left( (2 n^2 - k^2)^2 + \omega^{2}\right)} \\ & 
            +  \frac{1}{k^{4} + \omega^{2}} \left( \frac{1}{\left(k + n\right)^{2}} + \frac{1}{\left(k - n\right)^{2}} - \frac{2}{n^{2}} \right)  \\ & 
            + \frac{4 \left(3 k^{4} - \omega^{2}\right)}{\left(k^2 - n^2 \right) \left(k^{4} + \omega^{2}\right)^{2}}
        \end{split}
    \end{equation}
    to derive a simpler formula for $f$. First, note that 
    \begin{equation}
        \sum_{\substack{n \in \mathbb{Z} \\ n \ \mathrm{odd}}} \left( \frac{1}{\left(k + n\right)^{2}} + \frac{1}{\left(k - n\right)^{2}} - \frac{2}{n^{2}} \right) = 0.
    \end{equation}
    Second, we use the Mittag-Leffler formula
    \begin{equation}\label{eq:Mittag-Leffler_tan}
        \pi \tan(\pi z) = 8 z  \sum_{n=0}^{+\infty} \frac{1}{(2n+1)^2 - 4z^2} = 4 z \sum_{\substack{n \in \mathbb{Z} \\ n \ \mathrm{odd}}} \frac{1}{n^2 - 4z^2},
    \end{equation}
    which yields
    \begin{equation}
        \sum_{\substack{n \in \mathbb{Z} \\ n \ \mathrm{odd}}} \frac{4 \left(3 k^{4} - \omega^{2}\right)}{\left(k^2 - n^2 \right) \left(k^{4} + \omega^{2}\right)^{2}}
        = - \frac{2 \pi}{k} \frac{\left(3 k^{4} - \omega^{2}\right)}{\left(k^{4} + \omega^{2}\right)^{2}} \tan \! \left( \frac{\pi k}{2} \right) = 0.
    \end{equation}
    Hence, for now, we have proved 
    \begin{equation}
        f(\omega) = - \frac{8}{\left(k^{4} + \omega^{2}\right)^{2}} \sum_{\substack{n \in \mathbb{Z} \\ n \ \mathrm{odd}}} \frac{(\omega^2 - 3k^4) (2 n^2 - k^2) + 4 k^2 \omega^2}{ (2 n^2 - k^2)^2 + \omega^{2}} .
    \end{equation} 
    A straightforward calculation gives
    \begin{equation}
        \frac{(\omega^2 - 3k^4) (2 n^2 - k^2) + 4 k^2 \omega^2}{ (2 n^2 - k^2)^2 + \omega^{2}} = \Re \left( \frac{\omega^2 - 3 k^4 - 4 i k^2 \omega}{2n^2 - k^2 - i \omega} \right).
    \end{equation}
    Set
    \begin{equation}
        z^2 = \frac{k^2 + i\omega}{2}, \quad z := a+ib, \quad a^2 - b^2 = \frac{k^2}{2}, \quad 4 ab = \omega,
    \end{equation}
    with $a > 0$ and $b \geq 0$, so that $2n^2 - k^2 - i \omega = 2 \left( n^2 - z^2 \right)$ and $\omega^2 - 3 k^4 - 4 i k^2 \omega = - 4 z^2 (k^2 + z^2)$. Using \eqref{eq:Mittag-Leffler_tan} again, one finds
    \begin{equation}
        f(\omega) = \frac{8\pi}{\left(k^{4} + \omega^{2}\right)^{2}} \Re \left( z (k^2 + z^2)  \tan \! \left( \frac{\pi z}{2} \right) \right).
    \end{equation}
    If $\omega = 0$, then $b = 0$ and $z = a = \frac{k}{\sqrt{2}}$, implying that $f(0) < 0$ if and only if $\tan \! \left( \frac{\pi k}{2 \sqrt{2}} \right) < 0$, that is, if and only if $k \in \left( \sqrt{2}, 2 \sqrt{2} \right) + 2 \sqrt{2} \mathbb{Z}$. To complete the proof, we assume that this condition holds true, and we prove that it implies $f(\omega) < 0$ for all $\omega > 0$. We use the elementary formula
    \begin{equation}
        \tan \! \left( \frac{\pi (a+ib)}{2} \right) = \frac{\sin(\pi a) + i \sinh(\pi b)}{\cos(\pi a) + \cosh(\pi b)}.
    \end{equation}
    Note that since $b > 0$, the denominator $\cos(\pi a) + \cosh(\pi b)$ is a positive real number. Hence, $f(\omega)$ is of the same sign as
    \begin{equation}
        g(\omega) := \Re \left( z (k^2 + z^2) \left( \sin(\pi a) + i \sinh(\pi b) \right) \right)
        = a \sin( \pi a ) (3 a^2 - 5 b^2) - b \sinh(\pi b) (5 a^2 - 3 b^2).
    \end{equation}
    Note that since $a > b > 0$, one has $b \sinh(\pi b) > 0$ and $5 a^2 - 3 b^2 > 0$. We distinguish two cases.
    
    \underline{Case 1.} Assume that $\sin(\pi a) \leq 0$. 
    This case does not rely on the assumption $k \in \left( \sqrt{2}, 2 \sqrt{2} \right) + 2 \sqrt{2} \mathbb{Z}$. 
    If $3 a^2 - 5 b^2 \geq 0$, then $g(\omega) < 0$, so we can assume that $3 a^2 - 5 b^2 < 0$. Using $\left\vert \sin(\pi a) \right\vert \leq 1 \leq \pi$ and $\pi b < \sinh(\pi b)$, one finds
    \begin{equation}
        \begin{split}
            g(\omega) & = a \left\vert \sin( \pi a ) \right\vert (5 b^2 - 3 a^2) - b \sinh(\pi b) (5 a^2 - 3 b^2) \\
            & < \pi \left( a (5 b^2 - 3 a^2) - b^2 (5 a^2 - 3 b^2) \right) 
            = \pi a \left( (5 b^2 - 3 a^2) - \frac{b^2}{a} (5 a^2 - 3 b^2) \right).
        \end{split}
    \end{equation}
    Note that $a > \frac{k}{\sqrt{2}} \geq \sqrt{2}$, implying $\frac{b^2}{a} \geq \sqrt{2} \frac{b^2}{a^2}$. Using also $5 a^2 - 3 b^2 > 0$, and setting $r := \frac{b^2}{a^2}$, one obtains
    \begin{equation}
        g(\omega) < \pi a^3 \left( (5 r - 3 ) - \sqrt{2} r (5 - 3 r) \right).
    \end{equation}
    Since we assume $3 a^2 - 5 b^2 < 0$, one has $r \in \left(\frac{3}{5}, 1 \right)$. The function $r \mapsto (5 r - 3 ) - \sqrt{2} r (5 - 3 r)$ is convex, and is negative at $\frac{3}{5}$ and $1$, implying that $g(\omega) < 0$. 
    
    \underline{Case 2.} Assume that $\sin(\pi a) \geq 0$. 
    If $3 a^2 - 5 b^2 \leq 0$, then $g(\omega) < 0$, so we can assume that $3 a^2 - 5 b^2 > 0$. 
    Since $\sin(\pi a) \geq 0$, there exists $m \geq 0$ such that $2m \leq a \leq 2m+1$.  By assumption, there exists $n \geq 0$ such that $2n+1 < \frac{k}{\sqrt{2}} < 2(n+1)$. Using $a > \frac{k}{\sqrt{2}}$, one finds $m \geq n+1$, and hence $2m \geq \frac{k}{\sqrt{2}}$. Using also $\left\vert \sin(\pi a) \right\vert \leq \pi(a - 2m)$ and $\pi b < \sinh(\pi b)$, one finds
    \begin{equation}
        \begin{split}
            g(\omega) & = a \sin( \pi a )  (3 a^2 - 5 b^2) - b \sinh(\pi b) (5 a^2 - 3 b^2) \\
            & < \pi \left( a \left( a - 2m \right) (3 a^2 - 5 b^2) - b^2 (5 a^2 - 3 b^2) \right)\\
            & \leq \pi \left( a \left( a - \frac{k}{\sqrt{2}} \right) (3 a^2 - 5 b^2) - b^2 (5 a^2 - 3 b^2) \right).
        \end{split}
    \end{equation}
    As above, we set $r := \frac{b^2}{a^2}$, which gives, using also $b^2 = a^2 - \frac{k^2}{2}$,
    \begin{equation}
        g(\omega) < a^3 \pi \left( a - \frac{k}{\sqrt{2}} \right) \left( (3 - 5 r) - \left( 1 + \frac{k}{a\sqrt{2}} \right) (5 - 3 r) \right).
    \end{equation}
    Since we assume $3 a^2 - 5 b^2 > 0$, one has $r \in \left(0, \frac{3}{5}\right)$. The function $r \mapsto (3 - 5 r) - \left( 1 + \frac{k}{a \sqrt{2}} \right) (5 - 3 r)$ is affine, negative at $0$ and $\frac{3}{5}$, and hence negative on $\left(0, \frac{3}{5}\right)$, implying that $g(\omega) < 0$.
\end{proof}

\section{Quadratic obstructions across a full range of fractional Sobolev norms}\label{sec:proof_obstructions}

In this section, we relate the properties of the kernel $K_{\mu,k}$ to drift estimates, and show that these estimates lead to obstructions to controllability. We first prove that an asymptotic estimate on $K_{\mu,k}$ implies a drift estimate in small time, and that this drift estimate provides an obstruction to small-time controllability. We then establish analogous finite-time results under an additional global sign assumption on $K_{\mu,k}$. The proofs rely crucially on the remainder estimates obtained in Section \ref{sec:remainder_estimates}, together with the technical results on Sobolev norms provided in Appendix \ref{sec:appendix_sobolev}.

\subsection{Obstruction to small-time controllability}

The most natural case in Theorem \ref{thm:main:kernel_implies_drift_small_time} below is $s=s_0$, where the natural asymptotic decay of the kernel $K_{\mu,k}$ reflects the maximal regularity of $\mu$. However, as in Proposition \ref{prop:global_sign_AND_asymptotic_estimates}, the kernel $K_{\mu,k}$ may decay faster than expected; in that case, one must introduce a second regularity threshold $s_0 \geq s$. Recall that $\sigma(s) := - \frac{1+s}{2}$ for $s \in \left[-1, \frac{1}{2} \right]$, and $\sigma(s) := -1 + \frac{s}{2}$ for $s \in \left[ \frac{1}{2}, 1 \right)$. 

\begin{theorem}[An asymptotic estimate of the kernel implies a drift estimate in small time]\label{thm:main:kernel_implies_drift_small_time}  
    Let $p \in \left\{2, +\infty\right\}$, $s, s_0 \in [-1, 1)$, with $s_0 \geq s$ and $\sigma(s) + s_0 - s < \frac{1}{2}$, and let $k \geq 1$. If $p=2$, then let $\mu \in \mathcal{H}_2^s(0,1)$. If $p = +\infty$, assume that $s \notin \{-1, 0\}$, and let $\mu \in \mathcal{H}_\infty^{s+\frac{1}{2}}(0,1)$. Assume that $\langle \mu, \varphi_k \rangle = 0$, and
    \begin{equation}\label{eq:test:asymptotique_kernel}
        K_{\mu,k}(\omega) = \frac{\alpha}{\langle \omega \rangle^{2+s_0}} + \mathcal{O}\left( \frac{1}{\langle \omega \rangle^{2+s_0+\nu}} \right)
    \end{equation}
    when $\omega \rightarrow + \infty$, for some $\nu > 0$ and $\alpha \neq 0$. Then, there exist $\nu_0 > 0$, $\eta_0 > 0$, $\delta_0 > 0$, $T_0 > 0$ and $C > 0$ such that for all $T \in (0, T_0)$, all $u \in L^2(0,T) \cap \widetilde{H}^{\sigma(s) + s_0 - s}(0,T)$ satisfying $\delta := \Vert u \Vert_{\widetilde{H}^{\sigma(s) + s_0 - s}(0,T)} \leq \delta_0$, and all $y_0 \in L^2(0, 1)$ satisfying $\Vert y_0 \Vert_{L^2} \leq \eta_0$, one has
    \begin{equation}\label{eq:thm:main:kernel_implies_drift_small_time} 
        \begin{split}
            & \left\vert \left\langle y(T; y_0, u) - y(T; y_0, 0), \varphi_k \right\rangle - \frac{\alpha k }{4 \pi^2 \sqrt{2}} \Vert u \Vert_{\widetilde{H}^{- 1 - \frac{s_0}{2}}(0,T)}^2\right\vert \\
            \leq \ & C \left( T^{\nu_0} + \delta \right) \Vert u \Vert_{\widetilde{H}^{-1 - \frac{s_0}{2}}}^2 + C \left\Vert y(T; y_0, u) - y(T; y_0, 0) \right\Vert_{H^{-1}}^2 + C T^{\frac{1}{4}} \delta \Vert y_0 \Vert_{L^2}.
        \end{split}
    \end{equation}
\end{theorem}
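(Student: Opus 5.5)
We split the left-hand side of \eqref{eq:thm:main:kernel_implies_drift_small_time} into three pieces:
\begin{equation*}
\langle y(T;y_0,u)-y(T;y_0,0),\varphi_k\rangle = \langle y_2(T;u),\varphi_k\rangle + \langle y_1(T;u),\varphi_k\rangle + R(T),
\end{equation*}
where $\langle y_1(T;u),\varphi_k\rangle=0$ because $\langle\mu,\varphi_k\rangle=0$. The remainder $R(T)$ collects the cubic terms and the coupling between $y_0$ and $u$. We estimate $R(T)$ with the nonlinear remainder estimates of Section \ref{sec:remainder_estimates}, which we assume give
\begin{equation*}
|R(T)|\le C\delta\Vert u\Vert_{\widetilde H^{-1-s_0/2}}^2 + CT^{1/4}\delta\Vert y_0\Vert_{L^2} + \text{(terms absorbed below)}
\end{equation*}
for $\mu$ in the stated class. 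The choice $\sigma(s)$ of the smallness norm is exactly what makes the cubic term bounded by $\delta$ times the quadratic drift. Our contribution is then the analysis of $\langle y_2(T),\varphi_k\rangle$ via Proposition \ref{prop:decomposition_y_2_kernel}.

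\textbf{Main term.} By Proposition \ref{prop:decomposition_y_2_kernel}, $\langle y_2(T),\varphi_k\rangle$ equals
\begin{equation*}
\frac{ke^{-\lambda_kT}}{4\pi^2\sqrt2}\int|\widehat{v_{k,T}}|^2K_{\mu,k}\,\mathrm{d}\omega
\end{equation*}
up to the $y_1(T)$-remainders. We insert \eqref{eq:test:asymptotique_kernel}, valid globally in the form $|K_{\mu,k}(\omega)-\alpha\langle\omega\rangle^{-2-s_0}|\le C\langle\omega\rangle^{-2-s_0-\nu}$ since $K_{\mu,k}$ is continuous. This produces two pieces.
\begin{itemize}
\item[(a)] The leading piece is $\alpha\Vert v_{k,T}\Vert_{H^{-1-s_0/2}}^2$. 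Since $T$ is small, $\rho_k=e^{\lambda_k t/2}$ and $e^{-\lambda_kT}$ differ from $1$ by $O(T)$ on $(0,T)$. A multiplier lemma from Appendix \ref{sec:appendix_sobolev} then gives $\Vert v_{k,T}\Vert_{\widetilde H^{r}}^2=(1+O(T))\Vert u\Vert_{\widetilde H^r}^2$.
\item[(b)] The error piece is $\Vert v_{k,T}\Vert_{\widetilde H^{-1-s_0/2-\nu/2}}^2$. It is bounded by $CT^{\nu_0}\Vert u\Vert_{\widetilde H^{-1-s_0/2}}^2$ using the small-time gain for functions supported in $(0,T)$, namely $\Vert f\Vert_{\widetilde H^{r-\epsilon}}\lesssim T^{\epsilon'}\Vert f\Vert_{\widetilde H^{r}}$ for $r<1/2$. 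We expect this to be another appendix lemma.
\end{itemize}

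\textbf{Remainders of Proposition \ref{prop:decomposition_y_2_kernel}.} The term $\Vert y_1(T)\Vert_{H^{-1}}^2$ is replaced by $C\Vert y(T;y_0,u)-y(T;y_0,0)\Vert_{H^{-1}}^2$ plus higher-order errors. Indeed, $y(T;y_0,u)-y(T;y_0,0)-y_1(T)$ is quadratic, bounded by $\delta\Vert u\Vert_{\widetilde H^{-1-s_0/2}}+T^{1/4}\Vert y_0\Vert$-type quantities, and squaring and using $2ab\le a^2+b^2$ absorbs it.

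The low-frequency term is $|\langle y_1(T),\varphi_n\rangle|\,|\int_0^T ue^{(\lambda_k-\lambda_n)t}\,\mathrm{d}t|$ with $n^2<k^2/2$. We bound the moment $\int_0^T ue^{ct}\,\mathrm{d}t$ by $C\Vert u\Vert_{\widetilde H^{-1-s_0/2}}$ times a positive power of $T$, by duality with the smooth function $e^{ct}\mathbbm 1_{(0,T)}$ suitably cut off. Likewise $|\langle y_1(T),\varphi_n\rangle|\lesssim T^{\nu_0}\Vert u\Vert_{\widetilde H^{-1-s_0/2}}$. So this term is absorbed into $CT^{\nu_0}\Vert u\Vert^2$.

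\textbf{Main obstacle.} The hardest step is the remainder estimate. We must control the cubic part, and the interaction between $y_0$ and $u$, solely in terms of $\Vert u\Vert_{\widetilde H^{-1-s_0/2}}^2$ times small factors, while assuming only $u$ small in $\widetilde H^{\sigma(s)+s_0-s}$ and $\mu$ of low regularity (possibly $H^{-1}$). For that step we would first try to work with $y_1$ expressed through the primitive $u_1$ (Definition \ref{def:primitive}) and with interpolation between $\widetilde H^{-1-s/2}$ and $\widetilde H^{\sigma(s)}$, and we would rely on Section \ref{sec:remainder_estimates} for it.
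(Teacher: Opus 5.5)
\textbf{Gap: the moment terms.} The architecture is right: expand around $y_1+y_2$, drop $\langle y_1(T),\varphi_k\rangle$ since $\langle \mu,\varphi_k\rangle=0$, apply Proposition \ref{prop:decomposition_y_2_kernel}, and defer cubic and decoupling remainders to Section \ref{sec:remainder_estimates}. The failing step is your treatment of the moments of the control, both in step (b) and in the low-frequency remainder of Proposition \ref{prop:decomposition_y_2_kernel}.

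The relevant exponent is $-a$ with $a:=1+\frac{s_0}{2}\in[\frac12,\frac32)$. In this range there is no small-time gain for $\int_0^T u e^{ct}\dd t$ or for $\Vert u\Vert_{\widetilde H^{-a-\nu/2}}$.
\begin{itemize}
\item Take $u_\varepsilon:=\varepsilon^{-1}\mathbbm{1}_{(0,\varepsilon)}$ with $\varepsilon\le T$, times any small constant. All the inequalities involved are homogeneous of degree one in $u$, so smallness of $\delta$ does not help.
\item One has $|\widehat{u_\varepsilon}|\le 1$ and $\widehat{u_\varepsilon}\to 1$ locally uniformly. So for $a>\frac12$, both $\Vert u_\varepsilon\Vert_{\widetilde H^{-a}}^2$ and $\Vert u_\varepsilon\Vert_{\widetilde H^{-a-\nu/2}}^2$ tend to positive constants independent of $T$, while $\int_0^T u_\varepsilon e^{ct}\dd t\to 1$.
\item For $a=\frac12$, the choice $\varepsilon=T$ gives a ratio of norms of order $(\ln(1/T))^{-1/2}$, still not a power of $T$.
\end{itemize}
Hence your inequality $\Vert f\Vert_{\widetilde H^{r-\epsilon}}\lesssim T^{\epsilon'}\Vert f\Vert_{\widetilde H^{r}}$ fails for $r=-a\le -\frac12$. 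So do $|\int_0^T u e^{ct}\dd t|\lesssim T^{\nu_0}\Vert u\Vert_{\widetilde H^{-a}}$ and $|\langle y_1(T),\varphi_n\rangle|\lesssim T^{\nu_0}\Vert u\Vert_{\widetilde H^{-a}}$. Your duality argument cannot produce a power of $T$ either: for $a>\frac12$, any extension $\chi$ of $e^{ct}|_{[0,T]}$ satisfies $\Vert \chi\Vert_{H^a}\gtrsim \Vert\chi\Vert_{L^\infty}\ge |\chi(0)|=1$. Young's inequality does not rescue the low-frequency product. A fixed weight leaves $\epsilon\Vert u\Vert^2$ with a coefficient that does not tend to $0$. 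A $T$-dependent weight destroys the $T$-uniform constant in front of $\Vert y(T;y_0,u)-y(T;y_0,0)\Vert_{H^{-1}}^2$.

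\textbf{Missing idea: the closed-loop estimate (Lemma \ref{lem:closed_loop}).} The failure of the gain is carried entirely by moments. Lemma \ref{lem:appendix:weak_norm_power_T_new} gives
$\Vert u\rho_k\Vert_{\widetilde H^{-a-\nu/2}}^2\lesssim T^{\nu_0}\Vert u\rho_k\Vert_{\widetilde H^{-a}}^2+|\int_0^T u\rho_k\dd t|^2$.
To control any moment $\int_0^T u e^{\lambda t}\dd t$, pick $j$ with $\langle \mu,\varphi_j\rangle\neq 0$.
\begin{itemize}
\item Exactly, $\int_0^T u e^{\lambda_j t}\dd t=e^{\lambda_j T}\langle\mu,\varphi_j\rangle^{-1}\langle y_1(T),\varphi_j\rangle$.
\item The difference $\int_0^T (e^{(\lambda-\lambda_j)t}-1)e^{\lambda_j t}u\dd t$ does gain $T^{\nu}$, because the weight vanishes at $t=0$ (Lemma \ref{lem:appendix:removing_exp_from_sobolev_norm_small_time}).
\end{itemize}
Hence every moment is bounded by $T^{\nu}\Vert u\Vert_{\widetilde H^{-a}}+\Vert y_1(T)\Vert_{H^{-1}}$. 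The quantity $\Vert y_1(T)\Vert_{H^{-1}}^2$ is then traded, as you propose, for $\Vert y(T;y_0,u)-y(T;y_0,0)\Vert_{H^{-1}}^2$ plus quadratic and decoupling remainders. This is exactly why that term appears in the statement with a $T$-independent constant.
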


\begin{proof}
    In this proof, the symbol $\lesssim$ is used for constants that may depend on $k$, $\mu$, $s$ and $s_0$. 
    
    \textbf{Step 1.} \textit{Using \eqref{eq:test:asymptotique_kernel} to estimate the quadratic term.}
    Let $T > 0$. Write $v_{k,T} := u \rho_k \mathbbm{1}_{[0,T]}$, with $\rho_k(t) := e^{\frac{\lambda_k t}{2}}$. By \eqref{eq:test:asymptotique_kernel}, and since $K_{\mu,k}$ is even and continuous, one has 
    \begin{equation}
        \left\vert K_{\mu,k}(\omega) - \frac{\alpha}{\langle \omega \rangle^{2+s_0}} \right\vert \lesssim \frac{1}{\langle \omega \rangle^{2+s_0+\nu}},
    \end{equation}
    for all $\omega \in \mathbb{R}$, implying
    \begin{equation}
        \left\vert \int_{\mathbb{R}} \left\vert \widehat{v_{k,T}}(\omega) \right\vert^2 K_{\mu,k}(\omega) \dd \omega  - \alpha \Vert u \rho_k \Vert_{\widetilde{H}^{-1 - \frac{s_0}{2}}}^2 \right\vert \lesssim \Vert u \rho_k \Vert_{\widetilde{H}^{-1 - \frac{s_0 + \nu}{2}}}^2.
    \end{equation} 
    By Proposition \ref{prop:decomposition_y_2_kernel}, this gives
    \begin{equation}
        \begin{split}
            & \left\vert \left\langle y_2(T; u), \varphi_k \right\rangle - \frac{\alpha k e^{-\lambda_k T}}{4 \pi^2 \sqrt{2}} \Vert u \rho_k \Vert_{\widetilde{H}^{-1 - \frac{s_0}{2}}}^2\right\vert \\
            \lesssim \ & \Vert u \rho_k \Vert_{\widetilde{H}^{-1 - \frac{s_0 + \nu}{2}}}^2 + \Vert y_1(T;u) \Vert_{H^{-1}}^2 
            + \sum_{n\geq 1} \mathbbm{1}_{n^2 < \frac{k^2}{2}} \left\vert \langle y_1(T;u), \varphi_n \rangle \right\vert \left\vert \int_0^T u(t) e^{\left(\lambda_k - \lambda_n \right) t} \dd t \right\vert,
        \end{split}   
    \end{equation}
    where $t \mapsto y_1(t; u)$ and $t \mapsto y_2(t; u)$ are the solutions of \eqref{eq:Burgers_y_1_intro} and \eqref{eq:Burgers_y_2_intro}. This estimate can be simplified. First, by Lemma \ref{lem:appendix:weak_norm_power_T_new}, applied to $\rho_k u$, one has
    \begin{equation}
        \Vert u \rho_k \Vert_{\widetilde{H}^{-1 - \frac{s_0 + \nu}{2}}}^2 \lesssim T^{\nu_0} \Vert u \rho_k \Vert_{\widetilde{H}^{-1 - \frac{s_0}{2}}}^2 + \left\vert  \int_0^T u(t) \rho_k(t) \dd t \right\vert^2,
    \end{equation}
    for some $\nu_0 > 0$. Second, closed loop estimates (Lemma \ref{lem:closed_loop} \emph{(i)}), together with the elementary estimate $ab \lesssim a^2+b^2$, give
    \begin{equation}
        \begin{split}
            & \left\vert  \int_0^T u(t) \rho_k(t) \dd t \right\vert^2 + \sum_{n\geq 1} \mathbbm{1}_{n^2 < \frac{k^2}{2}} \left\vert \langle y_1(T;u), \varphi_n \rangle \right\vert \left\vert \int_0^T u(t) e^{\left(\lambda_k - \lambda_n \right) t} \dd t \right\vert \\
            \lesssim \ & T^{\nu_0} \Vert u \Vert_{\widetilde{H}^{-1 - \frac{s_0}{2}}}^2 + \Vert y_1(T;u) \Vert_{H^{-1}}^2,
        \end{split}   
    \end{equation}
    up to reducing $\nu_0 > 0$. Third, by Lemma \ref{lem:appendix:removing_exp_from_sobolev_norm_small_time}, one has
    \begin{equation}
        \begin{split}
            \left\vert \left\Vert u \rho_k \right\Vert_{\widetilde{H}^{-1 - \frac{s_0}{2}}}^2 - \left\Vert u \right\Vert_{\widetilde{H}^{-1 - \frac{s_0}{2}}}^2 \right\vert 
            \leq \ & \left\vert \left\Vert u \rho_k \right\Vert_{\widetilde{H}^{-1 - \frac{s_0}{2}}} - \left\Vert u \right\Vert_{\widetilde{H}^{-1 - \frac{s_0}{2}}} \right\vert \left( \left\Vert u \rho_k \right\Vert_{\widetilde{H}^{-1 - \frac{s_0}{2}}} + \left\Vert u \right\Vert_{\widetilde{H}^{-1 - \frac{s_0}{2}}} \right) \\
            \lesssim \ & T^{\nu_0} \Vert u \Vert_{\widetilde{H}^{-1 - \frac{s_0}{2}}}^2, 
        \end{split}
    \end{equation}
    up to reducing $\nu_0 > 0$. Using also $\left\vert 1 - e^{-\lambda_k T} \right\vert = 1 - e^{-\lambda_k T} \leq \lambda_k T$, one obtains
    \begin{equation}
        \begin{split}
            \left\vert \left\langle y_2(T; u), \varphi_k \right\rangle - \frac{\alpha k}{4 \pi^2 \sqrt{2}} \Vert u \Vert_{\widetilde{H}^{-1 - \frac{s_0}{2}}}^2\right\vert 
            \lesssim   T^{\nu_0} \Vert u \Vert_{\widetilde{H}^{-1 - \frac{s_0}{2}}}^2 + \Vert y_1(T;u) \Vert_{H^{-1}}^2 ,
        \end{split}   
    \end{equation}
    up to reducing $\nu_0 > 0$.

    \textbf{Step 2.} \textit{Coming back to the full controlled solution.}
    Write $t \mapsto R_2(t; u)$ and $t \mapsto R_3(t; u)$ for the quadratic and cubic remainders, defined by $R_2(t;u) := y(t; 0, u) - y_1(t; u)$ and $R_3(t;u) := y(t; 0, u) - y_1(t; u) - y_2(t; u)$. Let $y_0 \in L^2(0,1)$, and write $t \mapsto R(t; y_0, u)$ for the remainder of the decoupling estimate (Lemma \ref{lem:decoupling_estimate}), that is, $R(t; y_0, u) := y(t; y_0, u) - y(t; y_0, 0) - y(t; 0, u)$. Since $\langle \mu, \varphi_k \rangle = 0$, one has 
    \begin{equation}\label{eq:proof:thm:main_kernel_implies_STLC_step3_eq_1}
        \begin{split}
            \left\langle y_2(T; u), \varphi_k \right\rangle 
            & = \left\langle y(T; 0, u), \varphi_k \right\rangle - \left\langle R_3(T; u), \varphi_k \right\rangle \\
            & = \left\langle y(T; y_0, u) - y(T; y_0, 0), \varphi_k \right\rangle - \left\langle R_3(T; u), \varphi_k \right\rangle - \left\langle R(T; y_0, u), \varphi_k \right\rangle.
        \end{split}
    \end{equation}
    Using also
    \begin{equation}\label{eq:proof:thm:main_kernel_implies_STLC_step3_eq_2}
        \begin{split}
            \Vert y_1(T; u) \Vert_{H^{-1}}^2 \lesssim  \left\Vert y(T; y_0, u) - y(T; y_0, 0) \right\Vert_{H^{-1}}^2 + \left\Vert R_2(T; u) \right\Vert_{H^{-1}}^2 + \left\Vert R(T; y_0, u) \right\Vert_{H^{-1}}^2,
        \end{split}
    \end{equation}
    one obtains
    \begin{equation}\label{eq:proof:thm:main_kernel_implies_STLC_step2_eq_1_new}
        \begin{split}
            & \left\vert \left\langle y(T; y_0, u) - y(T; y_0, 0), \varphi_k \right\rangle - \frac{\alpha k }{4 \pi^2 \sqrt{2}} \Vert u \Vert_{\widetilde{H}^{-1 - \frac{s_0}{2}}}^2\right\vert \\
            \lesssim \ &  T^{\nu_0} \Vert u \Vert_{\widetilde{H}^{-1 - \frac{s_0}{2}}}^2 +  \left\Vert y(T; y_0, u) - y(T; y_0, 0) \right\Vert_{H^{-1}}^2 + \mathscr{R}(T; y_0, u),
        \end{split}
    \end{equation}
    where
    \begin{equation}\label{eq:proof:thm:main_kernel_implies_STLC_step2_eq_1_bis_new}
        \mathscr{R}(T; y_0, u) := \left\Vert R_2(T; u) \right\Vert_{H^{-1}}^2 + \left\vert \left\langle R_3(T; u), \varphi_k \right\rangle \right\vert + \left\Vert R(T; y_0, u) \right\Vert_{H^{-1}}^2 + \left\vert \left\langle R(T; y_0, u), \varphi_k \right\rangle \right\vert.
    \end{equation}
    
    We now estimate $\mathscr{R}(T; y_0, u)$. First, by quadratic and cubic remainder estimates (Propositions \ref{prop:quad_estimates_new} and \ref{prop:cubic_estimates_new} when $p=2$, Proposition \ref{prop:remainder_estimates_p_infty} when $p=+\infty$), one has
    \begin{equation}
        \left\Vert R_2(T; u) \right\Vert_{H^{-1}}^2 + \left\vert \left\langle R_3(T; u), \varphi_k \right\rangle \right\vert 
        \lesssim \Vert u \Vert_{\widetilde{H}^{-1 - \frac{s}{2}}}^2 \Vert u \Vert_{\widetilde{H}^{\sigma(s)}} ( 1 + \delta)
        \lesssim \Vert u \Vert_{\widetilde{H}^{-1 - \frac{s}{2}}}^2 \Vert u \Vert_{\widetilde{H}^{\sigma(s)}},
    \end{equation}
    where we have used Lemma \ref{lem:appendix:weak_norm_primitive_new} to bound norms of $u_1$ by norms of $u$. To deal with the case $s_0 \neq s$, we use Lemma \ref{lem:remainder_K_decays_too_fast}, which gives
    \begin{equation}\label{eq:proof:thm:main_kernel_implies_STLC_step2_eq_2_new}
        \left\Vert R_2(T; u) \right\Vert_{H^{-1}}^2 + \left\vert \left\langle R_3(T; u), \varphi_k \right\rangle \right\vert 
        \lesssim \Vert u \Vert_{\widetilde{H}^{-1 - \frac{s_0}{2}}}^2 \Vert u \Vert_{\widetilde{H}^{\sigma(s)+s_0-s}} 
        =  \Vert u \Vert_{\widetilde{H}^{-1 - \frac{s_0}{2}}}^2 \delta. 
    \end{equation}
    Second, to use the decoupling estimate (Lemma \ref{lem:decoupling_estimate}), we need to estimate $t \mapsto y(t;0,u)$ in $L^2((0,T);H^1(0,1))$ and in $L^2((0,T);L^2(0,1))$. Using estimates on the linearized term (Lemma \ref{lem:linear_estimates_new}) and quadratic remainder estimates (Proposition \ref{prop:quad_estimates_new}), one finds
    \begin{equation}
        \left\Vert t \mapsto y(t;0,u) \right\Vert_{L^2(H^1)} 
        \leq \left\Vert y_1 \right\Vert_{L^2(H^1)} + \left\Vert R_2 \right\Vert_{L^2(H^1)} \lesssim \Vert u \Vert_{\widetilde{H}^{\sigma(s)}} 
        \lesssim \delta.
    \end{equation}
    Similarly, one has
    \begin{equation}
        \left\Vert t \mapsto y(t;0,u) \right\Vert_{L^2(L^2)} 
        \leq \left\Vert y_1 \right\Vert_{L^2(L^2)} + \left\Vert R_2 \right\Vert_{L^2(H^1)} 
        \lesssim \Vert u \Vert_{\widetilde{H}^{- 1 - \frac{s}{2}}} + \Vert u \Vert_{\widetilde{H}^{- 1 - \frac{s}{2}}} \Vert u \Vert_{\widetilde{H}^{\sigma(s)}}
        \lesssim \delta
    \end{equation}
    if $s \leq 0$, and 
    \begin{equation}
    \begin{split}
        \left\Vert t \mapsto y(t;0,u) \right\Vert_{L^2(L^2)} 
        & \ \leq \Vert u_1 \mu \Vert_{L^2(L^2)} + \left\Vert y_1 - u_1 \mu \right\Vert_{L^2(L^2)} + \left\Vert R_2 \right\Vert_{L^2(H^1)}  \\
        & \ \lesssim \Vert u \Vert_{\widetilde{H}^{- 1}} + \Vert u \Vert_{\widetilde{H}^{- 1 - \frac{s}{2}}} +  \Vert u \Vert_{\widetilde{H}^{- 1 - \frac{s}{2}}} \Vert u \Vert_{\widetilde{H}^{\sigma(s)}} \\
        & \ \lesssim \delta,
    \end{split}
    \end{equation}
    where we have used Lemma \ref{lem:appendix:weak_norm_primitive_new} again to bound norms of $u_1$ by norms of $u$, and $-1 \leq \sigma(s) + s_0 - s$. Hence, if $\delta$ and $\Vert y_0 \Vert_{L^2}$ are sufficiently small, then the decoupling estimate (Lemma \ref{lem:decoupling_estimate}) gives 
    \begin{equation}\label{eq:proof:thm:main_kernel_implies_STLC_step2_eq_3_new}
         \left\Vert R(T; y_0, u) \right\Vert_{H^{-1}}^2 + \left\vert \left\langle R(T; y_0, u), \varphi_k \right\rangle \right\vert 
         \lesssim T^{\frac{1}{4}} \delta \Vert y_0 \Vert_{L^2} .
    \end{equation}
     Gathering \eqref{eq:proof:thm:main_kernel_implies_STLC_step2_eq_1_new}, \eqref{eq:proof:thm:main_kernel_implies_STLC_step2_eq_2_new} and \eqref{eq:proof:thm:main_kernel_implies_STLC_step2_eq_3_new}, one obtains \eqref{eq:thm:main:kernel_implies_drift_small_time} .
\end{proof}

We now prove that the drift estimate of Theorem \ref{thm:main:kernel_implies_drift_small_time} provides an obstruction to small-time controllability.

\begin{corollary}\label{cor:obstruction_small_time}
    Under the assumptions of Theorem \ref{thm:main:kernel_implies_drift_small_time}, the Burgers equation \eqref{eq:Burgers_intro} is not STLC. More precisely, there exist $c > 0$, $T_1 > 0$, $\eta_1 > 0$ and $\delta_1 > 0$, such that for all $T \in (0, T_1]$, all $u \in L^2(0,T) \cap \widetilde{H}^{\sigma(s) + s_0 - s}(0,T)$ satisfying $\Vert u \Vert_{\widetilde{H}^{\sigma(s) + s_0 - s}(0,T)} \leq \delta_1$, and all $\eta \in (0, \eta_1)$, one has 
    \begin{equation}\label{eq:cor:obstruction_small_time}
        \left\Vert y\!\left(T; \eta \widetilde{y_0}, u \right) \right\Vert_{H^{-1}(0,1)} + \left\Vert y\!\left(T; \eta \widetilde{y_0}, u \right) \right\Vert_{H^{-1}(0,1)}^2 \geq c \left( \eta + \Vert u \Vert_{\widetilde{H}^{- 1 - \frac{s_0}{2}}(0,T)}^2 \right) > 0,
    \end{equation}
    where $\widetilde{y_0} := \frac{\alpha}{\vert \alpha\vert} \varphi_k$.
\end{corollary}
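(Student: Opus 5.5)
We apply the drift estimate \eqref{eq:thm:main:kernel_implies_drift_small_time} of Theorem \ref{thm:main:kernel_implies_drift_small_time} with initial datum $y_0 = \eta \widetilde{y_0}$, and combine it with the explicit free evolution along $\varphi_k$. Since $\widetilde{y_0}$ is an eigenfunction, the key observation concerns the uncontrolled solution $y(t;\eta\widetilde{y_0},0)$. Its projection onto $\varphi_k$ has the sign of $\alpha$ and size comparable to $\eta$ for $T \le T_1$. Indeed, by uniqueness in Lemma \ref{lem:burgers_weak_wellposedness} and a perturbative argument, $y(t;\eta\widetilde{y_0},0) = \eta e^{-\lambda_k t}\widetilde{y_0} + O(\eta^2)$ in $L^\infty(L^2)$, because the nonlinearity $y\partial_x y$ contributes at order $\eta^2$. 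Hence $\frac{\alpha}{|\alpha|}\langle y(T;\eta\widetilde{y_0},0),\varphi_k\rangle \ge \eta e^{-\lambda_k T_1} - C\eta^2 \ge c_0\eta$ for $\eta\le\eta_1$ small. The drift term $\frac{\alpha k}{4\pi^2\sqrt2}\|u\|^2_{\widetilde H^{-1-s_0/2}}$ has the same sign as $\alpha$, so the two main contributions add rather than cancel.

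Set $D := y(T;\eta\widetilde{y_0},u) - y(T;\eta\widetilde{y_0},0)$, $N := \|u\|_{\widetilde H^{-1-s_0/2}}^2$, and $\beta := \frac{|\alpha| k}{4\pi^2\sqrt2}$. Multiplying the drift estimate by $\mathrm{sgn}(\alpha)$, taking $T_1\le T_0$ and $\delta_1\le\delta_0$ small so that $C(T^{\nu_0}+\delta)\le \beta/2$, and using $C T^{1/4}\delta\eta \le \varepsilon\eta$ for $T_1,\delta_1$ small, we obtain
\begin{equation*}
\tfrac{\alpha}{|\alpha|}\langle y(T;\eta\widetilde{y_0},u),\varphi_k\rangle \ge c_0\eta + \tfrac{\beta}{2} N - C\|D\|_{H^{-1}}^2 - \varepsilon\eta .
\end{equation*}
Choose $\varepsilon = c_0/2$. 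The left side is bounded by $\lambda_k^{1/2}\|y(T;\eta\widetilde{y_0},u)\|_{H^{-1}}$, since $|\langle z,\varphi_k\rangle| \le \lambda_k^{1/2}\|z\|_{H^{-1}}$.

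It remains to control $\|D\|_{H^{-1}}^2$. Using $\|D\|_{H^{-1}} \le \|y(T;\eta\widetilde{y_0},u)\|_{H^{-1}} + \|y(T;\eta\widetilde{y_0},0)\|_{H^{-1}}$, the second summand is at most $C'\eta$, so $\|D\|^2_{H^{-1}} \le 2\|y(T;\eta\widetilde{y_0},u)\|^2_{H^{-1}} + 2C'^2\eta^2$. The term $2CC'^2\eta^2$ is absorbed into $\frac{c_0}{4}\eta$ after shrinking $\eta_1$. Collecting terms then gives
\begin{equation*}
\lambda_k^{1/2}\|y(T)\|_{H^{-1}} + 2C\|y(T)\|_{H^{-1}}^2 \ge \tfrac{c_0}{4}\eta + \tfrac{\beta}{2}N ,
\end{equation*}
which is \eqref{eq:cor:obstruction_small_time} after dividing by constants. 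Because $\eta>0$, the right-hand side is strictly positive, so $y(T;\eta\widetilde{y_0},u)\neq 0$ for every small control. STLC therefore fails: for $\eta>0$ arbitrarily small, no control small in $\widetilde H^{\sigma(s)+s_0-s}$ steers $\eta\widetilde{y_0}$ to zero.

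The main obstacle is the free-evolution step. We need $\langle y(T;\eta\widetilde{y_0},0),\varphi_k\rangle \ge c_0\eta$, with the $O(\eta^2)$ remainder uniform in $T\le T_1$. I would obtain this from the energy estimate \eqref{eq:lem:Burgers_weak_wellposedness} applied to the difference $w = y - \eta e^{-\lambda_k t}\widetilde{y_0}$. This difference solves a Burgers-type equation with zero initial datum and source $-\eta^2 e^{-2\lambda_k t}\widetilde{y_0}\widetilde{y_0}'$, together with linear cross terms in $w$. The cross terms are handled by a Gronwall-type variant of the energy estimate, using that the coefficient $\eta\widetilde{y_0}$ is smooth and small. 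The remaining work is ensuring that the absorption constants are chosen in a consistent order: $T_1$ and $\delta_1$ first, then $\eta_1$.
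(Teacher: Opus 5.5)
Your proposal is correct and follows essentially the same route as the paper. You apply the drift estimate of Theorem \ref{thm:main:kernel_implies_drift_small_time} with $y_0=\eta\widetilde{y_0}$, use the sign choice so that the free contribution ($\approx\eta$) and the drift ($\approx\frac{|\alpha|k}{4\pi^2\sqrt{2}}\Vert u\Vert_{\widetilde{H}^{-1-s_0/2}}^2$) add rather than cancel, and absorb the errors by shrinking $T_1$, $\delta_1$, then $\eta_1$. The free-evolution step you flag as the main obstacle is exactly Lemma \ref{lem:free_evolution_Burgers}, which the paper proves without Gronwall: it treats $-\frac12\partial_x(y^2)$ as a source in Lemma \ref{lem:well-posedness_heat_derivative_L1L2}. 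That lemma gives your comparison $\Vert y(T;y_0,0)-S(T)y_0\Vert_{H^{-1}}\le CT^{3/4}\Vert y_0\Vert_{L^2}^2$, as well as the comparison with $y_0$ itself (an extra $CT^{1/2}\Vert y_0\Vert_{L^2}$), which is the one the paper's proof actually uses.
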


\begin{proof}
    In this proof, the symbol $\lesssim$ is used for constants that may depend on $k$, $\mu$, $s$ and $s_0$. Using Theorem \ref{thm:main:kernel_implies_drift_small_time} and the estimate for the free Burgers equation proved in Lemma \ref{lem:free_evolution_Burgers}, one finds
    \begin{equation}
        \begin{split}
            & \left\vert \left\langle y(T; y_0, u) - y_0, \varphi_k \right\rangle - \frac{\alpha k }{4 \pi^2 \sqrt{2}} \Vert u \Vert_{\widetilde{H}^{- 1 - \frac{s_0}{2}}(0,T)}^2\right\vert \\
            \lesssim \ & \left( T^{\nu_0} + \delta \right) \Vert u \Vert_{\widetilde{H}^{-1 - \frac{s_0}{2}}}^2 + \left\Vert y(T; y_0, u) - y_0 \right\Vert_{H^{-1}}^2 + T^{\frac{1}{4}} \Vert y_0 \Vert_{L^2} \\
            \lesssim \ & \left( T^{\nu_0} + \delta \right) \Vert u \Vert_{\widetilde{H}^{-1 - \frac{s_0}{2}}}^2 + \left\Vert y(T; y_0, u) \right\Vert_{H^{-1}}^2 + T^{\frac{1}{4}} \Vert y_0 \Vert_{L^2} + \Vert y_0 \Vert_{L^2}^2.
        \end{split}
    \end{equation}
    We assume that $\alpha > 0$; the case $\alpha < 0$ is similar. Using also $\langle \psi, \varphi_k \rangle \lesssim \Vert \psi \Vert_{H^{-1}}$, one obtains
    \begin{equation}
        \begin{split}
            & \frac{\alpha k }{4 \pi^2 \sqrt{2}} \Vert u \Vert_{\widetilde{H}^{- 1 - \frac{s_0}{2}}(0,T)}^2 + \left\langle  y_0, \varphi_k \right\rangle \\
            \lesssim \ & \left( T^{\nu_0} + \delta \right) \Vert u \Vert_{\widetilde{H}^{-1 - \frac{s_0}{2}}}^2 + \left\Vert y(T; y_0, u) \right\Vert_{H^{-1}} + \left\Vert y(T; y_0, u) \right\Vert_{H^{-1}}^2 + T^{\frac{1}{4}} \Vert y_0 \Vert_{L^2} + \Vert y_0 \Vert_{L^2}^2 .
        \end{split}
    \end{equation}
    Hence, if $y_0 = \eta \varphi_k$, then there exists $C > 0$ such that 
   \begin{equation}
        \begin{split}
            & \left( \frac{\alpha k }{4 \pi^2 \sqrt{2}} - C T^{\nu_0} - C \delta \right) \Vert u \Vert_{\widetilde{H}^{- 1 - \frac{s_0}{2}}(0,T)}^2 + \left( 1 - C T^{\frac{1}{4}} - C \eta \right) \eta  \\
            \lesssim \ & C \left\Vert y(T; y_0, u) \right\Vert_{H^{-1}} + C \left\Vert y(T; y_0, u) \right\Vert_{H^{-1}}^2.
        \end{split}
    \end{equation}
    This gives \eqref{eq:cor:obstruction_small_time} if $T$, $\eta$ and $\delta$ are sufficiently small.
 \end{proof}

\subsection{Obstruction to finite-time controllability}

Here, we prove the following finite-time analogue of Theorem \ref{thm:main:kernel_implies_drift_small_time}.

\begin{theorem}[An asymptotic estimate of the kernel and a global sign property imply a drift estimate in finite time]\label{thm:main:kernel_implies_drift_finite_time}
    Let $p \in \left\{2, +\infty\right\}$, $s, s_0 \in [-1, 1)$, with $s_0 \geq s$ and $\sigma(s) + s_0 - s < \frac{1}{2}$, and let $k \geq 1$. If $p=2$, then let $\mu \in \mathcal{H}_2^s(0,1)$. If $p = +\infty$, assume that $s \notin \{-1, 0\}$, and let $\mu \in \mathcal{H}_\infty^{s+\frac{1}{2}}(0,1)$. Assume that $\langle \mu, \varphi_k \rangle = 0$, that
    \begin{equation}\label{eq:thm:main:kernel_implies_drift_finite_time_1}
        K_{\mu,k}(\omega) = \frac{\alpha}{\langle \omega \rangle^{2+s_0}} + \mathcal{O}\left( \frac{1}{\langle \omega \rangle^{2+s_0+\nu}} \right), \quad \text{when $\omega \rightarrow + \infty$,}
    \end{equation}
    and that
    \begin{equation}\label{eq:thm:main:kernel_implies_drift_finite_time_2}
        \alpha K_{\mu,k}(\omega) > 0 , \quad \text{for all $\omega \in \mathbb{R}$},
    \end{equation}
    for some $\nu > 0$ and $\alpha \neq 0$. Then, for all $T > 0$, there exists $\eta_0 > 0$, $\delta_0 > 0$, and $C > 0$ such that for all $u \in L^2(0,T) \cap \widetilde{H}^{\sigma(s) + s_0 - s}(0,T)$ satisfying $\delta := \Vert u \Vert_{\widetilde{H}^{\sigma(s) + s_0 - s}(0,T)} \leq \delta_0$, and all $y_0 \in L^2(0, 1)$ satisfying $\Vert y_0 \Vert_{L^2} \leq \eta_0$, one has
    \begin{equation}\label{eq:thm:main:kernel_implies_drift_finite_time_3}
        \begin{split}
            & \left\vert \left\langle y(T; y_0, u) - y(T; y_0, 0), \varphi_k \right\rangle - \frac{\alpha}{\vert \alpha \vert} \frac{k e^{-\lambda_k T}}{4 \pi^2 \sqrt{2}} \Vert u \rho_k \Vert_{K_{\mu, k}}^2 \right\vert \\
            \leq \ & C \left\Vert y(T; y_0, u) - y(T; y_0, 0) \right\Vert_{H^{-1}}^2 + C \delta^{\frac{1}{2}} \Vert u \Vert_{\widetilde{H}^{-1-\frac{s_0}{2}}}^2 + C \delta^{\frac{1}{2}} \Vert y_0 \Vert_{L^2} \\
            & + C \Vert u \Vert_{\widetilde{H}^{- 1 - \frac{s_0}{2}}}  \left\Vert y(T; y_0, u) - y(T; y_0, 0) \right\Vert_{H^{-1}} ,
        \end{split}
    \end{equation}    
    where $\rho_k(t) := e^{\frac{\lambda_k t}{2}}$ and
    \begin{equation}\label{eq:def_norm_K_mu_k}
        \Vert u \Vert_{K_{\mu, k}}^2 := \frac{\alpha}{\vert \alpha \vert} \int_{\mathbb{R}} K_{\mu,k}(\omega) \left\vert \widehat{u}(\omega) \right\vert^2 \dd \omega.
    \end{equation}
 \end{theorem}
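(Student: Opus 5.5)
The plan follows the proof of Theorem \ref{thm:main:kernel_implies_drift_small_time}, but with $T$ fixed, so that no factor $T^{\nu_0}$ is available to absorb errors. The global sign \eqref{eq:thm:main:kernel_implies_drift_finite_time_2} has to play that role.

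\textbf{Step 1: the quadratic term.} By Proposition \ref{prop:decomposition_y_2_kernel}, the main part of $\langle y_2(T;u),\varphi_k\rangle$ is exactly $\frac{\alpha}{|\alpha|}\frac{k e^{-\lambda_k T}}{4\pi^2\sqrt2}\Vert u\rho_k\Vert_{K_{\mu,k}}^2$. No kernel replacement is needed, which is why \eqref{eq:thm:main:kernel_implies_drift_finite_time_3} is phrased with the $K_{\mu,k}$-seminorm. The remainder in \eqref{eq:prop:decomposition_y_2} consists of $\Vert y_1(T)\Vert_{H^{-1}}^2$ and of the products $|\langle y_1(T),\varphi_n\rangle|\,|\int_0^T u e^{(\lambda_k-\lambda_n)t}\dd t|$ for $n^2<k^2/2$. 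For the products, I bound the moment by $C_T\Vert u\Vert_{\widetilde H^{-1-s_0/2}}$, by duality against a smooth function on $[0,T]$. This produces a term $\Vert u\Vert_{\widetilde H^{-1-s_0/2}}\Vert y_1(T)\Vert_{H^{-1}}$, which explains the cross term in \eqref{eq:thm:main:kernel_implies_drift_finite_time_3}.

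\textbf{Step 2: comparing $\Vert u\rho_k\Vert_{K_{\mu,k}}$ with $\Vert u\Vert_{\widetilde H^{-1-s_0/2}}$.} From \eqref{eq:thm:main:kernel_implies_drift_finite_time_1}, \eqref{eq:thm:main:kernel_implies_drift_finite_time_2} and the continuity and evenness of $K_{\mu,k}$, I get $c\langle\omega\rangle^{-2-s_0}\le \frac{\alpha}{|\alpha|}K_{\mu,k}(\omega)\le C\langle\omega\rangle^{-2-s_0}$ on all of $\mathbb{R}$. Hence $\Vert u\rho_k\Vert_{K_{\mu,k}}\approx\Vert u\rho_k\Vert_{\widetilde H^{-1-s_0/2}}$. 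Multiplication by the smooth functions $\rho_k^{\pm1}$ on $[0,T]$ is bounded on $\widetilde H^{r}(0,T)$, so this is $\approx\Vert u\Vert_{\widetilde H^{-1-s_0/2}}$ with $T$-dependent constants. This is the only place the global sign is used, and it is essential: it lets every error of size $\epsilon\Vert u\Vert^2_{\widetilde H^{-1-s_0/2}}$ be compared with the main term. That comparison is what Corollary-type arguments need, and the statement itself only requires it as the normalization of the right-hand side.

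\textbf{Step 3: the nonlinear remainders.} I reuse Step 2 of the proof of Theorem \ref{thm:main:kernel_implies_drift_small_time}. First, I write $\langle y_2,\varphi_k\rangle$ in terms of $y(T;y_0,u)-y(T;y_0,0)$, $R_3$ and the decoupling remainder $R$. Second, I bound $\Vert y_1(T)\Vert_{H^{-1}}$ by $\Vert y(T;y_0,u)-y(T;y_0,0)\Vert_{H^{-1}}+\Vert R_2\Vert_{H^{-1}}+\Vert R\Vert_{H^{-1}}$. Third, I apply the quadratic and cubic remainder estimates (Propositions \ref{prop:quad_estimates_new}, \ref{prop:cubic_estimates_new}, \ref{prop:remainder_estimates_p_infty}) together with Lemma \ref{lem:remainder_K_decays_too_fast}, now at fixed $T$ with constants depending on $T$. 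They give $\Vert R_2\Vert^2_{H^{-1}}+|\langle R_3,\varphi_k\rangle|\lesssim\delta\Vert u\Vert^2_{\widetilde H^{-1-s_0/2}}$.

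\textbf{The main obstacle.} I expect the hard step to be the decoupling remainder $R$ at fixed $T$. The factor $T^{1/4}$ is no longer small, and the cross term $\Vert u\Vert\,\Vert R\Vert_{H^{-1}}$ also has to be controlled. I would use Lemma \ref{lem:decoupling_estimate} in the form $\Vert R(T)\Vert\lesssim\Vert y_0\Vert_{L^2}\Vert y(\cdot;0,u)\Vert_{L^2(H^1)}\lesssim\delta\Vert y_0\Vert_{L^2}$. The term $\Vert u\Vert\,\delta\Vert y_0\Vert$ is then split by Young's inequality as $\delta^{1/2}\Vert u\Vert^2+\delta^{3/2}\Vert y_0\Vert^2$, and smallness of $\delta$ and of $\Vert y_0\Vert_{L^2}$ leaves the $\delta^{1/2}$ factors appearing in \eqref{eq:thm:main:kernel_implies_drift_finite_time_3}. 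For the same reason, the squared term $\Vert R_2\Vert_{H^{-1}}^2$ coming from $\Vert y_1(T)\Vert^2$ and the cross term are absorbed with $\delta^{1/2}$ rather than $\delta$. Collecting the bounds from Steps 1 to 3 then gives \eqref{eq:thm:main:kernel_implies_drift_finite_time_3}.
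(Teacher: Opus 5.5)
Your proposal is correct and follows essentially the same route as the paper: Proposition~\ref{prop:decomposition_y_2_kernel} with the low-frequency moments bounded via Lemma~\ref{lem:u_1_finite_time}, the kernel--norm equivalence from the asymptotic and sign hypotheses (which, as you note, is really needed only for Corollary~\ref{cor:obstruction_finite_time}), and then the remainder bookkeeping of Theorem~\ref{thm:main:kernel_implies_drift_small_time} with $T$-dependent constants. The only cosmetic difference is in how the $\delta^{1/2}$ factors arise: you bound $\Vert R(T;y_0,u)\Vert_{H^{-1}}\lesssim \delta\Vert y_0\Vert_{L^2}$ directly and apply Young's inequality to $\Vert u\Vert\,\delta\Vert y_0\Vert$, whereas the paper bounds $\mathscr{R}\lesssim\delta\Vert u\Vert^2+\delta\Vert y_0\Vert$ as a whole and gets the $\delta^{1/2}$ factors from the cross term $\Vert u\Vert\,\mathscr{R}^{1/2}$.
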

 
\begin{proof}
    Let $T > 0$. In this proof, the symbol $\lesssim$ is used for constants that may depend on $T$, $k$, $\mu$, $s$ and $s_0$. 

    \textbf{Step 1.} \textit{Using \eqref{eq:thm:main:kernel_implies_drift_finite_time_1} and \eqref{eq:thm:main:kernel_implies_drift_finite_time_2} to estimate the quadratic term.}
    By \eqref{eq:thm:main:kernel_implies_drift_finite_time_1} and \eqref{eq:thm:main:kernel_implies_drift_finite_time_2}, and since $K_{\mu, k}$ is even and continuous, one has
    \begin{equation}
        1 \lesssim \frac{K_{\mu,k}(\omega) \langle \omega \rangle^{2+s_0}}{\alpha} \lesssim 1,
    \end{equation}
    for all $\omega \in \mathbb{R}$. This implies that for all $u \in \widetilde{H}^{-1 - \frac{s_0}{2}}(0,T)$, one has
    \begin{equation}\label{eq:proof:thm:main:kernel_implies_drift_finite_time:step_1_eq_2}
        \Vert u \Vert_{\widetilde{H}^{-1 - \frac{s_0}{2}}}^2 \lesssim \frac{\alpha}{\vert \alpha \vert} \int_{\mathbb{R}} K_{\mu,k}(\omega) \left\vert \widehat{u}(\omega) \right\vert^2 \dd \omega \lesssim \Vert u \Vert_{\widetilde{H}^{-1 - \frac{s_0}{2}}}^2,
    \end{equation}
    where $u$ is implicitly extended by zero outside $(0,T)$, so that the norm defined by \eqref{eq:def_norm_K_mu_k} is indeed a norm. By Proposition \ref{prop:decomposition_y_2_kernel}, one has
    \begin{equation}
        \begin{split}
            & \left\vert \left\langle y_2(T; u), \varphi_k \right\rangle - \frac{\alpha}{\vert \alpha \vert} \frac{k e^{-\lambda_k T}}{4 \pi^2 \sqrt{2}} \Vert u \rho_k \Vert_{K_{\mu, k}}^2 \right\vert \\
            \lesssim \ & \Vert y_1(T;u) \Vert_{H^{-1}}^2 + \Vert y_1(T;u) \Vert_{H^{-1}} \sum_{n\geq 1} \mathbbm{1}_{n^2 < \frac{k^2}{2}} \left\vert \int_0^T u(t) e^{\left(\lambda_k - \lambda_n \right) t} \dd t \right\vert,
        \end{split}
    \end{equation}
    where $t \mapsto y_1(t; u)$ and $t \mapsto y_2(t; u)$ are the solutions of \eqref{eq:Burgers_y_1_intro} and \eqref{eq:Burgers_y_2_intro}. 
    By Lemma \ref{lem:u_1_finite_time}, one has
    \begin{equation}
        \sum_{n\geq 1} \mathbbm{1}_{n^2 < \frac{k^2}{2}} \left\vert \int_0^T u(t) e^{\left(\lambda_k - \lambda_n \right) t} \dd t \right\vert
        \lesssim \Vert u \Vert_{\widetilde{H}^{-1 - \frac{s_0}{2}}},
    \end{equation}
    yielding
    \begin{equation}
        \left\vert \left\langle y_2(T; u), \varphi_k \right\rangle - \frac{\alpha}{\vert \alpha \vert} \frac{k e^{-\lambda_k T}}{4 \pi^2 \sqrt{2}} \Vert u \rho_k \Vert_{K_{\mu, k}}^2 \right\vert 
        \lesssim  \Vert y_1(T;u) \Vert_{H^{-1}}^2 + \Vert y_1(T;u) \Vert_{H^{-1}} \Vert u \Vert_{\widetilde{H}^{-1 - \frac{s_0}{2}}} .
    \end{equation}
    
    \textbf{Step 2.} \textit{Coming back to the full controlled solution.}
    Let $y_0 \in L^2(0,1)$. As in the proof of Theorem \ref{thm:main:kernel_implies_drift_small_time}, write $R_2(t;u)$, $R_3(t;u)$ and $R(t; y_0, u)$ for the quadratic, cubic and decoupling remainders. Let also $\mathscr{R}(T; y_0, u)$ be given by \eqref{eq:proof:thm:main_kernel_implies_STLC_step2_eq_1_bis_new}. Arguing as in the proof of Theorem \ref{thm:main:kernel_implies_drift_small_time}, one finds 
    \begin{equation}\label{eq:proof:thm:main:kernel_implies_drift_finite_time:step_2_eq_1}
        \begin{split}
            & \left\vert \left\langle y(T; y_0, u) - y(T; y_0, 0), \varphi_k \right\rangle - \frac{\alpha}{\vert \alpha \vert} \frac{k e^{-\lambda_k T}}{4 \pi^2 \sqrt{2}} \Vert u \rho_k \Vert_{K_{\mu, k}}^2 \right\vert \\
            \lesssim \ & \left\Vert y(T; y_0, u) - y(T; y_0, 0) \right\Vert_{H^{-1}}^2 + \mathscr{R}(T; y_0, u)  \\
            & + \Vert u \Vert_{\widetilde{H}^{- 1 - \frac{s_0}{2}}} \left( \left\Vert y(T; y_0, u) - y(T; y_0, 0) \right\Vert_{H^{-1}} + \mathscr{R}(T; y_0, u)^{\frac{1}{2}} \right).
        \end{split}
    \end{equation}
    As in the proof of Theorem \ref{thm:main:kernel_implies_drift_small_time}, one has 
    \begin{equation}
        \mathscr{R}(T; y_0, u) \lesssim \delta \Vert u \Vert_{\widetilde{H}^{-1 - \frac{s_0}{2}}}^2 + \delta \Vert y_0 \Vert_{L^2},
    \end{equation}
    where Lemma \ref{lem:appendix:weak_norm_primitive_new} has been used with a constant depending on $T$. Together with \eqref{eq:proof:thm:main:kernel_implies_drift_finite_time:step_2_eq_1}, this gives \eqref{eq:thm:main:kernel_implies_drift_finite_time_3}.
\end{proof}

As in Corollary \ref{cor:obstruction_small_time}, we show that the drift estimate of Theorem \ref{thm:main:kernel_implies_drift_finite_time} yields an obstruction to finite-time controllability.

\begin{corollary}\label{cor:obstruction_finite_time}
    Under the assumptions of Theorem \ref{thm:main:kernel_implies_drift_finite_time}, the Burgers equation \eqref{eq:Burgers_intro} is not FTLC. More precisely, for all $T > 0$, there exist $c > 0$, $\eta_1 > 0$ and $\delta_1 > 0$, such that for all $u \in L^2(0,T) \cap \widetilde{H}^{\sigma(s) + s_0 - s}(0,T)$ satisfying $\Vert u \Vert_{\widetilde{H}^{\sigma(s) + s_0 - s}(0,T)} \leq \delta_1$, and all $\eta \in (0, \eta_1)$, one has 
    \begin{equation}\label{eq:cor:obstruction_finite_time}
        \left\Vert y\!\left(T; \eta \widetilde{y_0}, u \right) \right\Vert_{H^{-1}(0,1)} + \left\Vert y\!\left(T; \eta \widetilde{y_0}, u \right) \right\Vert_{H^{-1}(0,1)}^2 \geq c \left( \eta + \Vert u \Vert_{\widetilde{H}^{- 1 - \frac{s_0}{2}}(0,T)}^2 \right) > 0,
    \end{equation}
    where $\widetilde{y_0} := \frac{\alpha}{\vert \alpha\vert} \varphi_k$.
\end{corollary}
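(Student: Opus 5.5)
We mimic the proof of Corollary \ref{cor:obstruction_small_time}, replacing the small-time drift estimate by the finite-time one of Theorem \ref{thm:main:kernel_implies_drift_finite_time}. Fix $T>0$ and assume $\alpha>0$; the case $\alpha<0$ is symmetric, and this is why we take $\widetilde{y_0} = \frac{\alpha}{\vert\alpha\vert}\varphi_k$. Two preliminary facts are needed. First, by \eqref{eq:proof:thm:main:kernel_implies_drift_finite_time:step_1_eq_2} applied to $u\rho_k$, and since $\rho_k$ and $\rho_k^{-1}$ are smooth and bounded on $[0,T]$ (so multiplication by them is bounded on $\widetilde{H}^{-1-\frac{s_0}{2}}(0,T)$ with a $T$-dependent constant), we have $\Vert u\rho_k\Vert_{K_{\mu,k}}^2 \geq c_T \Vert u\Vert_{\widetilde{H}^{-1-\frac{s_0}{2}}}^2$. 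Second, we use the free-evolution estimate of Lemma \ref{lem:free_evolution_Burgers}, which we would use in the form: $\langle y(T;y_0,0),\varphi_k\rangle = e^{-\lambda_k T}\langle y_0,\varphi_k\rangle + \mathcal{O}(\Vert y_0\Vert_{L^2}^2)$ and $\Vert y(T;y_0,0)\Vert_{H^{-1}}\lesssim \Vert y_0\Vert_{L^2}$. In finite time one cannot use $y(T;y_0,0)\approx y_0$, but the factor $e^{-\lambda_kT}$ is a positive constant, which is harmless.

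Next, take $y_0=\eta\varphi_k$ and write $Y:=y(T;y_0,u)$, $Y_0:=y(T;y_0,0)$ and $N:=\Vert u\Vert_{\widetilde{H}^{-1-\frac{s_0}{2}}}$. The drift estimate \eqref{eq:thm:main:kernel_implies_drift_finite_time_3} gives
\[
\langle Y,\varphi_k\rangle \geq e^{-\lambda_kT}\eta - C\eta^2 + c\,N^2 - C\Vert Y-Y_0\Vert_{H^{-1}}^2 - C\delta^{1/2}N^2 - C\delta^{1/2}\eta - C N\Vert Y-Y_0\Vert_{H^{-1}} .
\]
We bound $\Vert Y-Y_0\Vert_{H^{-1}}\leq \Vert Y\Vert_{H^{-1}}+C\eta$. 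Then the cross terms are handled by Young's inequality:
\begin{itemize}
\item $CN\Vert Y\Vert_{H^{-1}}\leq \frac{c}{4}N^2 + C'\Vert Y\Vert_{H^{-1}}^2$;
\item $CN\eta \leq \frac{c}{4}N^2 + C'\eta^2$;
\item $\Vert Y-Y_0\Vert_{H^{-1}}^2 \lesssim \Vert Y\Vert_{H^{-1}}^2+\eta^2$.
\end{itemize}
Finally, $\langle Y,\varphi_k\rangle\lesssim\Vert Y\Vert_{H^{-1}}$.

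Collecting terms, we obtain
\[
\bigl(e^{-\lambda_kT}-C\eta-C\delta^{1/2}\bigr)\eta + \bigl(\tfrac{c}{2}-C\delta^{1/2}\bigr)N^2 \lesssim \Vert Y\Vert_{H^{-1}}+\Vert Y\Vert_{H^{-1}}^2 .
\]
Choosing $\eta_1$ and $\delta_1$ small (depending on $T$) gives \eqref{eq:cor:obstruction_finite_time}. Non-FTLC follows directly: for any $T$, initial data $\eta\widetilde{y_0}$ with $\eta$ small can never be steered to $0$ by controls small in $\widetilde{H}^{\sigma(s)+s_0-s}$, since the left-hand side would vanish while the right-hand side is at least $c\eta>0$.

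The main obstacle is the term $CN\Vert Y-Y_0\Vert_{H^{-1}}$, which is absent in small time. It must be absorbed without losing the $\eta$ term, which is why the Young splitting above must put the $\eta^2$ piece against $e^{-\lambda_kT}\eta$, requiring $\eta$ small. A secondary point is checking the precise form of Lemma \ref{lem:free_evolution_Burgers} in finite time: we need it to yield the projection $e^{-\lambda_kT}\eta$ up to $\mathcal{O}(\eta^2)$.
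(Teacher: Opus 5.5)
Your proposal is correct and follows essentially the same route as the paper. Both arguments rest on four ingredients:
\begin{enumerate}[label=(\roman*)]
    \item the finite-time drift estimate of Theorem \ref{thm:main:kernel_implies_drift_finite_time};
    \item the first estimate of Lemma \ref{lem:free_evolution_Burgers}, which compares the free evolution with $S(T)y_0$, whose $\varphi_k$-component is $e^{-\lambda_k T}\eta$;
    \item the lower bound $\Vert u\rho_k\Vert_{K_{\mu,k}} \gtrsim \Vert u \Vert_{\widetilde{H}^{-1-\frac{s_0}{2}}}$, obtained via Lemma \ref{lem:appendix:removing_exp_from_sobolev_norm_large_time};
    \item absorption of the remaining terms by taking $\eta$ and $\delta$ small.
\end{enumerate}
The only cosmetic difference is in the cross term $N \Vert Y - Y_0 \Vert_{H^{-1}}$: you absorb it with Young's inequality, whereas the paper uses $N \lesssim \delta \lesssim 1$ to reduce it to $\Vert Y\Vert_{H^{-1}}$ plus a term of order $\delta\eta$.
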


\begin{proof}
    In this proof, the symbol $\lesssim$ is used for constants that may depend on $T$, $k$, $\mu$, $s$ and $s_0$. Using Theorem \ref{thm:main:kernel_implies_drift_finite_time} and the estimate for the free Burgers equation proved in Lemma \ref{lem:free_evolution_Burgers}, one finds
    \begin{equation}
        \begin{split}
            & \left\vert \left\langle y(T; y_0, u) - S(T)y_0, \varphi_k \right\rangle - \frac{\alpha}{\vert \alpha \vert} \frac{k e^{-\lambda_k T}}{4 \pi^2 \sqrt{2}} \Vert u \rho_k \Vert_{K_{\mu, k}}^2 \right\vert \\
            \lesssim \ & \left\Vert y(T; y_0, u) - S(T) y_0 \right\Vert_{H^{-1}}^2 + \delta^{\frac{1}{2}} \Vert u \Vert_{\widetilde{H}^{-1-\frac{s_0}{2}}}^2 + \delta^{\frac{1}{2}} \Vert y_0 \Vert_{L^2} \\
                & \quad + \Vert u \Vert_{\widetilde{H}^{- 1 - \frac{s_0}{2}}}  \left\Vert y(T; y_0, u) - S(T) y_0 \right\Vert_{H^{-1}} + \Vert y_0 \Vert_{L^2}^2 \\
            \lesssim \ & \left\Vert y(T; y_0, u) \right\Vert_{H^{-1}} + \left\Vert y(T; y_0, u) \right\Vert_{H^{-1}}^2 
                        + \delta^{\frac{1}{2}} \Vert u \Vert_{\widetilde{H}^{-1-\frac{s_0}{2}}}^2 + \delta^{\frac{1}{2}} \Vert y_0 \Vert_{L^2} + \Vert y_0 \Vert_{L^2}^2,
        \end{split}
    \end{equation}
    where $S$ is the heat semigroup, and where we have also used the estimates $\Vert S(T) y_0 \Vert_{H^{-1}} \lesssim \Vert y_0 \Vert_{H^{-1}} \lesssim \Vert y_0 \Vert_{L^2}$ and $\Vert u \Vert_{\widetilde{H}^{- 1 - \frac{s_0}{2}}} \lesssim \delta \lesssim 1$. We assume that $\alpha > 0$; the case $\alpha < 0$ is similar. Note that by Lemma \ref{lem:appendix:removing_exp_from_sobolev_norm_large_time} and \eqref{eq:proof:thm:main:kernel_implies_drift_finite_time:step_1_eq_2}, one has
    \begin{equation}
        \Vert u \Vert_{\widetilde{H}^{- 1 - \frac{s_0}{2}}} \lesssim \Vert u \rho_k \Vert_{\widetilde{H}^{- 1 - \frac{s_0}{2}}} \lesssim \Vert u \rho_k \Vert_{K_{\mu, k}}.
    \end{equation}
    Note also that if $y_0 = \eta \varphi_k$, then $\left\langle S(T)y_0, \varphi_k \right\rangle = e^{- \lambda_k T} \eta$. Hence, there exists $c, C > 0$ such that $y_0 = \eta \varphi_k$, then
    \begin{equation}
        \left( c - C \delta^{\frac{1}{2}} \right) \Vert u \Vert_{\widetilde{H}^{- 1 - \frac{s_0}{2}}}^2 + \left( c - C \delta^{\frac{1}{2}} - C \eta \right) \eta \leq C \left\Vert y(T; y_0, u) \right\Vert_{H^{-1}} + C \left\Vert y(T; y_0, u) \right\Vert_{H^{-1}}^2.
    \end{equation}
    This gives \eqref{eq:cor:obstruction_finite_time} if $\eta$ and $\delta$ are sufficiently small.
 \end{proof}

\section{Remainder estimates}\label{sec:remainder_estimates}

In this section, we prove the various remainder estimates used to establish the drift estimates in Section \ref{sec:proof_obstructions}. The main task is to derive estimates for the quadratic and cubic remainders, for which we begin with some preliminaries.

For clarity, we first provide fully detailed proofs of remainder estimates in the case $\mu \in \mathcal{H}^s_2(0,1)$ for $s \in [-1,1)$, and then provide sketches of proof for the case $\mu \in \mathcal{H}^{s+\frac{1}{2}}_\infty(0,1)$ for $s \in (-1,0) \cup (0,1)$. For simplicity, we do not treat the case $\mu \in \mathcal{H}^1_2$, which requires a specific argument, nor the cases $\mu \in \mathcal{H}^{s+\frac{1}{2}}_\infty(0,1)$ for $s \in \left\{-1,0,1\right\}$, which typically requires the introduction of Sobolev spaces with logarithmic weights. The proofs of the remainder estimates are relatively straightforward when $\mu \in \mathcal{H}^s_2$ for $s \in [-1,0]$, but considerably more involved when $\mu \in \mathcal{H}^s_2$ for $s \in (0,1)$. 

We then establish the so-called closed-loop estimates, which show that remainder terms of the form $\int_0^T u(t)e^{\lambda t} \dd t$ can be absorbed into the drift, up to a term involving the controlled solution. Finally, we prove the so-called decoupling estimates, which show that the full controlled solution $y(t;y_0,u)$ can be estimated in terms of the controlled solution with zero initial data, $y(t;0,u)$, and of the free evolution, $y(t;y_0,0)$.

\subsection{Preliminaries}

For $s \in [-1, 1)$, recall that $\sigma(s) = - \frac{1+s}{2}$ if $s \leq \frac{1}{2}$, and $\sigma(s) = -1 + \frac{s}{2}$ if $s \geq \frac{1}{2}$. Our goal is to prove remainder estimates that yield an obstruction quantified by $\Vert u \Vert_{\widetilde{H}^{- 1 - \frac{s}{2}}}^2$ (or by $\Vert u_1 \Vert_{\widetilde{H}^{- \frac{s}{2}}}^2$ if $s \geq 0$), with a control such that $\Vert u \Vert_{\widetilde{H}^{\sigma(s)}}$ is small. 

We introduce the symmetric bilinear operator $\mathcal{B}$ defined by
\begin{equation}\label{eq:def:mathcalB}
    \mathcal{B}(f,g)(t) := \int_0^t S(t-\tau) \left[ \partial_x \left( f(\tau) g(\tau) \right) \right] \dd \tau,
\end{equation}
where $S$ is the heat semigroup. Note that Lemma \ref{lem:easy_estimate_L1L2} and Lemma \ref{lem:well-posedness_heat_derivative_L1L2} imply
\begin{equation}\label{eq:standard_estimates_on_B_1}
    \left\Vert \mathcal{B}(f,g) \right\Vert_{L^2(L^2)} + \left\Vert \mathcal{B}(f,g) \right\Vert_{L^\infty(H^{-1})} \leq C \Vert fg \Vert_{L^1(L^2)} \lesssim \Vert f \Vert_{L^2(H^1)} \Vert g \Vert_{L^2(L^2)}, 
\end{equation}
if $f \in L^2((0,T);H_0^1(0,1))$ and $g \in L^2((0,T);L^2(0,1))$, and that Lemma \ref{lem:well-posedness_heat_L1L2}, together with the estimate 
\begin{equation}
    \left\Vert \partial_x(fg) \right\Vert_{L^1(L^2)} \lesssim \left\Vert fg \right\Vert_{L^1(H^1)} \lesssim \left\Vert f \right\Vert_{L^2(H^1)} \left\Vert g \right\Vert_{L^2(H^1)},
\end{equation}
imply 
\begin{equation}\label{eq:standard_estimates_on_B_2}
    \left\Vert \mathcal{B}(f,g) \right\Vert_{L^2(H^1)} + \left\Vert \mathcal{B}(f,g) \right\Vert_{L^\infty(L^2)} \leq C^\prime \Vert f \Vert_{L^2(H^1)} \Vert g \Vert_{L^2(H^1)} ,
\end{equation}
if $f \in L^2((0,T);H_0^1(0,1))$ and $g \in L^2((0,T);H_0^1(0,1))$, where $C, C^\prime > 0$ are absolute constants. 

The proof of cubic remainder estimates when $s > 0$ relies crucially on some bilinear parabolic estimates for some norms of mixed regularity, established in Lemma \ref{lem:parabolic_estimates_anisotropic_sobolev}. In the case $p=2$, we use the following notation: $X^s_2 := \widetilde{H}^{-\frac{s}{2}}((0,T);\mathcal{H}^s_2)$, $Y^s_2 := \widetilde{H}^{\frac{s}{2}}((0,T);\mathcal{H}^{-s}_2)$, and $Z^s_2 := \widetilde{H}^{1+\sigma(s)}((0,T);\mathcal{H}^s_2)$, with
\begin{equation}\label{eq:def_X_s_2}
    \left\Vert y \right\Vert_{X^s_2}^2 := \sum_{n \geq 1} \lambda_n^s \left\Vert t \mapsto \langle y(t), \varphi_n \rangle \right\Vert_{\widetilde{H}^{-\frac{s}{2}}(0,T)}^2,
\end{equation}
and $\left\Vert y \right\Vert_{Y^s_2}$ and $\left\Vert y \right\Vert_{Z^s_2}$ defined analogously. In the case $p=+\infty$, we use the following notation: $X^s_\infty := \widetilde{H}^{-\frac{s}{2}}((0,T);\mathcal{H}^{s+\frac{1}{2}}_\infty)$, $Y^s_\infty := \widetilde{H}^{\frac{s}{2}}((0,T);\mathcal{H}^{-s-\frac{1}{2}}_1)$, and $Z^s_\infty := \widetilde{H}^{1+\sigma(s)}((0,T);\mathcal{H}^{s+\frac{1}{2}}_\infty)$, with
\begin{equation}\label{eq:def_X_s_infty}
    \left\Vert y \right\Vert_{X_\infty^s} := \sup_{n \geq 1} n^{s+\frac{1}{2}} \left\Vert t \mapsto \langle y(t), \varphi_n \rangle \right\Vert_{\widetilde{H}^{-\frac{s}{2}}(0,T)}, \quad 
    \left\Vert y \right\Vert_{Y_\infty^s} := \sum_{n \geq 1} n^{-s-\frac{1}{2}} \left\Vert t \mapsto \langle y(t), \varphi_n \rangle \right\Vert_{\widetilde{H}^{\frac{s}{2}}(0,T)},
\end{equation}
and $\left\Vert y \right\Vert_{Z_\infty^s}$ defined analogously. Heuristically, note that $X_p^s$ and $Y_p^s$ are essentially dual to one another. Moreover, the norm of the control involved in drift estimates is essentially $\left\Vert u_1 \mu \right\Vert_{X_p^s}$, and the norm involved in the smallness condition on the control is essentially $\left\Vert u_1 \mu \right\Vert_{Z_p^s}$.

The following elementary lemma will be used repeatedly.

\begin{lemma}\label{lem:easy_integral_with_exp_lambda}
    Let $r \in \left( - \frac{1}{2}, \frac{1}{2} \right)$ and $r^\prime \in [r - 1, r]$. There exists $C > 0$ such that for all $\lambda > 1$, $T > 0$ and $F \in H^{r^\prime}(\mathbb{R})$ supported in $[0, +\infty)$, one has
    \begin{equation}\label{eq:lem:easy_integral_with_exp_lambda}
        \left\Vert t \mapsto \int_0^t e^{-\lambda(t-\tau)} F(\tau) \dd \tau \right\Vert_{\widetilde{H}^r(0,T)} \leq \frac{C}{\lambda^{r^\prime - r + 1}} \Vert F \Vert_{H^{r^\prime}(\mathbb{R})}.
    \end{equation}
    In addition, for all $r \in \left[ 0, \frac{1}{2} \right)$, there exists $C > 0$ such that for all $\lambda > 1$ and $F \in H^{-r}(\mathbb{R})$ supported in $[0, +\infty)$, one has
    \begin{equation}\label{eq:lem:easy_integral_with_exp_lambda_2}
        \left\vert \int_0^t e^{-\lambda(t-\tau)} F(\tau) \dd \tau \right\vert \leq \frac{C}{\lambda^{\frac{1}{2} - r}} \Vert F \Vert_{H^{-r}(\mathbb{R})},
    \end{equation}
    for all $t \geq 0$. 
\end{lemma}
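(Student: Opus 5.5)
The plan is to compute the Fourier transform of $G(t) := \int_0^t e^{-\lambda(t-\tau)} F(\tau)\dd\tau$ explicitly and compare multipliers. Since $F$ is supported in $[0,+\infty)$, one has $G = E_\lambda \ast F$ with $E_\lambda(t) := e^{-\lambda t}\mathbbm{1}_{t \geq 0}$, whose Fourier transform is $\widehat{E_\lambda}(\omega) = \frac{1}{\lambda + i\omega}$. Thus $\widehat{G}(\omega) = \frac{\widehat{F}(\omega)}{\lambda + i\omega}$, and $G$ is supported in $[0,+\infty)$.

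\textbf{Step 1: first estimate \eqref{eq:lem:easy_integral_with_exp_lambda}.} First, bound $\Vert G\Vert_{H^r(\mathbb{R})}$. One has
\begin{equation}
    \Vert G \Vert_{H^r(\mathbb{R})}^2 = \int_{\mathbb{R}} \frac{\langle \omega\rangle^{2r-2r'}}{\lambda^2+\omega^2}\, \langle\omega\rangle^{2r'} |\widehat F(\omega)|^2 \dd\omega ,
\end{equation}
so it suffices to bound $\sup_\omega \frac{\langle \omega\rangle^{2(r-r')}}{\lambda^2+\omega^2}$. Set $\theta := r-r' \in [0,1]$. For $|\omega|\le\lambda$, the ratio is at most $\frac{\langle\lambda\rangle^{2\theta}}{\lambda^2} \lesssim \lambda^{2\theta-2}$, using $\lambda>1$. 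For $|\omega|\ge\lambda$, it is at most $\langle\omega\rangle^{2\theta}/\omega^2 \lesssim \omega^{2\theta-2}\le\lambda^{2\theta-2}$, using $\theta\le1$. This gives $\Vert G\Vert_{H^r(\mathbb{R})}\lesssim \lambda^{-(r'-r+1)}\Vert F\Vert_{H^{r'}}$.

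Next, pass to $\widetilde{H}^r(0,T)$, since the norm there is $\Vert \mathbbm{1}_{(0,T)} G\Vert_{H^r(\mathbb{R})}$. Because $G$ vanishes on $(-\infty,0)$, one has $\mathbbm{1}_{(0,T)}G = \mathbbm{1}_{(-\infty,T)}G$. For $|r|<\frac12$, multiplication by the indicator of a half-line is bounded on $H^r(\mathbb{R})$ (classical: the Hilbert transform is bounded on $L^2$ with the weight $|\omega|^{2r}$, an $A_2$ weight for $|r|<\frac12$), with a constant independent of $T$ by translation invariance. This is exactly where the hypothesis $r\in(-\frac12,\frac12)$ enters, and I expect it to be the only nontrivial point. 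One also has to check that $\mathbbm{1}_{(0,T)}G$ lies in the closure of $C_c^\infty(0,T)$; for $|r|<\frac12$ this holds by the standard identification of $\widetilde H^r$ with restrictions of $H^r$ functions supported in $[0,T]$.

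\textbf{Step 2: pointwise estimate \eqref{eq:lem:easy_integral_with_exp_lambda_2}.} Fix $t \ge 0$. By Fourier inversion,
\begin{equation}
    G(t) = \frac{1}{2\pi}\int_{\mathbb{R}} \frac{e^{i\omega t}}{\lambda+i\omega}\widehat F(\omega)\dd\omega .
\end{equation}
To make this rigorous, first take $F$ smooth and then argue by density, since the final bound is uniform. By Cauchy--Schwarz,
\begin{equation}
    |G(t)| \le \frac{1}{2\pi}\Vert F\Vert_{H^{-r}}\left(\int_{\mathbb{R}} \frac{\langle\omega\rangle^{2r}}{\lambda^2+\omega^2}\dd\omega\right)^{1/2}.
\end{equation}
The integral is convergent because $2r-2<-1$, i.e. $r<\frac12$. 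Splitting at $|\omega|=\lambda$ as in Step 1, it is bounded by $\lambda\cdot\lambda^{2r-2} + \int_{|\omega|>\lambda}\omega^{2r-2}\dd\omega \lesssim \lambda^{2r-1}$. Taking square roots gives the factor $\lambda^{-(\frac12-r)}$. The restriction $r\ge0$ is only used to bound $\langle\omega\rangle^{2r}\lesssim\lambda^{2r}$ on $|\omega|\le\lambda$.
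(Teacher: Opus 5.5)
Your proof is correct and follows the same route as the paper. You write the integral as the convolution of $F$ with $\mathbbm{1}_{(0,+\infty)}e^{-\lambda\cdot}$, and control the restriction to $(0,T)$ by the boundedness of multiplication by indicator functions on $H^r(\mathbb{R})$ for $\vert r\vert<\frac{1}{2}$ (the paper simply invokes Lemma \ref{lem:product_indicatrice} where you sketch the weighted Hilbert transform argument). You then bound the multiplier $\langle\omega\rangle^{2(r-r')}/(\lambda^2+\omega^2)$ by its supremum, and get the pointwise bound from Fourier inversion and Cauchy--Schwarz with $\int_{\mathbb{R}}\langle\omega\rangle^{2r}(\lambda^2+\omega^2)^{-1}\dd\omega\lesssim\lambda^{2r-1}$; your explicit splitting at $\vert\omega\vert=\lambda$ just fills in details the paper leaves implicit.
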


\begin{proof}
    First, we prove \eqref{eq:lem:easy_integral_with_exp_lambda}. For $t \geq 0$, one has 
    \begin{equation}
        \int_0^t e^{-\lambda(t-\tau)} F(\tau) \dd \tau = \left( \mathbbm{1}_{(0, +\infty)} e^{-\lambda \cdot} \right) \ast F (t),
    \end{equation}
    implying
    \begin{equation}
        \left\Vert t \mapsto \int_0^t e^{-\lambda(t-\tau)} F(\tau) \dd \tau \right\Vert_{\widetilde{H}^r(0,T)}
        \leq C \left\Vert \left( \mathbbm{1}_{(0, +\infty)} e^{-\lambda \cdot} \right) \ast F \right\Vert_{H^r(\mathbb{R})},
    \end{equation}
    by Lemma \ref{lem:product_indicatrice}, for some absolute constant $C>0$, since $\vert r \vert < \frac{1}{2}$. This gives
    \begin{align}
        \left\Vert t \mapsto \int_0^t e^{-\lambda(t-\tau)} F(\tau) \dd \tau \right\Vert_{\widetilde{H}^r(0,T)}^2
        & \leq C \int_{\mathbb{R}} \frac{(1+\omega^2)^r}{\lambda^2+\omega^2} \left\vert \widehat{F}(\omega) \right\vert^2 \dd \omega \label{eq:proof:lem:easy_integral_with_exp_lambda_1} \\
        & \leq C \Vert F \Vert_{H^{r^\prime}(\mathbb{R})}^2 \sup_{\omega \in \mathbb{R}} \frac{(1+\omega^2)^{r-r^\prime}}{\lambda^2+\omega^2} ,
    \end{align}
    which implies \eqref{eq:lem:easy_integral_with_exp_lambda} since $r^\prime \in [r - 1, r]$.
    
    Second, we prove \eqref{eq:lem:easy_integral_with_exp_lambda_2}. Using the inverse Fourier formula and Cauchy-Schwarz, one finds
    \begin{equation}
        \left\vert \int_0^t e^{-\lambda(t-\tau)} F(\tau) \dd \tau \right\vert \leq C \left\Vert \mathbbm{1}_{(0, +\infty)} e^{-\lambda \cdot} \right\Vert_{H^{r}(\mathbb{R})} \Vert F \Vert_{H^{-r}(\mathbb{R})},
    \end{equation}
    for all $t \geq 0$, for some absolute constant $C>0$, since $F$ is supported in $[0, +\infty)$. Since $r \in \left[ 0, \frac{1}{2} \right)$, one has
    \begin{equation}
        \left\Vert \mathbbm{1}_{(0, +\infty)} e^{-\lambda \cdot} \right\Vert_{H^{r}(\mathbb{R})}^2 
        \leq \int_{\mathbb{R}} \frac{(1 + \omega^2)^r}{\lambda^2+\omega^2} \dd \omega 
        \leq \frac{C}{\lambda^{1 - 2 r}},
    \end{equation}
    for some constant $C>0$ depending on $r$, completing the proof of \eqref{eq:lem:easy_integral_with_exp_lambda_2}.
\end{proof}

\subsection{Estimates for the linearized system}

We prove the following estimates for $y_1$.

\begin{lemma}[Estimates for the linearized system]\label{lem:linear_estimates_new}
    Let $T > 0$, $s \in [-1, 1]$, $u \in \widetilde{H}^{-\frac{1+s}{2}}(0,T)$, and $\mu \in \mathcal{H}_2^{s}(0,1)$. Recall that $y_1$ is the solution of \eqref{eq:Burgers_y_1_intro}, and that $u_1(t) = \int_0^t u(\tau) \dd \tau$. There exists a constant $C > 0$ depending only on $\mu$ and $s$, such that
    \begin{equation}\label{eq:lem:linear_estimates_new_1}
        \left\Vert y_1 \right\Vert_{L^2((0,T);H^1)} \leq C \left\Vert u \right\Vert_{\widetilde{H}^{-\frac{1+s}{2}}}, \text{ for any } s \in \left[-1, 1 \right],
    \end{equation}
    \begin{equation}\label{eq:lem:linear_estimates_new_2}
        \left\Vert y_1 \right\Vert_{L^2((0,T);L^2)} \leq C \left\Vert u \right\Vert_{\widetilde{H}^{-1-\frac{s}{2}}} \text{ if } s \in \left[-1, 0 \right],
    \end{equation}
    and
    \begin{equation}\label{eq:lem:linear_estimates_new_3}
        \left\Vert y_1 - u_1 \mu \right\Vert_{L^2((0,T);L^2)} \leq C \left\Vert u_1 \right\Vert_{\widetilde{H}^{- \frac{s}{2}}} \text{ if } s \in \left(0, 1 \right].
    \end{equation}
\end{lemma}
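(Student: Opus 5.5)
The plan is to work mode by mode with the formula \eqref{eq:y_1_Fourier_intro}: writing $\mu_n := \langle \mu, \varphi_n\rangle$, one has $\langle y_1(t),\varphi_n\rangle = \mu_n\, w_n(t)$ with $w_n(t) := \int_0^t e^{-\lambda_n(t-\tau)} u(\tau)\dd\tau$. Each norm in the statement is a weighted sum over $n$, so every estimate reduces to a one-dimensional bound on $w_n$ in $L^2(0,T)$ with an explicit power of $\lambda_n$. Concretely,
\begin{equation*}
\|y_1\|_{L^2(H^1)}^2 = \sum_n \lambda_n \mu_n^2 \|w_n\|_{L^2(0,T)}^2, \qquad \|y_1\|_{L^2(L^2)}^2 = \sum_n \mu_n^2 \|w_n\|_{L^2(0,T)}^2 .
\end{equation*}
Since $\sum_n \lambda_n^s \mu_n^2 = \|\mu\|_{\mathcal{H}^s_2}^2$, it suffices to show $\|w_n\|_{L^2(0,T)} \lesssim \lambda_n^{-\frac{1+s}{2}}\|u\|_{\widetilde{H}^{-\frac{1+s}{2}}}$ for the first estimate and $\|w_n\|_{L^2(0,T)} \lesssim \lambda_n^{-\frac{s}{2}}\|u\|_{\widetilde{H}^{-1-\frac{s}{2}}}$ for the second.

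Both bounds follow from the first estimate \eqref{eq:lem:easy_integral_with_exp_lambda} of Lemma \ref{lem:easy_integral_with_exp_lambda}, taken with $r=0$ and $F = \mathbbm{1}_{(0,T)}u$, which is supported in $[0,+\infty)$. That lemma requires $\lambda>1$, and $\lambda_n\ge\pi^2$ for all $n$. For \eqref{eq:lem:linear_estimates_new_1} take $r' = -\frac{1+s}{2}$; the constraint $r' \in [-1,0]$ is exactly $s\in[-1,1]$, and it gives the decay $\lambda_n^{-(r'+1)} = \lambda_n^{-\frac{1-s}{2}}$. Since $\lambda_n\cdot\lambda_n^{-(1-s)} = \lambda_n^{s}$, summing yields $\|y_1\|_{L^2(H^1)} \lesssim \|\mu\|_{\mathcal{H}^s_2}\|u\|_{\widetilde{H}^{-\frac{1+s}{2}}}$. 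For \eqref{eq:lem:linear_estimates_new_2} take $r' = -1-\frac{s}{2}$, which lies in $[-1,0]$ precisely when $s\in[-2,0]$; this is why $s\le 0$ is needed. It gives the decay $\lambda_n^{\frac{s}{2}}$, and $\lambda_n^{s}$ is again the right weight.

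For $s\in(0,1]$ the bound fails because $r'<-1$ is not allowed, so I would integrate by parts in time. Writing $u=u_1'$ with $u_1(0)=0$,
\begin{equation*}
w_n(t) = u_1(t) - \lambda_n \int_0^t e^{-\lambda_n(t-\tau)}u_1(\tau)\dd\tau .
\end{equation*}
Hence
\begin{equation*}
\langle y_1 - u_1\mu, \varphi_n\rangle = -\lambda_n\mu_n \int_0^t e^{-\lambda_n(t-\tau)}u_1(\tau)\dd\tau .
\end{equation*}
Apply Lemma \ref{lem:easy_integral_with_exp_lambda} with $r=0$, $r'=-\frac{s}{2}\in[-1,0]$ and $F=\mathbbm{1}_{(0,T)}u_1$. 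The exponent is $\lambda_n^{1-(1-\frac{s}{2})} = \lambda_n^{\frac{s}{2}}$, so the squared sum carries the weight $\lambda_n^{s}$ and gives $C\|\mu\|_{\mathcal{H}^s_2}\|u_1\|_{\widetilde{H}^{-\frac{s}{2}}}$.

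The main obstacle is justifying $F=\mathbbm{1}_{(0,T)}u_1\in H^{-s/2}(\mathbb{R})$ with a norm comparable to $\|u_1\|_{\widetilde{H}^{-s/2}(0,T)}$. The extension by zero is discontinuous at $T$, but since $-\frac{s}{2}\in(-\frac12,0]$, Lemma \ref{lem:product_indicatrice} handles the cutoff. A second point is that $u_1$ is generally nonzero after $T$; this is harmless because the $L^2(0,T)$-norm on $[0,T]$ only involves $u_1$ restricted to $[0,t]\subset[0,T]$. One also has to make sense of $\|\cdot\|_{\widetilde{H}^r}$ when $u\in\widetilde{H}^{r'}$ is merely a distribution. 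I would handle this by density, first proving the estimates for $u\in C^\infty_c(0,T)$ and then passing to the limit, which is legitimate since the constants depend only on $\mu$ and $s$.
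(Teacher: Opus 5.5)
Your proposal is correct and follows the paper's proof essentially verbatim: you argue mode by mode via the Duhamel formula, apply Lemma~\ref{lem:easy_integral_with_exp_lambda} with $r=0$ and $r'=-\frac{1+s}{2}$ (resp.\ $r'=-1-\frac{s}{2}$), and for $s>0$ integrate by parts in time and apply the same lemma to $u_1$ with $r'=-\frac{s}{2}$. The ``main obstacle'' you flag is not an obstacle, since $\Vert u_1\Vert_{\widetilde{H}^{-s/2}(0,T)}$ is by definition $\Vert \mathbbm{1}_{(0,T)}u_1\Vert_{H^{-s/2}(\mathbb{R})}$, so no appeal to Lemma~\ref{lem:product_indicatrice} is needed; this matters because for $s\in(0,1]$ the exponent $-\frac{s}{2}$ lies in $[-\frac12,0)$ rather than in $(-\frac12,0]$, and at $s=1$ it falls outside that lemma's range.
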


\begin{proof}
    We use the symbol $\lesssim$ for constants that may depend on $s$ only. Set $y_{1,n}(t) := \left\langle y_1(t), \varphi_n \right\rangle$ and $\mu_n := \left\langle \mu, \varphi_n \right\rangle$. We still write $y_{1,n}$, $u$ and $u_1$ for their extension by zero outside $[0,T]$. One has 
    \begin{equation}\label{eq:proof:lem:linear_estimates_new_1}
        y_{1,n}(t) = \mathbbm{1}_{[0, T]}(t) \mu_n \int_0^t e^{-\lambda_n (t - \tau)} u(\tau) \dd \tau = u_1(t) \mu_n - \mu_n \lambda_n \mathbbm{1}_{[0, T]}(t) \int_0^t e^{-\lambda_n (t - \tau)} u_1(\tau) \dd \tau,
    \end{equation}
    by integration by parts. In particular, for $\vert r \vert < \frac{1}{2}$ and $r^\prime \in [r-1, r]$, Lemma \ref{lem:easy_integral_with_exp_lambda} implies
    \begin{equation}\label{eq:proof:lem:linear_estimates_new_2}
        \left\Vert y_{1, n} \right\Vert_{\widetilde{H}^{r}(0,T)} 
        \leq \frac{C \vert \mu_n \vert}{\lambda_n^{r^\prime - r + 1}} \left\Vert u \right\Vert_{\widetilde{H}^{r^\prime}(0,T)} ,
    \end{equation}
    and
    \begin{equation}\label{eq:proof:lem:linear_estimates_new_3}
        \left\Vert y_{1, n} - \mu_n u_1 \right\Vert_{\widetilde{H}^{r}(0,T)} 
        \leq \frac{C \vert \mu_n \vert}{\lambda_n^{r^\prime - r}} \left\Vert u_1 \right\Vert_{\widetilde{H}^{r^\prime}(0,T)},
    \end{equation}
    for some constant $C > 0$ which depends only on $r$. Since $- \frac{1+s}{2} \in [-1, 0]$ for any $s \in [-1, 1]$, \eqref{eq:proof:lem:linear_estimates_new_2} gives
    \begin{equation}
        \left\Vert y_1 \right\Vert_{L^2((0,T);H^1)}^2 \lesssim \sum_{n \geq 1} \lambda_n \left\Vert y_{1, n} \right\Vert_{L^2(0,T)}^2 
        \lesssim \sum_{n \geq 1} \lambda_n^s \mu_n^2 \left\Vert u \right\Vert_{\widetilde{H}^{-\frac{1+s}{2}}}^2 
        = \Vert \mu \Vert_{\mathcal{H}^s_2}^2 \left\Vert u \right\Vert_{\widetilde{H}^{-\frac{1+s}{2}}}^2. 
    \end{equation}
    If $s \leq 0$, then $- 1 - \frac{s}{2} \in [-1, 0]$, and \eqref{eq:proof:lem:linear_estimates_new_2} gives
    \begin{equation}
        \left\Vert y_1 \right\Vert_{L^2((0,T);L^2)}^2 \lesssim \sum_{n \geq 1} \left\Vert y_{1, n} \right\Vert_{L^2(0,T)}^2 
        \lesssim \sum_{n \geq 1} \lambda_n^s \mu_n^2 \left\Vert u \right\Vert_{\widetilde{H}^{-1-\frac{s}{2}}}^2 
        = \Vert \mu \Vert_{\mathcal{H}^s_2}^2 \left\Vert u \right\Vert_{\widetilde{H}^{-1-\frac{s}{2}}}^2. 
    \end{equation}
    If $s > 0$, then $- \frac{s}{2} \in [-1, 0]$, and \eqref{eq:proof:lem:linear_estimates_new_3} gives
    \begin{equation}
        \left\Vert y_1 - u_1 \mu \right\Vert_{L^2((0,T);L^2)}^2 \lesssim \sum_{n \geq 1} \left\Vert y_{1, n} - u_1 \mu_n \right\Vert_{L^2(0,T)}^2 
        \lesssim \sum_{n \geq 1} \lambda_n^s \mu_n^2 \left\Vert u_1 \right\Vert_{\widetilde{H}^{-\frac{s}{2}}}^2 
        = \Vert \mu \Vert_{\mathcal{H}^s_2}^2 \left\Vert u_1 \right\Vert_{\widetilde{H}^{-\frac{s}{2}}}^2. 
    \end{equation}
\end{proof}

In the case $s > 0$, we will also need the following additional estimates.

\begin{lemma}\label{lem:linear_estimates_new_additional}
    Let $T > 0$, $s \in (0,1)$, $u \in \widetilde{H}^{-\frac{1+s}{2}}(0,T)$, and $\mu \in \mathcal{H}_2^{s}(0,1)$. There exists a constant $C > 0$ depending only on $\mu$ and $s$, such that
    \begin{equation}\label{eq:lem:linear_estimates_new_4}
        \left\Vert y_1 \right\Vert_{X_2^s} \leq C \left\Vert u_1 \right\Vert_{\widetilde{H}^{- \frac{s}{2}}}
        \quad\text{ and } \quad
        \left\Vert y_1 \right\Vert_{Z_2^s} \leq C \left\Vert u_1 \right\Vert_{\widetilde{H}^{1+\sigma(s)}}.
    \end{equation}
\end{lemma}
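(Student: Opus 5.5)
We argue mode by mode, using the scalar representation of $y_{1,n} := \langle y_1, \varphi_n\rangle$ already written in the proof of Lemma \ref{lem:linear_estimates_new}. Write $\mu_n := \langle \mu, \varphi_n \rangle$ and extend everything by zero outside $[0,T]$. By \eqref{eq:proof:lem:linear_estimates_new_1},
\begin{equation*}
    y_{1,n}(t) = \mu_n \mathbbm{1}_{[0,T]}(t) \int_0^t e^{-\lambda_n(t-\tau)} u(\tau)\dd\tau .
\end{equation*}
Writing $u = u_1'$ and integrating by parts once more (with $u_1(0)=0$) gives
\begin{equation*}
    y_{1,n}(t) = \mu_n \left( u_1(t) - \lambda_n \int_0^t e^{-\lambda_n(t-\tau)} u_1(\tau)\dd\tau \right).
\end{equation*}
Both terms can be handled by \eqref{eq:lem:easy_integral_with_exp_lambda}. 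Alternatively, one observes that $t \mapsto \lambda_n \int_0^t e^{-\lambda_n(t-\tau)} F(\tau)\dd\tau$ is a Fourier multiplier with symbol $\lambda_n/(\lambda_n + i\omega)$, which is bounded by $1$. This suggests the uniform-in-$n$ bound
\begin{equation}\label{eq:plan_uniform}
    \Vert y_{1,n} \Vert_{\widetilde{H}^{r}(0,T)} \lesssim |\mu_n| \, \Vert u_1 \Vert_{\widetilde{H}^{r}(0,T)}, \qquad |r| < \tfrac12 .
\end{equation}
This is \eqref{eq:proof:lem:linear_estimates_new_3} with $r' = r$, combined with the triangle inequality. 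The restriction $|r| < \frac12$ is needed in order to apply Lemma \ref{lem:product_indicatrice} to the truncation $\mathbbm{1}_{[0,T]}$.

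For the $X_2^s$ estimate, we apply \eqref{eq:plan_uniform} with $r = -\frac{s}{2} \in (-\frac12, 0)$, which is admissible since $s \in (0,1)$. Summing,
\begin{equation*}
    \Vert y_1 \Vert_{X_2^s}^2 = \sum_{n\ge1} \lambda_n^s \Vert y_{1,n}\Vert_{\widetilde{H}^{-s/2}}^2 \lesssim \left(\sum_{n \ge 1} \lambda_n^s \mu_n^2\right) \Vert u_1 \Vert_{\widetilde{H}^{-s/2}}^2 = \Vert \mu \Vert_{\mathcal{H}^s_2}^2 \Vert u_1 \Vert_{\widetilde{H}^{-s/2}}^2 .
\end{equation*}
This is the first bound in \eqref{eq:lem:linear_estimates_new_4}.

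For the $Z_2^s$ estimate, set $r := 1 + \sigma(s)$. If $s \le \frac12$, then $r = \frac{1-s}{2} \in [\frac14, \frac12)$. If $s \ge \frac12$, then $r = \frac{s}{2} \in [\frac14, \frac12)$. In both cases $|r| < \frac12$, so \eqref{eq:plan_uniform} applies again. The same summation against the weights $\lambda_n^s$ then yields
\begin{equation*}
    \Vert y_1 \Vert_{Z_2^s} \lesssim \Vert \mu \Vert_{\mathcal{H}^s_2} \, \Vert u_1 \Vert_{\widetilde{H}^{1+\sigma(s)}} .
\end{equation*}
Note that $s < 1$ is exactly what keeps $r$ strictly below $\frac12$, which explains why $s = 1$ is excluded from the statement.

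The only delicate point is justifying \eqref{eq:plan_uniform} uniformly in $n$ and $T$ for positive $r$. This requires handling the truncation by $\mathbbm{1}_{[0,T]}$ in $\widetilde{H}^r$ (through Lemma \ref{lem:product_indicatrice}), together with the fact that $u_1$ need not vanish at $T$. For this reason we regard the $\widetilde{H}^{r}$-norm of $u_1$ as the norm of $\mathbbm{1}_{(0,T)} u_1$, as in Definition \ref{def:widetilde_Hs}. If a direct proof of \eqref{eq:plan_uniform} is preferred, one writes the convolution as $(\mathbbm{1}_{(0,\infty)} e^{-\lambda_n \cdot}) \ast (\mathbbm{1}_{(0,T)} u_1)$, bounds it on $\mathbb{R}$ through the multiplier $\lambda_n/(\lambda_n + i\omega)$, and then restricts to $(0,T)$ by Lemma \ref{lem:product_indicatrice}. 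This is precisely the argument behind \eqref{eq:proof:lem:easy_integral_with_exp_lambda_1}.
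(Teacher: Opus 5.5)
Your proposal is correct and is essentially the paper's proof: the paper likewise applies \eqref{eq:proof:lem:linear_estimates_new_3} with $r = r' = -\frac{s}{2}$ (resp.\ $r = r' = 1+\sigma(s)$), uses the triangle inequality on $y_1 = (y_1 - u_1\mu) + u_1 \mu$, and sums against the weights $\lambda_n^s$. One small correction to your side remark: for $s \in (0,\frac12]$ it is $s>0$, not $s<1$, that keeps $1+\sigma(s) = \frac{1-s}{2}$ strictly below $\frac12$, so both endpoints of $(0,1)$ matter for the $Z_2^s$ bound.
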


\begin{proof}
    Applying \eqref{eq:proof:lem:linear_estimates_new_3} with $r = r^\prime = - \frac{s}{2} \in \left(- \frac{1}{2}, \frac{1}{2} \right)$, one finds
    \begin{equation}
        \left\Vert y_1 \right\Vert_{\widetilde{H}^{-\frac{s}{2}}((0,T);\mathcal{H}^s_2)}
        \leq \left\Vert y_1 - u_1 \mu \right\Vert_{\widetilde{H}^{-\frac{s}{2}}((0,T);\mathcal{H}^s_2)} + \Vert \mu \Vert_{\mathcal{H}^s_2} \left\Vert u_1 \right\Vert_{\widetilde{H}^{-\frac{s}{2}}}
        \lesssim \Vert \mu \Vert_{\mathcal{H}^s_2} \left\Vert u_1 \right\Vert_{\widetilde{H}^{-\frac{s}{2}}}. 
    \end{equation}
    The proof of the second estimate of \eqref{eq:lem:linear_estimates_new_4} is similar, since $1+\sigma(s) = \frac{1-s}{2} \in \left(- \frac{1}{2}, \frac{1}{2} \right)$ if $s \in \left( 0, \frac{1}{2} \right]$ and  $1+\sigma(s) = \frac{s}{2} \in \left(- \frac{1}{2}, \frac{1}{2} \right)$ if $s \in \left[\frac{1}{2}, 1 \right)$.
\end{proof}

\begin{remark}[Optimality of the exponents at the linear level.]\label{rq:optimal_y_1}
    The following heuristic suggests that the assumption $u \in \widetilde{H}^{-\frac{1+s}{2}}(0,T)$ is optimal for the well-posedness of the linearized system when $\mu \in \mathcal{H}_2^{s}(0,1)$. It also explains why a smallness assumption involving $\Vert u \Vert_{\widetilde{H}^{-\frac{1+s}{2}}}$ seems natural for proving an obstruction to controllability quantified by the $\widetilde{H}^{-1-\frac{s}{2}}$-norm of the control. By definition, $y_1$ is the solution of the heat equation with source term $F := u \mu$. In order to ensure that $y_1 \in L^2((0,T);H_0^1(0,1))$, it is natural to look for a space $X$ such that the map $F \in X \mapsto \partial_x y \in L^2(L^2)$ is continuous, where $y$ is the solution of 
    \begin{equation}
        \left\{
        \begin{array}{lll}
            \partial_t y - \partial_x^2 y = F(t,x)
            & \quad t \in (0, T), & \quad x \in (0, 1), \\
            y(t,x) = 0
            & \quad t \in (0, T),  & \quad x \in \{0, 1\},\\
            y(0,x)=0
            & & \quad x \in (0, 1).
        \end{array}
        \right.
    \end{equation}
    The adjoint of this operator is $G \in L^2(L^2) \mapsto z \in X^\prime$, where $z$ is the solution of
    \begin{equation}
        \left\{
        \begin{array}{lll}
            \partial_t z + \partial_x^2 z = \partial_x G(t,x)
            & \quad t \in (0, T), & \quad x \in (0, 1), \\
            z(t,x) = 0
            & \quad t \in (0, T),  & \quad x \in \{0, 1\},\\
            z(T,x)=0
            & & \quad x \in (0, 1).
        \end{array}
        \right.
    \end{equation}
    For $G \in L^2(L^2)$, Lemma \ref{lem:heat_weak_wellposedness} gives $z \in L^2((0,T); H_0^1(0,1)) \cap H^1((0,T); H^{-1}(0,1))$. By interpolation, this yields $z \in H^{\frac{1-r}{2}}((0,T); H^{r}(0,1))$, for $r \in [-1,1]$. For $r = -s$, the corresponding source space is $X = H^{-\frac{1+s}{2}}((0,T); H^{s}(0,1))$, implying that when $\mu \in \mathcal{H}_2^{s}(0,1)$, the optimal regularity expected for $u$ is $\widetilde{H}^{-\frac{1+s}{2}}(0,T)$. In order to avoid technical difficulties related to interpolation and duality for the spaces $\mathcal{H}_p^{s}$, and since we aim to obtain estimates with constants independent of $T$, we base our proofs solely on Fourier analysis.
\end{remark}

\subsection{Quadratic remainder estimates}
 
We prove the following estimates for the quadratic term $y_2$ and the quadratic remainder $R_2$.

 \begin{proposition}[Estimates on the quadratic expansion and on the quadratic remainder]\label{prop:quad_estimates_new}
    Let $s \in \left[-1, 1 \right)$, $T>0$, $\mu \in \mathcal{H}^s_2(0,1)$, and $u \in L^2(0,T)$. Recall that $y_1$ and $y_2$ are the solutions of \eqref{eq:Burgers_y_1_intro} and \eqref{eq:Burgers_y_2_intro}, and set $R_2(t) := y(t;0,u) - y_1(t;u)$. 
    First, there exists $C > 0$ which may depend on $s$ and $\mu$ only, such that
    \begin{equation}\label{eq:prop:quad_estimates_new_0}
        \Vert y_2 \Vert_{L^2((0, T);H^1)} + \Vert y_2 \Vert_{L^\infty([0, T];L^2)} \leq C \Vert u \Vert_{\widetilde{H}^{\sigma(s)}}^2,
    \end{equation}
    \begin{equation}\label{eq:prop:quad_estimates_new_1}
        \Vert y_2 \Vert_{L^2((0, T);L^2)} + \Vert y_2 \Vert_{L^\infty([0, T];H^{-1})} \leq C \Vert u \Vert_{\widetilde{H}^{-1-\frac{s}{2}}} \Vert u \Vert_{\widetilde{H}^{\sigma(s)}} \text{ if } s \leq 0,
    \end{equation}
    and
    \begin{equation}\label{eq:prop:quad_estimates_new_2}
        \Vert y_2 \Vert_{L^2((0, T);L^2)} + \Vert y_2 \Vert_{L^\infty([0, T];H^{-1})} \leq C \Vert u_1 \Vert_{\widetilde{H}^{-\frac{s}{2}}} \left( \Vert u \Vert_{\widetilde{H}^{\sigma(s)}} + \Vert u_1 \Vert_{\widetilde{H}^{1+\sigma(s)}} \right) \text{ if } s > 0.
    \end{equation}
    Second, there exist $C, \delta > 0$ which may depend on $s$ and $\mu$ only, such that if $\Vert u \Vert_{\widetilde{H}^{\sigma(s)}(0,T)} \leq \delta$, then the above estimates hold true with $y_2$ replaced by $R_2$.
 \end{proposition}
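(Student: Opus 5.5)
The plan is to write $y_2 = -\frac{1}{2}\mathcal{B}(y_1,y_1)$ and derive the three estimates from the bilinear bounds \eqref{eq:standard_estimates_on_B_1}--\eqref{eq:standard_estimates_on_B_2} combined with the linear estimates of Lemmas \ref{lem:linear_estimates_new} and \ref{lem:linear_estimates_new_additional}. The statement for $R_2$ then follows from a fixed-point argument.

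\textbf{Estimate \eqref{eq:prop:quad_estimates_new_0}.} We apply \eqref{eq:standard_estimates_on_B_2} with $f=g=y_1$, which gives
\begin{equation*}
\Vert y_2\Vert_{L^2(H^1)}+\Vert y_2\Vert_{L^\infty(L^2)}\lesssim \Vert y_1\Vert_{L^2(H^1)}^2 .
\end{equation*}
By \eqref{eq:lem:linear_estimates_new_1}, the right-hand side is bounded by $\Vert u\Vert_{\widetilde H^{-\frac{1+s}{2}}}^2$. For $s\le \frac12$ this is exactly $\Vert u\Vert_{\widetilde H^{\sigma(s)}}^2$. For $s\geq\frac12$ we have $\sigma(s)\ge -\frac{1+s}{2}$, so the $\widetilde H^{\sigma(s)}$-norm dominates.

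\textbf{Estimate \eqref{eq:prop:quad_estimates_new_1}, case $s\le0$.} We use \eqref{eq:standard_estimates_on_B_1} with $f=y_1\in L^2(H^1)$ and $g=y_1\in L^2(L^2)$. Lemma \ref{lem:linear_estimates_new} bounds these factors by $\Vert u\Vert_{\widetilde H^{\sigma(s)}}$ and $\Vert u\Vert_{\widetilde H^{-1-\frac s2}}$ respectively, which gives the claim directly.

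\textbf{Estimate \eqref{eq:prop:quad_estimates_new_2}, case $s>0$.} Here $y_1$ is not small in $L^2(L^2)$ with respect to the weak norm. We split $y_1=u_1\mu+w$, where $w:=y_1-u_1\mu$, and expand $\mathcal B(y_1,y_1)$ accordingly.
\begin{itemize}
\item The terms involving $w$ in $L^2(L^2)$ are handled through \eqref{eq:lem:linear_estimates_new_3} and \eqref{eq:standard_estimates_on_B_1}.
\item The term $\mathcal B(u_1\mu,u_1\mu)=\int_0^t S(t-\tau)u_1(\tau)^2\,\mathrm d\tau\,(\mu^2)'$ is delicate, because $\mu^2$ is only of limited regularity and $u_1\mu\notin L^2(H^1)$.
\end{itemize}
For this term, and for the cross terms, I would work in the mixed spaces $X_2^s,Y_2^s,Z_2^s$. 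Using the bilinear parabolic estimate of Lemma \ref{lem:parabolic_estimates_anisotropic_sobolev} (assumed from the appendix), I expect a bound of the form
\begin{equation*}
\Vert \mathcal B(f,g)\Vert_{L^2(L^2)}+\Vert\mathcal B(f,g)\Vert_{L^\infty(H^{-1})}\lesssim \Vert f\Vert_{X_2^s}\,\Vert g\Vert_{Z_2^s}
\end{equation*}
(plus symmetric terms). Lemma \ref{lem:linear_estimates_new_additional} then gives $\Vert y_1\Vert_{X_2^s}\lesssim\Vert u_1\Vert_{\widetilde H^{-s/2}}$ and $\Vert y_1\Vert_{Z_2^s}\lesssim\Vert u_1\Vert_{\widetilde H^{1+\sigma(s)}}$. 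The extra $\Vert u\Vert_{\widetilde H^{\sigma(s)}}$ arises when one factor is instead placed in $L^2(H^1)$. This step is the main obstacle: it requires a duality between $X$ and $Y$ in which time regularity $-\frac s2$ is traded against space regularity $s$. I would try to prove it by testing against $\varphi_n$, writing the Fourier coefficients of $\partial_x(fg)$ as convolution sums over $|n\pm m|$, and applying Lemma \ref{lem:easy_integral_with_exp_lambda} mode by mode.

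\textbf{Remainder $R_2$.} The remainder satisfies $R_2=-\frac12\mathcal B(y_1+R_2,\,y_1+R_2)$. We solve this by a contraction in the space $E:=L^2(H^1)\cap L^\infty(L^2)$ on a ball of radius $C\Vert u\Vert_{\widetilde H^{\sigma(s)}}^2$, using \eqref{eq:standard_estimates_on_B_2}. This is consistent with the solution given by Lemma \ref{lem:burgers_weak_wellposedness}, by uniqueness. We have
\begin{equation*}
R_2=y_2-\mathcal B(y_1,R_2)-\tfrac12\mathcal B(R_2,R_2).
\end{equation*}
Each of the extra terms contains a factor $R_2$ of size $\lesssim\delta\,\Vert u\Vert_{\widetilde H^{\sigma(s)}}$ in $E$. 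They are therefore bounded by the same right-hand sides as $y_2$: we bound them through \eqref{eq:standard_estimates_on_B_1} with $R_2\in L^2(L^2)$, and absorb $\Vert R_2\Vert_{L^2(L^2)}$ into the left-hand side when $\delta$ is small. For $s>0$, the weak-norm estimates of $y_1$ are used again in the cross term.
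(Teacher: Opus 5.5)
Your arguments for \eqref{eq:prop:quad_estimates_new_0}, for \eqref{eq:prop:quad_estimates_new_1}, for the $L^2((0,T);L^2)$ part of \eqref{eq:prop:quad_estimates_new_2}, and for the passage from $y_2$ to $R_2$ are fine and agree with the paper. The paper uses a continuity argument in $t$ rather than a contraction, which makes no difference.

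\textbf{The gap.} It lies in the $L^\infty([0,T];H^{-1})$ part of \eqref{eq:prop:quad_estimates_new_2}. This is the real content of the case $s>0$, and the $R_2$ estimate inherits it. Lemma \ref{lem:parabolic_estimates_anisotropic_sobolev} does not contain the bound you ``assume from the appendix''. It controls $\mathcal{B}(f,g)$ in $X_2^s$, $Y_2^s$ and $L^2(L^2)$, and the single pairing $\langle \mathcal{B}(f,g)(T),\varphi_k\rangle$. It does not control $\sup_t \Vert \mathcal{B}(f,g)(t)\Vert_{H^{-1}}$, and the paper states explicitly that this norm needs a specific argument.

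\textbf{Your fallback.} Going mode by mode with the pointwise estimate \eqref{eq:lem:easy_integral_with_exp_lambda_2} does work for $s<\frac12$. Taking $\alpha\in(s,\min(s+\frac14,\frac12))$, it gives $\Vert \mathcal{B}(f,g)(t)\Vert_{H^{-1}}^2\lesssim \sum_n \lambda_n^{2\alpha-2}\Vert F_n\Vert_{H^{-\alpha}}^2$. That is exactly the quantity bounded in Step 1 of that lemma.

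It fails for $s\in[\frac12,1)$. There $1+\sigma(s)=\frac s2$, so the time factors lie in $H^{-s/2}$ and $H^{s/2}$. Their product is in general only in $H^{-\alpha}$ with $\alpha>\frac12$, which is why Step 1 of the appendix lemma takes $\alpha>\frac12$ in that range. But \eqref{eq:lem:easy_integral_with_exp_lambda_2} requires $r<\frac12$, so no pointwise-in-time bound follows.

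\textbf{A slip in the splitting.} Estimate \eqref{eq:standard_estimates_on_B_1} needs one factor in $L^2(H^1)$. Neither $u_1\mu$ nor $w=y_1-u_1\mu$ is in that space, so it only handles $\mathcal{B}(w,y_1)$. The split has to be the asymmetric one, $\mathcal{B}(u_1\mu,u_1\mu)+\mathcal{B}(u_1\mu,w)+\mathcal{B}(w,y_1)$, and $\mathcal{B}(u_1\mu,w)$ still needs its own argument.

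\textbf{How the paper closes it.} It exploits the tensor structure $u_1(t)\mu(x)$ rather than a general $X_2^s\times Z_2^s$ bound.
\begin{itemize}
\item For $\mathcal{B}(u_1\mu,w)$, it uses H\"older in time with $u_1\in L^p$, $p=\frac{2}{1-2(1+\sigma(s))}$, which is the Sobolev embedding of $\widetilde{H}^{1+\sigma(s)}$. It combines this with $\Vert e^{-\lambda_n\cdot}\Vert_{L^{1/(1+\sigma(s))}}\lesssim\lambda_n^{-1-\sigma(s)}$, the spatial bound $\Vert\mu w\Vert_{H^{-2-2\sigma(s)}}\lesssim\Vert\mu\Vert_{H^s}\Vert w\Vert_{L^2}$, and \eqref{eq:lem:linear_estimates_new_3}.
\item For $\mathcal{B}(u_1\mu,u_1\mu)$ with $s\geq\frac14$, it uses $\Vert u_1^2\mu^2\Vert_{L^1(L^2)}\le\Vert u_1\Vert_{L^2}^2\Vert\mu\Vert_{L^4}^2$ together with $\Vert u_1\Vert_{L^2}^2\le\Vert u_1\Vert_{\widetilde{H}^{-s/2}}\Vert u_1\Vert_{\widetilde{H}^{s/2}}$. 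This interpolation is available only because the same $u_1$ appears in both factors.
\item For $\mathcal{B}(u_1\mu,u_1\mu)$ with $s<\frac12$, it goes mode by mode with $u_1^2\in H^{-\alpha}$, $\alpha<\frac12$, and $\mu^2\in H^{2\alpha-1}$.
\end{itemize}

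\textbf{If you keep the general bilinear route.} For $s\geq\frac12$ you would need a new lemma that is neither in your proposal nor in the appendix. One option: pair $\langle f,\varphi_p\rangle\in H^{-s/2}$ against $\mathbbm{1}_{(0,t)}e^{-\lambda_n(t-\cdot)}\langle g,\varphi_q\rangle$, and prove that this kernel is a multiplier on $H^{s/2}$ uniformly in $n$ and $t$. Then apply the discrete convolution bound at the endpoint $\alpha=\frac12$.
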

 
 \begin{proof}
    In this proof, $\lesssim$ is used for constants which may depend on $s$ and $\mu$ only. By definition, one has 
    \begin{equation}
        \partial_t R_2 - \partial_x^2 R_2 = - \frac{1}{2} \partial_x \left( (y_1 + R_2)^2 \right),
    \end{equation}
    and so
    \begin{equation}\label{eq:proof:prop:quad_estimate_new_02}
        R_2 = y_2 - \mathcal{B}(y_1, R_2) - \frac{1}{2} \mathcal{B}(R_2, R_2). 
    \end{equation}
    Since $y_2 = - \frac{1}{2} \mathcal{B}(y_1, y_1)$, \eqref{eq:prop:quad_estimates_new_0} follows from \eqref{eq:standard_estimates_on_B_2} and \eqref{eq:lem:linear_estimates_new_1}, and \eqref{eq:prop:quad_estimates_new_1} follows from \eqref{eq:standard_estimates_on_B_1}, \eqref{eq:lem:linear_estimates_new_1} and \eqref{eq:lem:linear_estimates_new_2}. 
    
    \textbf{Step 1: proof of \eqref{eq:prop:quad_estimates_new_2}.}
    The estimate of $\Vert y_2 \Vert_{L^2((0, T);L^2)}$ follows immediately from Lemma \ref{lem:linear_estimates_new_additional} and Lemma \ref{lem:parabolic_estimates_anisotropic_sobolev}, which imply
    \begin{equation}
        \Vert y_2 \Vert_{L^2((0, T);L^2)} 
        \lesssim \left\Vert y_1 \right\Vert_{\widetilde{H}^{- \frac{s}{2}}((0,T);\mathcal{H}^s_2)} \left\Vert y_1 \right\Vert_{\widetilde{H}^{1+\sigma(s)}((0,T);\mathcal{H}^s_2)}
        \lesssim \left\Vert u_1 \right\Vert_{\widetilde{H}^{- \frac{s}{2}}} \left\Vert u_1 \right\Vert_{\widetilde{H}^{1+\sigma(s)}}.
    \end{equation}
    The estimate of $\Vert y_2 \Vert_{L^\infty([0, T];H^{-1})}$ requires a specific argument. Write $z_1 := y_1 - u_1 \mu$, so that
    \begin{equation}\label{eq:proof:prop:quad_estimate_new_01}
        y_2 = - \frac{1}{2} \mathcal{B}(y_1, y_1) = - \frac{1}{2} \mathcal{B}(u_1\mu, u_1 \mu) - \frac{1}{2} \mathcal{B}(u_1 \mu, z_1) - \frac{1}{2} \mathcal{B}(z_1 , y_1).
    \end{equation}
    We estimate the three terms on the right-hand side of \eqref{eq:proof:prop:quad_estimate_new_01} separately.
    
    \underline{Estimate of the third term.} 
    By \eqref{eq:standard_estimates_on_B_1}, \eqref{eq:lem:linear_estimates_new_1} and \eqref{eq:lem:linear_estimates_new_3}, one has
    \begin{equation}\label{eq:proof:prop:quad_estimate_new_05}
        \left\Vert \mathcal{B}(z_1,y_1) \right\Vert_{L^\infty(H^{-1})} 
        \lesssim \Vert z_1 \Vert_{L^2(L^2)} \Vert y_1 \Vert_{L^2(H^1)} 
        \lesssim \Vert u_1 \Vert_{\widetilde{H}^{-\frac{s}{2}}} \Vert u \Vert_{\widetilde{H}^{-\frac{1+s}{2}}}
        \lesssim \Vert u_1 \Vert_{\widetilde{H}^{-\frac{s}{2}}} \Vert u \Vert_{\widetilde{H}^{\sigma(s)}}.
    \end{equation}
    
    \underline{Estimate of the second term.} 
    We prove that
    \begin{equation}\label{eq:proof:prop:quad_estimate_new_06}
        \left\Vert \mathcal{B}(u_1\mu,z_1) \right\Vert_{L^\infty([0,T];H^{-1})} 
        \lesssim \Vert u_1 \Vert_{\widetilde{H}^{-\frac{s}{2}}} \Vert u_1 \Vert_{\widetilde{H}^{1 + \sigma(s)}}.
    \end{equation}
    Let $t_0 \in [0,T]$. Set $p := \frac{2}{1 - 2(1+\sigma(s))}$. Note that $1+\sigma(s) \in \left( 0, \frac{1}{2} \right)$. We use the Sobolev embedding $\Vert u_1 \Vert_{L^{p}(\mathbb{R})} \lesssim \Vert u_1 \Vert_{\widetilde{H}^{1+\sigma(s)}(0,T)}$, where $u_1$ is extended by zero outside $[0,T]$. Using Hölder's inequality, with $\left( 1+\sigma(s) \right) + \frac{1}{p} + \frac{1}{2} = 1$, one finds
    \begin{equation}
    \begin{split}
        \left\vert \int_0^{t_0} e^{-\lambda_n (t_0-t)} u_1(t) \langle \mu z_1(t), \varphi_n^\prime \rangle \dd t \right\vert^2 
        & \leq \left\Vert e^{-\lambda_n \cdot} \right\Vert_{L^{\frac{1}{1+\sigma(s)}}(0, +\infty)}^2 \Vert u_1 \Vert_{L^p(\mathbb{R})}^2 \int_0^T \langle \mu z_1(t), \varphi_n^\prime \rangle^2 \dd t \\
        & \lesssim \frac{1}{\lambda_n^{2 + 2\sigma(s)}} \Vert u_1 \Vert_{\widetilde{H}^{1+\sigma(s)}}^2 \int_0^T \langle \mu z_1(t), \varphi_n^\prime \rangle^2 \dd t ,
    \end{split}
    \end{equation}
    yielding
    \begin{equation}\label{eq:proof:prop:quad_estimate_new_07}
        \begin{split}
            \Vert \mathcal{B}(u_1\mu,z_1)(t_0) \Vert_{H^{-1}}^2 
            & = \sum_{n\geq 1} \frac{1}{\lambda_n} \left\vert \int_0^{t_0} e^{-\lambda_n (t_0-t)} u_1(t) \langle \mu z_1(t), \varphi_n^\prime \rangle \dd t \right\vert^2 \\
            & \lesssim \Vert u_1 \Vert_{\widetilde{H}^{1+\sigma(s)}}^2 \sum_{n \geq 1} \frac{1}{\lambda_n^{2 + 2\sigma(s)}} \int_0^T \left\langle \mu z_1(t), \frac{\varphi_n^\prime}{n} \right\rangle^2 \dd t \\
            & \lesssim \Vert u_1 \Vert_{\widetilde{H}^{1+\sigma(s)}}^2 \int_0^T \left\Vert \mu z_1(t) \right\Vert_{H^{-2 - 2\sigma(s)}(0,1)}^2 \dd t ,
        \end{split}
    \end{equation}
    where we have used the fact that the $H^{-2 - 2\sigma(s)}(0,1)$-norm is greater than the $\ell^2(\mathbb{N}^\ast)$-norm of $\left( n^{-2 - 2\sigma(s)} \left\langle \cdot, \frac{\varphi_n^\prime}{n} \right\rangle \right)_{n\geq 1}$. By Lemma \ref{lem:product_sobolev_classical}, one has 
    \begin{equation}\label{eq:proof:prop:quad_estimate_new_07_bis}
        \left\Vert \mu z_1(t) \right\Vert_{H^{-2 - 2\sigma(s)}(0,1)} \lesssim \left\Vert \mu \right\Vert_{H^{s}(0,1)} \left\Vert z_1(t) \right\Vert_{L^2}    .
    \end{equation}
    Indeed, if $s \in \left(0, \frac{1}{2} \right)$ then one has $-2 - 2\sigma(s) = s-1$, and hence \eqref{eq:proof:prop:quad_estimate_new_07_bis} follows from $s \geq s-1$, $0\geq s-1$, $s - \frac{1}{2} > s-1$ and $s+0 > 0$, and if $s \in \left[\frac{1}{2}, 1 \right)$ then one has $-2 - 2\sigma(s) = -s$, and hence \eqref{eq:proof:prop:quad_estimate_new_07_bis} follows from $s \geq -s$, $0 \geq -s$, $s - \frac{1}{2} > -s$ and $s+0 > 0$.
    Hence, one obtains
    \begin{equation}
        \Vert \mathcal{B}(u_1\mu,z_1)(t_0) \Vert_{H^{-1}}^2 
        \lesssim \Vert u_1 \Vert_{\widetilde{H}^{1+\sigma(s)}}^2 \Vert z_1 \Vert_{L^2(L^2)}^2
        \lesssim \Vert u_1 \Vert_{\widetilde{H}^{1+\sigma(s)}}^2 \Vert u_1 \Vert_{\widetilde{H}^{-\frac{s}{2}}}^2,
    \end{equation}
    by \eqref{eq:lem:linear_estimates_new_3}.

    \underline{Estimate of the first term.} 
    We prove that
    \begin{equation}\label{eq:proof:prop:quad_estimate_new_08}
        \left\Vert \mathcal{B}(u_1\mu,u_1\mu) \right\Vert_{L^\infty(H^{-1})} 
        \lesssim \Vert u_1 \Vert_{\widetilde{H}^{-\frac{s}{2}}} \Vert u_1 \Vert_{\widetilde{H}^{1 + \sigma(s)}}.
    \end{equation}

    We start with the case $s \in \left[ \frac{1}{2}, 1 \right)$. Using \eqref{eq:standard_estimates_on_B_1}, one finds
    \begin{equation}
        \left\Vert \mathcal{B}(u_1\mu,u_1\mu) \right\Vert_{L^\infty(H^{-1})} 
        \lesssim \Vert u_1 \Vert_{L^2(0,T)}^2 \Vert \mu \Vert_{L^4(0,1)}^2.
    \end{equation}
    Since $s \geq \frac{1}{2} \geq \frac{1}{4}$, one has $\mu \in L^4(0,1)$. Using the Plancherel identity and Cauchy-Schwarz, one finds 
    \begin{equation}
        \Vert u_1 \Vert_{L^2(0,T)}^2 \lesssim \Vert u_1 \Vert_{\widetilde{H}^{-\frac{s}{2}}} \Vert u_1 \Vert_{\widetilde{H}^{+\frac{s}{2}}} = \Vert u_1 \Vert_{\widetilde{H}^{-\frac{s}{2}}} \Vert u_1 \Vert_{\widetilde{H}^{1 + \sigma(s)}}.
    \end{equation}
    Note that this argument in fact covers the case $s \in \left[ \frac{1}{4}, 1 \right)$.
    
    Now, we prove \eqref{eq:proof:prop:quad_estimate_new_08} when $s \in \left( 0, \frac{1}{2} \right)$.  Let $t_0 \in [0, T]$. By definition, one has
    \begin{equation}
        \left\Vert \mathcal{B}(u_1\mu,u_1\mu)(t_0) \right\Vert_{H^{-1}}^2 = \sum_{n\geq 1} \frac{1}{\lambda_n} \langle \mu^2, \varphi_n^\prime \rangle^2 \left\vert \int_0^{t_0} e^{-\lambda_n (t_0-t)} u_1(t)^2 \dd t \right\vert^2 .
    \end{equation}
    Let $n \geq 1$. Let $\alpha$ be such that $s < \alpha < \min \left( \frac{1}{4} + s, \frac{1}{2} \right)$. Recall that $u_1$ is extended by zero outside $[0,T]$. Lemma \ref{lem:easy_integral_with_exp_lambda} implies
    \begin{equation}
        \left\Vert \mathcal{B}(u_1\mu,u_1\mu)(t_0) \right\Vert_{H^{-1}}^2 
        \lesssim \sum_{n\geq 1} \frac{1}{\lambda_n^{2-2\alpha}} \langle \mu^2, \varphi_n^\prime \rangle^2 \left\Vert (u_1)^2 \right\Vert_{H^{-\alpha}(\mathbb{R})}^2.
    \end{equation}
    By Lemma \ref{lem:product_sobolev_classical}, one has
    \begin{equation}
        \left\Vert (u_1)^2 \right\Vert_{H^{-\alpha}(\mathbb{R})} \lesssim \left\Vert u_1 \right\Vert_{H^{-\frac{s}{2}}(\mathbb{R})} \left\Vert u_1 \right\Vert_{H^{1+\sigma(s)}(\mathbb{R})}.
    \end{equation}
    Indeed, since $s \in \left( 0, \frac{1}{2} \right)$, one has $1+\sigma(s) = \frac{1-s}{2}$, and Lemma \ref{lem:product_sobolev_classical} can be applied with $-\frac{s}{2} \geq -\alpha$, $\frac{1-s}{2} \geq - \alpha$, $-\frac{s}{2} + \frac{1-s}{2} - \frac{1}{2} > -\alpha$ and $-\frac{s}{2} + \frac{1-s}{2} > 0$. Hence, one obtains 
    \begin{equation}
        \begin{split}
            \left\Vert \mathcal{B}(u_1\mu,u_1\mu)(t_0) \right\Vert_{H^{-1}}^2 
            & \lesssim \left\Vert u_1 \right\Vert_{H^{-\frac{s}{2}}(\mathbb{R})}^2 \left\Vert u_1 \right\Vert_{H^{1+\sigma(s)}(\mathbb{R})}^2 
               \sum_{n\geq 1} \frac{1}{\lambda_n^{2-2\alpha}} \langle \mu^2, \varphi_n^\prime \rangle^2 \\
            & \lesssim  \Vert u_1 \Vert_{\widetilde{H}^{-\frac{s}{2}}}^2 \Vert u_1 \Vert_{\widetilde{H}^{1+\sigma(s)}}^2 \Vert \mu^2 \Vert_{H^{2\alpha-1}(0,1)}^2 ,
        \end{split}
    \end{equation}
    where we have used the fact that the $H^{2\alpha-1}(0,1)$-norm is greater than the $\ell^2(\mathbb{N}^\ast)$-norm of 
    \begin{equation}
        \left( n^{2\alpha-1} \left\langle \cdot, \frac{\varphi_n^\prime}{n} \right\rangle \right)_{n\geq 1}.    
    \end{equation}
    Since $s < \frac{1}{2}$, one has $\mathcal{H}_2^s(0,1) = H^s(0,1)$, with equivalent norms. Hence, we can use Lemma \ref{lem:product_sobolev_classical}: one has $s \geq 2\alpha-1$, $s+s - \frac{1}{2} > 2\alpha-1$ and $s+s > 0$, implying $\mu^2 \in H^{2\alpha-1}(0,1)$, which yields
    \begin{equation}
        \left\Vert \mathcal{B}(u_1\mu,u_1\mu)(t_0) \right\Vert_{H^{-1}}^2 \lesssim \Vert u_1 \Vert_{\widetilde{H}^{-\frac{s}{2}}}^2 \Vert u_1 \Vert_{\widetilde{H}^{1+\sigma(s)}}^2.
    \end{equation}
 
    \textbf{Step 2: estimate of $R_2$ in $L^2((0,T);L^2)$ and $L^\infty((0,T);H^1)$.}
    We prove that if $\Vert u \Vert_{\widetilde{H}^{\sigma(s)}}$ is sufficiently small, then
    \begin{equation}\label{eq:proof:prop:quad_estimate_new_09}
        \left\Vert R_2 \right\Vert_{L^2(H^1)} + \left\Vert R_2 \right\Vert_{L^\infty(L^2)} \lesssim \Vert u \Vert_{\widetilde{H}^{\sigma(s)}(0,T)}^2.
    \end{equation}
    For $t \in [0, T]$, set $\Vert f \Vert_{E_t} := \Vert f \Vert_{L^\infty((0,t),L^2)} + \Vert f \Vert_{L^2((0,t),H^1)}$. Using \eqref{eq:proof:prop:quad_estimate_new_02}, one finds
    \begin{equation}
        \left\Vert R_2 \right\Vert_{E_t} \lesssim \left\Vert y_2 \right\Vert_{E_t} + \left\Vert \mathcal{B}(y_1, R_2) \right\Vert_{E_t} + \left\Vert \mathcal{B}(R_2, R_2) \right\Vert_{E_t}.
    \end{equation} 
    By \eqref{eq:standard_estimates_on_B_2}, \eqref{eq:prop:quad_estimates_new_0} and \eqref{eq:lem:linear_estimates_new_1}, this gives
    \begin{equation}
        \left\Vert R_2 \right\Vert_{E_t} \lesssim C \Vert u \Vert_{\widetilde{H}^{\sigma(s)}(0,T)}^2 + C \left( \Vert u \Vert_{\widetilde{H}^{\sigma(s)}(0,T)} + \left\Vert R_2 \right\Vert_{E_t} \right) \left\Vert R_2 \right\Vert_{E_t},
    \end{equation}
    for some $C>0$ independent of $t$. Let $\delta \in \left( 0, \frac{1}{4C} \right)$. Assume that $\Vert u \Vert_{\widetilde{H}^{\sigma(s)}(0,T)} \leq \delta$. It implies
    \begin{equation}
        \frac{3}{4} \left\Vert R_2 \right\Vert_{E_t} \lesssim C \Vert u \Vert_{\widetilde{H}^{\sigma(s)}(0,T)}^2 + C \left\Vert R_2 \right\Vert_{E_t}^2.
    \end{equation}
    In particular, if $\left\Vert R_2 \right\Vert_{E_t} = 4 C \Vert u \Vert_{\widetilde{H}^{\sigma(s)}(0,T)}^2$, then
    \begin{equation}
        2 C \Vert u \Vert_{\widetilde{H}^{\sigma(s)}(0,T)}^2 \leq 16 C^3 \Vert u \Vert_{\widetilde{H}^{\sigma(s)}(0,T)}^4,
    \end{equation}
    an inequality which is false if $\Vert u \Vert_{\widetilde{H}^{\sigma(s)}(0,T)}$ is sufficiently small. Hence, there exists $\delta^\prime \in (0, \delta)$ such that if $\Vert u \Vert_{\widetilde{H}^{\sigma(s)}(0,T)} \leq \delta^\prime$, then for all $t \in [0, T]$, one has  $\left\Vert R_2 \right\Vert_{E_t} \neq 4 C \Vert u \Vert_{\widetilde{H}^{\sigma(s)}(0,T)}^2$. By the intermediate value theorem, applied to $t \mapsto \left\Vert R_2 \right\Vert_{E_t}$, and since $\left\Vert R_2 \right\Vert_{E_t}=0$ when $t = 0$, one has $\left\Vert R_2 \right\Vert_{E_t} \leq 4 C \Vert u \Vert_{\widetilde{H}^{\sigma(s)}(0,T)}^2$ for all $t \in [0, T]$. Note that this argument is valid since $R_2 \in C^0([0,T], L^2(0,1))$. This proves \eqref{eq:proof:prop:quad_estimate_new_09}.
    
    \textbf{Step 3: proof of the estimate for $R_2$.}
    Using \eqref{eq:proof:prop:quad_estimate_new_02} and \eqref{eq:standard_estimates_on_B_1}, one finds
    \begin{equation}
        \begin{split}
            \Vert R_2 \Vert_{L^2(L^2)} + \Vert R_2 \Vert_{L^\infty(H^{-1})} 
            & \lesssim \Vert y_2 \Vert_{L^2(L^2)} + \Vert y_2 \Vert_{L^\infty(H^{-1})} + \Vert y_1 \Vert_{L^2(H^1)} \Vert R_2 \Vert_{L^2(L^2)} \\
            & \quad + \Vert R_2 \Vert_{L^2(H^1)} \Vert R_2 \Vert_{L^2(L^2)} .
        \end{split}
    \end{equation}
    By \eqref{eq:prop:quad_estimates_new_1}, \eqref{eq:prop:quad_estimates_new_2}, \eqref{eq:lem:linear_estimates_new_1} and \eqref{eq:proof:prop:quad_estimate_new_09}, there exists $\delta > 0$ such that if $ \Vert u \Vert_{\widetilde{H}^{\sigma(s)}} \leq \delta$, then
    \begin{equation}
        \Vert R_2 \Vert_{L^2(L^2)} + \Vert R_2 \Vert_{L^\infty(H^{-1})} 
        \lesssim \Vert u \Vert_{\widetilde{H}^{-1-\frac{s}{2}}} \Vert u \Vert_{\widetilde{H}^{\sigma(s)}} + \delta \Vert R_2 \Vert_{L^2(L^2)}, \text{ if } s \leq 0,
    \end{equation}
    and
    \begin{equation}
        \Vert R_2 \Vert_{L^2(L^2)} + \Vert R_2 \Vert_{L^\infty(H^{-1})} 
        \lesssim \Vert u_1 \Vert_{\widetilde{H}^{-\frac{s}{2}}} \left( \Vert u_1 \Vert_{\widetilde{H}^{1+\sigma(s)}} + \Vert u \Vert_{\widetilde{H}^{\sigma(s)}}  \right) + \delta \Vert R_2 \Vert_{L^2(L^2)}, \text{ if } s > 0.
    \end{equation}
    Up to reducing $\delta$, this gives the claimed estimate for $R_2$.
 \end{proof}

In the case $s > 0$, the following additional estimates will be used to estimate $R_3$.

\begin{lemma}\label{lem:quad_estimates_additional_anisotropic_estimates}
    Let $T > 0$, $s \in (0,1)$, $u \in L^2(0,T)$, and $\mu \in \mathcal{H}_2^{s}(0,1)$. There exists a constant $C > 0$ depending only on $\mu$ and $s$, such that 
    \begin{equation}\label{eq:lem:y_2_estimates_anisotropic_sobolev_2}
        \left\Vert y_2 \right\Vert_{Z_2^s} \leq C \left\Vert u_1 \right\Vert_{\widetilde{H}^{1+\sigma(s)}(0,T)}^2,
    \end{equation}
    and
    \begin{equation}\label{eq:lem:y_2_estimates_anisotropic_sobolev_1}
        \left\Vert y_2 \right\Vert_{X_2^s} + \left\Vert y_2 \right\Vert_{Y_2^s}
        \leq C \left\Vert u_1 \right\Vert_{\widetilde{H}^{- \frac{s}{2}}(0,T)} \left\Vert u_1 \right\Vert_{\widetilde{H}^{1+\sigma(s)}(0,T)}.
    \end{equation}
\end{lemma}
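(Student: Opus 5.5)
We apply the bilinear parabolic estimates for mixed-regularity norms (Lemma \ref{lem:parabolic_estimates_anisotropic_sobolev}) to the representation $y_2 = -\frac{1}{2}\mathcal{B}(y_1,y_1)$. The linear-level bounds of Lemma \ref{lem:linear_estimates_new_additional} then close the estimates, namely $\Vert y_1\Vert_{X_2^s}\lesssim \Vert u_1\Vert_{\widetilde H^{-s/2}}$ and $\Vert y_1\Vert_{Z_2^s}\lesssim \Vert u_1\Vert_{\widetilde H^{1+\sigma(s)}}$. Throughout, $\lesssim$ denotes constants depending only on $s$ and $\mu$.

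\textbf{The $Z_2^s$ estimate.} We write $\mathcal{B}(y_1,y_1)$ in Fourier modes, so that $\langle y_2(t),\varphi_n\rangle = \frac12\int_0^t e^{-\lambda_n(t-\tau)}\langle y_1(\tau)^2,\varphi_n'\rangle\,\mathrm d\tau$. For each $n$, we use \eqref{eq:lem:easy_integral_with_exp_lambda} with $r = 1+\sigma(s)\in(0,\frac12)$ and a suitable $r'\in[r-1,r]$ to trade powers of $\lambda_n$ for time regularity. This bounds $\Vert y_2\Vert_{Z_2^s}$ by a mixed norm of $\partial_x(y_1^2)$ of the form $\widetilde H^{r'}((0,T);\mathcal H^{s-2-2(r'-r)}_2)$. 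The required product estimate, in which two factors in $Z_2^s$ produce $\partial_x(y_1^2)$ in that space, is what Lemma \ref{lem:parabolic_estimates_anisotropic_sobolev} is designed to supply. In the regime $s<\frac12$, where $\mathcal H^s_2 = H^s$, it also follows from Lemma \ref{lem:product_sobolev_classical} applied separately in $x$ and in $t$: both the space exponent $s$ and the time exponent $1+\sigma(s)$ are positive and below $\frac12$, so the product losses are controlled. We therefore obtain $\Vert y_2\Vert_{Z_2^s}\lesssim \Vert y_1\Vert_{Z_2^s}^2$, and \eqref{eq:lem:y_2_estimates_anisotropic_sobolev_2} follows.

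\textbf{The $X_2^s$ and $Y_2^s$ estimates.} We use the same scheme, but place one factor in $X_2^s$ and the other in $Z_2^s$. This gives $\Vert y_2\Vert_{X_2^s}+\Vert y_2\Vert_{Y_2^s}\lesssim \Vert y_1\Vert_{X_2^s}\Vert y_1\Vert_{Z_2^s}$, which is exactly the form already used for the $L^2(L^2)$ bound in the proof of Proposition \ref{prop:quad_estimates_new}. For $Y_2^s$, which has time regularity $\frac s2$ and space regularity $-s$, the heat smoothing in \eqref{eq:lem:easy_integral_with_exp_lambda} must take $r=\frac s2$ and gain $\lambda_n^{-(r'-r+1)}$. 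We must then check two things: that the resulting exponent pair lies in the admissible range $r'\in[r-1,r]$, and that $\frac s2<\frac12$, which holds because $s<1$.

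The main obstacle is that the pointwise product $y_1\cdot y_1$ couples a factor with negative time regularity $-\frac s2$ to one with positive time regularity $1+\sigma(s)$. The product lemma needs the sum of the two exponents to be positive, namely $-\frac s2+1+\sigma(s)>0$, and it also loses $\frac12$ in the usual way. I would first try to verify this exponent bookkeeping separately in the cases $s\in(0,\frac12)$ and $s\in[\frac12,1)$, as was done in Step 1 of Proposition \ref{prop:quad_estimates_new}. If the direct estimate fails near $s=1$, the fallback is to split $y_1 = u_1\mu + z_1$ with $z_1 := y_1-u_1\mu$. The $u_1\mu\cdot u_1\mu$ term can be handled explicitly through $\mu^2$, as in the estimate of the first term there. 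The terms involving $z_1$ gain a full factor of $\lambda_n$ by \eqref{eq:proof:lem:linear_estimates_new_3}, which should leave enough room.
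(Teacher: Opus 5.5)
Your argument matches the paper's proof. You write $y_2=-\frac{1}{2}\mathcal{B}(y_1,y_1)$, then apply Lemma \ref{lem:parabolic_estimates_anisotropic_sobolev} with $f=g=y_1$: its $Z_2^s$ bound gives the first estimate and its $X_2^s$--$Z_2^s$ bound gives the second, and Lemma \ref{lem:linear_estimates_new_additional} closes both. The exponent check and fallback in your last paragraph are unnecessary, since that lemma already does this bookkeeping. Also, your stated positivity condition actually fails for $s\in[\frac12,1)$, where $-\frac{s}{2}+1+\sigma(s)=0$; the paper covers this case with the endpoint $a+b=0$ of Lemma \ref{lem:product_sobolev_classical} (using $-\alpha<-\frac12$), not with the splitting $y_1=u_1\mu+z_1$.
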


\begin{proof}
    Since $y_2 = - \frac{1}{2} \mathcal{B}(y_1, y_1)$, \eqref{eq:lem:y_2_estimates_anisotropic_sobolev_2} and \eqref{eq:lem:y_2_estimates_anisotropic_sobolev_1} follow from Lemma \ref{lem:parabolic_estimates_anisotropic_sobolev} and Lemma \ref{lem:linear_estimates_new_additional}.
\end{proof}

\begin{remark}[Optimality of the exponents at the quadratic level.]\label{rq:optimal_y_2}
    We explain why the exponent $\sigma(s)$ defined in \eqref{eq:defn_sigma(s)} is expected to be optimal for quadratic remainder estimates, and thus for our main obstruction results. By definition, $\sigma(s) = \max\left(- \frac{1+s}{2}, -1+\frac{s}{2} \right)$. The exponent $- \frac{1+s}{2}$ is the natural threshold for the well-posedness of the linearized system when $\mu \in \mathcal{H}_2^s$ (see Remark \ref{rq:optimal_y_1}). We explain below why the second exponent, $-1+\frac{s}{2}$, arises naturally at the quadratic level. For simplicity, we present the argument as a heuristic remark, although the computations could be made rigorous. Let $\mu \in \mathcal{H}_2^s$, with $s > \frac{1}{2}$. Assume that $\mu^2$ is not constant, or equivalently that there exists $n \geq 1$ such that $\langle (\mu^2)^\prime, \varphi_n \rangle \neq 0$. Using the approximation $y_1 \approx u_1 \mu$ (see \eqref{eq:lem:linear_estimates_new_3}), one finds
    \begin{equation}
        \left\langle y_2(T), \varphi_n \right\rangle \approx - \frac{1}{2} \langle (\mu^2)^\prime, \varphi_n \rangle \int_0^T e^{-\lambda_n (T-t)} \left( u_1(t) \right)^2 \dd t.
    \end{equation}
    Let $\chi \in C^\infty_\mathrm{c}(0,T)$ be a nonzero cutoff, and let $N \geq 1$. We choose $u := \left( t \mapsto \chi(t) \cos(N t) \right)^\prime$. Then 
    \begin{equation}
        \int_0^T e^{-\lambda_n (T-t)} \left( u_1(t) \right)^2 \dd t = \frac{1}{2} \int_0^T e^{-\lambda_n (T-t)} \chi(t)^2 \left( 1 + \cos(2Nt) \right) \dd t \longrightarrow c_0,
    \end{equation}
    for some $c_0 \neq 0$, when $N \rightarrow + \infty$. On the other hand, for all $r < \frac{1}{2}$, one has
    \begin{equation}
        \left\Vert u_1 \right\Vert_{\widetilde{H}^{r}(0,T)}^2 =  \frac{1}{4} \int_{\mathbb{R}} (1+\omega^2)^r \left\vert \widehat{\chi}(\omega+N) + \widehat{\chi}(\omega-N) \right\vert^2 \dd \omega \sim c_1 N^{2r},
    \end{equation}
     when $N \rightarrow + \infty$, for some $c_1 > 0$ which depends on $r$ and $\chi$. Hence, an estimate of the form 
     \begin{equation}\label{eq:rq:optimal_y_2}
         \left\Vert y_2(T) \right\Vert_{H^{-1}} \leq C \Vert u_1 \Vert_{\widetilde{H}^{-\frac{s}{2}}} \Vert u_1 \Vert_{\widetilde{H}^{1+\sigma}} 
     \end{equation}
    implies that for all $N$ sufficiently large, one has $c \leq N^{-\frac{s}{2}} N^{1 + \sigma}$, where $c, C > 0$ are constants independent of $N$. Hence, an estimate of the form \eqref{eq:rq:optimal_y_2} can only hold if $\sigma \geq -1+\frac{s}{2}$.
\end{remark}
 
\subsection{Cubic remainder estimates}
 
Finally, we prove the following estimates for the cubic remainder $R_3$.

\begin{proposition}[Estimates on the cubic remainder]\label{prop:cubic_estimates_new}
    Let $s \in \left[ -1, 1 \right)$, $T>0$ and $k \geq 1$. Let $\mu \in \mathcal{H}^s_2(0,1)$, and $u \in L^2(0,T)$. Set $R_3 := y - y_1 - y_2$, where $y$, $y_1$ and $y_2$ are the solutions of \eqref{eq:Burgers_intro} with $y_0 = 0$, \eqref{eq:Burgers_y_1_intro} and \eqref{eq:Burgers_y_2_intro}, respectively. There exist $C, \delta > 0$ which may depend on $s$, $k$ and $\mu$ only, such that if
    \begin{equation}
        \Vert u \Vert_{\widetilde{H}^{\sigma(s)}(0,T)} \leq \delta \text{ if } s \leq 0, \quad \text{ and } \quad \Vert u_1 \Vert_{\widetilde{H}^{1+\sigma(s)}(0,T)} \leq \delta \text{ if } s > 0,
    \end{equation}
    then
    \begin{equation}\label{eq:prop:cubic_estimates_new_1}
        \left\vert \left\langle R_3(T), \varphi_k \right\rangle \right\vert \leq C \Vert u \Vert_{\widetilde{H}^{-1-\frac{s}{2}}}^2 \Vert u \Vert_{\widetilde{H}^{\sigma(s)}} \text{ if } s \leq 0,
    \end{equation}
    and
    \begin{equation}\label{eq:prop:cubic_estimates_new_2}
        \left\vert \left\langle R_3(T), \varphi_k \right\rangle \right\vert \leq C \Vert u_1 \Vert_{\widetilde{H}^{-\frac{s}{2}}}^2  \Vert u_1 \Vert_{\widetilde{H}^{1 + \sigma(s)}} \text{ if } s > 0.
    \end{equation}
\end{proposition}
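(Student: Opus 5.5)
We first derive an equation for $R_3$ and then test it against $\varphi_k$. Since $y = y_1 + R_2$ and $R_3 = R_2 - y_2$, the identity \eqref{eq:proof:prop:quad_estimate_new_02} together with $y_2 = -\frac12 \mathcal{B}(y_1,y_1)$ gives
\begin{equation}
    R_3 = - \mathcal{B}(y_1, R_2) - \frac{1}{2} \mathcal{B}(R_2, R_2) = - \mathcal{B}(y_1, y_2) - \mathcal{B}(y_1, R_3) - \frac12 \mathcal{B}(R_2,R_2).
\end{equation}
By the definition of $\mathcal{B}$, $|\langle \mathcal{B}(f,g)(T), \varphi_k\rangle| = |\int_0^T e^{-\lambda_k(T-\tau)} \langle f g, \varphi_k' \rangle \dd\tau| \lesssim_k \Vert fg\Vert_{L^1(L^1)}$, with a constant depending only on $k$ because $\varphi_k'$ is bounded. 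So the target quantity is controlled by $L^1_tL^1_x$ norms of products, which is much weaker than the norms appearing in Proposition \ref{prop:quad_estimates_new}.

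\textbf{Case $s\le 0$.} We bound $|\langle R_3(T),\varphi_k\rangle| \lesssim \Vert y_1\Vert_{L^2(L^2)}\Vert R_2\Vert_{L^2(L^2)} + \Vert R_2\Vert_{L^2(L^2)}^2$. Estimate \eqref{eq:lem:linear_estimates_new_2} gives $\Vert y_1\Vert_{L^2(L^2)} \lesssim \Vert u\Vert_{\widetilde H^{-1-\frac s2}}$. Proposition \ref{prop:quad_estimates_new}, in the form \eqref{eq:prop:quad_estimates_new_1} for $R_2$, gives $\Vert R_2\Vert_{L^2(L^2)}\lesssim \Vert u\Vert_{\widetilde H^{-1-\frac s2}}\Vert u\Vert_{\widetilde H^{\sigma(s)}}$. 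Multiplying, and using $\Vert u\Vert_{\widetilde H^{\sigma(s)}}\le\delta\le1$ on the square term, yields \eqref{eq:prop:cubic_estimates_new_1} directly. This case is routine.

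\textbf{Case $s>0$.} The $L^2(L^2)$ route now fails, because $y_1$ is only controlled by $u_1$, through $y_1 \approx u_1\mu$. We split the three terms as follows.
\begin{itemize}
    \item For $\mathcal{B}(R_2,R_2)$, we use $\Vert R_2\Vert_{L^2(L^2)}^2$ together with \eqref{eq:prop:quad_estimates_new_2}. This gives $\Vert u_1\Vert^2_{\widetilde H^{-\frac s2}}$ times a quadratic smallness factor, which is acceptable. We first bound $\Vert u\Vert_{\widetilde H^{\sigma(s)}} \lesssim \Vert u_1\Vert_{\widetilde H^{1+\sigma(s)}}$ by Lemma \ref{lem:appendix:weak_norm_primitive_new}.
    \item For $\mathcal{B}(y_1,y_2)$, we write the pairing as $\int_0^T e^{-\lambda_k(T-\tau)}\langle y_1 y_2,\varphi_k'\rangle \dd\tau$. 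We then use the duality between $X_2^s$ and $Y_2^s$ through the bilinear parabolic estimates of Lemma \ref{lem:parabolic_estimates_anisotropic_sobolev}. The goal is a bound of the form
    \begin{equation}
        |\langle\mathcal{B}(y_1,y_2)(T),\varphi_k\rangle| \lesssim \Vert y_1\Vert_{X_2^s}\Vert y_2\Vert_{Y_2^s}.
    \end{equation}
    Lemma \ref{lem:linear_estimates_new_additional} bounds $\Vert y_1\Vert_{X_2^s} \lesssim \Vert u_1\Vert_{\widetilde H^{-\frac s2}}$, and \eqref{eq:lem:y_2_estimates_anisotropic_sobolev_1} bounds $\Vert y_2\Vert_{Y_2^s} \lesssim \Vert u_1\Vert_{\widetilde H^{-\frac s2}}\Vert u_1\Vert_{\widetilde H^{1+\sigma(s)}}$. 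Together these give exactly \eqref{eq:prop:cubic_estimates_new_2}.
    \item For $\mathcal{B}(y_1,R_3)$, we bound it in the same way by $\Vert u_1\Vert_{\widetilde H^{-\frac s2}}\Vert R_3\Vert_{Y_2^s}$. This requires an a priori estimate on $R_3$ in $Y_2^s$, which we get from a fixed-point or bootstrap argument as in Step 2 of Proposition \ref{prop:quad_estimates_new}.
\end{itemize}

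The bootstrap uses the equation $R_3 = -\mathcal{B}(y_1,y_2)-\mathcal{B}(y_1,R_3)-\frac12\mathcal{B}(R_2,R_2)$ and the mapping properties of Lemma \ref{lem:parabolic_estimates_anisotropic_sobolev}. These are of the form $\Vert\mathcal{B}(f,g)\Vert_{Y_2^s}\lesssim \Vert f\Vert_{Z_2^s}\Vert g\Vert_{Y_2^s}$, together with a variant using $X_2^s$ and $Z_2^s$. Combined with \eqref{eq:lem:y_2_estimates_anisotropic_sobolev_2} and $\Vert y_1\Vert_{Z_2^s}\lesssim\Vert u_1\Vert_{\widetilde H^{1+\sigma(s)}}\le\delta$, the term $\mathcal{B}(y_1,R_3)$ is absorbed and we obtain $\Vert R_3\Vert_{Y_2^s}\lesssim \Vert u_1\Vert_{\widetilde H^{-\frac s2}}\Vert u_1\Vert_{\widetilde H^{1+\sigma(s)}}^2$. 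For the source term $\mathcal{B}(R_2,R_2)$ in this bootstrap, we also need $R_2$ in mixed norms, and we obtain that by the same absorption argument applied to $R_2 = y_2 - \mathcal{B}(y_1,R_2) - \frac12\mathcal{B}(R_2,R_2)$.

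\textbf{Main obstacle.} The hard step is the $s>0$ case: making the anisotropic bilinear bounds close with exactly one factor $\Vert u_1\Vert_{\widetilde H^{1+\sigma(s)}}$ and two factors of the weak norm $\Vert u_1\Vert_{\widetilde H^{-s/2}}$, while keeping the bootstrap in spaces where $\mathcal{B}$ is bounded uniformly in $T$.
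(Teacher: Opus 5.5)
Your case $s\le 0$ is correct, and shorter than the paper's proof, which goes through Lemma \ref{lem:forced_burgers} with $f=-y_1y_2-\frac12 y_2^2$ and $g=y_1+y_2$. Reusing the $R_2$-bound \eqref{eq:prop:quad_estimates_new_1} is legitimate, since its smallness hypothesis is exactly the one assumed here.

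The case $s>0$, however, has a genuine gap in the term $\mathcal{B}(R_2,R_2)$. The $R_2$-version of \eqref{eq:prop:quad_estimates_new_2} is only available under $\Vert u\Vert_{\widetilde H^{\sigma(s)}}\le\delta$, which is not assumed when $s>0$. It also leaves a factor $\Vert u\Vert_{\widetilde H^{\sigma(s)}}$ in the bound. Lemma \ref{lem:appendix:weak_norm_primitive_new} goes the other way, giving $\Vert u_1\Vert_{\widetilde H^{r}}\le C_T\Vert u\Vert_{\widetilde H^{r-1}}$. The inequality you need, $\Vert u\Vert_{\widetilde H^{\sigma(s)}}\lesssim\Vert u_1\Vert_{\widetilde H^{1+\sigma(s)}}$, is false. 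Indeed, $\mathbbm{1}_{(0,T)}u=(\mathbbm{1}_{(0,T)}u_1)'+u_1(T)\delta_T$, where $\delta_T$ is the Dirac mass at $T$. This mass lies in $H^{\sigma(s)}(\mathbb{R})$ since $\sigma(s)<-\frac12$, while $u_1(T)$ is not controlled by $\Vert u_1\Vert_{\widetilde H^{1+\sigma(s)}}$ since $1+\sigma(s)<\frac12$. Concretely, let $u_1=0$ on $(0,T-\varepsilon)$ and let $u_1$ rise smoothly to $1$ on $(T-\varepsilon,T)$. Then, as $\varepsilon\to0$, $\Vert u_1\Vert_{\widetilde H^{1+\sigma(s)}}\to 0$ but $\Vert u\Vert_{\widetilde H^{\sigma(s)}}\to\Vert\delta_T\Vert_{H^{\sigma(s)}(\mathbb{R})}>0$. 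So for $s>0$ your argument needs an extra hypothesis on $u$ and produces an extra factor $\Vert u\Vert_{\widetilde H^{\sigma(s)}}$, which is not the statement.

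Lemma \ref{lem:parabolic_estimates_anisotropic_sobolev} also does not contain the bound $\Vert\mathcal{B}(f,g)\Vert_{Y_2^s}\lesssim\Vert f\Vert_{Z_2^s}\Vert g\Vert_{Y_2^s}$ that you quote. Estimate \eqref{eq:lem:parabolic_estimates_anisotropic_sobolev_1} maps $X_2^s\times Z_2^s$ into $X_2^s\cap Y_2^s\cap L^2(L^2)$, so absorptions must be run in $X_2^s$, not in $Y_2^s$ alone.

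The fix is to control $R_2$, rather than $R_3$, entirely in the anisotropic spaces; this also makes your $R_3$-bootstrap unnecessary. Start from $R_2=y_2-\mathcal{B}(y_1,R_2)-\frac12\mathcal{B}(R_2,R_2)$.
\begin{itemize}
\item A bootstrap with \eqref{eq:lem:parabolic_estimates_anisotropic_sobolev_2}, Lemma \ref{lem:linear_estimates_new_additional} and \eqref{eq:lem:y_2_estimates_anisotropic_sobolev_2} gives $\Vert R_2\Vert_{Z_2^s}\lesssim\Vert u_1\Vert_{\widetilde H^{1+\sigma(s)}}^2$.
\item Then use \eqref{eq:lem:parabolic_estimates_anisotropic_sobolev_1} and \eqref{eq:lem:y_2_estimates_anisotropic_sobolev_1}. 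Bound $\mathcal{B}(y_1,R_2)$ by $\Vert y_1\Vert_{X_2^s}\Vert R_2\Vert_{Z_2^s}$, and absorb $\mathcal{B}(R_2,R_2)$ through $\Vert R_2\Vert_{X_2^s}\Vert R_2\Vert_{Z_2^s}$. This gives $\Vert R_2\Vert_{X_2^s}+\Vert R_2\Vert_{Y_2^s}\lesssim\Vert u_1\Vert_{\widetilde H^{-s/2}}\Vert u_1\Vert_{\widetilde H^{1+\sigma(s)}}$.
\item Finally, pair $R_3=-\mathcal{B}(y_1,R_2)-\frac12\mathcal{B}(R_2,R_2)$ with $\varphi_k$ via \eqref{eq:lem:parabolic_estimates_anisotropic_sobolev_3}. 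This gives
\[
|\langle R_3(T),\varphi_k\rangle|\lesssim\big(\Vert y_1\Vert_{X_2^s}+\Vert R_2\Vert_{X_2^s}\big)\Vert R_2\Vert_{Y_2^s}\lesssim\Vert u_1\Vert_{\widetilde H^{-s/2}}^2\Vert u_1\Vert_{\widetilde H^{1+\sigma(s)}}.
\]
\end{itemize}
This is your $s\le0$ argument with the $L^2(L^2)$ Cauchy--Schwarz step replaced by the $X_2^s$--$Y_2^s$ duality. The paper instead bootstraps $R_3$ itself, first in $Z_2^s$ and then in $X_2^s\cap Y_2^s$. It works from the four-term identity containing $\mathcal{B}(y_2,y_2)$ and $\mathcal{B}(R_3,R_3)$.
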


\begin{proof}
    The symbol $\lesssim$ is used for constants that  may depend on $s$, $k$ and $\mu$ only. One has
    \begin{equation}
        \partial_t R_3 - \partial_x^2 R_3 + R_3 \partial_x R_3 + \partial_x \left[ \left( y_1 + y_2 \right) R_3 \right] = - \partial_x \left[ y_1 y_2 + \frac{(y_2)^2}{2} \right],
    \end{equation}   
    or equivalently
    \begin{equation}\label{eq:proof:new_cubic_estimates_1}
        R_3 = - \mathcal{B}(y_1, y_2) - \frac{1}{2} \mathcal{B}(y_2, y_2) - \mathcal{B}(y_1 + y_2, R_3) - \frac{1}{2} \mathcal{B}(R_3, R_3).
    \end{equation}
    Let $\delta \in (0,1)$, that will be chosen small below. 

    \textbf{Step 1: proof of \eqref{eq:prop:cubic_estimates_new_1}.}
    Here, we assume that $s \in [-1, 0]$, and that $\Vert u \Vert_{\widetilde{H}^{\sigma(s)}(0,T)} \leq \delta$. We will need the estimate
    \begin{equation}\label{eq:proof:new_cubic_estimates_step_1_1}
        \Vert y_2 \Vert_{L^2(H^1)} \lesssim \left\Vert \partial_x \left( (y_1)^2 \right) \right\Vert_{L^1(L^2)} 
        \lesssim \Vert y_1 \Vert_{L^2(H^1)}^2
        \lesssim \Vert u \Vert_{\widetilde{H}^{\sigma(s)}}^2,  
    \end{equation}
    where we have used Lemma \ref{lem:well-posedness_heat_L1L2}, \eqref{eq:lem:linear_estimates_new_1}, and the fact that $H^1(0,1)$ is an algebra. Set $f = - y_1 y_2 - \frac{(y_2)^2}{2}$ and $g = y_1 + y_2$, so that $R_3$ is the solution of a forced Burgers equation in the sense of Lemma \ref{lem:forced_burgers}. In particular, if $\Vert f \Vert_{L^1(H^1)}$ and $\Vert g \Vert_{L^2(H^1)}$ are sufficiently small, then the last estimate of Lemma \ref{lem:forced_burgers} gives
    \begin{equation}\label{eq:proof:new_cubic_estimates_step_1_2}
        \left\vert \left\langle R_3(T), \varphi_k \right\rangle \right\vert 
        \lesssim \Vert f \Vert_{L^1(L^1)} + \left( \Vert g \Vert_{L^2(L^2)} + \Vert f \Vert_{L^1(L^2)} \right) \Vert f \Vert_{L^1(L^2)}.
    \end{equation}
    
    On the one hand, since $H^1(0,1)$ is an algebra, one has
    \begin{equation}
        \Vert f \Vert_{L^1(H^1)} + \Vert g \Vert_{L^2(H^1)} \lesssim \Vert y_1 \Vert_{L^2(H^1)} \Vert y_2 \Vert_{L^2(H^1)} + \Vert y_2 \Vert_{L^2(H^1)}^2 + \Vert y_1 \Vert_{L^2(H^1)} + \Vert y_2 \Vert_{L^2(H^1)} \lesssim \delta,
    \end{equation}
    by \eqref{eq:lem:linear_estimates_new_1} and \eqref{eq:proof:new_cubic_estimates_step_1_1}. Hence, Lemma \ref{lem:forced_burgers} can be applied if $\delta$ is sufficiently small.
    
    On the other hand, using the embedding $H^1(0,1) \subset L^\infty(0,1)$, \eqref{eq:lem:linear_estimates_new_2}, Lemma \ref{prop:quad_estimates_new} and \eqref{eq:proof:new_cubic_estimates_step_1_1}, one obtains
    \begin{equation}
        \Vert f \Vert_{L^1(L^1)} \lesssim \Vert y_1 \Vert_{L^2(L^2)} \Vert y_2 \Vert_{L^2(L^2)} + \Vert y_2 \Vert_{L^2(L^2)}^2 \lesssim \Vert u \Vert_{\widetilde{H}^{-1-\frac{s}{2}}}^2 \Vert u \Vert_{\widetilde{H}^{\sigma(s)}},
    \end{equation}
    \begin{equation}
        \Vert g \Vert_{L^2(L^2)} \lesssim \Vert y_1 \Vert_{L^2(L^2)} + \Vert y_2 \Vert_{L^2(L^2)}  \lesssim \Vert u \Vert_{\widetilde{H}^{-1-\frac{s}{2}}},
    \end{equation}
    and
    \begin{equation}
        \Vert f \Vert_{L^1(L^2)} \lesssim \Vert y_1 \Vert_{L^2(L^2)} \Vert y_2 \Vert_{L^2(H^1)} + \Vert y_2 \Vert_{L^2(L^2)} \Vert y_2 \Vert_{L^2(H^1)} \lesssim \Vert u \Vert_{\widetilde{H}^{-1-\frac{s}{2}}} \Vert u \Vert_{\widetilde{H}^{\sigma(s)}}^2.
    \end{equation}
    Hence, \eqref{eq:prop:cubic_estimates_new_1} follows from \eqref{eq:proof:new_cubic_estimates_step_1_2}.
    
    For the rest of the proof, we assume that $s \in (0, 1)$, and that $\Vert u_1 \Vert_{\widetilde{H}^{1+\sigma(s)}(0,T)} \leq \delta$.
     
    \textbf{Step 2: we prove that}
    \begin{equation}\label{eq:proof:new_cubic_estimates_step_2_1}
        \left\Vert R_3 \right\Vert_{Z_2^s} \lesssim \left\Vert u_1 \right\Vert_{\widetilde{H}^{1+\sigma(s)}}^3. 
    \end{equation}
    Using the first estimate of Lemma \ref{lem:parabolic_estimates_anisotropic_sobolev}, the second estimate of \eqref{eq:lem:linear_estimates_new_4} and the first estimate of Lemma \ref{lem:quad_estimates_additional_anisotropic_estimates}, in \eqref{eq:proof:new_cubic_estimates_1}, one finds
    \begin{equation}
        \begin{split}
            \left\Vert R_3 \right\Vert_{Z_2^s} 
        	& \lesssim \left\Vert u_1 \right\Vert_{\widetilde{H}^{1+\sigma(s)}}^3 + \left\Vert u_1 \right\Vert_{\widetilde{H}^{1+\sigma(s)}}^4  
        	     + \left( \left\Vert u_1 \right\Vert_{\widetilde{H}^{1+\sigma(s)}} + \left\Vert u_1 \right\Vert_{\widetilde{H}^{1+\sigma(s)}}^2 + \left\Vert R_3 \right\Vert_{Z_2^s} \right) \left\Vert R_3 \right\Vert_{Z_2^s} \\
        	& \lesssim \left\Vert u_1 \right\Vert_{\widetilde{H}^{1+\sigma(s)}}^3 
        	    + \left( \delta + \left\Vert R_3 \right\Vert_{Z_2^s} \right) \left\Vert R_3 \right\Vert_{Z_2^s} .
        \end{split}
    \end{equation}
    For $\delta$ sufficiently small, this gives $\left\Vert R_3 \right\Vert_{Z_2^s} \lesssim \left\Vert u_1 \right\Vert_{\widetilde{H}^{1+\sigma(s)}}^3 + \left\Vert R_3 \right\Vert_{Z_2^s}^2$. A bootstrap argument similar to that of Step 2 of the proof of Proposition \ref{prop:quad_estimates_new} gives \eqref{eq:proof:new_cubic_estimates_step_2_1} if $\delta$ is sufficiently small.
    
    \textbf{Step 3: we prove that}
    \begin{equation}\label{eq:proof:new_cubic_estimates_step_3_1}
        \left\Vert R_3 \right\Vert_{X_2^s} + \left\Vert R_3 \right\Vert_{Y_2^s} 
        \lesssim \left\Vert u_1 \right\Vert_{\widetilde{H}^{1+\sigma(s)}}^2 \left\Vert u_1 \right\Vert_{\widetilde{H}^{- \frac{s}{2}}}.
    \end{equation}
    Using the second estimate of Lemma \ref{lem:parabolic_estimates_anisotropic_sobolev} in \eqref{eq:proof:new_cubic_estimates_1}, one finds
    \begin{equation}
        \begin{split}
        \left\Vert R_3 \right\Vert_{X_2^s} + \left\Vert R_3 \right\Vert_{Y_2^s} 
        	& \lesssim \left\Vert y_1 \right\Vert_{X_2^s} \left\Vert y_2 \right\Vert_{Z_2^s} + \left\Vert y_2 \right\Vert_{X_2^s} \left\Vert y_2 \right\Vert_{Z_2^s}
			+ \left( \left\Vert y_1 \right\Vert_{X_2^s} + \left\Vert y_2 \right\Vert_{X_2^s} + \left\Vert R_3 \right\Vert_{X_2^s} \right) \left\Vert R_3 \right\Vert_{Z_2^s}.
        \end{split}
    \end{equation}
    Using Lemma \ref{lem:linear_estimates_new_additional}, Lemma \ref{lem:quad_estimates_additional_anisotropic_estimates} and \eqref{eq:proof:new_cubic_estimates_step_2_1}, one finds
    \begin{equation}
        \left\Vert R_3 \right\Vert_{X_2^s} + \left\Vert R_3 \right\Vert_{Y_2^s} 
        \lesssim  \left\Vert u_1 \right\Vert_{\widetilde{H}^{- \frac{s}{2}}} \left\Vert u_1 \right\Vert_{\widetilde{H}^{1+\sigma(s)}}^2 + \delta^3 \left\Vert R_3 \right\Vert_{X_2^s}.
    \end{equation}
    This gives \eqref{eq:proof:new_cubic_estimates_step_3_1} if $\delta$ is sufficiently small.

    \textbf{Step 4: proof of \eqref{eq:prop:cubic_estimates_new_2}.}
    Using the third estimate of Lemma \ref{lem:parabolic_estimates_anisotropic_sobolev} in \eqref{eq:proof:new_cubic_estimates_1}, one finds
    \begin{equation}
        \begin{split}
            \left\vert \left\langle R_3(T), \varphi_k \right\rangle \right\vert
            & \lesssim \left\Vert y_1 \right\Vert_{X_2^s} \left\Vert y_2 \right\Vert_{Y_2^s} + \left\Vert y_2 \right\Vert_{X_2^s} \left\Vert y_2 \right\Vert_{Y_2^s}
			+ \left( \left\Vert y_1 \right\Vert_{X_2^s} + \left\Vert y_2 \right\Vert_{X_2^s} + \left\Vert R_3 \right\Vert_{X_2^s} \right) \left\Vert R_3 \right\Vert_{Y_2^s}.
        \end{split}
    \end{equation}
    Using the first estimate of \eqref{eq:lem:linear_estimates_new_4}, the second estimate of Lemma \ref{lem:quad_estimates_additional_anisotropic_estimates}, and \eqref{eq:proof:new_cubic_estimates_step_3_1}, one obtains \eqref{eq:prop:cubic_estimates_new_2}.
\end{proof}
 
\subsection{Remainder estimates in the case $p=\infty$}

We gave above fully detailed proofs of the remainder estimates in the case $\mu \in \mathcal{H}^s_2(0,1)$, for $s \in [-1,1)$. One could apply those estimates to $\mu \in \mathcal{H}^{s+\frac{1}{2}+\varepsilon}_\infty(0,1) \hookrightarrow \mathcal{H}^s_2(0,1)$, but this would lead to an $\varepsilon$-loss in the smallness assumption of most of our main results. We therefore provide here remainder estimates specifically adapted to the case $\mu \in \mathcal{H}^{s+\frac{1}{2}}_\infty(0,1)$. For shortness, and since the proofs follow the same structure as those presented above, we provide sketches and give details only for the points that require a specific argument.

\begin{proposition}[Linear, quadratic and cubic estimates in the case $p=\infty$]\label{prop:remainder_estimates_p_infty}
    The results of Lemma \ref{lem:linear_estimates_new}, Proposition \ref{prop:quad_estimates_new} and Proposition \ref{prop:cubic_estimates_new} are true with the assumption $\mu \in \mathcal{H}^{s}_2(0,1)$ replaced by $\mu \in \mathcal{H}^{s+\frac{1}{2}}_\infty(0,1)$, for $s \in (-1, 0) \cup (0, 1)$.
\end{proposition}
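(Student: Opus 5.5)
The plan is to redo the $p=2$ proofs line by line, replacing every use of the $\ell^2$-weighted sum $\sum_n \lambda_n^s \mu_n^2$ by a weighted $\ell^\infty$ bound $|\mu_n| \le \Vert \mu\Vert_{\mathcal{H}^{s+1/2}_\infty} n^{-s-1/2}$. Any resulting divergent sum over $n$ is then compensated by the time-frequency localisation coming from Lemma \ref{lem:easy_integral_with_exp_lambda}. We do not compare Fourier coefficients one by one. Instead we work with the full $\omega$-integral: the sums $\sum_n n^{-2s-1}\frac{(1+\omega^2)^{r}}{\lambda_n^2+\omega^2}$ are computed via Lemma \ref{lem:asymptotic_sum_appendix}. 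This is exactly where the restriction $s\notin\{-1,0,1\}$ enters, since at those exponents a logarithm appears.

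\textbf{Linear estimates.} We first treat the analogue of Lemma \ref{lem:linear_estimates_new}. By \eqref{eq:proof:lem:linear_estimates_new_1} and \eqref{eq:proof:lem:linear_estimates_new_1_} in Fourier form, we have
\begin{equation*}
\Vert y_1\Vert_{L^2(H^1)}^2 \lesssim \int_{\mathbb{R}} |\widehat{u}(\omega)|^2 \sum_n \lambda_n \frac{\mu_n^2}{\lambda_n^2+\omega^2}\,\dd\omega ,
\end{equation*}
where $u$ is extended by zero. Using $\mu_n^2\lesssim n^{-2s-1}$, the inner sum behaves like $\sum_n n^{1-2s}/(n^4+\omega^2)\approx \langle\omega\rangle^{-(1+s)}$ by \eqref{eq:lem:asymptotic_sum_appendix_1}, valid for $s\in(-1,1)$. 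This gives $\Vert u\Vert_{\widetilde H^{-\frac{1+s}{2}}}$ with no loss. The $L^2(L^2)$ bound, and the bound on $y_1-u_1\mu$ for $s>0$, follow identically, with inner sums of orders $\langle\omega\rangle^{-2-s}$ and $\langle\omega\rangle^{-s}$ respectively (applied to $u_1$). The anisotropic bounds of Lemma \ref{lem:linear_estimates_new_additional} in $X^s_\infty$ and $Z^s_\infty$ follow directly from \eqref{eq:proof:lem:linear_estimates_new_3}, because those norms are sup-weighted per mode.

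\textbf{Quadratic and cubic estimates.} For the quadratic terms, \eqref{eq:prop:quad_estimates_new_0} and \eqref{eq:prop:quad_estimates_new_1}, as well as the bootstrap for $R_2$, use only the linear bounds together with \eqref{eq:standard_estimates_on_B_1} and \eqref{eq:standard_estimates_on_B_2}, so they transfer verbatim. In the $s>0$ case \eqref{eq:prop:quad_estimates_new_2}, we need product estimates where the $p=2$ proof invoked $\mu\in H^s$.
\begin{itemize}
\item For the term $\mathcal{B}(u_1\mu,z_1)$, we will use $\mathcal{H}^{s+1/2}_\infty\hookrightarrow H^{s-\varepsilon}$. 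Since \eqref{eq:proof:prop:quad_estimate_new_07_bis} has strict room in every inequality except $s\ge\cdot$, it survives the $\varepsilon$-loss.
\item For the term $\mathcal{B}(u_1\mu,u_1\mu)$, only $\mu^2\in H^{2\alpha-1}$ and $\mu\in L^4$ are needed, and both have slack. We adjust $\alpha$ by $\varepsilon$.
\end{itemize}
The cubic estimates rely on Lemma \ref{lem:parabolic_estimates_anisotropic_sobolev}, which must be re-proved for the $\ell^\infty/\ell^1$ pair $(X^s_\infty,Y^s_\infty)$. Since $X^s_\infty$ and $Y^s_\infty$ are dual, the trilinear form $\langle\partial_x(fg),\cdot\rangle$ is estimated mode by mode through the convolution structure $\langle\varphi_n\varphi_m,\varphi_j'\rangle$ from \eqref{eq:expression_varphi_varphi_varphiprime}. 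Each output mode couples only to $|n\pm m|=j$, so Young's inequality $\ell^\infty\ast\ell^1\subset\ell^\infty$ replaces Cauchy–Schwarz.

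\textbf{Main obstacle.} The main difficulty is this bilinear anisotropic lemma in the $p=\infty$ setting, where the time-Sobolev exponents must be balanced with no $\varepsilon$ to spare. As a first attempt, we will dyadically decompose in $n$ and $\omega$ and check that the geometric sums converge exactly when $s\neq 0$ and $s\neq\pm1$.
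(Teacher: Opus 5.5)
Your linear step, and your handling of the $\varepsilon$-loss in the quadratic step, are the paper's own argument. The sum--integral comparison with $\mu_n^2\lesssim n^{-2s-1}$ and Lemma \ref{lem:asymptotic_sum_appendix} gives exactly the exponents you list. The $X^s_\infty$, $Z^s_\infty$ bounds on $y_1$ are per-mode. The loss in $\mathcal{H}^{s+\frac12}_\infty\hookrightarrow\mathcal{H}^{s-\varepsilon}_2$ is absorbed by the slack in \eqref{eq:proof:prop:quad_estimate_new_07_bis} and in the choice of $\alpha$.

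The gap is the $p=\infty$ version of Lemma \ref{lem:parabolic_estimates_anisotropic_sobolev}, which you correctly call the crux but leave unproved. You need it earlier than you say: for $s>0$, the $L^2(L^2)$ part of \eqref{eq:prop:quad_estimates_new_2} and the $X^s_\infty$, $Y^s_\infty$, $Z^s_\infty$ bounds on $y_2$ already go through it, not only the estimates on $R_3$.

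The tool you propose, Young's inequality $\ell^\infty\ast\ell^1\subset\ell^\infty$, only fits \eqref{eq:lem:parabolic_estimates_anisotropic_sobolev_3}. There the output mode $k$ is fixed, $p$ and $q$ are comparable since $|p\pm q|=k$, and one just pairs $X^s_\infty$ with $Y^s_\infty$. In \eqref{eq:lem:parabolic_estimates_anisotropic_sobolev_2} and \eqref{eq:lem:parabolic_estimates_anisotropic_sobolev_1}, both inputs are sup-weighted ($Z^s_\infty\times Z^s_\infty$, $X^s_\infty\times Z^s_\infty$). All you know is $\Vert\langle f,\varphi_p\rangle\Vert\lesssim p^{-s-\frac12}\Vert f\Vert$ for both factors. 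This weight is not summable for $s\le\frac12$. Even for $s>\frac12$, plain Young only bounds the $n$-th output mode by $\mathcal{O}(n)$ (the factor $n$ comes from $\varphi_n^\prime$), whereas the target norms need genuine decay in $n$.

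The missing ingredient is an explicit bound on the convolution of the two power weights,
$I_{n,s}:=\sum_{p,q}|\langle\varphi_p\varphi_q,\varphi_n^\prime\rangle|\,p^{-s-\frac12}q^{-s-\frac12}$.
It is $\lesssim n^{1-2s}$, $\ln(2+n)$ or $n^{\frac12-s}$ according as $s<\frac12$, $s=\frac12$ or $s>\frac12$ (Lemma \ref{lem:product_kind_of_discrete_sobolev_p_infty}). The proof exploits that one of the two indices is $\gtrsim n$: split the $p+q=n$ part at $p=n/2$, and the $|p-q|=n$ part at $p=n$.

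With this bound, the time side of the $p=2$ proof is kept verbatim: Lemma \ref{lem:easy_integral_with_exp_lambda} mode by mode, plus the time product estimate with the same auxiliary exponent $\alpha$ (resp.\ $\beta$). The estimates then reduce to the finiteness of $\sup_n n^{2\alpha-\frac32}I_{n,s}$, $\sum_n n^{2\alpha-\frac52}I_{n,s}$, $\sum_n n^{4\alpha-4}I_{n,s}^2$ and $\sup_n n^{\frac12+s-2\beta+2\sigma(s)}I_{n,s}$. All of these hold with strict room.

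So no dyadic decomposition in $\omega$ is needed. Your diagnosis that the time exponents must balance ``with no $\varepsilon$ to spare'' is off: $\alpha$ and $\beta$ range over open intervals. The $\varepsilon$-loss disappears because the spatial weights are summed exactly, rather than through $\mathcal{H}^{s+\frac12}_\infty\hookrightarrow\mathcal{H}^{s-\varepsilon}_2$. Within $(0,1)$ the only special value here is $s=\frac12$, where $I_{n,s}$ picks up a harmless logarithm.

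In the paper none of this is redone inside the proposition. Lemma \ref{lem:parabolic_estimates_anisotropic_sobolev} is already stated for $p\in\{2,+\infty\}$, and its Step 4 treats $p=+\infty$ in exactly this way. After that, Proposition \ref{prop:cubic_estimates_new} transfers unchanged, including its $s<0$ step.
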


\begin{proof}[Sketch of proof.]
    Let $T> 0$, $k \geq 1$, $s \in (-1, 0) \cup (0, 1)$, $\mu \in \mathcal{H}^{s+\frac{1}{2}}_\infty(0,1)$, and $u \in L^2(0,T)$. We use the symbol $\lesssim$ for constants that may depend on $s$, $k$ and $\mu$ only. 
    Recall that the spaces $X_\infty^s$, $Y_\infty^s$ and $Z_\infty^s$ are defined before Lemma \ref{lem:parabolic_estimates_anisotropic_sobolev}, and that $\mathcal{B}(f,g)$ is defined by \eqref{eq:def:mathcalB}.
    
    \textbf{Step 1: Linear estimates.}
    Using \eqref{eq:proof:lem:easy_integral_with_exp_lambda_1} and \eqref{eq:proof:lem:linear_estimates_new_1}, one finds
    \begin{equation}
        \left\Vert y_1 \right\Vert_{L^2((0,T);L^2)}^2 
        \lesssim \sum_{n \geq 1} \int_{\mathbb{R}} \frac{\mu_n^2}{\lambda_n^2 + \omega^2} \left\vert \widehat{u}(\omega) \right\vert^2 \dd \omega
        \lesssim \Vert \mu \Vert_{\mathcal{H}^{s + \frac{1}{2}}_\infty}^2 \int_{\mathbb{R}} \sum_{n \geq 1} \frac{\lambda_n^{-s-\frac{1}{2}}}{\lambda_n^2 + \omega^2} \left\vert \widehat{u}(\omega) \right\vert^2 \dd \omega.
    \end{equation}
    Hence, a sum-integral comparison gives $\left\Vert y_1 \right\Vert_{L^2((0,T);L^2)} \lesssim \Vert u \Vert_{\widetilde{H}^{-1-\frac{s}{2}}}$ if $s \in (-1, 0)$. The two other estimates of Lemma \ref{lem:linear_estimates_new} are proved in the same way. Moreover, if $s \in (0,1)$, analogous arguments give
    \begin{equation}
        \begin{split}
            \Vert y_1 \Vert_{X_\infty^s} 
            & \lesssim \left\Vert u_1 \right\Vert_{\widetilde{H}^{- \frac{s}{2}}} 
                + \sup_{n \geq 1} \left\{ n^{s + \frac{1}{2}} \left\vert \mu_n \right\vert \lambda_n \left( \int_{\mathbb{R}} \frac{(1 + \omega^2)^{-{\frac{s}{2}}}}{\lambda_n^2 + \omega^2} \left\vert \widehat{u_1}(\omega) \right\vert^2 \dd \omega \right)^{\frac{1}{2}} \right\} \\
            & \lesssim \left\Vert u_1 \right\Vert_{\widetilde{H}^{- \frac{s}{2}}} 
                + \left( \int_{\mathbb{R}} \left( \sup_{n \geq 1} \frac{\lambda_n^2}{\lambda_n^2 + \omega^2} \right) (1 + \omega^2)^{-{\frac{s}{2}}} \left\vert \widehat{u_1}(\omega) \right\vert^2 \dd \omega \right)^{\frac{1}{2}} 
             \lesssim \left\Vert u_1 \right\Vert_{\widetilde{H}^{- \frac{s}{2}}} ,
        \end{split}
    \end{equation}
    and similarly $\Vert y_1 \Vert_{Z_\infty^s} \lesssim \left\Vert u_1 \right\Vert_{\widetilde{H}^{1 + \sigma(s)}}$.

    \textbf{Step 2: Quadratic estimates.}
    The proof is similar to that of Proposition \ref{prop:quad_estimates_new}, except for the following points. First, when proving \eqref{eq:proof:prop:quad_estimate_new_06}, one must replace \eqref{eq:proof:prop:quad_estimate_new_07_bis} by 
    \begin{equation}
        \left\Vert \mu z_1(t) \right\Vert_{H^{-2 - 2\sigma(s)}(0,1)} \lesssim \left\Vert \mu \right\Vert_{H^{s-\varepsilon}(0,1)} \left\Vert z_1(t) \right\Vert_{L^2} \lesssim \left\Vert z_1(t) \right\Vert_{L^2},
    \end{equation}
    for $s \in (0,1)$ and $\varepsilon > 0$ sufficiently small, since $\mathcal{H}_\infty^{s+\frac{1}{2}}(0,1) \hookrightarrow \mathcal{H}_2^{s - \varepsilon}(0,1)$. Second, in the proof of \eqref{eq:proof:prop:quad_estimate_new_08}, the argument that relies on $\mu \in L^4(0,1)$ can still be used when $s \geq \frac{1}{2}$, since $\mathcal{H}_\infty^{s+\frac{1}{2}} \hookrightarrow \mathcal{H}_2^{\frac{1}{4}} \hookrightarrow L^4$, and the argument used when $s < \frac{1}{2}$ works with the condition $s < \alpha < \min \left( \frac{1}{4} + s, \frac{1}{2} \right)$ replaced by $s < \alpha < \min \left( \frac{1}{4} + s - \varepsilon, \frac{1}{2} \right)$, for $\varepsilon$ sufficiently small, again by the embedding $\mathcal{H}_\infty^{s+\frac{1}{2}}(0,1) \hookrightarrow \mathcal{H}_2^{s - \varepsilon}(0,1)$.
    
    The rest of the proof of the quadratic estimates is similar to that of Proposition \ref{prop:quad_estimates_new}. As in Lemma \ref{lem:quad_estimates_additional_anisotropic_estimates}, one has the following additional estimates: 
    \begin{equation}
        \left\Vert y_2 \right\Vert_{Z_\infty^s} \lesssim \left\Vert u_1 \right\Vert_{\widetilde{H}^{1+\sigma(s)}}^2 \quad \text{ and } \quad 
        \left\Vert y_2 \right\Vert_{X_\infty^s} + \left\Vert y_2 \right\Vert_{Y_\infty^s} 
        \lesssim \left\Vert u_1 \right\Vert_{\widetilde{H}^{- \frac{s}{2}}} \left\Vert u_1 \right\Vert_{\widetilde{H}^{1+\sigma(s)}}.
    \end{equation}
    
    \textbf{Step 3: Cubic estimates.}
    The proof of Proposition \ref{prop:cubic_estimates_new} applies without any changes, using the spaces $X_\infty^s$, $Y_\infty^s$ and $Z_\infty^s$ instead of $X_2^s$, $Y_2^s$ and $Z_2^s$.
\end{proof}

\subsection{Closed loop estimates}

We prove the following closed loop estimates.

\begin{lemma}\label{lem:closed_loop}
    Let $\mu \in H^{-1}(0,1)$, $\mu \neq 0$. Let $s \in [-1, 1)$ and $\lambda \in \mathbb{R}$. There exist $C > 0$ and $\nu > 0$ such that for all $T \in \left( 0, 1 \right)$ and $u \in L^2(0,T)$, one has
    \begin{equation}\label{eq:lem:closed_loop_1}
        \left\vert \int_0^T e^{\lambda t} u(t) \dd t \right\vert \leq C \left( T^{\nu} \left\Vert u \right\Vert_{\widetilde{H}^{-1 - \frac{s}{2}}(0,T)} +  \left\Vert y_1(T) \right\Vert_{H^{-1}} \right). 
    \end{equation}
\end{lemma}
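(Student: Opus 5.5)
Since $\mu \neq 0$, pick $n \geq 1$ with $\mu_n := \langle \mu, \varphi_n \rangle \neq 0$. The $n$-th mode of the linearized state is
\begin{equation*}
    \langle y_1(T), \varphi_n \rangle = \mu_n \int_0^T e^{-\lambda_n (T-t)} u(t) \dd t = \mu_n e^{-\lambda_n T} \int_0^T e^{\lambda_n t} u(t) \dd t .
\end{equation*}
Because $T < 1$, the weight satisfies $e^{-\lambda_n T} \geq e^{-\lambda_n}$. Hence
\begin{equation*}
    \left\vert \int_0^T e^{\lambda_n t} u(t) \dd t \right\vert \leq C \Vert y_1(T) \Vert_{H^{-1}},
\end{equation*}
with $C$ depending only on $n$ and $\mu$. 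The plan is to write $e^{\lambda t} = e^{\lambda_n t} + (e^{\lambda t} - e^{\lambda_n t})$. The first piece is then controlled by $\Vert y_1(T) \Vert_{H^{-1}}$ exactly as above. It remains to show that the second piece is bounded by $C T^\nu \Vert u \Vert_{\widetilde{H}^{-1-\frac{s}{2}}}$.

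Set $h(t) := e^{\lambda t} - e^{\lambda_n t}$. Then $h(0) = 0$ and $|h(t)| \lesssim t$, $|h'(t)| \lesssim 1$ on $[0,1]$. I would pair $u$ (extended by zero) against $h \chi_T$, where $\chi_T$ is a cutoff equal to one on $[0,T]$. To avoid a cutoff problem at $t = T$, it is cleaner to integrate by parts using $u_1$:
\begin{equation*}
    \int_0^T h u \dd t = h(T) u_1(T) - \int_0^T h' u_1 \dd t .
\end{equation*}
The boundary term is at most $C T |u_1(T)| = C T \left| \int_0^T u \right|$. The term $\int_0^T u$ is again of the same type as the left-hand side, with $\lambda = 0$.

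A more robust route is to apply the same splitting iteratively. Choose several distinct indices $n_1, n_2, \dots$ with $\mu_{n_j} \neq 0$, if they exist, and use Taylor expansion to kill low-order terms of $h$. This does not work in general, since $\mu$ may have a single nonzero mode. I would therefore instead bound directly
\begin{equation*}
    \left| \int_0^T h u \dd t \right| \leq \Vert u \Vert_{\widetilde{H}^{-1-\frac{s}{2}}} \, \Vert h \mathbbm{1}_{(0,T)} \Vert_{H^{1+\frac{s}{2}}} .
\end{equation*}
Here $1 + \frac{s}{2} \in [\frac{1}{2}, \frac{3}{2})$. The trouble is the jump of $h \mathbbm{1}_{(0,T)}$ at $T$, which has size $|h(T)| \sim T$. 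That jump only lies in $H^{r}$ for $r < \frac{1}{2}$.

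The main obstacle is this endpoint issue, and I would handle it through the primitive. Duality with $u_1$ in $\widetilde{H}^{-\frac{s}{2}}$ gives
\begin{equation*}
    \left| \int_0^T h' u_1 \dd t \right| \lesssim \Vert u_1 \Vert_{\widetilde{H}^{-\frac{s}{2}}} \Vert h' \mathbbm{1}_{(0,T)} \Vert_{H^{\frac{s}{2}}} .
\end{equation*}
For $s < 1$ the right-hand norm is finite and scales like $T^{\frac{1}{2} - \frac{s}{2}}$ (a bounded function cut to an interval of length $T$). Lemma \ref{lem:appendix:weak_norm_primitive_new} then bounds $\Vert u_1 \Vert_{\widetilde{H}^{-\frac{s}{2}}}$ by $\Vert u \Vert_{\widetilde{H}^{-1-\frac{s}{2}}}$, uniformly for $T < 1$. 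For the boundary term $h(T) u_1(T)$, I would avoid it altogether by replacing $h$ with $\tilde h := h - \frac{h(T)}{e^{\lambda_n T} - 1}(e^{\lambda_n t} - 1)$, which vanishes at both $0$ and $T$. The correction $\frac{h(T)}{e^{\lambda_n T}-1} = O(1)$ multiplies $\int_0^T (e^{\lambda_n t} - 1) u \dd t$. That integral splits into the $y_1$-controlled moment and $\int_0^T u = -\int_0^T \cdot$ of the same form. To close the circle, I would run the argument with $\lambda = 0$ first (writing $1 = e^{\lambda_n t} + (1 - e^{\lambda_n t})$) and then for general $\lambda$. This yields the claim with $\nu = \frac{1-s}{2} > 0$.
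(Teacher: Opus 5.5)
Your skeleton is the paper's. You choose $n$ with $\mu_n \neq 0$, bound $\int_0^T e^{\lambda_n t}u \dd t$ by $\Vert y_1(T)\Vert_{H^{-1}}$, and show that $\int_0^T h u \dd t$, with $h = e^{\lambda \cdot} - e^{\lambda_n \cdot}$, is $O(T^{\nu}\Vert u\Vert_{\widetilde{H}^{-1-\frac{s}{2}}})$. The gap is in how you close this last step.

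Your corrected function $\tilde h$ leaves behind the term $-c_T\,u_1(T)$, where $c_T = h(T)/(e^{\lambda_n T}-1)$ is of order one, not small. Everything therefore hinges on the case $\lambda = 0$. But for $\lambda = 0$ one has $h = -(e^{\lambda_n \cdot}-1)$, hence $c_T = -1$ and $\tilde h \equiv 0$. Your decomposition then collapses to the tautology $\int_0^T u = \int_0^T e^{\lambda_n t} u - \int_0^T e^{\lambda_n t}u + \int_0^T u$. As written, the circle does not close.

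The repair is easy, and in fact the detour is unnecessary. In your integration by parts, the boundary term $h(T)u_1(T)$ already carries the small factor $\vert h(T)\vert \lesssim T$. Moreover, $\vert u_1(T)\vert = \vert \int_{\mathbb{R}} u\chi \vert \lesssim \Vert u\Vert_{\widetilde{H}^{-1-\frac{s}{2}}}\Vert \chi\Vert_{H^{1+\frac{s}{2}}}$ uniformly in $T\in(0,1)$, for a fixed $\chi\in C^\infty_{\mathrm{c}}(\mathbb{R})$ equal to $1$ on $[0,1]$. So no separate $\lambda=0$ step is needed. Alternatively, for $\lambda=0$ you can absorb the boundary term via $e^{\lambda_n T}u_1(T) = \int_0^T e^{\lambda_n t}u\dd t + \lambda_n\int_0^T e^{\lambda_n t}u_1\dd t$.

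More importantly, the endpoint obstacle you identified does not exist. Since $u$, extended by zero, is supported in $[0,T]$, you never need $h\mathbbm{1}_{(0,T)}$. You may pair $u$ with any smooth extension of the restriction of $h$ to $[0,T]$, for instance $h\,\psi(\cdot/T)$ with $\psi\in C^\infty_{\mathrm{c}}((-2,2))$ and $\psi=1$ on $[0,1]$. Because $h(0)=0$, one has $\vert h\vert\lesssim T$ on the support of $\psi(\cdot/T)$. So the $L^2$, $H^1$, $H^2$ norms of $h\,\psi(\cdot/T)$ are $O(T^{3/2})$, $O(T^{1/2})$, $O(T^{-1/2})$, and interpolation gives $\Vert h\,\psi(\cdot/T)\Vert_{H^{1+\frac{s}{2}}}\lesssim T^{\frac{1-s}{2}}$.

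This is exactly the paper's proof. It applies Lemma \ref{lem:appendix:removing_exp_from_sobolev_norm_small_time} to $u e^{\lambda_n\cdot}$ with $\rho = e^{(\lambda-\lambda_n)\cdot}$, then uses the same lemma to remove the weight $e^{\lambda_n\cdot}$ from the norm. It also avoids the primitive altogether. That matters at $s=-1$: there your appeal to Lemma \ref{lem:appendix:weak_norm_primitive_new} would require $r=\frac{1}{2}$, outside its range $r<\frac{1}{2}$.
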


\begin{proof}
    Let $j \geq 1$ be such that $\langle \mu, \varphi_j \rangle \neq 0$. By definition of $y_1(T)$, one has
    \begin{equation}
        \left\vert \int_0^T e^{\lambda t} u(t) \dd t \right\vert
        \leq \left\vert \int_0^T \left( e^{(\lambda - \lambda_j) t} -1 \right) e^{\lambda_j t} u(t) \dd t \right\vert + e^{\lambda_j T} \left\vert \left\langle \mu, \varphi_j \right\rangle \right\vert^{-1} \left\vert \left\langle y_1(T), \varphi_j \right\rangle \right\vert.
    \end{equation}
    By Lemma \ref{lem:appendix:removing_exp_from_sobolev_norm_small_time}, this gives
    \begin{equation}
        \left\vert \int_0^T e^{\lambda t} u(t) \dd t \right\vert
        \leq C \left( T^{\nu} \left\Vert u e^{\lambda_j \cdot} \right\Vert_{\widetilde{H}^{-1 - \frac{s}{2}}(0,T)} +  \left\Vert y_1(T) \right\Vert_{H^{-1}} \right),
    \end{equation}
    for some $C, \nu > 0$, which may depend on $\mu$, $\lambda$ and $s$. Lemma \ref{lem:appendix:removing_exp_from_sobolev_norm_small_time} also implies $\left\Vert u e^{\lambda_j \cdot} \right\Vert_{\widetilde{H}^{-1 - \frac{s}{2}}} \leq C \left\Vert u  \right\Vert_{\widetilde{H}^{-1 - \frac{s}{2}}}$ for some $C>0$, completing the proof of \eqref{eq:lem:closed_loop_1}.
\end{proof}

\subsection{Decoupling estimate for initial data and control}

Recall that once a source profile $\mu$ is fixed, we denote by $t \mapsto y(t; y_0, u)$ the solution of the Burgers equation \eqref{eq:Burgers_intro} with initial data $y_0$ and control $u$. We establish here an estimate that will allow us to decompose $y(t; y_0, u)$ as the sum of the controlled solution with zero initial data, $y(t; 0, u)$, and the uncontrolled solution, $y(t; y_0, 0)$.

\begin{lemma}\label{lem:decoupling_estimate}
    Let $T > 0$, $y_0 \in L^2(0,1)$, $k \geq 1$, and $\mu \in H^{-1}(0,1)$. Let $u$ be such that the solutions $y(t; y_0, u)$ and $y(t; 0, u)$ of the Burgers equation \eqref{eq:Burgers_intro} are well-defined and belong to $L^2((0,T); H^1(0,1))$. 
    Set $R(t; y_0, u) :=  y(t; y_0, u) - y(t; y_0, 0) - y(t; 0, u)$, where the solutions of the Burgers equation implicitly depend on $\mu$. We introduce the shorthand
    \begin{equation}
        y^u(t) :=  y(t; 0, u).
    \end{equation}
    Then, there exist constants $\varepsilon_0, C > 0$, which may depend only on $k$, such that if 
    \begin{equation}\label{eq:lem:decoupling_estimate_0}
        \Vert y_0 \Vert_{L^2} + \left\Vert y^u \right\Vert_{L^2((0, T);H^1)} \leq \varepsilon_0, 
    \end{equation}
    then
    \begin{equation}\label{eq:lem:decoupling_estimate_1}
        \left\Vert R(T; y_0, u) \right\Vert_{H^{-1}}^2 \leq C \sqrt{T} \Vert y_0 \Vert_{L^2}^2 \left\Vert y^u \right\Vert_{L^2((0,T);L^2)}^2,
    \end{equation}
    and 
    \begin{equation}\label{eq:lem:decoupling_estimate_2}
        \left\vert \left\langle R(T; y_0, u), \varphi_k \right\rangle \right\vert \leq C T^{\frac{1}{4}} \Vert y_0 \Vert_{L^2} \left\Vert y^u \right\Vert_{L^2((0,T);L^2)} \left( 1 + T^{\frac{1}{2}} \right).
    \end{equation}
\end{lemma}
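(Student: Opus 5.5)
The approach is to derive the PDE satisfied by $R$ and use its bilinear structure. Write $\bar y(t) := y(t; y_0, 0)$ for the free evolution, $y^u := y(t;0,u)$ and $y := y(t;y_0,u)$. Subtracting the equations, the source term $u\mu$ cancels, because it appears in $y$ and in $y^u$ but not in $\bar y$. Using $y\partial_x y = \frac12 \partial_x(y^2)$ and $y = \bar y + y^u + R$, one finds
\begin{equation*}
\partial_t R - \partial_x^2 R + \tfrac12 \partial_x\bigl( R^2 + 2R(\bar y + y^u)\bigr) = -\partial_x(\bar y\, y^u),
\end{equation*}
with $R(0)=0$ and homogeneous Dirichlet conditions. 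Equivalently, with the operator $\mathcal{B}$ of \eqref{eq:def:mathcalB},
\begin{equation*}
R = -\mathcal{B}(\bar y, y^u) - \mathcal{B}(\bar y + y^u, R) - \tfrac12 \mathcal{B}(R,R).
\end{equation*}
The whole proof is driven by the forcing $\bar y\, y^u$. It is bilinear in $y_0$ (through $\bar y$) and in $u$ (through $y^u$), which is exactly where the product $\Vert y_0\Vert \,\Vert y^u\Vert$ in the claimed bounds comes from.

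First I will collect bounds on the free evolution. By the energy estimate of Lemma \ref{lem:burgers_weak_wellposedness} with $f=0$, one has $\Vert \bar y\Vert_{L^\infty(L^2)} + \Vert \bar y\Vert_{L^2(H^1)} \lesssim \Vert y_0\Vert_{L^2}$. To obtain powers of $T$ I will also use the parabolic smoothing estimate $\Vert \bar y(t)\Vert_{H^1}\lesssim t^{-1/2}\Vert y_0\Vert_{L^2}$ (small data), or more crudely Gagliardo–Nirenberg, $\Vert \bar y(t)\Vert_{L^\infty}\lesssim \Vert \bar y\Vert_{L^2}^{1/2}\Vert \bar y\Vert_{H^1}^{1/2}$. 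This gives $\Vert \bar y\Vert_{L^4(L^\infty)}\lesssim \Vert y_0\Vert_{L^2}$, and Hölder in time then yields
\begin{equation*}
\Vert \bar y\, y^u\Vert_{L^1(L^2)} \le \Vert \bar y\Vert_{L^2(L^\infty)}\Vert y^u\Vert_{L^2(L^2)} \lesssim T^{1/4}\Vert y_0\Vert_{L^2}\Vert y^u\Vert_{L^2(L^2)}.
\end{equation*}
This is the source of the factor $T^{1/4}$, and squaring it gives the $\sqrt T$ in \eqref{eq:lem:decoupling_estimate_1}.

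Next comes the energy estimate for $R$. Testing the equation against $R$ in $H^{-1}$, i.e. against $A^{-1}R$, or equivalently applying \eqref{eq:standard_estimates_on_B_1} to each term, gives
\begin{equation*}
\Vert R\Vert_{L^\infty(H^{-1})}+\Vert R\Vert_{L^2(L^2)} \lesssim \Vert \bar y\,y^u\Vert_{L^1(L^2)} + \bigl(\Vert \bar y\Vert_{L^2(H^1)}+\Vert y^u\Vert_{L^2(H^1)}+\Vert R\Vert_{L^2(H^1)}\bigr)\Vert R\Vert_{L^2(L^2)}.
\end{equation*}
The smallness assumption \eqref{eq:lem:decoupling_estimate_0} makes the middle coefficient small, so those terms can be absorbed. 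For the term involving $\Vert R\Vert_{L^2(H^1)}$, I first bound $R$ in $E_T = L^\infty(L^2)\cap L^2(H^1)$ using \eqref{eq:standard_estimates_on_B_2}, with the continuity and bootstrap argument of Step 2 of Proposition \ref{prop:quad_estimates_new}. This gives $\Vert R\Vert_{E_T}\lesssim \varepsilon_0$, which is small. Then $\Vert R(T)\Vert_{H^{-1}}\lesssim T^{1/4}\Vert y_0\Vert\,\Vert y^u\Vert_{L^2(L^2)}$, and squaring proves \eqref{eq:lem:decoupling_estimate_1}.

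The projection estimate \eqref{eq:lem:decoupling_estimate_2} follows in the same way as in Lemma \ref{lem:forced_burgers}. I will write
\begin{equation*}
\langle R(T),\varphi_k\rangle = \tfrac12\int_0^T e^{-\lambda_k(T-t)}\bigl\langle 2\bar y y^u + R^2 + 2R(\bar y+y^u),\ \varphi_k'\bigr\rangle\,\mathrm{d}t
\end{equation*}
and bound each term using $\Vert \varphi_k'\Vert_{L^\infty}\lesssim k$. The leading term is controlled by $\Vert \bar y y^u\Vert_{L^1(L^1)}\le \sqrt T\,\Vert \bar y\Vert_{L^\infty(L^2)}\Vert y^u\Vert_{L^2(L^2)}$, which is even better than needed. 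The terms quadratic in $R$, or pairing $R$ with $\bar y + y^u$, are bounded by $\Vert R\Vert_{L^2(L^2)}\bigl(\Vert R\Vert_{L^2(L^2)}+\Vert\bar y\Vert_{L^2(L^2)}+\Vert y^u\Vert_{L^2(L^2)}\bigr)$. Since $\Vert\bar y\Vert_{L^2(L^2)}\le \sqrt T\,\Vert y_0\Vert$, these produce the factor $(1+T^{1/2})$. The main obstacle is extracting the positive power of $T$ uniformly, which needs the $L^2(L^\infty)$ bound on $\bar y$. Controlling the nonlinear term $R\,y^u$ using only $L^2(L^2)$ norms is not a problem, because the estimate is tested against the smooth function $\varphi_k'$.
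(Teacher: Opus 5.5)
Your proposal is correct and follows the paper's proof. The paper likewise treats $R$ as the solution of the forced Burgers equation of Lemma~\ref{lem:forced_burgers} with $f=-\bar y\,y^u$ and $g=\bar y+y^u$. It extracts $T^{1/4}$ from $\Vert \bar y\, y^u\Vert_{L^1(L^2)}$ via Lemma~\ref{lem:easy_estimate_L1L2} and $\Vert \bar y\Vert_{L^2(L^2)}\le \sqrt{T}\,\Vert \bar y\Vert_{L^\infty(L^2)}$, and bounds the projection as you do. The one difference is that you re-derive Lemma~\ref{lem:forced_burgers} inline, getting the smallness of $\Vert R\Vert_{L^2(H^1)}$ by a bootstrap on the Duhamel formulation rather than the energy estimate of Lemma~\ref{lem:burgers_strong_wellposedness}; that route is equally valid.
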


\begin{proof}
    In this proof, the symbol $\lesssim$ is used for constants that may depend only on $k$.
    With the notations of Lemma \ref{lem:forced_burgers}, $t \mapsto R(t; y_0, u)$ is the solution of a forced Burgers equation with $f(t) := - y(t; y_0, 0) y(t; 0, u)$ and $g(t) := y(t; y_0, 0) + y(t; 0, u)$. By Lemma \ref{lem:burgers_weak_wellposedness} (applied with $f = 0$), one has 
    \begin{equation}\label{eq:proof:lem:decoupling_estimate_0}
        \left\Vert t \mapsto y(t; y_0, 0) \right\Vert_{L^2(H^1)} + \left\Vert t \mapsto y(t; y_0, 0) \right\Vert_{L^\infty(L^2)} \lesssim \Vert y_0 \Vert_{L^2} + \Vert y_0 \Vert_{L^2}^2.  
    \end{equation}
    This gives
    \begin{equation}
        \Vert g \Vert_{L^2(H^1)} \lesssim \Vert y_0 \Vert_{L^2} + \Vert y_0 \Vert_{L^2}^2 + \left\Vert y^u \right\Vert_{L^2(H^1)},
    \end{equation}
    and, by the Cauchy-Schwarz inequality together with the algebra property of $H^1(0,1)$,
    \begin{equation}
        \Vert f \Vert_{L^1(H^1)} \lesssim \left\Vert t \mapsto y(t; y_0, 0) \right\Vert_{L^2(H^1)} \left\Vert y^u \right\Vert_{L^2(H^1)} \lesssim \Vert y_0 \Vert_{L^2}^2 + \Vert y_0 \Vert_{L^2}^4 + \left\Vert y^u \right\Vert_{L^2(H^1)}^2 .
    \end{equation}
    Hence, if $ \Vert y_0 \Vert_{L^2}$ and $\left\Vert y^u \right\Vert_{L^2(H^1)} $ are sufficiently small, then Lemma \ref{lem:forced_burgers} can be applied. It gives
    \begin{equation}\label{eq:proof:lem:decoupling_estimate_1}
        \left\Vert R(T; y_0, u) \right\Vert_{H^{-1}} \lesssim \Vert f \Vert_{L^1(L^2)},
    \end{equation}
    and 
    \begin{equation}\label{eq:proof:lem:decoupling_estimate_2}
        \left\vert \left\langle R(T; y_0, u), \varphi_k \right\rangle \right\vert \lesssim \Vert f \Vert_{L^1(L^1)} + \left( \Vert g \Vert_{L^2(L^2)} + \left\Vert f \right\Vert_{L^1(L^2)} \right) \left\Vert f \right\Vert_{L^1(L^2)}.
    \end{equation}
    We prove that this implies \eqref{eq:lem:decoupling_estimate_1} and \eqref{eq:lem:decoupling_estimate_2}.
    
    First, by Lemma \ref{lem:easy_estimate_L1L2} and \eqref{eq:proof:lem:decoupling_estimate_0}, one has
    \begin{align}
        \left\Vert f \right\Vert_{L^1(L^2)}^2
        & \lesssim \left\Vert t \mapsto y(t; y_0, 0) \right\Vert_{L^2(H^1)} \left\Vert t \mapsto y(t; y_0, 0) \right\Vert_{L^2(L^2)} \left\Vert y^u \right\Vert_{L^2(L^2)}^2 \\
        & \lesssim \sqrt{T} \left\Vert t \mapsto y(t; y_0, 0) \right\Vert_{L^2(H^1)} \left\Vert t \mapsto y(t; y_0, 0) \right\Vert_{L^\infty(L^2)} \left\Vert y^u \right\Vert_{L^2(L^2)}^2 \\
        & \lesssim \sqrt{T} \left\Vert y_0 \right\Vert_{L^2}^2 \left\Vert y^u \right\Vert_{L^2(L^2)}^2, \label{eq:proof:lem:decoupling_estimate_3}
    \end{align}
    and this gives \eqref{eq:lem:decoupling_estimate_1}.
    
    Second, using the Cauchy-Schwarz inequality and \eqref{eq:proof:lem:decoupling_estimate_0}, one finds
    \begin{equation}
        \Vert f \Vert_{L^1(L^1)} \lesssim \left\Vert t \mapsto y(t; y_0, 0) \right\Vert_{L^2(L^2)} \left\Vert y^u \right\Vert_{L^2(L^2)}
        \lesssim \sqrt{T} \Vert y_0 \Vert_{L^2} \left\Vert y^u \right\Vert_{L^2(L^2)}. \label{eq:proof:lem:decoupling_estimate_4}
    \end{equation}    
    Note that \eqref{eq:proof:lem:decoupling_estimate_0} also gives
    \begin{equation}
        \Vert g \Vert_{L^2(L^2)} \lesssim \sqrt{T} \left\Vert y_0 \right\Vert_{L^2} + \left\Vert y^u \right\Vert_{L^2(L^2)}. \label{eq:proof:lem:decoupling_estimate_5}
    \end{equation}   
    Gathering \eqref{eq:proof:lem:decoupling_estimate_2}, \eqref{eq:proof:lem:decoupling_estimate_3}, \eqref{eq:proof:lem:decoupling_estimate_4} and \eqref{eq:proof:lem:decoupling_estimate_5}, one obtains 
    \begin{equation}
        \begin{split}
            \left\vert \left\langle R(T; y_0, u), \varphi_k \right\rangle \right\vert 
            \leq \ & C T^{\frac{1}{4}} \Vert y_0 \Vert_{L^2} \left\Vert y^u \right\Vert_{L^2((0,T);L^2)} \\
            &\times \left( T^{\frac{1}{4}} + T^{\frac{1}{2}} \Vert y_0 \Vert_{L^2} + \left\Vert y^u \right\Vert_{L^2((0,T);L^2)} + T^{\frac{1}{4}} \Vert y_0 \Vert_{L^2} \left\Vert y^u \right\Vert_{L^2((0,T);L^2)} \right).
        \end{split}
    \end{equation}
    Together with \eqref{eq:lem:decoupling_estimate_0}, this gives \eqref{eq:lem:decoupling_estimate_2}.
\end{proof}

\appendix

\section{Asymptotic sum estimates}\label{sec:appendix_sum_estimates}

We prove the following result, which is used several times in establishing the asymptotic properties of kernels in Section \ref{sec:properties_of_kernels}.

\begin{lemma}\label{lem:asymptotic_sum_appendix}
    The following asymptotic estimates hold, where $\mathcal{O}$ is taken as $\omega \rightarrow + \infty$.
    \begin{enumerate}[label=(\roman*)]
        \item  For $A > 0$, $\alpha \in (-1, 3)$ and $\beta \in (-1, 7)$, there exists $\nu > 0$ such that 
            \begin{equation}\label{eq:lem:asymptotic_sum_appendix_1}
                \sum_{n \geq 1} \frac{n^\alpha}{ A n^4 + \omega^2 } = \frac{1}{\omega^{\frac{3}{2} - \frac{\alpha}{2}}} \int_0^\infty \frac{\tau^\alpha}{ A \tau^4 + 1 } \dd \tau + \mathcal{O} \left( \frac{1}{\omega^{\frac{3}{2} - \frac{\alpha}{2}+\nu}} \right),
            \end{equation}
            and
            \begin{equation}\label{eq:lem:asymptotic_sum_appendix_2}
                \sum_{n \geq 1} \frac{n^\beta}{\left( A n^4 + \omega^2 \right)^2} = \frac{1}{\omega^{\frac{7}{2} - \frac{\beta}{2}}} \int_0^\infty \frac{\tau^\beta}{ \left( A \tau^4 + 1 \right)^2 } \dd \tau + \mathcal{O} \left( \frac{1}{\omega^{\frac{7}{2} - \frac{\beta}{2}+\nu}} \right).
            \end{equation}
            Moreover, when $\alpha = 2$ and $A = \pi^4$, \eqref{eq:lem:asymptotic_sum_appendix_1} can be improved as
            \begin{equation}\label{eq:lem:asymptotic_sum_appendix_3}
                \sum_{n \geq 1} \frac{n^2}{\pi^4 n^4 + \omega^2} = \frac{1}{2 \sqrt{2} \pi^2 \sqrt{\omega}} + \mathcal{O} \left( \frac{1}{\omega^{\frac{3}{2}}} \right).
            \end{equation}
            
        \item One has
            \begin{equation}\label{eq:lem:asymptotic_sum_appendix_4}
                \left\vert \sum_{n \geq 1} \frac{(-1)^n n^2}{\pi^4 n^4 + \omega^2} \right\vert = \mathcal{O} \left( \frac{1}{\omega^{\frac{3}{2}}} \right).
            \end{equation}
            
        \item  Let $\rho \in \mathbb{R}$, $k \geq 2$, and $s \in (-2, 0)$. There exists $\nu > 0$ such that
            \begin{equation}\label{eq:lem:asymptotic_sum_appendix_5}
                \sum_{n \geq 1} \frac{\sin \left(\frac{n \pi}{k} + \rho \right)^2}{ n^{2s + 1} \left( \pi^4 n^4 + \omega^2 \right)} 
                = \frac{1}{2 \omega^{s+2}} \int_0^\infty \frac{1}{ t^{2s + 1} \left( \pi^4 t^4 + 1 \right)} \dd t
                + \mathcal{O}\left( \frac{1}{\omega^{2+s+\nu}} \right).
            \end{equation}
            
    \end{enumerate}
\end{lemma}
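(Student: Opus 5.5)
The plan is to obtain every estimate in Lemma~\ref{lem:asymptotic_sum_appendix} by comparing a sum with the corresponding integral after the rescaling $n = \omega^{1/2}\tau$. Write $h = \omega^{-1/2}$. Then
\begin{equation*}
\sum_{n\geq 1} \frac{n^\alpha}{An^4+\omega^2} = \omega^{-2+\frac{\alpha}{2}} \sum_{n\geq1} F(nh), \qquad F(\tau) := \frac{\tau^\alpha}{A\tau^4+1},
\end{equation*}
so that $\omega^{-2+\frac{\alpha}{2}} h^{-1}\int_0^\infty F = \omega^{-\frac32+\frac{\alpha}{2}}\int_0^\infty F$, which is the stated main term. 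The question is therefore the size of the Riemann-sum error $h\sum_{n\geq1} F(nh) - \int_0^\infty F$.

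\textbf{Part (i).} On each cell $[(n-1)h, nh]$ we bound $|F(nh) - F(\tau)|$ by $h\sup|F'|$ on that cell. Since $F'(\tau) = O(\tau^{\alpha-1})$ near $0$ and $F'(\tau) = O(\tau^{\alpha-5})$ at infinity, this gives the following.
\begin{itemize}
\item For $\alpha > 0$, the error is $O(h)$ up to a logarithmic factor.
\item For $\alpha \in (-1, 0]$, the first cell contributes $O(h^{1+\alpha})$ (with a log when $\alpha = 0$), because $\int_0^h \tau^\alpha \, d\tau \sim h^{1+\alpha}$ and $hF(h) \sim h^{1+\alpha}$. The remaining cells are controlled by monotonicity.
\end{itemize}
In all cases the error is $O(h^{\nu})$ for some $\nu>0$, using $\alpha>-1$. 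The upper bound $\alpha<3$ is exactly what makes $F$ integrable at infinity. The estimate \eqref{eq:lem:asymptotic_sum_appendix_2} is proved the same way with $G(\tau) = \tau^\beta (A\tau^4+1)^{-2}$; integrability at infinity needs $\beta<7$.

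For \eqref{eq:lem:asymptotic_sum_appendix_3} ($\alpha=2$, $A=\pi^4$), $F$ is smooth, vanishes to second order at $0$, and decays like $\tau^{-2}$. I would apply Euler--Maclaurin, or equivalently Poisson summation to the even extension of $F$. The boundary term $F(0)/2$ vanishes, and the odd derivatives at $0$ satisfy $F'(0)=0$ and $F'''(0)\neq 0$, so the correction is $O(h^3)$, hence $O(\omega^{-2}\cdot\omega^{1/2})$ after rescaling. The integral is explicit: $\int_0^\infty \tau^2/(\pi^4\tau^4+1)\,d\tau = \frac{1}{\pi^3}\cdot\frac{\pi}{2\sqrt2} = \frac{1}{2\sqrt2\,\pi^2}$. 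Alternatively, the exact partial fraction/Mittag-Leffler formula for $\sum n^2/(n^4+a^4)$ in terms of $\sinh$ and $\sin$ can be used, and its expansion gives an error that is exponentially small beyond the polynomial terms.

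\textbf{Part (ii).} I would pair consecutive terms $n=2m-1$ and $n=2m$. Each pair is $O(\sup|F_\omega'|)$ with $F_\omega(x) = x^2/(\pi^4x^4+\omega^2)$, which leaves $\sum_m F_\omega'(\xi_m)$. The cleaner route is the exact identity $\sum (-1)^n F(nh) = 2\sum F(2nh) - \sum F(nh)$. Applying the sharp Euler--Maclaurin expansion from \eqref{eq:lem:asymptotic_sum_appendix_3} to both sums, the leading integral terms cancel exactly. The remaining terms are $O(h^3)$ times the prefactor, which yields $O(\omega^{-3/2})$.

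\textbf{Part (iii).} Write $\sin^2(\frac{n\pi}{k}+\rho) = \frac12 - \frac12\cos(\frac{2n\pi}{k}+2\rho)$. The constant part $\frac12$ is handled by (i) with $\alpha = -2s-1 \in (-1,3)$, and it produces the stated main term. The oscillating part is the main obstacle. The phase $e^{2i\pi n/k}$ is $k$-periodic with zero mean over a period, since $k\geq2$ makes it a nontrivial root of unity. So I would use summation by parts (Abel) with bounded partial sums $\sum_{n\leq N}\cos(\cdot)$. This bounds the oscillating sum by $\sum_n |\Delta g(n)| + |g(1)|$, where $g(n) = n^{-2s-1}(\pi^4 n^4+\omega^2)^{-1}$. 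Splitting at $n\sim\sqrt\omega$, the total variation is at most $\omega^{-2}\max(1,\omega^{(-2s-1)/2})$ plus a tail of size $\omega^{-2}\cdot\omega^{-s-1/2}$. Both are $O(\omega^{-2-s-\nu})$ because $s<0$, the first term giving gain $\nu=\min(-s,\tfrac12)>0$.
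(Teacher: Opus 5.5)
Your proposal is correct and follows essentially the paper's route: you rescale by $\sqrt{\omega}$ and compare Riemann sums with integrals in (i), you use the second-order trapezoidal (Euler--Maclaurin) error with $F(0)=0$ and $F''\in L^1$ for \eqref{eq:lem:asymptotic_sum_appendix_3} and \eqref{eq:lem:asymptotic_sum_appendix_4} (your identity $\sum(-1)^nF(nh)=2\sum F(2nh)-\sum F(nh)$ is a tidy substitute for the paper's second differences), and you reduce the oscillating part of (iii) to the total variation of the weight via the vanishing mean of $\cos(\frac{2n\pi}{k}+2\rho)$ over a period. One correction: $F(\tau)=\tau^2/(\pi^4\tau^4+1)$ is even, so $F'''(0)=0$ (and your $O(h^3)$ order bookkeeping is off), but this does not affect the conclusion, because the $O(h^2)$ trapezoidal remainder already gives exactly the $O(\omega^{-3/2})$ required in \eqref{eq:lem:asymptotic_sum_appendix_3}.
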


\begin{proof}
    \underline{Proof of \emph{(i)}.}
    We first prove \eqref{eq:lem:asymptotic_sum_appendix_1} and \eqref{eq:lem:asymptotic_sum_appendix_2}. Let $m \in \{1,2\}$, let $\sigma \in (-1, 3)$ if $m=1$ and $\sigma \in (-1, 7)$ if $m=2$.  For $\tau > 0$, set $f(\tau) = \frac{\tau^\sigma}{ \left( A \tau^4 + 1 \right)^m}$. We assume $\sigma < 0$, since the other cases are easier. We use the symbol $\lesssim$ for constants that may depend on $m$, $\sigma$ and $A$.
    
    It suffices to show that there exists $\nu > 0$ such that for all $\delta \in (0,1)$, one has
    \begin{equation}\label{eq:proof:lem:asymptotic_sum_appendix_0}
        \left\vert \sum_{n \geq 1} \delta f(n\delta) - \int_0^\infty f(\tau) \dd \tau \right\vert \lesssim \delta^{\nu},
    \end{equation}
    where $\delta$ plays the role of $\frac{1}{\sqrt{\omega}}$. For small $\tau$, we will use $f(\tau) \leq \tau^\sigma$ and $\vert f^\prime(\tau) \vert \lesssim \tau^{\sigma-1}$, and for large $\tau$, we will use $f(\tau) \leq \tau^{\sigma-4m}$ and $\vert f^\prime(\tau) \vert \lesssim \tau^{\sigma-4m-1}$. Set $N(\delta) := \lfloor \delta^{-\frac{1}{2}} \rfloor$ and $M(\delta) := \lfloor \delta^{-2} \rfloor$. First, one has
    \begin{equation}\label{eq:proof:lem:asymptotic_sum_appendix_1}
        \begin{split}
            \left\vert \delta \sum_{n = N(\delta)}^{M(\delta)-1} f(n\delta) - \int_{\delta N(\delta)}^{\delta M(\delta)} f(\tau) \dd \tau \right\vert 
            & \leq \delta \int_{\delta N(\delta)}^{\delta M(\delta)} \vert f^\prime(\tau) \vert \dd \tau \\
            & \lesssim \delta \int_{\delta N(\delta)}^1 \tau^{\sigma-1} \dd \tau + \delta \int_{1}^{\delta M(\delta)} \tau^{\sigma-4m-1}\dd \tau \\
            & \lesssim \delta \left( \delta N(\delta) \right)^{\sigma} + \delta \left( \delta M(\delta) \right)^{\sigma-4m} \\
            & \lesssim \delta^{1 + \frac{\sigma}{2}} + \delta^{4m-\sigma+1} ,
        \end{split}
    \end{equation}
    where we have used $\sigma \leq 0$.
    Second, one has
    \begin{equation}\label{eq:proof:lem:asymptotic_sum_appendix_2}
        \delta \sum_{n = 1}^{N(\delta)-1} f(n\delta) + \int_0^{\delta N(\delta)} f(\tau) \dd \tau
        \lesssim \delta^{1+\sigma} N(\delta)^{1+\sigma} \lesssim \delta^{\frac{1}{2}+ \frac{\sigma}{2}}.
    \end{equation}
    Third, one has
    \begin{equation}\label{eq:proof:lem:asymptotic_sum_appendix_3}
        \delta \sum_{n \geq M(\delta)} f(n \delta) + \int_{\delta M(\delta)}^\infty f(\tau) \dd \tau 
        \lesssim \delta^{\sigma-4m+1} M(\delta)^{\sigma-4m+1} \lesssim \delta^{4m-\sigma-1}. 
    \end{equation}
    Combining \eqref{eq:proof:lem:asymptotic_sum_appendix_1}, \eqref{eq:proof:lem:asymptotic_sum_appendix_2} and \eqref{eq:proof:lem:asymptotic_sum_appendix_3}, one obtains \eqref{eq:proof:lem:asymptotic_sum_appendix_0} with $\nu = \min\left\{ \frac{1+\sigma}{2}, \frac{1}{2}, 4m-\sigma-1\right\} > 0$.

    Now, we prove \eqref{eq:lem:asymptotic_sum_appendix_3}. Let $f(\tau) = \frac{\tau^2}{\pi^4 \tau^4 + 1}$. To obtain the $\mathcal{O} \left( \frac{1}{\omega^{\frac{3}{2}}} \right)$ remainder, we need a sharper estimate of the error between the integral and its Riemann sum than \eqref{eq:proof:lem:asymptotic_sum_appendix_1}. We use the trapezoidal formula
    \begin{equation}
        \int_a^b f(\tau) \dd \tau = \frac{1}{2} (b-a) \left( f(a) + f(b) \right) + \frac{1}{2} \int_a^b (b-\tau) (a-\tau) f^{\prime \prime}(\tau) \dd \tau,
    \end{equation}
    for $0 < a < b < + \infty$. Since $f^{\prime \prime} \in L^1((0, +\infty); \mathbb{R})$, this gives
    \begin{equation}
        \left\vert \int_0^\infty f(\tau) \dd \tau - \frac{\delta}{2} \sum_{n \geq 0} \left\{ f(n\delta) + f((n+1)\delta) \right\} \right\vert
        \leq \frac{\delta^2}{2} \int_0^\infty \left\vert f^{\prime \prime}(\tau) \right\vert \dd \tau,
    \end{equation}
    for all $\delta > 0$. Since $f(0) = 0$, applying this to $\delta=\frac{1}{\sqrt{\omega}}$ yields
    \begin{equation}
        \left\vert \int_0^\infty f(\tau) \dd \tau - \sqrt{\omega} \sum_{n \geq 1} \frac{n^2}{\pi^4 n^4 + \omega^2} \right\vert
        \leq \frac{1}{2\omega} \int_0^\infty \left\vert f^{\prime \prime}(\tau) \right\vert \dd \tau.
    \end{equation}
    Using $\int_0^\infty f = \frac{1}{2\sqrt{2} \pi^2}$, one obtains \eqref{eq:lem:asymptotic_sum_appendix_3}.
    
    \underline{Proof of \emph{(ii)}.} As above, to obtain the $\mathcal{O} \left( \frac{1}{\omega^{\frac{3}{2}}} \right)$ remainder in \eqref{eq:lem:asymptotic_sum_appendix_4}, we need an estimate involving $f^{\prime \prime} \in L^1((0, +\infty); \mathbb{R})$, where $f(\tau) = \frac{\tau^2}{\pi^4 \tau^4 + 1}$. We use
    \begin{equation}
        \sum_{n \geq 1} (-1)^n f(n\delta) = - \frac{f(\delta)}{2} + \frac{1}{2} \sum_{n \geq 1} \left\{ 2 f(2n\delta) - f((2n+1)\delta) - f((2n-1)\delta) \right\},
    \end{equation}
    and
    \begin{equation}
        2 f\left( \frac{a+b}{2} \right) - f(a) - f(b) =  \int_a^{\frac{a+b}{2}} (a-\tau) f^{\prime \prime}(\tau) \dd \tau + \int_{\frac{a+b}{2}}^b (\tau-b) f^{\prime \prime}(\tau) \dd \tau,
    \end{equation}
    which imply
    \begin{equation}
        \left\vert \sum_{n \geq 1} (-1)^n f(n\delta) \right\vert \leq \frac{\delta}{2} + \frac{\delta}{2} \sum_{n \geq 1} \int_{(2n-1)\delta}^{(2n+1)\delta} \left\vert f^{\prime \prime}(\tau) \right\vert \dd \tau \leq C \delta,
    \end{equation}
    for some absolute constant $C>0$, and for all $\delta \in (0,1)$. With $\delta = \frac{1}{\sqrt{\omega}}$, this gives \eqref{eq:lem:asymptotic_sum_appendix_4}.
    
    \underline{Proof of \emph{(iii)}.} Since
    \begin{equation}
        \sin \left(\frac{n \pi}{k} + \rho \right)^2 = \frac{1}{2} - \frac{1}{2} \cos \left(\frac{2 n \pi}{k} + 2 \rho \right),
    \end{equation}
    we can apply \eqref{eq:lem:asymptotic_sum_appendix_1}, with $\alpha = -2s -1 \in (-1, 3)$, to find
    \begin{equation}
        \sum_{n \geq 1} \frac{\sin \left(\frac{n \pi}{k} + \rho \right)^2}{ n^{2s + 1} \left( \pi^4 n^4 + \omega^2 \right)} 
        = \frac{1}{2 \omega^{s+2}} \int_0^\infty \frac{1}{ t^{2s + 1} \left( \pi^4 t^4 + 1 \right)} \dd t
        - \frac{1}{2} \sum_{n \geq 1} \frac{\cos \left(\frac{2 n \pi}{k} + 2 \rho \right)}{ n^{2s + 1} \left( \pi^4 n^4 + \omega^2 \right)} 
        + \mathcal{O}\left( \frac{1}{\omega^{2+s+\nu}} \right),
    \end{equation}
    for some $\nu > 0$. It remains to estimate the sum involving $c_n := \cos \left(\frac{2 n \pi}{k} + 2 \rho \right)$. Set $f(\tau) = \frac{\tau^{-2s - 1}}{\pi^4 \tau^4 + 1}$. As above, it suffices to show that there exist $C>0$ and $\nu > 0$ such that for all $\delta \in (0,1)$, one has
    \begin{equation}\label{eq:proof:lem:asymptotic_sum_appendix_4}
        \left\vert \sum_{n \geq 1} c_n f(n\delta) \right\vert \leq C \delta^{-1 + 2 \nu}.
    \end{equation}
    Since $\sum_{n = 1}^k c_n = 0$, one has
    \begin{equation}
        \left\vert \sum_{n \geq 1} c_n f(n\delta) \right\vert 
        \leq \sum_{p \geq 0} \sum_{n = kp+1}^{k(p+1)} \vert c_n \vert \left\vert  f(n\delta) - f((kp+1)\delta) \right\vert
        \leq C \sum_{n \geq 1} \left\vert f((n+1)\delta) - f(n\delta) \right\vert ,
    \end{equation}
    for some $C>0$ independent of $\delta$. Instead of estimating this last sum by repeating the argument of \emph{(i)}, we use \emph{(i)} directly. For $n \geq 1$, one has
    \begin{equation}
        \begin{split}
            \left\vert  f((n+1)\delta) - f(n\delta) \right\vert 
            \leq \ &  \frac{1}{ \left( \delta^4 n^4 + 1 \right) n^{2s+1} \delta^{2s+1} } \left\vert 1 - \left( 1 + \frac{1}{n} \right)^{-1 - 2s} \right\vert \\
            & + \frac{1}{ n^{2s+1} \delta^{2s+1} } \left\vert \frac{1}{ \pi^4 \delta^4 n^4 + 1 } - \frac{1}{ \pi^4 \delta^4 (n+1)^4 + 1 } \right\vert \\
            \leq \ & C \frac{n^{-2-2s}}{ \delta^{2s+1} \left( \delta^4 n^4 + 1 \right)  } + C \frac{n^{2-2s}}{ \delta^{2s-3} \left( \delta^4 n^4 + 1 \right)^2  } \\
            \leq \ &  2 C \frac{n^{-2-2s}}{ \delta^{2s+1} \left( \delta^4 n^4 + 1 \right)  },
        \end{split}
    \end{equation}
    for some $C >0$ independent of $\delta$ and $n$. Since $- 2 -2s \notin (-1, 3)$, we cannot apply \emph{(i)} with $\alpha = - 2 -2s$. Since $s < 0$, we may choose $\theta \in (0,1)$ such that $\theta > 1 + 2s$. Since $n^{-2-2s} \leq n^{-2-2s+\theta}$, and since $-2-2s+\theta \in (-1, 3)$, we can apply \emph{(i)} with $\alpha = - 2 -2s+\theta$: it gives
    \begin{equation}
        \sum_{n \geq 1} \left\vert f((n+1)\delta) - f(n\delta) \right\vert \leq C \frac{ \delta^{2+2s - \theta -1} }{ \delta^{2s+1} } = C  \delta^{- \theta} .
    \end{equation}
    Hence, \eqref{eq:proof:lem:asymptotic_sum_appendix_4} holds true with $\nu = \frac{1 - \theta}{2} > 0$. This completes the proof of \eqref{eq:lem:asymptotic_sum_appendix_5}.
\end{proof}

\section{Some useful technical results about Sobolev spaces}\label{sec:appendix_sobolev}

\subsection{Elementary and product estimates}

We first give an elementary estimate in $L^1((0,T);L^2(0,1))$, followed by Sobolev product estimates that play a crucial role in the proof of the quadratic and cubic remainder estimates. We begin with the following standard estimate, which is used repeatedly throughout the article.

\begin{lemma}\label{lem:easy_estimate_L1L2}
    There exists an absolute constant $C > 0$ such that for all $\varphi \in H_0^1(0,1)$,
    \begin{equation}\label{eq:lem:easy_estimate_L1L2_1}
        \Vert \varphi \Vert_{L^\infty}^2 \leq C \Vert \varphi \Vert_{H^1} \Vert \varphi \Vert_{L^2},
    \end{equation}
    and for all $T > 0$, $f \in L^2((0,T); H_0^1(0,1))$ and $g \in L^2((0,T); H^1(0,1))$, one has
    \begin{equation}\label{lem:estimate_L1L2_2}
        \Vert fg \Vert_{L^1((0, T);L^2)} \leq C \Vert f \Vert_{L^2((0, T);H^1)}^{\frac{1}{2}} \Vert f \Vert_{L^2((0, T);L^2)}^{\frac{1}{2}} \Vert g \Vert_{L^2((0, T);L^2)}.
    \end{equation}
\end{lemma}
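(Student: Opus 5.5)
The first inequality is a one-dimensional Gagliardo--Nirenberg (Agmon) estimate, and I will prove it using the vanishing boundary values of $\varphi\in H_0^1(0,1)$. For smooth $\varphi$ vanishing at $0$ and $x\in[0,1]$, the fundamental theorem of calculus gives
\begin{equation*}
\varphi(x)^2=\int_0^x 2\varphi(y)\varphi^\prime(y)\dd y .
\end{equation*}
Applying Cauchy--Schwarz, this yields $\Vert\varphi\Vert_{L^\infty}^2\leq 2\Vert\varphi\Vert_{L^2}\Vert\varphi^\prime\Vert_{L^2}\leq 2\Vert\varphi\Vert_{L^2}\Vert\varphi\Vert_{H^1}$. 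I will then extend this to all of $H_0^1(0,1)$ by density, using the continuity of both sides in the $H^1$ norm (the left side via the embedding $H^1\subset C^0$). In fact only $\varphi(0)=0$ is used, so the constant is $C=2$.

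For the second inequality, I fix $t$ and use the pointwise bound
\begin{equation*}
\Vert f(t)g(t)\Vert_{L^2}\leq\Vert f(t)\Vert_{L^\infty}\Vert g(t)\Vert_{L^2}\leq C^{1/2}\Vert f(t)\Vert_{H^1}^{1/2}\Vert f(t)\Vert_{L^2}^{1/2}\Vert g(t)\Vert_{L^2},
\end{equation*}
which is valid for almost every $t$, since $f(t)\in H_0^1$ almost everywhere. Integrating in time, I then apply H\"older's inequality with exponents $(4,4,2)$ to the three factors $\Vert f\Vert_{H^1}^{1/2}$, $\Vert f\Vert_{L^2}^{1/2}$ and $\Vert g\Vert_{L^2}$. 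This gives
\begin{equation*}
\int_0^T\Vert f\Vert_{H^1}^{1/2}\Vert f\Vert_{L^2}^{1/2}\Vert g\Vert_{L^2}\dd t\leq\Bigl(\int_0^T\Vert f\Vert_{H^1}^2\Bigr)^{1/4}\Bigl(\int_0^T\Vert f\Vert_{L^2}^2\Bigr)^{1/4}\Bigl(\int_0^T\Vert g\Vert_{L^2}^2\Bigr)^{1/2},
\end{equation*}
which is exactly the claimed estimate.

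The only mildly delicate point is measurability and the density step for $H_0^1$ functions. Both are standard: the first inequality holds for every $\varphi\in H_0^1$, so the time-pointwise estimate holds wherever $f(t)\in H_0^1$, which is almost every $t$. Note that the $H^1$ regularity assumed on $g$ is not used beyond $g\in L^2(L^2)$.
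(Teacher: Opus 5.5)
Your proposal is correct and follows essentially the same route as the paper: the identity $\varphi(x)^2=2\int_0^x\varphi\varphi^\prime$ with Cauchy--Schwarz for the first inequality, then $\Vert fg\Vert_{L^1(L^2)}\leq\Vert f\Vert_{L^2(L^\infty)}\Vert g\Vert_{L^2(L^2)}$ combined with the first inequality. Your single H\"older step with exponents $(4,4,2)$ is the same as the paper's Cauchy--Schwarz in time followed by a second Cauchy--Schwarz on $\int_0^T\Vert f\Vert_{H^1}\Vert f\Vert_{L^2}\dd t$.
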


\begin{proof}
    First, writing $\varphi(x)^2 = 2 \int_0^x \varphi^\prime \varphi$ and using the Cauchy-Schwarz inequality, one obtains \eqref{eq:lem:easy_estimate_L1L2_1}. Second, using the Cauchy-Schwarz inequality again, one finds
    \begin{equation}
        \Vert fg \Vert_{L^1(L^2)} \leq \Vert f \Vert_{L^2(L^\infty)} \Vert g \Vert_{L^2((0,T);L^2)},
    \end{equation}
    which, together with \eqref{eq:lem:easy_estimate_L1L2_1}, implies \eqref{lem:estimate_L1L2_2}.
\end{proof}

The next results are the key technical tools to obtain quadratic and cubic estimates for $\mu \in \mathcal{H}^s_2(0,1)$ with $s \geq 0$. First, we recall the following classical result (see, for instance, \cite[Section 4.6.1, Theorem 1]{RunstSickel1996} and \cite[Section 4.6.1, Proposition 1]{RunstSickel1996}).

\begin{lemma}\label{lem:product_sobolev_classical}
    Let $I \subset \mathbb{R}$ be an open interval, not necessarily bounded. If $a \geq r$, $b \geq r$, $a + b - \frac{1}{2} > r$ and $a + b > 0$, then there exists $C>0$ such that for all $f \in H^a(I, \mathbb{R})$ and $g \in H^b(I, \mathbb{R})$, one has
    \begin{equation}
        \Vert fg \Vert_{H^r(I)} \leq C \Vert f \Vert_{H^a(I)} \Vert g \Vert_{H^b(I)}.
    \end{equation}
    In addition, this estimate also holds when $a + b = 0$, provided $r < - \frac{1}{2}$ and $r \leq a < 0$.
\end{lemma}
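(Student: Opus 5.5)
The plan is to cite this classical product estimate from Runst--Sickel, and if a self-contained argument is wanted, to reduce to the whole line and use a Littlewood--Paley (paraproduct) decomposition.

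\textbf{Reduction to $I=\mathbb{R}$.} When $a,b\ge 0$, use bounded extension operators $H^a(I)\to H^a(\mathbb{R})$ (Stein or Rychkov extension, valid for all real orders on intervals). Apply the estimate on $\mathbb{R}$ and restrict, since $\Vert F|_I\Vert_{H^r(I)}\le \Vert F\Vert_{H^r(\mathbb{R})}$. For negative exponents, $H^a(I)$ is defined by restriction with the quotient norm. In that case pick near-optimal extensions $F,G$ of $f,g$; then $fg=(FG)|_I$, and the whole-line estimate gives the bound on $I$.

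\textbf{Whole-line estimate.} Write $fg=\Pi_1+\Pi_2+\Pi_3$, with $\Pi_1=\sum_j S_{j-2}f\,\Delta_j g$, $\Pi_2=\sum_j \Delta_j f\, S_{j-2}g$, and $\Pi_3=\sum_{|j-j'|\le1}\Delta_j f\,\Delta_{j'}g$.

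For $\Pi_1$, each piece is frequency-localized at $2^j$. Bound $\Vert S_{j-2}f\Vert_{L^\infty}$ by Bernstein:
\begin{itemize}
\item if $a>\tfrac12$, it is at most $\Vert f\Vert_{H^a}$;
\item if $a<\tfrac12$, it is at most $2^{j(1/2-a)}c_j\Vert f\Vert_{H^a}$, up to a logarithm at $a=\tfrac12$.
\end{itemize}
This gives $\Pi_1\in H^{\min(b,\,a+b-1/2)-\varepsilon'}$. The hypotheses $b\ge r$ and $a+b-\tfrac12>r$ give $\Pi_1\in H^r$. The strict inequality absorbs the logarithm and the endpoint loss; the case $b=r$ with $a>\tfrac12$ needs no loss. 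The term $\Pi_2$ is symmetric and uses $a\ge r$.

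The resonant term $\Pi_3$ is where $a+b>0$ enters. Its pieces are supported in balls of radius $\sim 2^j$. Estimate $\Vert \Delta_j f\Delta_{j'}g\Vert_{L^1}\le c_jd_j2^{-j(a+b)}$ and use $L^1\to L^2$ Bernstein on the output dyadic block $k\le j+2$. Summing over $j\ge k$ is possible because $a+b>0$. It yields $\Pi_3\in H^{a+b-1/2}$, or in $H^{r}$ for any $r<a+b-\tfrac12$ (the $\ell^2$ sum in $k$ needs the strict inequality, or $r\le0$ with $a+b>0$). Again $r<a+b-\tfrac12$ suffices.

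\textbf{Borderline case $a+b=0$, $r<-\tfrac12$, $r\le a<0$.} $\Pi_1$ and $\Pi_2$ are handled as above. For $\Pi_3$, the sum over $j$ no longer gains decay, but pairs $H^a\times H^{-a}$ still give $fg\in L^1$-type bounds. Concretely, $\sum_j\Vert\Delta_jf\Delta_jg\Vert_{L^1}\le\Vert f\Vert_{H^a}\Vert g\Vert_{H^{-a}}$, so $\Pi_3$ is a finite measure. Since $L^1\hookrightarrow H^r$ for $r<-\tfrac12$ (by Sobolev embedding duality in one dimension), this gives $\Pi_3\in H^r$.

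The main obstacle is bookkeeping the endpoint cases: $a$ or $b$ equal to $\tfrac12$, and the $\ell^2$-summability of the resonant term. I expect to handle both by slack from the strict inequalities. The extension to intervals with negative exponents is routine via the quotient-norm definition. In practice I would simply cite \cite[Section 4.6.1]{RunstSickel1996}, as the paper does.
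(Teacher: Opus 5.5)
Your proposal is correct and takes the paper's approach: the paper gives no proof and simply cites \cite[Section 4.6.1, Theorem 1 and Proposition 1]{RunstSickel1996}, the latter for the borderline case $a+b=0$, and your paraproduct sketch is a valid version of the standard argument behind that citation. As a minor remark, no loss actually occurs in $\Pi_1$ when $a<\frac12$, nor in $\Pi_3$: since $(c_jd_j)\in\ell^1$ and $a+b>0$, the sequence $\bigl(2^{k(a+b-\frac12)}\Vert\Delta_k\Pi_3\Vert_{L^2}\bigr)_k$ lies in $\ell^1$, so $\Pi_3\in H^{a+b-\frac12}$ outright and your parenthetical about the $\ell^2$ sum in $k$ is unnecessary.
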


Second, we will also need the following result, which follows from \cite[Section 4.6.3, Theorem 1]{RunstSickel1996} and $\mathbbm{1}_{(a,b)} = \mathbbm{1}_{(a,+\infty)} \mathbbm{1}_{(- \infty,b)}$.

\begin{lemma}\label{lem:product_indicatrice}
    For $s \in \left(-\frac{1}{2}, \frac{1}{2} \right)$, there exists $C > 0$ such that for all $a,b \in \mathbb{R}$ with $a<b$, and all $u \in H^s(\mathbb{R})$, one has
    \begin{equation}
        \left\Vert \mathbbm{1}_{(a,b)} u \right\Vert_{H^s(\mathbb{R})} \leq C \left\Vert u \right\Vert_{H^s(\mathbb{R})}.
    \end{equation}
\end{lemma}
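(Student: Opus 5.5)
The plan is to reduce Lemma \ref{lem:product_indicatrice} to the half-line case and then treat the half-line cutoff by a Fourier-side argument. Since $\mathbbm{1}_{(a,b)} = \mathbbm{1}_{(a,+\infty)} \mathbbm{1}_{(-\infty,b)}$, it suffices to prove
\begin{equation*}
    \left\Vert \mathbbm{1}_{(c,+\infty)} u \right\Vert_{H^s(\mathbb{R})} \leq C \Vert u \Vert_{H^s(\mathbb{R})}
\end{equation*}
with $C$ independent of $c$, together with the analogous bound for $\mathbbm{1}_{(-\infty,c)}$. Applying the two bounds in succession then gives the stated estimate with constant $C^2$. Independence of $c$ follows from translation invariance of the $H^s(\mathbb{R})$-norm, since translation only multiplies $\widehat{u}$ by a unimodular factor. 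The second bound follows from the first by the reflection $t \mapsto -t$, which is an isometry of $H^s(\mathbb{R})$. So everything reduces to showing that multiplication by the Heaviside function $\mathbbm{1}_{(0,+\infty)}$ is bounded on $H^s(\mathbb{R})$ for $\vert s \vert < \frac{1}{2}$.

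For this core step I would cite \cite[Section 4.6.3, Theorem 1]{RunstSickel1996}: characteristic functions of half-lines are pointwise multipliers of $H^s(\mathbb{R}) = F^s_{2,2}(\mathbb{R})$ exactly when $-\frac{1}{2} < s < \frac{1}{2}$. To keep the argument self-contained, I would instead argue by duality and the Hardy inequality. Multiplication by $\mathbbm{1}_{(0,+\infty)}$ is self-adjoint with respect to the $L^2$ pairing, so boundedness on $H^s$ is equivalent to boundedness on $H^{-s}$. It is therefore enough to treat $s \in [0, \frac{1}{2})$.

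For $s = 0$ the bound is trivial. For $s \in (0, \frac{1}{2})$, I would use the Gagliardo seminorm
\begin{equation*}
    \int_{\mathbb{R}} \int_{\mathbb{R}} \frac{\vert v(t) - v(\tau) \vert^2}{\vert t - \tau \vert^{1+2s}} \dd t \dd \tau,
\end{equation*}
which, added to $\Vert v \Vert_{L^2}^2$, gives a norm equivalent to the $H^s(\mathbb{R})$-norm. For $v = \mathbbm{1}_{(0,+\infty)} u$, the contributions with $t, \tau$ on the same side of $0$ are controlled by the seminorm of $u$. The cross terms are bounded by
\begin{equation*}
    2 \int_0^\infty \int_{-\infty}^0 \frac{\vert u(t) \vert^2}{\vert t - \tau \vert^{1+2s}} \dd \tau \dd t = \frac{1}{s} \int_0^\infty \frac{\vert u(t) \vert^2}{t^{2s}} \dd t .
\end{equation*}
The main obstacle is the fractional Hardy inequality
\begin{equation*}
    \int_{\mathbb{R}} \frac{\vert u(t) \vert^2}{\vert t \vert^{2s}} \dd t \lesssim \Vert u \Vert_{H^s(\mathbb{R})}^2 \quad \text{for } 0 < s < \frac{1}{2} .
\end{equation*}
This is where the restriction $s < \frac{1}{2}$ is essential. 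I would prove it on the Fourier side. By Plancherel, it amounts to bounding the $L^2$ norm of $\vert t \vert^{-s} u$, which is a convolution of $\widehat{u}$ with the homogeneous kernel $\vert \omega \vert^{s-1}$ (the Fourier transform of $\vert t \vert^{-s}$, up to a constant). This reduces to a Hardy–Littlewood–Sobolev / Stein–Weiss weighted estimate with weights $\vert \omega \vert^{s}$, which holds precisely for $0 < s < \frac{1}{2}$.

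Alternatively, I could simply take the Hardy inequality as classical. Either way, combining the same-side bound, the cross-term bound, Hardy's inequality, duality and the reflection argument yields the lemma.
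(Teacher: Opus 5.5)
Your proposal is correct and follows the paper's proof. The paper also writes $\mathbbm{1}_{(a,b)} = \mathbbm{1}_{(a,+\infty)} \mathbbm{1}_{(-\infty,b)}$ and invokes the Runst--Sickel pointwise-multiplier theorem for half-line indicators on $H^s(\mathbb{R})$, $\vert s \vert < \frac{1}{2}$, leaving implicit the translation and reflection invariance you spell out to get a constant uniform in $a,b$. Your optional self-contained route is also sound: duality reduces to $s \in [0,\frac{1}{2})$, the cross terms of the Gagliardo seminorm equal $\frac{1}{s}\int_0^\infty \vert u(t)\vert^2 t^{-2s} \dd t$, and the fractional Hardy (Pitt) inequality, valid exactly for $0<s<\frac{1}{2}$ in one dimension, closes the argument.
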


Third, when proving remainder estimates for $\mu \in \mathcal{H}^{s}_2$, with $s>0$, we need the following technical result, whose proof is based on multiplication results in Sobolev spaces of the torus.

\begin{lemma}\label{lem:product_kind_of_discrete_sobolev}
    Let $s \in \left( 0, 1 \right)$ and $\alpha$ be such that $s < \alpha < s + \frac{1}{4}$ if $s \in \left(0, \frac{1}{2} \right)$, and $\frac{1}{2} < \alpha < 1 - \frac{s}{2}$ if $s \in \left[ \frac{1}{2}, 1 \right)$. There exists $C > 0$ such that for all non-negative sequences $(a_p)$ and $(b_q)$,
    \begin{equation}\label{eq:proof:cubic_new_singular_term_3}
        \sum_{n\geq 1} \lambda_n^{2\alpha-2} \left( \sum_{p, q \geq 1} \left\vert \left\langle \varphi_p \varphi_q, \varphi_n^\prime  \right\rangle \right\vert a_p b_q \right)^2 \leq C \left( \sum_{p\geq 1} \lambda_p^{s} a_p^2 \right) \left( \sum_{q \geq 1} \lambda_q^{s} b_q^2 \right).
    \end{equation}
\end{lemma}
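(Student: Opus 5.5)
The plan is to reduce the discrete inequality to a product estimate in Sobolev spaces on the torus $\mathbb{T} = \mathbb{R}/2\pi\mathbb{Z}$. The key point is that all coefficients involved are nonnegative, so no cancellation can be lost when majorizing.

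\textbf{Step 1: reduce to a convolution of sequences.} By \eqref{eq:expression_varphi_varphi_varphiprime}, one has
\begin{equation*}
\left\vert \left\langle \varphi_p \varphi_q, \varphi_n^\prime \right\rangle \right\vert \leq \frac{n\pi}{\sqrt{2}} \left( \mathbbm{1}_{\vert p-q\vert = n} + \mathbbm{1}_{p+q=n} \right).
\end{equation*}
Since $\lambda_n^{2\alpha-2} n^2 \lesssim n^{4\alpha-2}$, the left-hand side of \eqref{eq:proof:cubic_new_singular_term_3} is bounded by a constant times
\begin{equation*}
\sum_{n \geq 1} n^{2(2\alpha-1)} c_n^2, \quad \text{where} \quad c_n := \sum_{p,q\geq 1} \left( \mathbbm{1}_{p-q=n} + \mathbbm{1}_{q-p=n} + \mathbbm{1}_{p+q=n} \right) a_p b_q .
\end{equation*}

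\textbf{Step 2: interpret $c_n$ as a Fourier coefficient.} Extend the sequences evenly to $\mathbb{Z}$ by setting $A_p := a_{\vert p\vert}$ and $B_q := b_{\vert q\vert}$ for $p,q \neq 0$, and $A_0 = B_0 := 0$. Consider the formal trigonometric series $F(x) := \sum_{p} A_p e^{ipx}$ and $G(x) := \sum_q B_q e^{iqx}$ on $\mathbb{T}$. The $n$-th Fourier coefficient of $FG$ is $\sum_{p+q=n} A_p B_q$. This sum contains the three families of index pairs $(p,-q)$, $(-p,q)$ and $(p,q)$ with $p,q\geq 1$, which are exactly the ones appearing in $c_n$. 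It may contain other nonnegative terms as well. Since every term is nonnegative, we get $0 \leq c_n \leq \widehat{FG}(n)$. Therefore
\begin{equation*}
\text{LHS} \lesssim \Vert FG \Vert_{H^{2\alpha-1}(\mathbb{T})}^2, \qquad \Vert F \Vert_{H^s(\mathbb{T})}^2 \simeq \sum_{p\geq 1} p^{2s} a_p^2 \simeq \sum_{p \geq 1} \lambda_p^s a_p^2,
\end{equation*}
and the same holds for $G$. We may first argue with finitely supported sequences and then pass to the limit by monotone convergence.

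\textbf{Step 3: apply the product estimate.} It remains to prove $\Vert FG\Vert_{H^{r}(\mathbb{T})} \lesssim \Vert F\Vert_{H^s(\mathbb{T})} \Vert G\Vert_{H^s(\mathbb{T})}$ with $r = 2\alpha - 1$. I would use the periodic analogue of Lemma \ref{lem:product_sobolev_classical}, for instance from Runst--Sickel, or derive it from the $\mathbb{R}$ version via a smooth partition of unity and localization. That lemma requires $s \geq r$, $2s - \tfrac12 > r$ and $2s > 0$. The last condition is immediate. For the other two, I check the two ranges of $\alpha$ in turn.
\begin{enumerate}[label=(\alph*)]
\item If $s\in(0,\tfrac12)$ and $s<\alpha<s+\tfrac14$, then $r < 2s - \tfrac12$. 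Moreover $\alpha < s + \tfrac14 \leq \tfrac{s+1}{2}$, which gives $r \leq s$.
\item If $s\in[\tfrac12,1)$ and $\tfrac12<\alpha<1-\tfrac s2$, then $r<1-s\leq s$. Also $1-s\leq 2s-\tfrac12$, which gives $r < 2s - \tfrac12$.
\end{enumerate}
In both ranges the hypotheses of the product estimate hold, and the lemma follows.

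\textbf{Main obstacle.} The only delicate point is justifying the torus product estimate. I would cite the periodic version of the Runst--Sickel theorem rather than re-derive it; if that is not acceptable, I would localize $F$ and $G$ with smooth cutoffs to reduce to the statement on $\mathbb{R}$.
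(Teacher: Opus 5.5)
Your proposal is correct and follows the paper's proof essentially step for step. You bound $\vert\langle\varphi_p\varphi_q,\varphi_n^\prime\rangle\vert \lesssim n(\mathbbm{1}_{p+q=n}+\mathbbm{1}_{\vert p-q\vert=n})$, extend $(a_p),(b_q)$ evenly to $\mathbb{Z}$ with value zero at the origin so that the inner sum becomes the convolution $(A\ast B)_n$, and conclude with the periodic Sobolev product estimate with $r=2\alpha-1$, checking $s\geq r$ and $2s-\frac12>r$ exactly as the paper does. The only differences are cosmetic: you use period $2\pi$ instead of $1$, and you suggest the convolution might contain extra nonnegative terms, whereas for $n\geq 1$ it actually coincides with your $c_n$; the inequality you use is all that is needed anyway.
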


\begin{proof}
    We use the symbol $\lesssim$ for constants that may depend $s$ and $\alpha$. For $p, q \in \mathbb{Z}$, set $A_p := a_{\vert p \vert}$ if $p \neq 0$, and $A_p := 0$ if $p = 0$, and similarly, $B_q := b_{\vert q \vert}$ if $q \neq 0$, and $B_q := 0$ if $q = 0$. By \eqref{eq:expression_varphi_varphi_varphiprime}, one has
    \begin{equation}\label{eq:bound_varphi_varphi_varphiprime}
        \left\vert \left\langle \varphi_p \varphi_q, \varphi_n^\prime  \right\rangle \right\vert \lesssim n \left( \mathbbm{1}_{p+q=n} + \mathbbm{1}_{\vert p - q \vert = n} \right).
    \end{equation}
    In particular, for all $n \geq 1$, one has
    \begin{equation}
        \begin{split}
             \frac{1}{n} \sum_{p, q \geq 1} \left\vert \left\langle \varphi_p \varphi_q, \varphi_n^\prime  \right\rangle \right\vert a_p b_q 
            & \lesssim \sum_{p = 1}^{n-1} a_p b_{n-p} + \sum_{p \geq 1} a_p b_{n+p} + \sum_{q \geq 1} a_{n+q} b_q \\
            & = \sum_{p = 1}^{n-1} A_p B_{n-p} + \sum_{p \geq 1} A_{-p} B_{n+p} + \sum_{q \geq 1} A_{n+q} B_{-q} 
             \lesssim \left( A \ast B \right)_n,
        \end{split}
    \end{equation}
    where $\left( A \ast B \right)_n := \sum_{m \in \mathbb{Z}} A_m B_{n-m}$. Hence, it suffices to show
    \begin{equation}\label{eq:proof:lem:product_kind_of_discrete_sobolev}
        \sum_{n\geq 1} \lambda_n^{2\alpha-1} \left(  \left( A \ast B \right)_n \right)^2 \lesssim \left( \sum_{p\geq 1} \lambda_p^{s} A_p^2 \right) \left( \sum_{q \geq 1} \lambda_q^{s} B_q^2 \right).
    \end{equation}
    We claim that we are reduced to a result of product of Sobolev functions on the torus. Indeed, write $\mathbb{T} := \mathbb{R} / \mathbb{Z}$, and consider the Sobolev spaces $H^\sigma(\mathbb{T}; \mathbb{C})$, defined using the Hilbertian basis $\left( e^{2 i \pi n \cdot } \right)_{n\in \mathbb{Z}}$, naturally endowed with the norm 
    \begin{equation}
        \Vert h \Vert_{H^\sigma(\mathbb{T})}^2 := \sum_{n \in \mathbb{Z}} (1+n^2)^\sigma \left\vert \left\langle h, e^{2 i \pi n \cdot} \right\rangle_{L^2(\mathbb{T})} \right\vert^2.
    \end{equation}
    Set $f(x) = \sum_{p \in \mathbb{Z}} A_p e^{2 i \pi p x}$ and $g(x) = \sum_{q \in \mathbb{Z}} B_q e^{2 i \pi q x}$. Then $f(x) g(x) = \sum_{n \in \mathbb{Z}} \left( A \ast B \right)_n e^{2 i \pi n x}$, yielding
    \begin{equation}
        \Vert fg \Vert_{H^{2\alpha - 1}(\mathbb{T})}^2 = \sum_{n \in \mathbb{Z}} (1+n^2)^{2\alpha-1} \left(  \left( A \ast B \right)_n \right)^2 .
    \end{equation}
    The assumptions on $\alpha$ imply that $s \geq 2 \alpha - 1$ and $s + s - \frac{1}{2} > 2 \alpha - 1$. Hence, by the standard product estimate on the torus, analogous to Lemma \ref{lem:product_sobolev_classical}, we obtain \eqref{eq:proof:lem:product_kind_of_discrete_sobolev} (see, for instance, \cite{BahouriChemin} or \cite{RunstSickel1996}).
\end{proof}

Finally, when proving remainder estimates for $\mu \in \mathcal{H}^{s+\frac{1}{2}}_\infty$, with $s > 0$, we will need the following technical result.

\begin{lemma}\label{lem:product_kind_of_discrete_sobolev_p_infty}
    Let $s \in \left( 0, 1 \right)$. For $n \geq 1$, set $I_{n,s} := \sum_{p,q \geq 1} \left\vert \left\langle \varphi_p \varphi_q, \varphi_n^\prime \right\rangle \right\vert p^{-s-\frac{1}{2}} q^{-s-\frac{1}{2}}$. Then, there exists $C > 0$ such that 
    \begin{equation}\label{eq:lem:product_kind_of_discrete_sobolev_p_infty}
        I_{n,s} \leq 
        \begin{cases}
              C n^{1-2s} & \text{ if } s < \frac{1}{2},\\
              C \ln(2+n) & \text{ if } s = \frac{1}{2},\\
              C n^{-s+\frac{1}{2}} & \text{ if } s > \frac{1}{2}.
        \end{cases}
    \end{equation}
\end{lemma}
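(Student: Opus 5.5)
By \eqref{eq:expression_varphi_varphi_varphiprime}, $|\langle \varphi_p\varphi_q,\varphi_n'\rangle| \lesssim n(\mathbbm{1}_{p+q=n}+\mathbbm{1}_{|p-q|=n})$. The plan is to bound $I_{n,s}$ by $n$ times the sum of two one-dimensional sums and estimate each by elementary sum-integral comparisons:
\begin{equation*}
    I_{n,s} \lesssim n\left( S_1(n) + S_2(n) \right), \qquad S_1(n) := \sum_{p=1}^{n-1} p^{-s-\frac12}(n-p)^{-s-\frac12}, \qquad S_2(n) := \sum_{p\geq 1} p^{-s-\frac12}(p+n)^{-s-\frac12}.
\end{equation*}
The sum $S_2$ collects the terms with $|p-q|=n$. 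By symmetry in $p$ and $q$, these are twice the terms with $q=p+n$, and the factor $2$ is harmless.

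\textbf{The convolution sum $S_1$.} Split at $p=n/2$. On each half, one factor is comparable to $n^{-s-\frac12}$, and the other gives $\sum_{p\le n/2} p^{-s-\frac12}$. This partial sum is $\lesssim n^{\frac12-s}$ if $s<\frac12$, $\lesssim \ln(2+n)$ if $s=\frac12$, and $\lesssim 1$ if $s>\frac12$. Hence $S_1(n)\lesssim n^{-2s}$, $n^{-1}\ln(2+n)$, or $n^{-s-\frac12}$ in the three cases. Multiplying by $n$ gives exactly the three bounds in \eqref{eq:lem:product_kind_of_discrete_sobolev_p_infty}.

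\textbf{The tail sum $S_2$.} Split into $p\le n$ and $p>n$. For $p\le n$, we have $(p+n)^{-s-\frac12}\approx n^{-s-\frac12}$, so this part is bounded by $n^{-s-\frac12}\sum_{p\le n}p^{-s-\frac12}$, which is the same as for $S_1$. For $p>n$, the summand is $\lesssim p^{-2s-1}$, and the tail sum is $\lesssim n^{-2s}$ since $s>0$. In the case $s>\frac12$, note $n^{-2s}\le n^{-s-\frac12}$, so this tail is dominated by the main term. Thus $S_2(n)$ obeys the same bounds as $S_1(n)$, and multiplying by $n$ concludes the proof.

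There is no real obstacle here. The only care needed is at $s=\frac12$, where the logarithm appears, and in checking that the $p>n$ tail of $S_2$ does not exceed the claimed rate when $s>\frac12$. That check uses $2s>s+\frac12$.
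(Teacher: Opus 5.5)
Your proof is correct and follows essentially the same route as the paper. Both reduce, via \eqref{eq:bound_varphi_varphi_varphiprime}, to the convolution sum (split at $n/2$) and the shifted sum (split at $p=n$), and estimate each with elementary sum--integral bounds. The only cosmetic difference is that for $s>\frac12$ the paper bounds the shifted sum in one step, using $(p+n)^{-s-\frac12}\le n^{-s-\frac12}$ and the summability of $p^{-s-\frac12}$, instead of splitting and checking the tail.
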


\begin{proof}
    We use the symbol $\lesssim$ for constants that may depend on $s$. By \eqref{eq:bound_varphi_varphi_varphiprime}, one has $I_{n,s} \lesssim I_{n,s}^1 + I_{n,s}^2$, where $I_{n,s}^1 := n \sum_{p = 1}^{n-1} p^{-s-\frac{1}{2}} (n-p)^{-s-\frac{1}{2}}$ and $I_{n,s}^2 := n \sum_{p \geq 1} p^{-s-\frac{1}{2}} (p+n)^{-s-\frac{1}{2}}$. Note that
    \begin{equation}
        I_{n,s}^1 \leq 2 n \sum_{1 \leq p \leq \frac{n}{2}} p^{-s-\frac{1}{2}} (n-p)^{-s-\frac{1}{2}}
        \leq n \sum_{1 \leq p \leq \frac{n}{2}} p^{-s-\frac{1}{2}} \left( \frac{n}{2} \right)^{-s-\frac{1}{2}},
    \end{equation}
    implying that $I_{n,s}^1$ satisfies \eqref{eq:lem:product_kind_of_discrete_sobolev_p_infty}. To estimate $I_{n,s}^2$, we distinguish three cases. First, assume that $s > \frac{1}{2}$. Then $I_{n,s}^2 \leq n \sum_{p \geq 1} p^{-s-\frac{1}{2}} n^{-s-\frac{1}{2}} \lesssim n^{-s+\frac{1}{2}}$. Second, assume that $s < \frac{1}{2}$. Then
    \begin{equation}
        I_{n,s}^2 \leq n \sum_{p = 1}^{n-1} p^{-s-\frac{1}{2}} n^{-s-\frac{1}{2}} + n \sum_{p = n}^\infty p^{-s-\frac{1}{2}} p^{-s-\frac{1}{2}} \lesssim n^{1-2s}.
    \end{equation}
    The case $s = \frac{1}{2}$ is similar.
\end{proof}

\subsection{Estimates in weak Sobolev norms}

Here, we prove three results involving weak Sobolev norms. The first shows how an estimate involving $u_1$ can be converted into an estimate involving $u$. The second shows that some Sobolev regularity can be traded for a power of $T$. The third will be used when the kernel $K_{\mu,k}$ decays faster than expected from the regularity of $\mu$.

We start with the following result, which will be used both for large times, in which case the $T$-dependence of the constant is allowed, and for small time, with a constant independent of $T$. Recall that $u_1(t) := \int_0^t u(\tau) \dd \tau$.

\begin{lemma}\label{lem:appendix:weak_norm_primitive_new}
    Let $r < \frac{1}{2}$ and $T > 0$. There exists $C_T > 0$ such that for all $u \in L^2(0,T)$, one has
    \begin{equation}\label{eq:lem:appendix:weak_norm_primitive_new}
        \Vert u_1 \Vert_{\widetilde{H}^r(0,T)} \leq C_T \Vert u \Vert_{\widetilde{H}^{r-1}(0,T)}.
    \end{equation}
    In addition, there exists $C > 0$ which depends only on $r$ such that if $T \in (0,1)$ then $C_T \leq C$.
\end{lemma}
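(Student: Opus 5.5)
Since $u \in L^2(0,T)$, the primitive $u_1$ is continuous on $[0,T]$, so it is natural to extend both $u$ and $u_1$ to the whole line and compare them through the Fourier transform. With $u$ extended by zero outside $(0,T)$, the function $u_1$ coincides on $(0,T)$ with $U := \mathbbm{1}_{(0,+\infty)} \ast u$, which equals $u_1(T) = \widehat{u}(0)$ for $t > T$. The difficulty is that $u_1(T)$ is in general nonzero, so $U$ is not compactly supported and we cannot simply set $\widehat{u_1}(\omega) = \widehat{u}(\omega)/(i\omega)$. The plan is to subtract a fixed profile carrying this constant. Choose $\theta \in C^\infty_c(\mathbb{R})$ with $\int \theta = 1$ and support in $(0,T)$, rescaled if necessary when $T<1$. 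Then write $\mathbbm{1}_{(0,T)} u_1 = \mathbbm{1}_{(0,T)} (w + \widehat{u}(0)\Theta)$, where $\Theta(t) := \int_{-\infty}^t \theta$ and $w := \mathbbm{1}_{(0,+\infty)} \ast (u - \widehat{u}(0)\theta)$. The function $w$ is compactly supported, in $[0,T]$, since $u - \widehat{u}(0)\theta$ has zero mean.

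\textbf{Step 1: the mean-zero part.} We have $\widehat{w}(\omega) = \frac{\widehat{u}(\omega) - \widehat{u}(0)\widehat{\theta}(\omega)}{i\omega}$. For $|\omega| \geq 1$, this gives $\langle\omega\rangle^{r}|\widehat{w}| \lesssim \langle\omega\rangle^{r-1}(|\widehat{u}| + |\widehat{u}(0)||\widehat{\theta}|)$. For $|\omega| \leq 1$, the numerator vanishes at $0$, so $|\widehat{w}(\omega)| \leq \sup_{|\xi|\leq 1} |\partial_\omega(\widehat{u} - \widehat{u}(0)\widehat{\theta})|$. Because $u$ and $\theta$ are supported in $[0,T]$, the derivative $\partial_\omega \widehat{u} = \widehat{-itu}$ is bounded by $\|tu\|$ in a negative Sobolev norm. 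We bound it by $C_T\|u\|_{\widetilde{H}^{r-1}}$, using that multiplication by a smooth cutoff equal to $t$ on $[0,T]$ is bounded on $H^{r-1}(\mathbb{R})$, together with a dual pairing of $\widehat{tu}(\xi)$ against $e^{-i\xi t}\chi(t)$. In the same way, $|\widehat{u}(0)| = |\langle u, \chi\rangle| \leq C_T \|u\|_{\widetilde{H}^{r-1}}$ with $\chi \in C_c^\infty$ equal to $1$ on $[0,T]$. Combining these bounds gives $\|w\|_{H^r(\mathbb{R})} \leq C_T \|u\|_{\widetilde{H}^{r-1}}$.

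\textbf{Step 2: the constant part and truncation.} The term $\mathbbm{1}_{(0,T)}\Theta$ is a fixed function in $H^r$ for $r<\frac12$, so it contributes at most $C_T|\widehat{u}(0)|$, which Step 1 already controls. The remaining issue is that $\widetilde{H}^r(0,T)$ is defined through the norm of $\mathbbm{1}_{(0,T)}u_1$ on $\mathbb{R}$. When $r \in (-\frac12,\frac12)$, Lemma \ref{lem:product_indicatrice} handles the truncation directly. When $r \leq -\frac12$, we instead use that $H^r$ norms of functions supported in $[0,T]$ are dominated by $H^{r'}$ norms for $r' \in (-\frac12, r]$... but this goes the wrong direction. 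So for very negative $r$ we avoid truncating $w$ at all: $w$ is already supported in $[0,T]$, and we choose $\Theta$ so that the truncation $\mathbbm{1}_{(0,T)}\Theta$ is only needed against $\widehat{u}(0)$, where any fixed finite norm suffices. This works because $\mathbbm{1}_{(0,T)} \in H^r$ for every $r<\frac12$.

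\textbf{Main obstacle: uniformity for $T\in(0,1)$.} The hardest step is to get constants independent of $T$ when $T\in(0,1)$. For this we take $\chi$ and the cutoff multiplying by $t$ supported in a fixed interval $[-1,2]$, and $\theta$ fixed with support in $(-1,0)$, so that the decomposition no longer depends on $T$. The function $w$ is then supported in $[-1,T] \subset [-1,1]$. Uniform constants then follow from Step 1 with fixed auxiliary functions and from Lemma \ref{lem:product_indicatrice}, whose constant does not depend on $a,b$. For $r \leq -\frac12$, we expect to need the fact that truncation to $(0,T)$ of a function already in $H^{r}$ is uniformly bounded, provided we exploit that $\mathbbm{1}_{(0,T)}u_1 = \mathbbm{1}_{(0,T)}U$ where $U$ is continuous. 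This is the step we would check most carefully.
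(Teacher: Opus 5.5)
Your fixed-$T$ argument is sound. With $\theta$ supported in $(0,T)$, the function $w$ is supported in $[0,T]$, so $\mathbbm{1}_{(0,T)} u_1 = w + \widehat{u}(0) \mathbbm{1}_{(0,T)} \Theta$ holds exactly and no truncation estimate is needed, for any $r < \frac{1}{2}$. Your fixed-$\theta$ route also gives uniform constants when $\vert r \vert < \frac{1}{2}$. The gap is the uniform-in-$T$ claim for $r \leq -\frac{1}{2}$, which is part of the statement.

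By moving $\theta$ into $(-1,0)$ you destroy exactly the support property above: now $w = -\widehat{u}(0)\Theta$ on $(-1,0)$. You therefore have to bound $\mathbbm{1}_{(0,T)} w = \mathbbm{1}_{(0,+\infty)} w$ in $H^r(\mathbb{R})$ by $\Vert w \Vert_{H^r(\mathbb{R})}$. Lemma \ref{lem:product_indicatrice} gives this only for $\vert r \vert < \frac{1}{2}$. The principle you propose for $r \leq -\frac{1}{2}$ is false, even for smooth compactly supported functions. Take $\phi \in C^\infty_{\mathrm{c}}(\mathbb{R})$ with $\int_{\mathbb{R}} \phi = 0$ and $\int_0^\infty \phi = 1$, and set $\psi_n(t) := n\phi(nt)$. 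For $r < -\frac{1}{2}$, $\Vert \psi_n \Vert_{H^r} \to 0$ by dominated convergence, since $\widehat{\psi_n}(\omega) = \widehat{\phi}(\omega/n) \to \widehat{\phi}(0) = 0$. Yet $\mathbbm{1}_{(0,+\infty)} \psi_n \to \delta_0 \neq 0$ in $H^r$. At $r = -\frac{1}{2}$ the ratio of the two norms grows like $\sqrt{\ln n}$. Continuity of $U$ does not rescue this, so as written $C_T \leq C$ for $T \in (0,1)$ is not established when $r \leq -\frac{1}{2}$.

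The repair is to keep the corrector inside $[0,T]$. The simplest choice is the limiting one, $\theta = \delta_0$. Write $\mathbbm{1}_{(0,T)} u_1 = \mathbbm{1}_{(0,T)} (u_1 - u_1(T)) + u_1(T) \mathbbm{1}_{(0,T)}$. The first function is supported in $[0,T]$ and has Fourier transform $\frac{\widehat{u}(\omega) - \widehat{u}(0)}{i\omega}$. Your Step 1 then applies verbatim with fixed cutoffs equal to $1$ on $[0,1]$. This gives $T$-independent constants for every $r < \frac{1}{2}$, since the high frequencies only use $\int_{\mathbb{R}} \langle \omega \rangle^{2r-2} \dd \omega < \infty$. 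The bound $\vert \widehat{\mathbbm{1}_{(0,T)}}(\omega) \vert \leq \min(T, 2/\vert \omega \vert)$ then controls $\Vert \mathbbm{1}_{(0,T)} \Vert_{H^r}$ uniformly.

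Your initial rescaled choice $\theta_T := T^{-1}\theta(\cdot/T)$ with $\theta \geq 0$ would also have worked. Indeed $\vert \partial_\xi \widehat{\theta_T} \vert \leq T \Vert t\theta \Vert_{L^1}$, $\vert \widehat{\theta_T} \vert \leq 1$, and $\mathbbm{1}_{(0,T)}\Theta_T$ obeys the same $\min(T, 2/\vert \omega \vert)$ bound.

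This repair is essentially the paper's proof. The paper computes the transform of the zero extension directly from $i\omega \widehat{u_1}(\omega) = \widehat{u}(\omega) - u_1(T) e^{-i\omega T}$ and splits at $\vert \omega \vert = 1$. It then bounds $\vert u_1(T) \vert$ and $\sup_{\vert \omega \vert \leq 1} \vert \widehat{u_1}(\omega) \vert$ by duality against fixed test functions. The jump at $T$ is built into that formula, so no truncation ever has to be estimated.
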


\begin{proof}
    We use the symbol $\lesssim$ for constants that may depend only on $r$. We still write $u$ and $u_1$ for their extension by zero outside $(0,T)$. For $\omega \in \mathbb{R}$, by definition of $u_1$, one has 
    \begin{equation}
        i \omega \widehat{u_1}(\omega) 
        = \int_0^T u(s) \left( e^{- i \omega s} - e^{- i \omega T} \right) \dd s 
        = \widehat{u}(\omega) - u_1(T) e^{- i \omega T}.
    \end{equation}
    Since $r < \frac{1}{2}$, this gives
    \begin{equation}
        \begin{split}
            \Vert u_1 \Vert_{\widetilde{H}^r(0,T)}^2
            & = \int_{\vert \omega \vert \leq 1} \left( 1 + \omega^2 \right)^r \left\vert \widehat{u_1}(\omega) \right\vert^2 \dd \omega
                + \int_{\vert \omega \vert > 1} \left( 1 + \omega^2 \right)^r \left\vert \widehat{u_1}(\omega) \right\vert^2 \dd \omega \\
            & \lesssim \sup_{\vert \omega \vert \leq 1} \left\vert \widehat{u_1}(\omega) \right\vert^2
                + 2 \int_{\vert \omega \vert > 1} \left( 1 + \omega^2 \right)^{r-1} \omega^2 \left\vert \widehat{u_1}(\omega) \right\vert^2 \dd \omega \\
            & \lesssim \sup_{\vert \omega \vert \leq 1} \left\vert \widehat{u_1}(\omega) \right\vert^2 + \left\vert u_1(T) \right\vert^2
                + \Vert u \Vert_{\widetilde{H}^{r-1}(0,T)}^2.
        \end{split}
    \end{equation}
    We prove below that
    \begin{equation}\label{eq:proof:lem:appendix:weak_norm_primitive_new_2}
        \left\vert u_1(T) \right\vert \leq C_T  \Vert u \Vert_{\widetilde{H}^{r-1}(0,T)},
    \end{equation}
    and
    \begin{equation}\label{eq:proof:lem:appendix:weak_norm_primitive_new_3}
        \sup_{\vert \omega \vert \leq 1} \left\vert \widehat{u_1}(\omega) \right\vert \leq C_T  \Vert u \Vert_{\widetilde{H}^{r-1}(0,T)},
    \end{equation}
    for some $C_T > 0$ bounded uniformly for $T \in (0,1)$, which yield \eqref{eq:lem:appendix:weak_norm_primitive_new}.

    \underline{Proof of \eqref{eq:proof:lem:appendix:weak_norm_primitive_new_2}.}
        Assume that $T \in (0,1)$. Let $\chi \in C^\infty_\mathrm{c}(-2, 2)$ be such that $\chi = 1$ on $[0,1]$ (and thus on $[0, T]$). Since $u$ is supported in $[0, T]$, one has
        \begin{equation}
            \left\vert u_1(T) \right\vert
            = \left\vert \int_{\mathbb{R}} u(t) \chi(t) \dd t \right\vert 
            \lesssim \Vert u \Vert_{\widetilde{H}^{r-1}(0,T)} \left\Vert \chi \right\Vert_{H^{1-r}(\mathbb{R})} 
            \lesssim \Vert u \Vert_{\widetilde{H}^{r-1}(0,T)},
        \end{equation}
        by duality. A similar argument gives \eqref{eq:proof:lem:appendix:weak_norm_primitive_new_2} for $T \geq 1$, with a constant depending on $T$.
    
    \underline{Proof of \eqref{eq:proof:lem:appendix:weak_norm_primitive_new_3}.}
        Let $\omega \in [-1,1]$, $\omega \neq 0$. Let $\chi$ be as above. Since $u$ is supported in $[0, T]$, one has
        \begin{equation}
            \left\vert \widehat{u_1}(\omega) \right\vert
            = \left\vert \int_{\mathbb{R}} u(t) \frac{1 - e^{i \omega (T-t)}}{\omega} \chi(t) \dd t \right\vert 
            \lesssim \Vert u \Vert_{\widetilde{H}^{r-1}(0,T)} \left\Vert t \longmapsto \frac{1 - e^{i \omega (T-t)}}{\omega} \chi(t) \right\Vert_{H^{1-r}(\mathbb{R})},
        \end{equation}
        by duality. Set $f_\omega(t) := \frac{1 - e^{i \omega t}}{\omega}$. Then $\sup_{\vert t \vert < 3} \left\vert f_\omega(t) \right\vert \lesssim 1$, and for $n \geq 1$, $\sup_{\vert t \vert < 3} \left\vert f_\omega^{(n)}(t) \right\vert \lesssim \vert \omega \vert^{n-1} \leq 1$.  In particular, one has
        \begin{equation}
            \left\Vert t \longmapsto \frac{1 - e^{i \omega (T-t)}}{\omega} \chi(t) \right\Vert_{H^{1-r}(\mathbb{R})} \lesssim 1,
        \end{equation}
        implying $\left\vert \widehat{u_1}(\omega) \right\vert \lesssim  \Vert u \Vert_{\widetilde{H}^{r-1}(0,T)}$. By continuity of $\widehat{u_1}$, this also holds at $\omega=0$. A similar argument gives \eqref{eq:proof:lem:appendix:weak_norm_primitive_new_3} for $T \geq 1$, with a constant depending on $T$.
\end{proof}

Second, the following result will be used to prove the small-time drift estimates; it shows that a norm weaker than the one appearing in the drift estimate can be absorbed.

\begin{lemma}\label{lem:appendix:weak_norm_power_T_new}
    Let $r \in \left[ \frac{1}{2}, \frac{3}{2} \right)$, and let $\nu > 0$. 
    There exist $C, \nu_0 > 0$ such that for all $T \in \left(0,1\right)$ and all $u \in L^2(0, T)$, one has 
    \begin{equation}\label{eq:lem:appendix:weak_norm_power_T_new}
        \Vert u \Vert_{\widetilde{H}^{-r - \nu}(0,T)} \leq C T^{\nu_0} \Vert u \Vert_{\widetilde{H}^{-r}(0,T)} + C \left\vert u_1(T) \right\vert.
    \end{equation}
\end{lemma}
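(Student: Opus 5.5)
The plan is to work on the Fourier side and split frequencies at a threshold $\Omega = T^{-\gamma}$, for some $\gamma > 0$ chosen at the end. We extend $u$ by zero outside $(0,T)$, so that
\begin{equation*}
\Vert u \Vert_{\widetilde{H}^{-r-\nu}}^2 = \int_{\mathbb{R}} \langle \omega \rangle^{-2r-2\nu} \vert \widehat{u}(\omega) \vert^2 \dd \omega .
\end{equation*}

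\emph{High frequencies} ($\vert \omega \vert \geq \Omega$). Here we use the trivial bound $\langle \omega \rangle^{-2\nu} \leq \Omega^{-2\nu} = T^{2\gamma\nu}$. This contribution is therefore at most $T^{2\gamma \nu} \Vert u \Vert_{\widetilde{H}^{-r}}^2$.

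\emph{Low frequencies} ($\vert \omega \vert \leq \Omega$). We write
\begin{equation*}
\widehat{u}(\omega) = \widehat{u}(0) + \bigl(\widehat{u}(\omega) - \widehat{u}(0)\bigr), \qquad \widehat{u}(0) = u_1(T).
\end{equation*}
\begin{enumerate}[label=(\alph*)]
\item The constant part contributes at most $C \vert u_1(T) \vert^2$, since $\langle\omega\rangle^{-2r-2\nu}$ is integrable because $r \geq \frac{1}{2}$.
\item For the difference, we use
\begin{equation*}
\widehat{u}(\omega) - \widehat{u}(0) = \int_0^T u(t)\bigl(e^{-i\omega t} - 1\bigr) \dd t = \langle u, \chi \, g_\omega \rangle,
\end{equation*}
where $g_\omega(t) = e^{-i\omega t} - 1$ and $\chi$ is a cutoff equal to $1$ on $[0,T]$ and supported in $[-T, 2T]$. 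By duality this is bounded by $\Vert u \Vert_{\widetilde{H}^{-r}} \Vert \chi g_\omega \Vert_{H^{r}}$.
\end{enumerate}

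The key estimate is a bound of the form
\begin{equation*}
\Vert \chi g_\omega \Vert_{H^r} \lesssim \vert \omega \vert \, T^{a}
\end{equation*}
for some $a > 0$ when $r < \frac{3}{2}$. It relies on $\vert g_\omega(t) \vert \leq \vert \omega t \vert$ and $\vert g_\omega'(t) \vert \leq \vert \omega \vert$ on a set of size $T$. We take $\chi(t) = \theta(t/T)$ and interpolate between the $L^2$ norm (size $\vert\omega\vert T^{3/2}$) and the $H^2$ norm (size $\lesssim \vert\omega\vert T^{-1/2}$ for $\vert \omega \vert T \leq 1$). This gives $\Vert \chi g_\omega\Vert_{H^r} \lesssim \vert\omega\vert T^{3/2 - r}$, and the exponent $a = \frac{3}{2} - r$ is positive exactly because $r < \frac{3}{2}$.

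This makes the low-frequency contribution of the difference at most
\begin{equation*}
\Vert u \Vert_{\widetilde{H}^{-r}}^2 \, T^{3-2r} \int_{\vert\omega\vert \leq \Omega} \omega^2 \langle\omega\rangle^{-2r-2\nu} \dd\omega \;\lesssim\; \Vert u \Vert_{\widetilde{H}^{-r}}^2 \, T^{3-2r} \, \Omega^{3-2r},
\end{equation*}
which requires $\Omega T \leq 1$ (so $\gamma \leq 1$). Choosing $\gamma$ small, say $\gamma = \frac{1}{2}$, gives a gain of $T^{(3-2r)(1-\gamma)}$.

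We then set $\nu_0 = \min\bigl(\gamma\nu, (3-2r)(1-\gamma)/2\bigr)$ and take square roots to conclude.

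The main obstacle is obtaining the fractional $H^r$ bound on the rescaled, localized $\chi g_\omega$ uniformly in $\omega$, for $\vert \omega \vert \leq T^{-\gamma}$ and $T<1$. The plan is to handle it by scaling combined with interpolation between integer Sobolev norms, estimating each integer norm by hand.
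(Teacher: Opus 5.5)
Your proof is correct and follows essentially the paper's route. High frequencies are absorbed by the factor $\langle\omega\rangle^{-2\nu}$. Low frequencies are handled by comparing $\widehat{u}(\omega)$ with $\widehat{u}(0)=u_1(T)$ and bounding the difference by duality against a rescaled test function whose $H^r$ norm is $\lesssim |\omega|\,T^{\frac{3}{2}-r}$, obtained by interpolating between $L^2$ and $H^2$. The paper uses the mean value theorem with the test function $t e^{-i\xi t}\chi(t/T)$ instead of your $\chi\,(e^{-i\omega t}-1)$, which is equivalent.

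The one genuine variation is that you cut at $T^{-1/2}$ rather than at $T^{-1}$. This lets you discard $\nu$ in the low-frequency integral and avoids the paper's case distinction ($r+\nu$ above, equal to, or below $\frac{3}{2}$, including a logarithmic case), at the price of a smaller $\nu_0$.

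One small correction: integrability of $\langle\omega\rangle^{-2r-2\nu}$ uses $\nu>0$ when $r=\frac{1}{2}$; it does not follow from $r\geq\frac{1}{2}$ alone.
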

 
\begin{proof}
    In this proof, the symbol $\lesssim$ is used for constants that may depend on $r$ and $\nu$. By definition, one has 
    \begin{equation}
        \Vert u \Vert_{\widetilde{H}^{-r - \nu}(0,T)}^2 = \int_{\vert \omega \vert < \frac{1}{T}} \frac{\left\vert \widehat{u}(\omega) \right\vert^2}{(1 + \omega^2)^{r+\nu}} \dd \omega +  \int_{\vert \omega \vert \geq \frac{1}{T}} \frac{\left\vert \widehat{u}(\omega) \right\vert^2}{(1 + \omega^2)^{r+\nu}} \dd \omega.
    \end{equation}
    The high-frequency term satisfies
    \begin{equation}
        \int_{\vert \omega \vert \geq \frac{1}{T}} \frac{\left\vert \widehat{u}(\omega) \right\vert^2}{(1 + \omega^2)^{r+\nu}} \dd \omega 
        \leq T^{2 \nu} \int_{\vert \omega \vert \geq \frac{1}{T}} \frac{\left\vert \widehat{u}(\omega) \right\vert^2}{(1 + \omega^2)^{r}} \dd \omega
        \leq T^{2 \nu} \Vert u \Vert_{\widetilde{H}^{-r}(0,T)}^2.
    \end{equation}
    Hence, it suffices to estimate the low-frequency term. For $\omega \in \left( - \frac{1}{T}, \frac{1}{T} \right)$, one has
    \begin{equation}
        \left\vert \widehat{u}(\omega) \right\vert 
        \leq \left\vert \int_0^\omega \left( \widehat{u} \right)^\prime (\xi) \dd \xi \right\vert + \left\vert \widehat{u}(0) \right\vert 
        \leq \vert \omega \vert \left( \sup_{\vert \xi \vert < \frac{1}{T}} \left\vert \left( \widehat{u} \right)^\prime (\xi) \right\vert \right) + \left\vert u_1(T) \right\vert,
    \end{equation}
    yielding
    \begin{equation}
        \int_{\vert \omega \vert < \frac{1}{T}} \frac{\left\vert \widehat{u}(\omega) \right\vert^2}{(1 + \omega^2)^{r+\nu}} \dd \omega
        \lesssim \left( \sup_{\vert \xi \vert < \frac{1}{T}} \left\vert \left( \widehat{u} \right)^\prime (\xi) \right\vert^2 \right) \int_{\vert \omega \vert < \frac{1}{T}} \frac{\left\vert \omega \right\vert^2}{(1 + \omega^2)^{r+\nu}} \dd \omega + \left\vert u_1(T) \right\vert^2,
    \end{equation}
    since $r + \nu > \frac{1}{2}$. We prove below that 
    \begin{equation}\label{eq:proof:lem:appendix:weak_norm_power_T_new_1}
        I := \sup_{\vert \xi \vert < \frac{1}{T}} \left\vert \left( \widehat{u} \right)^\prime (\xi) \right\vert \lesssim T^{\frac{3}{2} - r} \Vert u \Vert_{\widetilde{H}^{-r}(0,T)},
    \end{equation}
    which is sufficient to conclude. Indeed, if $r + \nu > \frac{3}{2}$, then 
    \begin{equation}
        J := \int_{\vert \omega \vert < \frac{1}{T}} \frac{\left\vert \omega \right\vert^2}{(1 + \omega^2)^{r+\nu}} \dd \omega \lesssim \int_{\mathbb{R}} \frac{\left\vert \omega \right\vert^2}{(1 + \omega^2)^{r+\nu}} \dd \omega \lesssim 1,
    \end{equation}
    if $r + \nu = \frac{3}{2}$, then $J \lesssim 1-\ln(T) $, and if $r + \nu < \frac{3}{2}$, then 
    \begin{equation}
        J \lesssim \int_{\vert \omega \vert < \frac{1}{T}} \left\vert \omega \right\vert^{2-2r-2\nu} \dd \omega \lesssim T^{-3+2r+2\nu}.
    \end{equation}
    Hence, in any case, one has 
    \begin{equation}
        I^2 J \lesssim T^{3 - 2r} \Vert u \Vert_{\widetilde{H}^{-r}(0,T)}^2 J \lesssim T^{2\nu_0} \Vert u \Vert_{\widetilde{H}^{-r}(0,T)}^2,
    \end{equation}
    for some $\nu_0 > 0$.
    
    It remains to prove \eqref{eq:proof:lem:appendix:weak_norm_power_T_new_1}. Let $\chi \in C^\infty_\mathrm{c}(-1, 2)$ be such that $\chi = 1$ on $[0,1]$. We still write $u$ for its extension by zero outside $[0,T]$. Let $\vert \xi \vert < \frac{1}{T}$. By duality, one has
    \begin{equation}
        \left\vert \left( \widehat{u} \right)^\prime (\xi) \right\vert 
        = \left\vert \int_{\mathbb{R}} u(t) t e^{-i \xi t} \chi\left(\frac{t}{T}\right) \dd t \right\vert
        \lesssim \Vert u \Vert_{\widetilde{H}^{-r}(0,T)} \Vert \varphi_\xi \Vert_{H^{r}(\mathbb{R})},
    \end{equation}
    where $\varphi_\xi(t) := t e^{-i \xi t} \chi\left(\frac{t}{T}\right)$. One has
    \begin{equation}
        \Vert \varphi_\xi \Vert_{L^2(\mathbb{R})}^2 = \int_{\mathbb{R}} t^2 \chi\left(\frac{t}{T}\right)^2 \dd t \lesssim T^3,
    \end{equation}
    \begin{equation}
        \Vert \varphi_\xi^\prime \Vert_{L^2(\mathbb{R})}^2 
        \lesssim \int_{\mathbb{R}} \left( \chi\left(\frac{t}{T}\right)^2 + \frac{t^2}{T^2} \chi^\prime\left(\frac{t}{T}\right)^2 + \xi^2 t^2 \chi\left(\frac{t}{T}\right)^2 \right)\dd t \lesssim T,
    \end{equation}
    and similarly, $ \Vert \varphi_\xi^{\prime \prime} \Vert_{L^2(\mathbb{R})}^2 \lesssim \frac{1}{T}$. By interpolation, this yields $\Vert \varphi_\xi \Vert_{H^{r}(\mathbb{R})} \lesssim T^{\frac{3}{2} - r}$, completing the proof of \eqref{eq:proof:lem:appendix:weak_norm_power_T_new_1}.
\end{proof}
 
Third, the following result will be useful when the kernel $K_{\mu, k}$ decays faster than expected from the regularity of $\mu$: in that case, one needs to convert the norm given by the remainder estimates into the norm adapted to the decay of $K_{\mu, k}$.

\begin{lemma}\label{lem:remainder_K_decays_too_fast}
    Let $a, b \in \mathbb{R}$ with $a < b$, and let $\delta \geq 0$. Then, there exists $C > 0$ such that for all $f \in H^{b+2\delta}(\mathbb{R})$, one has
    \begin{equation}\label{eq:lem:remainder_K_decays_too_fast_1}
        \left\Vert f \right\Vert_{H^a(\mathbb{R})}^2 \left\Vert f \right\Vert_{H^b(\mathbb{R})} 
        \leq C \left\Vert f \right\Vert_{H^{a-\delta}(\mathbb{R})}^2 \left\Vert f \right\Vert_{H^{b+2\delta}(\mathbb{R})}.
    \end{equation}
    In particular, for $s \in [-1, 1]$, $\sigma > -1 - \frac{s}{2}$ and $s_0 \geq s$, with $\sigma + s_0 - s < \frac{1}{2}$ there exists $C > 0$ such that for all $T>0$ and $u \in \widetilde{H}^{\sigma + s_0 - s}(0,T)$, one has
    \begin{equation}\label{eq:lem:remainder_K_decays_too_fast_2}
        \left\Vert u \right\Vert_{\widetilde{H}^{-1 - \frac{s}{2}}(0, T)}^2 \left\Vert u \right\Vert_{\widetilde{H}^{\sigma}(0, T)}
        \leq C \left\Vert u \right\Vert_{\widetilde{H}^{-1 - \frac{s_0}{2}}(0, T)}^2 \left\Vert u \right\Vert_{\widetilde{H}^{\sigma + s_0 - s}(0, T)}.
    \end{equation}
\end{lemma}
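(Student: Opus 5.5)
The first inequality \eqref{eq:lem:remainder_K_decays_too_fast_1} is a pure interpolation statement on the Fourier side, and I will derive it from log-convexity of Sobolev norms. Write $N_\theta := \Vert f \Vert_{H^\theta(\mathbb{R})}$. By Hölder's inequality applied to $\int (1+\omega^2)^\theta \vert \widehat{f} \vert^2 \dd \omega$, the map $\theta \mapsto \ln N_\theta^2$ is convex. Consequently, for $\theta_0 < \theta_1 < \theta_2$ one has $N_{\theta_1} \leq N_{\theta_0}^{1-t} N_{\theta_2}^{t}$ with $\theta_1 = (1-t)\theta_0 + t \theta_2$.

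If $\delta = 0$ there is nothing to prove. If $\delta > 0$, I apply this twice, with endpoints $a - \delta$ and $b + 2\delta$:
\begin{equation*}
    N_a \leq N_{a-\delta}^{1-t_a} N_{b+2\delta}^{t_a}, \qquad N_b \leq N_{a-\delta}^{1-t_b} N_{b+2\delta}^{t_b},
\end{equation*}
where $t_a = \frac{\delta}{b-a+3\delta}$ and $t_b = \frac{b-a+\delta}{b-a+3\delta}$. Multiplying, I get $N_a^2 N_b \leq N_{a-\delta}^{E_1} N_{b+2\delta}^{E_2}$, with total exponent $E_1 + E_2 = 3$ and
\begin{equation*}
    E_2 = 2 t_a + t_b = \frac{b-a+3\delta}{b-a+3\delta} = 1.
\end{equation*}
Hence $E_1 = 2$, which is exactly \eqref{eq:lem:remainder_K_decays_too_fast_1} with $C = 1$. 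The key point to check is this exponent bookkeeping: the choice of $2\delta$ on the right is what makes $E_2$ equal exactly $1$.

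For \eqref{eq:lem:remainder_K_decays_too_fast_2}, I extend $u$ by zero, so that each $\widetilde{H}^\theta(0,T)$ norm is the $H^\theta(\mathbb{R})$ norm of $f = \mathbbm{1}_{(0,T)} u$. I then apply the first part with $a = -1-\frac{s}{2}$, $b = \sigma$ and $\delta = \frac{s_0 - s}{2} \geq 0$. The hypothesis $\sigma > -1 - \frac{s}{2}$ gives $a < b$. The left endpoint becomes $a - \delta = -1 - \frac{s_0}{2}$ and the right endpoint becomes $b + 2\delta = \sigma + s_0 - s$, which are precisely the norms in \eqref{eq:lem:remainder_K_decays_too_fast_2}. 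The constant is independent of $T$, since we worked on $\mathbb{R}$ with no cutoff. The assumption $\sigma + s_0 - s < \frac{1}{2}$ only guarantees that $\widetilde{H}^{\sigma+s_0-s}(0,T)$ is defined as in Definition \ref{def:widetilde_Hs}, and that $f \in H^{b+2\delta}(\mathbb{R})$ for $u$ in that space.

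There is no real obstacle here; the only care needed is the exponent computation above.
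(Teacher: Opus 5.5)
Your proof is correct and follows the paper's argument: the same two interpolation (Hölder) inequalities between the endpoints $a-\delta$ and $b+2\delta$, the same exponent bookkeeping showing that the total exponent on $\Vert f\Vert_{H^{b+2\delta}}$ is exactly $1$, and then the substitution $\delta = \frac{s_0-s}{2}$, $f = \mathbbm{1}_{(0,T)} u$.
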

 
\begin{proof}
    In this proof, we write $\left\Vert f \right\Vert_{r} := \left\Vert f \right\Vert_{H^r(\mathbb{R})}$, for $r \in \mathbb{R}$. We can assume that $\delta > 0$. We use the standard consequence of the Hölder inequality
    \begin{equation}
        \left\Vert f \right\Vert_{r} \leq \left\Vert f \right\Vert_p^{\frac{q-r}{q-p}} \left\Vert f \right\Vert_{q}^{\frac{r-p}{q-p}},
    \end{equation}
    which is valid for all $p < r < q$. It gives
    \begin{equation}
        \left\Vert f \right\Vert_{a}^2 \left\Vert f \right\Vert_{b} 
        \leq \left\Vert f \right\Vert_{a-\delta}^{\frac{2\left( b-a+2\delta \right)}{b-a+3\delta}} \left\Vert f \right\Vert_{b+2\delta}^{\frac{2\delta}{b-a+3\delta}}
                \left\Vert f \right\Vert_{a-\delta}^{\frac{2\delta}{b-a+3\delta}} \left\Vert f \right\Vert_{b+2\delta}^{\frac{b-a+\delta}{b-a+3\delta}}
        = \left\Vert f \right\Vert_{a-\delta}^2 \left\Vert f \right\Vert_{b+2\delta}.
    \end{equation}
     Finally, \eqref{eq:lem:remainder_K_decays_too_fast_2} follows from \eqref{eq:lem:remainder_K_decays_too_fast_1}, applied with $\delta = \frac{s_0 - s}{2}$ and $f = \mathbbm{1}_{[0,T]} u$.
\end{proof}
 
\subsection{Removing exponential factors}

In this section, we provide some results which will be used to estimate quantities of the form $ \Vert u e^{\lambda \cdot} \Vert_{\widetilde{H}^{-\sigma}}$ and $\int_0^T u(t) e^{\lambda t} \dd t$. For small $T > 0$, we will use the following lemma.

\begin{lemma}\label{lem:appendix:removing_exp_from_sobolev_norm_small_time}
    Let $s \in [-1, 1)$ and $\lambda \in \mathbb{R}$. Set $\rho(t) = e^{\lambda t}$. There exist $C > 0$ and $\nu > 0$ such that for all $T \in (0, 1]$ and all $u \in L^2(0,T)$, one has
    \begin{equation}\label{eq:lem:appendix:sobolev_norm_rho_small_time}
        \left\vert \int_0^T \left( \rho(t) - 1 \right) u(t) \dd t \right\vert 
        + \left\vert \Vert u \rho \Vert_{\widetilde{H}^{-1 - \frac{s}{2}}(0,T)} - \Vert u \Vert_{\widetilde{H}^{-1 - \frac{s}{2}}(0,T)} \right\vert 
        \leq C T^{\nu} \Vert u \Vert_{\widetilde{H}^{-1 - \frac{s}{2}}(0,T)}.
    \end{equation}
\end{lemma}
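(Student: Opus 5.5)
The plan is to prove both estimates by duality, pairing $u$ (extended by zero outside $(0,T)$) against smooth test functions. Write $r := 1 + \frac{s}{2} \in [\frac12, \frac32)$, so that the relevant norm is $\widetilde{H}^{-r}(0,T)$.

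\textbf{The integral term.} Fix a cutoff $\chi \in C^\infty_\mathrm{c}(-1,2)$ with $\chi = 1$ on $[0,1]$, as in the proof of Lemma \ref{lem:appendix:weak_norm_power_T_new}. Since $u$ is supported in $[0,T]$, we have
\begin{equation*}
\int_0^T (\rho(t)-1)u(t)\dd t = \int_{\mathbb{R}} u(t)\,\psi_T(t)\dd t, \qquad \psi_T(t) := (e^{\lambda t}-1)\chi(t/T).
\end{equation*}
Duality then gives the bound $\lesssim \Vert u\Vert_{\widetilde{H}^{-r}}\Vert \psi_T\Vert_{H^r(\mathbb{R})}$. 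Because $|e^{\lambda t}-1| \lesssim |t|$ on the support of $\psi_T$, the same scaling computation as in Lemma \ref{lem:appendix:weak_norm_power_T_new} applies:
\begin{equation*}
\Vert\psi_T\Vert_{L^2}^2 \lesssim T^3, \qquad \Vert\psi_T'\Vert_{L^2}^2 \lesssim T, \qquad \Vert\psi_T''\Vert_{L^2}^2 \lesssim T^{-1}.
\end{equation*}
Interpolation yields $\Vert\psi_T\Vert_{H^r}\lesssim T^{\frac32-r}$. Since $r<\frac32$, this gives the first term with $\nu = \frac32 - r$.

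\textbf{The norm difference.} By the reverse triangle inequality it suffices to bound $\Vert (\rho-1)u\Vert_{\widetilde{H}^{-r}}$. Since $\chi(\cdot/T)=1$ on the support of $u$, we have $(\rho-1)u = \psi_T u$. The key claim is that multiplication by $\psi_T$ maps $H^{-r}$ to itself with norm $\lesssim T^{\nu}$. By duality this is the same as the multiplier norm of $\psi_T$ on $H^{r}(\mathbb{R})$.

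To estimate that multiplier norm, I use $\Vert \psi f\Vert_{H^r}\lesssim \Vert\psi\Vert_{L^\infty}\Vert f\Vert_{H^r} + \Vert \psi\Vert_{W^{1,\infty}}\Vert f\Vert_{H^{r-1}}$, which holds for $r\in[0,2]$ by interpolating the cases $r=0,1,2$. This alone is not good enough. Writing $\psi_T(t) = \lambda t\,\chi(t/T)\,g(t)$ with $g$ smooth, we get $\Vert\psi_T\Vert_{L^\infty}\lesssim T$, but $\Vert\psi_T'\Vert_{L^\infty}\lesssim 1$, which carries no power of $T$.

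\textbf{Main obstacle and how to handle it.} The difficulty is extracting a positive power of $T$ when $r\ge 1$, given that $\psi_T'$ is only $O(1)$.

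The first thing I will try is to split $\widehat u$ into frequencies $|\omega|\ge T^{-1}$ and $|\omega|<T^{-1}$, as in Lemma \ref{lem:appendix:weak_norm_power_T_new}.
\begin{itemize}
\item The low-frequency part is controlled through $\sup_{|\xi|<1/T}|\widehat{\psi_T u}(\xi)|$. This supremum is again a duality pairing of $u$ against $\psi_T e^{-i\xi t}$, whose $H^r$ norm is $\lesssim T^{\frac32-r}$ by the same scaling as above. Integrating $(1+\omega^2)^{-r}$ over $|\omega|<1/T$ costs at most $T^{-(1-2r)_+}$ up to a logarithm; since $r \ge \frac12$ this costs at most a logarithm, so this part is acceptable.
\item The high-frequency part is handled by the commutator structure: $\psi_T$ is essentially $\lambda t$ times a bump at scale $T$. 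Multiplication by $t$ acts as a derivative in frequency, and at frequencies $\gtrsim 1/T$ it gains a factor $T$.
\end{itemize}

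If this splitting becomes messy, the fallback is a direct estimate. For $r\in[\frac12,1]$, interpolating the $L^\infty$ bound $T$ against the $W^{1,\infty}$ bound $1$ gives a multiplier norm on $H^r$ of order $T^{1-r}$, which suffices when $r<1$. The range $r\in[1,\frac32)$ is then handled through the low/high frequency splitting described above.
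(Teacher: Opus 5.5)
Your treatment of the integral term is the same as the paper's: duality against $\psi_T=(\rho-1)\chi(\cdot/T)$, the scaling bounds $T^3,T,T^{-1}$ for $\|\psi_T\|_{L^2}^2,\|\psi_T'\|_{L^2}^2,\|\psi_T''\|_{L^2}^2$, then interpolation. Your fallback for the norm difference when $r=1+\frac{s}{2}\in[\frac12,1)$ is also valid: interpolating the multiplier norms $\lesssim T$ on $L^2$ and $\lesssim 1$ on $H^1$ gives $T^{1-r}$. The gap is the range $r\in[1,\frac32)$, i.e.\ $s\in[0,1)$. There the high-frequency part is the step you yourself single out as the main obstacle, and you only describe it (``multiplication by $t$ acts as a derivative in frequency and gains $T$''); you never estimate it. As written, the norm difference is therefore not proved for $s\ge 0$.

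The missing idea is an $L^2$-based multiplier bound. You measure $\psi_T$ as a multiplier through $L^\infty$-based norms, which cannot see that $\psi_T$ lives on an interval of length $\sim T$; that is exactly why $\|\psi_T'\|_{L^\infty}\sim 1$ destroys the power of $T$. The paper instead uses the endpoint case $a+b=0$ of Lemma \ref{lem:product_sobolev_classical}, which is dual to the algebra property of $H^{r}(\mathbb{R})$ for $r>\frac12$: it gives $\|\psi_T u\|_{H^{-r}(\mathbb{R})}\lesssim\|u\|_{\widetilde{H}^{-r}}\|\psi_T\|_{H^{r}(\mathbb{R})}$. The factor $\|\psi_T\|_{H^r}\lesssim T^{\frac32-r}$ is precisely the bound you already proved for the integral term. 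This handles all $s\in(-1,1)$ at once with no frequency splitting. The case $s=-1$ follows from the non-endpoint case with $b=1$, which gives $T^{1/2}$.

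If you prefer to keep your splitting, the high-frequency step can be made rigorous as follows.
\begin{itemize}
\item Write $\widehat{\psi_T u}=\frac{1}{2\pi}\widehat{\psi_T}*\widehat{u}$ with $\widehat{\psi_T}(\xi)=\lambda T^2\widehat{\eta_T}(T\xi)$. Here $\eta_T(x)=x\chi(x)g(\lambda T x)$ and $g(y)=\frac{e^y-1}{y}$, so $\eta_T$ is smooth, supported in $(-1,2)$, with derivatives bounded uniformly in $T\in(0,1]$.
\item For $|\omega|\ge\frac1T$ one has $\langle\omega'\rangle\le(\sqrt{2}+T|\omega-\omega'|)\langle\omega\rangle$.
\item Young's inequality then bounds the high-frequency part by $\|\widehat{\psi_T}\,(\sqrt{2}+T|\cdot|)^r\|_{L^1}\|u\|_{\widetilde{H}^{-r}}\lesssim T\|u\|_{\widetilde{H}^{-r}}$.
\end{itemize}
One of these two arguments has to be written out for the proof to be complete.
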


\begin{proof}
    In this proof, $\lesssim$ is used for constants that may depend on $\lambda$ and $s$. Let $T \in (0, 1]$ and $u \in L^2(0,T)$. Let $\psi \in C^\infty_\mathrm{c}((-2, 2); [0, 1])$ be such that $\psi = 1$ on $[0, 1]$. Then $\chi := (\rho - 1) \psi\left( \frac{\cdot}{T} \right)$ is an extension of $(\rho - 1) \mathbbm{1}_{[0, T]}$. On the one hand, Lemma \ref{lem:product_sobolev_classical} gives
    \begin{equation}
        \left\vert \Vert u \rho \Vert_{\widetilde{H}^{-1 - \frac{s}{2}}} - \Vert u \Vert_{\widetilde{H}^{-1 - \frac{s}{2}}} \right\vert 
        \leq \left\Vert (\rho - 1) u \right\Vert_{\widetilde{H}^{-1 - \frac{s}{2}}}
        \lesssim \Vert u \Vert_{\widetilde{H}^{-1 - \frac{s}{2}}} \Vert \chi \Vert_{H^{\theta_s}(\mathbb{R})},
    \end{equation}
    where $\theta_s := 1 + \frac{s}{2}$ if $s \in (-1,1)$, and (for instance) $\theta_s := 1$ if $s = -1$. On the other hand, by duality, one has
    \begin{equation}
        \left\vert \int_0^T \left( \rho(t) - 1 \right) u(t) \dd t \right\vert 
        = \left\vert \int_{\mathbb{R}} \left( \mathbbm{1}_{[0,T]}(t) u(t) \right) \chi(t) \dd t \right\vert 
        \lesssim \Vert u \Vert_{\widetilde{H}^{-1 - \frac{s}{2}}} \Vert \chi \Vert_{H^{1 + \frac{s}{2}}(\mathbb{R})}.
    \end{equation}
    We prove below that     
    \begin{equation}\label{eq:proof:lem:appendix:sobolev_norm_rho_small_time_2}
        \left\Vert \chi  \right\Vert_{H^{1 + \frac{s}{2}}(\mathbb{R})}^2 \lesssim T^{1-s},
    \end{equation}
    which yields \eqref{eq:lem:appendix:sobolev_norm_rho_small_time}.
    
    To prove \eqref{eq:proof:lem:appendix:sobolev_norm_rho_small_time_2}, we use the elementary estimate $\vert e^x - 1 \vert \leq \vert x \vert e^{\vert x \vert}$, which gives
    \begin{equation}
        \left\Vert \chi \right\Vert_{L^2(\mathbb{R})}^2 = \int_{-2T}^{2T} \left( e^{\lambda t} - 1 \right)^2 \psi\left( \frac{t}{T} \right)^2 \dd t 
        \lesssim \int_{-2T}^{2T} t^2 \dd t \lesssim T^3,
    \end{equation}
    \begin{equation}\label{eq:proof:lem:appendix:sobolev_norm_rho_small_time}
        \begin{split}
            \left\Vert \chi \right\Vert_{H^1(\mathbb{R})}^2
            & \lesssim T^3 + \int_{-2T}^{2T} \left( e^{\lambda t} - 1 \right)^2 \frac{1}{T^2} \psi^\prime\left( \frac{t}{T} \right)^2 \dd t + \int_{-2T}^{2T} e^{2 \lambda t} \psi\left( \frac{t}{T} \right)^2 \dd t \\
            & \lesssim T^3 + \frac{1}{T^2} \int_{-2T}^{2T} t^2 \dd t + T \int_{-2}^{2} \psi(t)^2 \dd t 
            \lesssim T,
        \end{split}
    \end{equation}
    and similarly $\left\Vert \chi \right\Vert_{H^2(\mathbb{R})}^2 \lesssim \frac{1}{T}$. By interpolation, one obtains \eqref{eq:proof:lem:appendix:sobolev_norm_rho_small_time_2}.
\end{proof}

For large $T > 0$, we will use the following two lemmas.

\begin{lemma}\label{lem:appendix:removing_exp_from_sobolev_norm_large_time}
    Let $\rho \in C^\infty([0, T])$, $\sigma \in \left[ \frac{1}{2}, \frac{3}{2} \right]$, and $T > 0$. There exists $C_T > 0$, which may depend on $\rho$, $\sigma$ and $T$, such that $\left\Vert u \rho \right\Vert_{\widetilde{H}^{-\sigma}} \leq C_T \left\Vert u \right\Vert_{\widetilde{H}^{-\sigma}}$.
\end{lemma}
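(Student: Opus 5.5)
We argue by duality, using only that multiplication by a smooth function is bounded on $H^{\sigma}$ of a bounded interval. Write $\bar u := \mathbbm{1}_{(0,T)} u$, extended by zero. By definition,
\begin{equation}
\Vert u\rho \Vert_{\widetilde{H}^{-\sigma}(0,T)} = \sup_{\varphi \in H^{\sigma}(\mathbb{R}),\ \Vert \varphi \Vert_{H^\sigma} \leq 1} \left\vert \int_{\mathbb{R}} \bar u(t) \rho(t) \varphi(t) \dd t \right\vert .
\end{equation}
This uses the duality $H^{-\sigma}(\mathbb{R}) = (H^{\sigma}(\mathbb{R}))'$, valid for the Fourier-defined norms.

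We first fix a smooth extension. Let $\widetilde{\rho} \in C^\infty_{\mathrm{c}}(\mathbb{R})$ coincide with $\rho$ on $[0,T]$. Such an extension exists by a Seeley or Borel-type extension, since $\rho \in C^\infty([0,T])$ means all derivatives extend continuously to the endpoints. Then for every test function $\varphi$,
\begin{equation}
\int_{\mathbb{R}} \bar u \rho \varphi = \int_{\mathbb{R}} \bar u \, (\widetilde{\rho}\varphi),
\end{equation}
because $\bar u$ vanishes outside $[0,T]$.

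The key step is to show that multiplication by $\widetilde{\rho}$ is bounded on $H^{\sigma}(\mathbb{R})$ for $\sigma \in [\frac12,\frac32]$. We first check it at integer orders. For $m \in \{0,1,2\}$, the Leibniz rule gives
\begin{equation}
\Vert \widetilde{\rho} \varphi \Vert_{H^m} \leq C_m \Vert \widetilde{\rho} \Vert_{W^{m,\infty}} \Vert \varphi \Vert_{H^m}.
\end{equation}
Complex interpolation between $m=0$ and $m=2$ then gives $\Vert \widetilde{\rho}\varphi \Vert_{H^{\sigma}} \leq C \Vert \varphi \Vert_{H^{\sigma}}$ for all $\sigma \in [0,2]$. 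Alternatively, Lemma \ref{lem:product_sobolev_classical} with $a$ large and $b=r=\sigma$ yields the same bound directly.

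Combining these, we get
\begin{equation}
\left\vert \int \bar u \rho \varphi \right\vert \leq \Vert \bar u \Vert_{H^{-\sigma}} \Vert \widetilde{\rho}\varphi \Vert_{H^\sigma} \leq C \Vert u \Vert_{\widetilde{H}^{-\sigma}} \Vert \varphi \Vert_{H^\sigma}.
\end{equation}
Taking the supremum over $\varphi$ gives the claim, with $C_T$ depending on $\Vert\widetilde{\rho}\Vert_{W^{2,\infty}}$, hence on $\rho$, $T$ and $\sigma$.

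One technical point remains: $u\rho \in L^2(0,T)$, so the pairing is a genuine integral and we need no density argument beyond the definition of $\widetilde{H}^{-\sigma}$ as a closure. If necessary, we first prove the estimate for $u \in C^\infty_{\mathrm c}(0,T)$ and conclude by density.

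We see no real obstacle. The only point requiring care is the extension $\widetilde{\rho}$. In the applications $\rho$ is an exponential $e^{\lambda t}$, which is defined on all of $\mathbb{R}$, so a cutoff times $\rho$ already suffices.
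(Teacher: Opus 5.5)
Your proof is correct and is essentially the paper's argument. The paper also extends $\rho$ to some $\rho_0 \in C^\infty_{\mathrm{c}}(\mathbb{R})$, but then applies Lemma \ref{lem:product_sobolev_classical} directly in negative regularity (with $a = r = -\sigma$ and $b = 2$, which is admissible since $\sigma < 2$) to get $\Vert (\mathbbm{1}_{[0,T]}u)\rho_0 \Vert_{H^{-\sigma}(\mathbb{R})} \lesssim \Vert u \Vert_{\widetilde{H}^{-\sigma}} \Vert \rho_0 \Vert_{H^2(\mathbb{R})}$; you instead derive the same negative-order multiplier bound by dualizing the positive-order one on $H^{\sigma}(\mathbb{R})$, which is equivalent in substance and only slightly more self-contained.
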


\begin{proof}
    Let $\rho_0 \in C^\infty_{\mathrm{c}}(\mathbb{R})$ be an extension of $\rho$. By Lemma \ref{lem:product_sobolev_classical}, one has
    \begin{equation}
        \left\Vert u \rho \right\Vert_{\widetilde{H}^{-\sigma}} 
        = \left\Vert \left( u \mathbbm{1}_{[0,T]} \right) \rho_0 \right\Vert_{H^{-\sigma}(\mathbb{R})}
        \leq C \left\Vert u \right\Vert_{\widetilde{H}^{-\sigma}} \left\Vert \rho_0 \right\Vert_{H^{2}(\mathbb{R})}
        \leq C^\prime \left\Vert u \right\Vert_{\widetilde{H}^{-\sigma}},
    \end{equation}
    for some $C, C^\prime > 0$ which depend on $T$, $\rho$ and $\sigma$.
\end{proof}

\begin{lemma}\label{lem:u_1_finite_time}
    Let $\lambda \in \mathbb{R}$, $\sigma \geq 0$, and $T> 0$. There exists $C_T > 0$ such that for all $u \in L^2(0, T)$, one has
    \begin{equation}
        \left\vert \int_0^T u(t) e^{\lambda t} \dd t \right\vert \leq C_T \Vert u \Vert_{\widetilde{H}^{- \sigma}(0,T)}.
    \end{equation}
\end{lemma}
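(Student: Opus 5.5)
The plan is a duality argument. Extend $u$ by zero outside $(0,T)$, so that it lies in $\widetilde{H}^{-\sigma}(0,T)$, and pick a cutoff $\chi \in C^\infty_{\mathrm{c}}(\mathbb{R})$ with $\chi = 1$ on $[0,T]$, for instance $\chi \in C^\infty_{\mathrm{c}}(-1, T+1)$. Because $u$ is supported in $[0,T]$, we can write
\begin{equation}
    \int_0^T u(t) e^{\lambda t} \dd t = \int_{\mathbb{R}} \left( \mathbbm{1}_{(0,T)} u \right)(t) \, \chi(t) e^{\lambda t} \dd t .
\end{equation}
The function $\psi := \chi e^{\lambda \cdot}$ is smooth with compact support. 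Hence it belongs to $H^{\sigma}(\mathbb{R})$, with a norm that depends only on $\lambda$, $\sigma$, $T$ and the choice of $\chi$.

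Next I would apply the $H^{-\sigma}$–$H^{\sigma}$ duality. Using Plancherel with the paper's Fourier convention, we have $\int_{\mathbb{R}} f \overline{g} = \frac{1}{2\pi} \int_{\mathbb{R}} \widehat{f}\, \overline{\widehat{g}} \dd \omega$. Inserting the weights $(1+\omega^2)^{\mp \sigma/2}$ and applying Cauchy–Schwarz gives
\begin{equation}
    \left\vert \int_{\mathbb{R}} f \psi \right\vert \leq \frac{1}{2\pi} \Vert f \Vert_{H^{-\sigma}(\mathbb{R})} \Vert \psi \Vert_{H^{\sigma}(\mathbb{R})}.
\end{equation}
Since $\psi$ is real, $\overline{\psi} = \psi$, so the conjugate causes no difficulty. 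With $f = \mathbbm{1}_{(0,T)} u$ this yields the claim, with $C_T := \frac{1}{2\pi} \Vert \chi e^{\lambda \cdot} \Vert_{H^\sigma(\mathbb{R})}$. This $C_T$ is finite for every $\sigma \geq 0$, because $\psi$ lies in the Schwartz class.

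There is no real obstacle in this argument. The only step requiring care is the choice of the extension $\chi$: it is needed because $e^{\lambda t}$ itself is not in $H^\sigma(\mathbb{R})$. Replacing $e^{\lambda t}$ by $\chi e^{\lambda t}$ is legitimate because $\chi = 1$ on the support of $u$. Since $u \in L^2$, the pairing is a genuine integral, so no density argument is needed.
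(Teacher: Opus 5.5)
Your proposal is correct and follows the paper's proof essentially verbatim. The paper also picks a cutoff $\chi \in C^\infty_{\mathrm{c}}(\mathbb{R})$ with $\chi = 1$ on $[0,T]$, inserts it into the integral, and bounds the result by the $H^{-\sigma}$–$H^{\sigma}$ duality against $e^{\lambda \cdot}\chi$.
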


\begin{proof}
    In this proof, $\lesssim$ is used for constants that may depend on $\lambda$, $\sigma \geq 0$, and $T> 0$. Let $\chi \in C^\infty_\mathrm{c}(\mathbb{R}, \mathbb{R})$ be such that $\chi = 1$ on $[0, T]$. Recall that we write $u = \mathbbm{1}_{[0, T]} u$. Lemma \ref{lem:u_1_finite_time} follows from
    \begin{equation}
        \left\vert \int_0^T u(t) e^{\lambda t} \dd t \right\vert 
        = \left\vert \int_\mathbb{R} u(t) e^{\lambda t} \chi(t) \dd t \right\vert 
        \lesssim \Vert u \Vert_{H^{- \sigma}(\mathbb{R})} \Vert e^{\lambda \cdot} \chi \Vert_{H^{\sigma}(\mathbb{R})} 
        \lesssim \Vert u \Vert_{\widetilde{H}^{- \sigma}(0,T)}.
    \end{equation}
\end{proof}

\section{Well-posedness and regularity results for the heat equation}\label{sec:appendix_heat}

\subsection{Well-posedness results for the heat equation}

We provide here some regularity results for the linear heat equation, with constants independent of $T$. First, we prove that solutions of the heat equation with source term of the form $\partial_x f$, for some $f \in L^1((0,T);L^2(0,1))$, belong to $L^2((0, T); L^2(0,1)) \cap L^\infty((0, T); H^{-1}(0,1))$. We give a self-contained proof using Fourier analysis to show that the continuity constant is independent of $T$. 
 
\begin{lemma}[Heat equations with source $\partial_x f$ for $f \in L^1(L^2)$]\label{lem:well-posedness_heat_derivative_L1L2}
    Let $T > 0$. There exists an absolute constant $C > 0$ such that the following properties hold.
    If $f \in L^1((0,T);L^2(0,1))$, then there exists a unique solution $y$ of
    \begin{equation}\label{eq:appendix_heat_weak}
        \left\{
        \begin{array}{lll}
            \partial_t y - \partial_x^2 y  = \partial_x f
            & \quad t \in (0, T), & \quad x \in (0,1), \\
            y(t,0) = y(t,1) = 0
            & & \quad t \in (0, T), \\
            y(0,x)=0
            & & \quad x \in (0, 1),
        \end{array}
        \right.
    \end{equation}
    satisfying $y \in L^2((0, T); L^2(0,1))$. Moreover, one has
    \begin{equation}\label{eq:lem:well-posedness_heat_derivative_L1L2}
        \Vert y \Vert_{L^2((0, T);L^2)} + \Vert y \Vert_{L^\infty((0, T);H^{-1})} \leq C \Vert f \Vert_{L^1((0, T); L^2)}.
    \end{equation}
\end{lemma}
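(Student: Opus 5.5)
The plan is to argue entirely on the Fourier side in the eigenbasis $(\varphi_n)$, as the paper announces, so that every constant is visibly independent of $T$. Set $f_n(t) := \langle f(t), \varphi_n^\prime \rangle$, which is the natural pairing since $\langle \partial_x f, \varphi_n \rangle = -\langle f, \varphi_n^\prime \rangle$. We then define the candidate solution by Duhamel's formula,
\begin{equation}
    y_n(t) := -\int_0^t e^{-\lambda_n (t-\tau)} f_n(\tau) \dd \tau, \qquad y := \sum_{n \geq 1} y_n \varphi_n .
\end{equation}
The structural fact we rely on is that $\left( \varphi_n^\prime / (n\pi) \right)_{n \geq 1} = \left( \sqrt{2}\cos(n\pi \cdot) \right)_{n \geq 1}$ is an orthonormal family in $L^2(0,1)$. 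By Bessel's inequality this gives
\begin{equation}
    \sum_{n \geq 1} \frac{f_n(\tau)^2}{\lambda_n} \leq \Vert f(\tau) \Vert_{L^2}^2 \quad \text{for a.e. } \tau .
\end{equation}
Uniqueness in $L^2(L^2)$ follows by testing the equation, in the weak sense, against $\varphi_n$. Each coefficient then satisfies a scalar ODE with zero initial datum, so $y_n$ is forced to take the form above.

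The $L^\infty(H^{-1})$ bound comes straight from Minkowski's inequality. For fixed $t$, the vector $\left( \lambda_n^{-1/2} y_n(t) \right)_n$ is an integral over $\tau$ of the vectors $\left( \lambda_n^{-1/2} e^{-\lambda_n(t-\tau)} f_n(\tau) \right)_n$. Since $e^{-\lambda_n(t-\tau)} \leq 1$ and Bessel's inequality holds, each of these vectors has $\ell^2$ norm at most $\Vert f(\tau) \Vert_{L^2}$. Integrating in $\tau$ gives $\Vert y(t) \Vert_{H^{-1}} \leq \Vert f \Vert_{L^1(L^2)}$.

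The $L^2(L^2)$ bound is the main obstacle. A naive bound of each mode by Young's inequality gives $\Vert y_n \Vert_{L^2} \lesssim \lambda_n^{-1/2} \Vert f_n \Vert_{L^1}$, and summing over $n$ would then require controlling $\sum_n \lambda_n^{-1} \Vert f_n \Vert_{L^1}^2$ by $\Vert f \Vert_{L^1(L^2)}^2$. That comparison fails because $\ell^2$ and $L^1$ are in the wrong order. Instead, I would write $y = \int_0^T Y_\tau \dd \tau$, where $Y_\tau(t) := -\mathbbm{1}_{t > \tau} \sum_n e^{-\lambda_n (t-\tau)} f_n(\tau) \varphi_n$ is the free heat evolution of the datum $\partial_x f(\tau)$ launched at time $\tau$, and apply Minkowski's inequality in $L^2((0,T);L^2)$. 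For each $\tau$ we compute
\begin{equation}
    \Vert Y_\tau \Vert_{L^2(L^2)}^2 = \sum_n f_n(\tau)^2 \int_0^\infty e^{-2\lambda_n s} \dd s = \frac{1}{2} \sum_n \frac{f_n(\tau)^2}{\lambda_n} \leq \frac{1}{2} \Vert f(\tau) \Vert_{L^2}^2 .
\end{equation}
This is the same Bessel bound as before, now combined with the parabolic $L^2$-in-time smoothing of each mode. Integrating in $\tau$ gives $\Vert y \Vert_{L^2(L^2)} \leq 2^{-1/2} \Vert f \Vert_{L^1(L^2)}$, with an absolute constant.

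The remaining work is routine. We must justify that $\tau \mapsto Y_\tau$ is Bochner measurable, which follows by density of simple functions in $L^1(L^2)$. We must also check that $y$ solves the equation in the weak sense, which follows from the mode-by-mode ODE together with the bounds already obtained.
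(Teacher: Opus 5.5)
Your proposal is correct and follows essentially the same route as the paper: Duhamel's formula in the eigenbasis, Minkowski's integral inequality in $L^2((0,T);L^2)$ together with the orthonormality of $(\varphi_n'/\sqrt{\lambda_n})_{n\geq 1}$ (giving the constant $2^{-1/2}$), and a Minkowski-type argument for the $L^\infty((0,T);H^{-1})$ bound, which the paper phrases as duality plus Fubini and Cauchy--Schwarz. Your mode-by-mode uniqueness argument fills in a point that the paper leaves implicit.
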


\begin{proof}
    We start with the estimate of $\Vert y \Vert_{L^2((0, T);L^2)}$. The Duhamel formula gives
    \begin{equation}
        y(t) = \int_0^t F(s,t) \dd s \quad \text{ with } F(s,t) := \sum_{j \geq 1} e^{-\lambda_j (t-s)} \left\langle \partial_x f(s) , \varphi_j \right\rangle \varphi_j.
    \end{equation}
    Starting from
    \begin{equation}
        \Vert y(t) \Vert_{L^2(0,1)} \leq \int_0^T \mathbbm{1}_{s \in (0,t)} \Vert F(s,t) \Vert_{L^2(0,1)} \dd s
    \end{equation}  
    and applying Minkowski's integral formula (or simply expressing the $L^2(0,T)$-norm by duality and using Fubini and Cauchy-Schwarz), one obtains
    \begin{align}
        \Vert y \Vert_{L^2((0,T);L^2)} & \leq \left( \int_0^T \left( \int_0^T \mathbbm{1}_{s\in(0,t)} \Vert F(s,t) \Vert_{L^2(0,1)} \dd s \right)^2 \dd t \right)^{\frac{1}{2}} \\
        & \leq  \int_0^T \left( \int_0^T \mathbbm{1}_{s \in (0,t)} \Vert F(s,t) \Vert_{L^2(0,1)}^2 \dd t \right)^{\frac{1}{2}} \dd s. \label{eq:proof:lem:well-posedness_heat_derivative_L1L2}
    \end{align}
    Integrating by parts, one finds
    \begin{equation}
        \Vert F(s,t) \Vert_{L^2(0,1)}^2 = \sum_{j \geq 1} e^{-2 \lambda_j (t-s)} \left\langle f(s) , \varphi_j^\prime \right\rangle^2,
    \end{equation}
    yielding
    \begin{equation}
        \int_0^T \mathbbm{1}_{s \in (0,t)} \Vert F(s,t) \Vert_{L^2(0,1)}^2 \dd t \leq \sum_{j \geq 1} \left\langle f(s) , \varphi_j^\prime \right\rangle^2 \int_s^T e^{-2 \lambda_j (t-s)} \dd t \leq \frac{1}{2} \sum_{j \geq 1} \frac{1}{\lambda_j} \left\langle f(s) , \varphi_j^\prime \right\rangle^2.
    \end{equation}
    Since the family $\left( \frac{\varphi_j^\prime}{\sqrt{\lambda_j}}\right)$ is orthonormal, this gives
    \begin{equation}
        \int_0^T \mathbbm{1}_{s \in (0,t)} \Vert F(s,t) \Vert_{L^2(0,1)}^2 \dd t \leq \frac{1}{2} \Vert f(s) \Vert_{L^2(0,1)}^2.
    \end{equation}
    Together with \eqref{eq:proof:lem:well-posedness_heat_derivative_L1L2}, this gives the desired estimate of $\Vert y \Vert_{L^2((0, T);L^2)}$ in \eqref{eq:lem:well-posedness_heat_derivative_L1L2} with $C = \frac{1}{\sqrt{2}}$.
    
    Now, we estimate $\Vert y \Vert_{L^\infty((0, T);H^{-1})}$. Fix $t \in [0,T]$. One has 
    \begin{equation}
        \Vert y(t) \Vert_{H^{-1}(0,1)}^2 = \sum_{j \geq 1} \left\vert \int_0^t h_j(s,t) \dd s \right\vert^2, \text{ with } h_j(s,t) := \frac{e^{-\lambda_j (t-s)}}{\sqrt{\lambda_j}} \left\langle f(s) , \varphi_j^\prime \right\rangle .
    \end{equation}
    Using duality, Fubini and Cauchy-Schwarz, one obtains
    \begin{equation}
        \begin{split}
            \Vert y(t) \Vert_{H^{-1}(0,1)} 
            & = \sup \left\{ \sum_{j \geq 1} \left( \int_0^t h_j(s,t) \dd s \right) a_j, \ (a_j) \in \ell^2(\mathbb{N}^\ast), \ \sum_{j\geq 1} a_j^2 \leq 1 \right\} \\
            & \leq \sup \left\{ \int_0^t \left( \sum_{j\geq 1} a_j^2 \right)^{\frac{1}{2}} \left( \sum_{j \geq 1}  h_j(s,t)^2  \right)^{\frac{1}{2}} \dd s , \ (a_j) \in \ell^2(\mathbb{N}^\ast), \ \sum_{j\geq 1} a_j^2 \leq 1 \right\} \\
            & = \int_0^t  \left( \sum_{j \geq 1}  h_j(s,t)^2  \right)^{\frac{1}{2}} \dd s .
        \end{split}
    \end{equation}
    As above, since the family $\left( \frac{\varphi_j^\prime}{\sqrt{\lambda_j}}\right)$ is orthonormal, one has
    \begin{equation}
        \sum_{j \geq 1}  h_j(s,t)^2  \leq \sum_{j \geq 1}  \left\langle f(s) , \frac{\varphi_j^\prime}{\sqrt{\lambda_j}} \right\rangle^2 \leq \Vert f(s) \Vert_{L^2(0,1)}^2.
    \end{equation}
    This completes the proof of \eqref{eq:lem:well-posedness_heat_derivative_L1L2}.
\end{proof}

\ifarxiv
    Second, we prove a well-posedness result for the linear heat equation with source term $f \in L^2((0, T), H^{-1}(0,1))$, which will be the main tool for the proof of the well-posedness of the controlled Burgers equation \eqref{eq:Burgers_intro} (see Lemma \ref{lem:burgers_weak_wellposedness}).
\else
    Second, we recall without proof the following classical result (see, for instance, \cite{Brezis2011}). The fact that the continuity constant is independent of $T$ can be verified by energy estimates, or by direct Fourier computations similar to, but simpler than, those used in the proof of Lemma \ref{lem:well-posedness_heat_derivative_L1L2}.
\fi

\begin{lemma}[Heat equation with source in $L^2(H^{-1})$]\label{lem:heat_weak_wellposedness}
    For $T > 0$, $f \in L^2((0, T), H^{-1}(0,1))$ and $y_0 \in L^2(0,1)$, there exists a unique solution 
    \begin{equation}\label{eq:lem:Heat_weak_wellposedness_0}
        y \in C^0([0, T]; L^2(0,1)) \ \cap \ L^2((0,T); H_0^1(0,1)) \ \cap \ H^1((0,T); H^{-1}(0,1))
    \end{equation} 
    of the heat equation
    \begin{equation}\label{eq:lem:heat_weak_wellposedness}
        \left\{
        \begin{array}{cll}
            \partial_t y - \partial_x^2 y = f
            & \quad t \in (0, T), & \quad x \in (0,1), \\
            y(t,0) = y(t,1) = 0
            & & \quad t \in (0, T), \\
            y(0,x)=y_0
            & & \quad x \in (0, 1).
        \end{array}
        \right.
    \end{equation}
    In addition, there exists an absolute constant $C > 0$ such that
    \begin{equation}\label{eq:lem:Heat_weak_wellposedness_1}
        \Vert y \Vert_{L^\infty((0,T); L^2)} + \Vert y \Vert_{L^2((0,T); H_0^1)} + \Vert y \Vert_{H^{1}((0,T); H^{-1})} \leq C \left( \Vert y_0 \Vert_{L^2} + \Vert f \Vert_{L^2((0, T); H^{-1})} \right).
    \end{equation}
\end{lemma}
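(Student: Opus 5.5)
I plan to work entirely in the eigenbasis $(\varphi_j)$, through the spectral (Duhamel) representation. Energy estimates would give the same result, but the Fourier route makes it transparent that the constant does not depend on $T$.

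\textbf{Existence and uniqueness.} Write $y_0=\sum_j a_j\varphi_j$ and $f(t)=\sum_j f_j(t)\varphi_j$, where $f_j(t):=\langle f(t),\varphi_j\rangle_{H^{-1},H_0^1}$. Then $\Vert f(t)\Vert_{H^{-1}}^2\simeq\sum_j \lambda_j^{-1}f_j(t)^2$. Define
\begin{equation*}
y_j(t):=e^{-\lambda_j t}a_j+\int_0^t e^{-\lambda_j(t-s)}f_j(s)\,\mathrm{d}s ,
\end{equation*}
and set $y:=\sum_j y_j\varphi_j$. Uniqueness follows from the same computation applied to a difference of two solutions in the class \eqref{eq:lem:Heat_weak_wellposedness_0}. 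Indeed, testing against $\varphi_j$ shows that each coefficient solves the scalar ODE $y_j'+\lambda_j y_j=0$ with $y_j(0)=0$, which forces $y_j\equiv 0$.

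\textbf{Estimates, mode by mode.} For each $j$, multiply the ODE by $y_j$ and integrate on $(0,t)$:
\begin{equation*}
y_j(t)^2+2\lambda_j\int_0^t y_j^2 = a_j^2+2\int_0^t f_jy_j \le a_j^2+\lambda_j\int_0^t y_j^2+\lambda_j^{-1}\int_0^t f_j^2 .
\end{equation*}
Summing over $j$ gives
\begin{equation*}
\sup_{t} \Vert y(t)\Vert_{L^2}^2+\Vert y\Vert_{L^2(H_0^1)}^2\le \Vert y_0\Vert_{L^2}^2+C\Vert f\Vert_{L^2(H^{-1})}^2 ,
\end{equation*}
with absolute constants. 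Next, the relation $\partial_t y = \partial_x^2 y + f$ gives
\begin{equation*}
\Vert \partial_t y\Vert_{L^2(H^{-1})}\le \Vert y\Vert_{L^2(H_0^1)}+\Vert f\Vert_{L^2(H^{-1})} .
\end{equation*}
Finally, $\Vert y\Vert_{L^2(H^{-1})}$ is bounded in terms of the $L^2(H^1_0)$ norm, since $\lambda_j\ge\pi^2$. This part is also uniform in $T$.

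\textbf{Regularity.} Each truncated sum is smooth in time. The estimates above, applied to tails, show that the partial sums are Cauchy in $L^\infty(L^2)\cap L^2(H_0^1)\cap H^1(H^{-1})$, so the limit lies in the class \eqref{eq:lem:Heat_weak_wellposedness_0}. The continuity $y\in C^0([0,T];L^2)$ follows from uniform convergence in $L^\infty(L^2)$, or alternatively from the Lions–Magenes embedding $L^2(H_0^1)\cap H^1(H^{-1})\subset C^0(L^2)$.

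The only delicate point is keeping every constant independent of $T$. The mode-by-mode energy identity settles this, because it contains no Gronwall factor and uses only $\lambda_j\ge\pi^2$. Beyond that, the argument is routine.
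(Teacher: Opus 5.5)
Your proof is correct and shares the paper's skeleton: the solution is given by the eigenfunction Duhamel formula, each mode is estimated with absolute constants and the bounds are summed, and $\partial_t y$ is controlled in $L^2(H^{-1})$ directly from the equation. The difference is the scalar estimate.

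\begin{itemize}
\item The paper bounds the Duhamel integral explicitly. Cauchy--Schwarz gives $\bigl|\int_0^t e^{-\lambda_k(t-s)}f_k(s)\,\mathrm{d}s\bigr|^2\le (2\lambda_k)^{-1}\int_0^t f_k^2$ for the $L^\infty(L^2)$ bound. Young's convolution inequality, with $\Vert e^{-\lambda_k\cdot}\Vert_{L^1(0,\infty)}=\lambda_k^{-1}$, gives the $L^2(H_0^1)$ bound.
\item You multiply the mode equation by $y_j$ and absorb the source term with Young's inequality, which yields both bounds from a single identity.
\end{itemize}

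Summed over $j$, your identity is exactly the global energy identity $\frac12\Vert y(t)\Vert_{L^2}^2+\int_0^t\Vert\partial_x y\Vert_{L^2}^2=\frac12\Vert y_0\Vert_{L^2}^2+\int_0^t\langle f,y\rangle_{H^{-1},H_0^1}$. So your argument is the energy method written in coordinates, not an alternative to it. It is slightly more economical and does not rely on an explicit kernel. The paper's version stays in the explicit-kernel style of its other Duhamel computations.

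You also supply what the paper leaves implicit: uniqueness, the construction of the solution as a limit, and the bound on $\Vert y\Vert_{L^2(H^{-1})}$ via $\lambda_j\ge\pi^2$. That last bound correctly avoids the $T$-dependent estimate $\sqrt{T}\,\Vert y\Vert_{L^\infty(L^2)}$.

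Two small corrections:
\begin{itemize}
\item The truncated sums are not smooth in time, since $f_j$ is only in $L^2(0,T)$. They lie in $H^1(0,T)$ in time and are therefore continuous, which is all your argument needs.
\item State explicitly that the limit solves the equation. This follows by passing to the limit in $\partial_t y^N-\partial_x^2 y^N=\sum_{j\le N}f_j\varphi_j$ in $L^2((0,T);H^{-1})$.
\end{itemize}
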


\ifarxiv
\begin{proof}
    In this proof, the symbol $\lesssim$ denotes an inequality with an absolute constant. We only recall why the solution $y$ given by the formula
    \begin{equation}
        y(t) = \sum_{k \geq 1} \left( e^{- \lambda_k t} \left\langle y_0, \varphi_k \right\rangle + \int_0^t e^{- \lambda_k (t-s)} \left\langle f(s), \varphi_k \right\rangle \dd s \right) \varphi_k 
    \end{equation}
    satisfies \eqref{eq:lem:Heat_weak_wellposedness_0} and \eqref{eq:lem:Heat_weak_wellposedness_1}.
    
    For $k \geq 1$ and $t \in [0, T]$, the Cauchy-Schwarz inequality gives
    \begin{equation}
        \left\vert \langle y(t), \varphi_k \rangle \right\vert^2 \lesssim \left\vert \langle y_0, \varphi_k \rangle \right\vert^2 + \frac{1}{\lambda_k} \int_0^t  \left\vert \langle f(s), \varphi_k \rangle \right\vert^2 \dd s,
    \end{equation}
    yielding 
    \begin{equation}\label{eq:proof:lem:heat_weak_wellposedness_1}
        \Vert y \Vert_{L^\infty((0, T);L^2)}^2 \lesssim \Vert y_0 \Vert_{L^2}^2 + \Vert f \Vert_{L^2((0, T);H^{-1})}^2.
    \end{equation}
    To get an estimate in $L^2((0,T); H_0^1(0,1))$, we use Young's convolution inequality, which gives
    \begin{align}
        \int_0^T \left\Vert \partial_x y(t) \right\Vert_{L^2}^2 \dd t 
        & \lesssim \sum_{k \geq 1} \lambda_k \int_0^T e^{-2 \lambda_k t} \langle y_0, \varphi_k \rangle^2 \dd t \\
        & \hspace{0.5cm} + \sum_{k \geq 1} \lambda_k \int_0^T \left( \int_0^t e^{- \lambda_k (t-s)} \langle f(s), \varphi_k \rangle \dd s \right)^2 \dd t \\
        & \lesssim \Vert y_0 \Vert_{L^2}^2 + \sum_{k \geq 1} \lambda_k \left( \int_0^T \langle f(t), \varphi_k \rangle^2 \dd t \right) \left( \int_0^\infty e^{-\lambda_k t} \dd t \right)^2 \\
        & \lesssim \Vert y_0 \Vert_{L^2}^2 + \Vert f \Vert_{L^2((0, T);H^{-1})}^2. \label{eq:proof:lem:heat_weak_wellposedness_2}
    \end{align}
    Finally, using the first line of \eqref{eq:lem:heat_weak_wellposedness}, one finds
    \begin{align}
        \Vert \partial_t y \Vert_{L^2((0, T);H^{-1})} & \lesssim \Vert \partial_x^2 y \Vert_{L^2((0, T);H^{-1})} + \Vert f \Vert_{L^2((0, T);H^{-1})} \\
        & \lesssim \Vert y \Vert_{L^2((0, T);H^1)} + \Vert f \Vert_{L^2((0, T);H^{-1})}, \label{eq:proof:lem:heat_weak_wellposedness_3}
    \end{align}
    and this gives \eqref{eq:lem:Heat_weak_wellposedness_0}. Note that \eqref{eq:lem:Heat_weak_wellposedness_1} follows from \eqref{eq:proof:lem:heat_weak_wellposedness_1}, \eqref{eq:proof:lem:heat_weak_wellposedness_2} and \eqref{eq:proof:lem:heat_weak_wellposedness_3}.
\end{proof}
\fi

Finally, we recall, without proof, the following classical result. The fact that the constants are independent of $T$ can be verified, for instance, by a straightforward inspection of \cite[Lemma A.1]{Nguyen2025Burgers}. 

\begin{lemma}\label{lem:well-posedness_heat_L1L2}
    If $f \in L^1((0, T); L^2(0, 1))$ and $y_0 \in L^2(0, 1)$, then there exists a unique solution $y$ of \eqref{eq:lem:heat_weak_wellposedness} satisfying
    \begin{equation}
        y \in C^0([0, T]; L^2(0,1)) \cap L^2((0, T); H_0^1(0,1)).
    \end{equation}
    In addition, there exists $C > 0$ independent of $T$, such that
    \begin{equation}\label{eq:lem:well-posedness_1}
        \Vert y \Vert_{L^\infty((0, T);L^2)} + \Vert y \Vert_{L^2((0, T);H_0^1)} \leq C \left( \Vert y_0 \Vert_{L^2} + \Vert f \Vert_{L^1((0, T); L^2)} \right).
    \end{equation}
\end{lemma}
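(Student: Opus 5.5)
The plan is to follow the Duhamel/Fourier structure already used in Lemma \ref{lem:heat_weak_wellposedness}. First, for $f \in L^1((0,T);L^2)$ and $y_0 \in L^2$, define $y$ by the spectral formula
\begin{equation*}
    y(t) = \sum_{k \geq 1} \left( e^{-\lambda_k t} \langle y_0, \varphi_k \rangle + \int_0^t e^{-\lambda_k (t-s)} \langle f(s), \varphi_k \rangle \dd s \right) \varphi_k .
\end{equation*}
Uniqueness follows from uniqueness in the larger class of Lemma \ref{lem:heat_weak_wellposedness}, since $L^1(L^2)$ data can be approximated by $L^2(H^{-1})$ data and the difference of two solutions solves the homogeneous problem. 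By linearity, it suffices to treat $y_0$ and $f$ separately. The $y_0$ part is already covered by \eqref{eq:lem:Heat_weak_wellposedness_1} with $f=0$, and that estimate has an absolute constant.

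For the source part, I would use Minkowski's integral inequality exactly as in the proof of Lemma \ref{lem:well-posedness_heat_derivative_L1L2}. Write $y(t) = \int_0^t S(t-s) f(s) \dd s$.

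For the $L^\infty(L^2)$ bound, contractivity of the semigroup on $L^2$ gives $\Vert y(t) \Vert_{L^2} \leq \int_0^t \Vert f(s) \Vert_{L^2} \dd s$ directly.

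For the $L^2(H_0^1)$ bound, Minkowski gives
\begin{equation*}
    \Vert \partial_x y \Vert_{L^2((0,T);L^2)} \leq \int_0^T \left( \int_s^T \Vert \partial_x S(t-s) f(s) \Vert_{L^2}^2 \dd t \right)^{\frac{1}{2}} \dd s .
\end{equation*}
The inner integral equals
\begin{equation*}
    \sum_k \lambda_k \langle f(s), \varphi_k \rangle^2 \int_0^{T-s} e^{-2\lambda_k \tau} \dd \tau \leq \frac{1}{2} \Vert f(s) \Vert_{L^2}^2 .
\end{equation*}
This is the standard energy identity for the free heat flow, and it yields the bound $\frac{1}{\sqrt{2}} \Vert f \Vert_{L^1(L^2)}$, uniformly in $T$.

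Continuity in time with values in $L^2$ comes from dominated convergence applied to the Duhamel formula, or equivalently by density of smooth $f$ together with the uniform $L^\infty(L^2)$ bound.

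I do not expect a genuine obstacle here. The only point needing care is checking that no constant depends on $T$. That is automatic in the argument above, because every time integral is bounded by an integral over $(0,\infty)$ (for instance $\int_0^{T-s} e^{-2\lambda_k\tau} \dd \tau \leq \frac{1}{2\lambda_k}$), and the semigroup is a contraction.
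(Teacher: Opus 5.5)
Your argument is correct. The paper gives no proof of this lemma. It recalls the result as classical and defers the $T$-independence of the constant to an inspection of \cite[Lemma A.1]{Nguyen2025Burgers}.

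You instead give a self-contained proof modeled on the Minkowski/orthonormality argument of Lemma \ref{lem:well-posedness_heat_derivative_L1L2}. It produces explicit absolute constants for the source part: $1$ for the $L^\infty(L^2)$ bound and $\frac{1}{\sqrt{2}}$ for the gradient bound.

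The other natural route, also present in the paper, is the energy computation in the proof of Lemma \ref{lem:burgers_strong_wellposedness} with the nonlinear term removed. Testing against $y$ gives $\frac{\mathrm{d}}{\mathrm{d}t}\Vert y\Vert_{L^2}\le\Vert f\Vert_{L^2}$, and integrating the energy identity then bounds $\Vert \partial_x y\Vert_{L^2(L^2)}$. That handles $y_0$ and $f$ in one stroke, but the energy identity must be justified at this low regularity, typically by approximation. Your spectral route makes existence, the bound and time-continuity explicit, at the cost of splitting off the $y_0$ part. Your density argument for continuity also shows that the spectral formula really defines a solution.

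One small correction on uniqueness. The class of Lemma \ref{lem:heat_weak_wellposedness} is smaller, not larger, than $C^0([0,T];L^2)\cap L^2((0,T);H_0^1)$, so the remark about approximating $L^1(L^2)$ data by $L^2(H^{-1})$ data does no work. What makes the argument go through is the following. The difference $w$ of two solutions satisfies $\partial_t w=\partial_x^2 w\in L^2((0,T);H^{-1})$, hence $w\in H^1((0,T);H^{-1})$, and Lemma \ref{lem:heat_weak_wellposedness} applies. More directly, each coefficient $w_k:=\langle w,\varphi_k\rangle$ is continuous and solves $w_k'=-\lambda_k w_k$ with $w_k(0)=0$, so $w_k\equiv 0$.
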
 
 
\subsection{Bilinear parabolic estimates}

We prove here the bilinear parabolic estimates used in the proof of cubic remainder estimates when $s > 0$. Recall that the spaces $X_p^s$, $Y_p^s$ and $Z_p^s$ are introduced around \eqref{eq:def_X_s_2} and \eqref{eq:def_X_s_infty}, and that $\mathcal{B}$ is defined by \eqref{eq:def:mathcalB}. 

\begin{lemma}\label{lem:parabolic_estimates_anisotropic_sobolev}
   Let $s \in (0, 1)$, $p \in \{2, +\infty \}$ and $k \geq 1$. There exists $C>0$ that depends only on $s$ and $p$ such that for all $T >0$, and all $f,g$, one has
   \begin{equation}\label{eq:lem:parabolic_estimates_anisotropic_sobolev_2}
       \left\Vert \mathcal{B}(f,g) \right\Vert_{Z^s_p} \leq C \left\Vert f \right\Vert_{Z^s_p} \left\Vert g \right\Vert_{Z^s_p},
   \end{equation}
   and
   \begin{equation}\label{eq:lem:parabolic_estimates_anisotropic_sobolev_1}
       \begin{split}
           &  \left\Vert \mathcal{B}(f,g) \right\Vert_{X^s_p} + \left\Vert \mathcal{B}(f,g) \right\Vert_{Y^s_p} + \left\Vert \mathcal{B}(f,g) \right\Vert_{L^2((0,T);L^2)}
           \leq  C \left\Vert f \right\Vert_{X^s_p} \left\Vert g \right\Vert_{Z^s_p}.
       \end{split}
   \end{equation}
   In addition, for $k \geq 1$, there exists $C > 0$ that depends only on $s$, $p$ and $k$, such that for all $T >0$, and all $f,g$, one has
   \begin{equation}\label{eq:lem:parabolic_estimates_anisotropic_sobolev_3}
       \left\vert \left\langle \mathcal{B}(f,g)(T), \varphi_k \right\rangle \right\vert \leq C \left\Vert f \right\Vert_{X^s_p} \left\Vert g \right\Vert_{Y^s_p} .
   \end{equation}
\end{lemma}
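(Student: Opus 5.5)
We expand everything on the eigenbasis and reduce each of the three bilinear estimates to a weighted Young or Schur-type bound on sequences, with the time variable handled on the Fourier side. Write $f_p(t) := \langle f(t), \varphi_p\rangle$ and $g_q(t) := \langle g(t),\varphi_q\rangle$, extended by zero outside $[0,T]$. Then
\begin{equation*}
    \langle \mathcal{B}(f,g)(t), \varphi_n\rangle = -\sum_{p,q\geq 1} \langle \varphi_p\varphi_q, \varphi_n^\prime\rangle \int_0^t e^{-\lambda_n(t-\tau)} f_p(\tau) g_q(\tau) \dd \tau .
\end{equation*}
By \eqref{eq:bound_varphi_varphi_varphiprime}, the coefficient is $\lesssim n(\mathbbm{1}_{p+q=n}+\mathbbm{1}_{|p-q|=n})$. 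The time part is handled by Lemma \ref{lem:easy_integral_with_exp_lambda}: for $|r|<\frac12$ and $r'\in[r-1,r]$, the Duhamel integral maps $H^{r'}$ to $\widetilde H^r$ with gain $\lambda_n^{-(r'-r+1)}$. It remains to estimate $\|f_pg_q\|_{H^{r'}(\mathbb{R})}$ by a product estimate, Lemma \ref{lem:product_sobolev_classical} on $\mathbb{R}$, and then to sum in $p,q$. In the case $p=2$ the summation is done with Lemma \ref{lem:product_kind_of_discrete_sobolev}; in the case $p=\infty$ it is done with Lemma \ref{lem:product_kind_of_discrete_sobolev_p_infty}.

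\textbf{Estimate \eqref{eq:lem:parabolic_estimates_anisotropic_sobolev_2}.} Take $r=1+\sigma(s)\in(0,\frac12)$. Since $\widetilde H^{r}$ is not an algebra, we use Lemma \ref{lem:product_sobolev_classical} in the form $\|f_pg_q\|_{H^{r'}}\lesssim \|f_p\|_{H^r}\|g_q\|_{H^r}$ with $r' = 2r-\frac12-\varepsilon$, which is admissible because $r'<2r-\frac12$ and $r'\le r$. The Duhamel integral then gains $\lambda_n^{-(\frac12+\varepsilon-r)}$, so that $n\,\lambda_n^{-(\frac12+\varepsilon-r)}\approx n^{2r-2\varepsilon}$. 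Taking the $\mathcal H^s$ weight $n^{s}$, we need
\begin{equation*}
    \sum_n n^{2s+4r-2-4\varepsilon}\Big(\sum_{p\pm q=n} a_pb_q\Big)^2\lesssim \|a\|_{\ell^2_s}^2\|b\|_{\ell^2_s}^2 ,
\end{equation*}
where $a_p:=\|f_p\|_{\widetilde H^r}$ and $b_q:=\|g_q\|_{\widetilde H^r}$. This is exactly Lemma \ref{lem:product_kind_of_discrete_sobolev} with $2\alpha-2 = s+2r-2-2\varepsilon$, once we check the admissible range of $\alpha$. For $s<\frac12$ we have $r=\frac{1-s}2$, so $\alpha = \frac{1}{2}+\frac{s}{2}+\frac12-\frac{s}{2}\cdot$ adjusted; the needed condition is $s<\alpha<s+\frac14$, i.e.\ $\frac12<\dots$. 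Whether this range is actually met has to be checked case by case, and the $\varepsilon$ has to be tuned accordingly. For $p=\infty$ we instead take the supremum in $n$ and bound the inner sum by $I_{n,s}$ from Lemma \ref{lem:product_kind_of_discrete_sobolev_p_infty}.

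\textbf{Estimate \eqref{eq:lem:parabolic_estimates_anisotropic_sobolev_1}.} The same scheme applies, with $f_p\in \widetilde H^{-s/2}$ paired against $g_q\in\widetilde H^{r}$. Since $r-\frac{s}{2}>0$ and $r\ge \frac s2$, the product rule gives $f_pg_q\in H^{-s/2-\varepsilon'}$. For the $X$ target ($r_{\rm out}=-\frac s2$) and for the $L^2(L^2)$ target, Lemma \ref{lem:easy_integral_with_exp_lambda} gives a gain $\lambda_n^{-1+\varepsilon'}$. For the $Y$ target ($r_{\rm out}=\frac s2$) the gain is $\lambda_n^{-1+s/2+\varepsilon'}$, which is compensated by the weight $n^{-s}$. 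The spatial summation is again Lemma \ref{lem:product_kind_of_discrete_sobolev}, now with two different weights $n^{-s}\cdot n^{s}$, or equivalently the torus product estimate $H^{-s}\times H^{s}\to H^{-s-\frac12-}$, which can be applied directly on $\mathbb T$.

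\textbf{Estimate \eqref{eq:lem:parabolic_estimates_anisotropic_sobolev_3}.} Here $n=k$ is fixed, so only $q=p\pm k$ contribute to the sum. We use the duality $\left|\int_0^T e^{-\lambda_k(T-\tau)}f_pg_q\,\dd\tau\right|\lesssim \|f_p\|_{\widetilde H^{-s/2}}\|e^{-\lambda_k(T-\cdot)}g_q\|_{H^{s/2}}$, and the multiplier $e^{-\lambda_k(T-\cdot)}$ is removed by Lemma \ref{lem:product_indicatrice} together with Lemma \ref{lem:product_sobolev_classical}. Summing with the weights $p^{s}\cdot q^{-s}$, which are bounded because $q=p\pm k$, and applying Cauchy--Schwarz gives $\|f\|_{X}\|g\|_{Y}$. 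The $p=\infty$ case is the $\ell^\infty$--$\ell^1$ pairing, which is immediate.

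The main obstacle is \eqref{eq:lem:parabolic_estimates_anisotropic_sobolev_2}: the exponents are at the borderline, so one must verify that a single choice of $r'$ (the time loss) and $\alpha$ (the spatial weight) satisfies the constraints of Lemma \ref{lem:product_kind_of_discrete_sobolev} in both regimes $s<\frac12$ and $s\ge\frac12$. We would first try splitting the time frequency into $|\omega|\lesssim\lambda_n$ and $|\omega|\gtrsim\lambda_n$, using the exact multiplier $(1+\omega^2)^{r}/(\lambda_n^2+\omega^2)$ from \eqref{eq:proof:lem:easy_integral_with_exp_lambda_1}, so as to trade time regularity for spatial decay optimally.
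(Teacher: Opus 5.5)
Your architecture is the paper's: expand on $(\varphi_n)$, apply Lemma~\ref{lem:easy_integral_with_exp_lambda} mode by mode, estimate $f_pg_q$ in time with Lemma~\ref{lem:product_sobolev_classical}, and sum in $(p,q)$ with Lemma~\ref{lem:product_kind_of_discrete_sobolev} (resp.\ Lemma~\ref{lem:product_kind_of_discrete_sobolev_p_infty}). Your argument for \eqref{eq:lem:parabolic_estimates_anisotropic_sobolev_3} is fine and matches the paper's; the paper puts the exponential on the $f_p$ factor, which is immaterial.

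The gap is that the lemma consists of the exponent bookkeeping. You leave that bookkeeping open for \eqref{eq:lem:parabolic_estimates_anisotropic_sobolev_2} and get it wrong for \eqref{eq:lem:parabolic_estimates_anisotropic_sobolev_1}.

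\textbf{Estimate \eqref{eq:lem:parabolic_estimates_anisotropic_sobolev_2}.} Lemma~\ref{lem:easy_integral_with_exp_lambda} gives the factor $\lambda_n^{-(r'-r+1)}$. For your $r'=2r-\frac12-\varepsilon$ this is $\lambda_n^{-(r+\frac12-\varepsilon)}$, not $\lambda_n^{-(\frac12+\varepsilon-r)}=\lambda_n^{-(r-r')}$. With the factor you wrote, the admissible range of $\alpha$ is indeed missed for some $s$, which is why this step looked like an obstacle. With the correct factor you need $\sum_n\lambda_n^{s-2r-1+2\varepsilon}\bigl(\sum_{p,q}|\langle\varphi_p\varphi_q,\varphi_n'\rangle|a_pb_q\bigr)^2\lesssim\bigl(\sum_p\lambda_p^sa_p^2\bigr)\bigl(\sum_q\lambda_q^sb_q^2\bigr)$. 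This is Lemma~\ref{lem:product_kind_of_discrete_sobolev} with $\alpha=\frac{1+s}{2}-r+\varepsilon$, which equals $s+\varepsilon$ if $s<\frac12$ and $\frac12+\varepsilon$ if $s\ge\frac12$. Both are admissible for $0<\varepsilon<\min(\frac14,\frac{1-s}{2})$. For $p=\infty$ you need $\sup_n n^{s-\frac12-2r+2\varepsilon}I_{n,s}<\infty$, which Lemma~\ref{lem:product_kind_of_discrete_sobolev_p_infty} provides. So your $r'$ works (it sits just below the top of the paper's range for $\beta$), and no frequency splitting is needed.

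\textbf{Estimate \eqref{eq:lem:parabolic_estimates_anisotropic_sobolev_1}.} The loss in the time product is not an $\varepsilon'$. Lemma~\ref{lem:product_sobolev_classical} gives $\|f_pg_q\|_{H^{-\alpha}}\lesssim\|f_p\|_{H^{-s/2}}\|g_q\|_{H^{1+\sigma(s)}}$ under $\alpha>s$ if $s<\frac12$, and under $\alpha>\frac12$ if $s\ge\frac12$. In the latter case $1+\sigma(s)=\frac s2$ puts you in the endpoint case $a+b=0$, so your condition $r-\frac s2>0$ fails there.

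The Duhamel factor into $\widetilde H^{\pm s/2}$ is $\lambda_n^{-(1-\alpha\mp\frac s2)}$; your factor for the $Y$ target is also off. With the corresponding spatial weights $\lambda_n^{\mp s}$, the $X^s_2$, $Y^s_2$ and $L^2(L^2)$ norms all reduce to the same quantity $\sum_n\lambda_n^{2\alpha-2}\|F_n\|^2_{H^{-\alpha}}$.

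The spatial summation is then Lemma~\ref{lem:product_kind_of_discrete_sobolev} with \emph{both} weights $\lambda^s$, because $X^s_2$ and $Z^s_2$ are both $\mathcal H^s_2$-valued. It is not an $H^{-s}\times H^{s}$ product on the torus; that product only reaches regularity below $-\frac12$ and would fail for $s\ge\frac14$. Lemma~\ref{lem:product_kind_of_discrete_sobolev} requires $\alpha<s+\frac14$, resp.\ $\alpha<1-\frac s2$.

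The proof works only because these upper bounds are compatible with the lower bounds forced by the time product. The windows $(s,s+\frac14)$ and $(\frac12,1-\frac s2)$ are nonempty, and inside them Lemma~\ref{lem:easy_integral_with_exp_lambda} applies since $\frac s2\le\alpha\le 1-\frac s2$. This is exactly how the paper chooses $\alpha$; your proposal never identifies these constraints.

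For $p=\infty$ the same $\alpha$ works. Note, however, that $Y^s_\infty$ is an $\ell^1$-type norm, so you need the summability $\sum_n n^{2\alpha-\frac52}I_{n,s}<\infty$, not just a supremum bound.
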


\begin{proof}
    We use $\lesssim$ for constants that may depend only on $s$ and $k$. Write $F_n(\tau) := \left\langle f(\tau) g(\tau), \varphi_n^\prime \right\rangle$, so that 
    \begin{equation}
         \left\langle \mathcal{B}(f,g)(t), \varphi_n \right\rangle = - \int_0^t e^{-\lambda_n(t-\tau)} F_n(\tau) \dd \tau.
    \end{equation}
    We still write $f$ and $g$ for their extension by zero outside $[0,T]$.
    
    \textbf{Step 1: we prove \eqref{eq:lem:parabolic_estimates_anisotropic_sobolev_1} in the case $p=2$.}
    Let $\alpha$ be such that $s < \alpha < s + \frac{1}{4}$ if $s \in \left(0, \frac{1}{2} \right)$, and $\frac{1}{2} < \alpha < 1 - \frac{s}{2}$ if $s \in \left[ \frac{1}{2}, 1 \right)$. One has $\pm \frac{s}{2} \leq \alpha \leq 1 \pm \frac{s}{2}$, so that Lemma \ref{lem:easy_integral_with_exp_lambda} implies
    \begin{equation}
        \left\Vert t \mapsto \int_0^t e^{-\lambda_n(t-\tau)} F_n(\tau) \dd \tau \right\Vert_{\widetilde{H}^{\pm\frac{s}{2}}(0,T)}^2 
        \lesssim \frac{1}{\lambda_n^{-2\alpha \mp s + 2}} \left\Vert F_n \right\Vert_{H^{-\alpha}(\mathbb{R})}^2,
    \end{equation}
    yielding
    \begin{equation}
        \begin{split}
            \left\Vert \mathcal{B}(f,g) \right\Vert_{\widetilde{H}^{\pm \frac{s}{2}}((0,T);\mathcal{H}^{\mp s}_2)}^2 
            = & \sum_{n \geq 1} \lambda_n^{\mp s} \left\Vert t \mapsto \int_0^t e^{-\lambda_n(t-\tau)} F_n(\tau) \dd \tau \right\Vert_{\widetilde{H}^{\pm \frac{s}{2}}(0,T)}^2 \\
            \lesssim & \sum_{n \geq 1} \frac{1}{\lambda_n^{2-2\alpha}} \left\Vert F_n \right\Vert_{H^{-\alpha}(\mathbb{R})}^2.
        \end{split}
    \end{equation}
    Arguing similarly, one finds
    \begin{equation}
        \left\Vert \mathcal{B}(f,g) \right\Vert_{L^2((0,T);L^2)}^2 \lesssim \sum_{n \geq 1} \frac{1}{\lambda_n^{2-2\alpha}} \left\Vert F_n \right\Vert_{H^{-\alpha}(\mathbb{R})}^2.
    \end{equation}
    Hence, it suffices to estimate $\sum_{n \geq 1} \frac{1}{\lambda_n^{2-2\alpha}} \left\Vert F_n \right\Vert_{H^{-\alpha}(\mathbb{R})}^2$. The triangular inequality gives
    \begin{equation}
        \sum_{n \geq 1} \frac{1}{\lambda_n^{2-2\alpha}} \left\Vert F_n \right\Vert_{H^{-\alpha}(\mathbb{R})}^2
        \lesssim \sum_{n \geq 1} \frac{1}{\lambda_n^{2-2\alpha}} \left( \sum_{p, q \geq 1} \left\vert \left\langle \varphi_p \varphi_q, \varphi_n^\prime  \right\rangle \right\vert \left\Vert \langle f, \varphi_p \rangle \langle g, \varphi_q \rangle \right\Vert_{H^{-\alpha}(\mathbb{R})} \right)^2.
    \end{equation}
    If $s \in \left[ \frac{1}{2}, 1 \right)$, then Lemma \ref{lem:product_sobolev_classical} (in the case $a+b=0$) gives
    \begin{equation}
         \left\Vert \langle f, \varphi_p \rangle \langle g, \varphi_q \rangle \right\Vert_{H^{-\alpha}(\mathbb{R})} 
         \lesssim  \left\Vert \langle f, \varphi_p \rangle \right\Vert_{H^{-\frac{s}{2}}(\mathbb{R})}  \left\Vert \langle g, \varphi_q \rangle \right\Vert_{H^{\frac{s}{2}}(\mathbb{R})},
    \end{equation}
    since $-\alpha < - \frac{1}{2}$ and $-\frac{s}{2} \geq - \alpha$. If $s \in \left(0, \frac{1}{2} \right)$, then Lemma \ref{lem:product_sobolev_classical} gives
    \begin{equation}
         \left\Vert \langle f, \varphi_p \rangle \langle g, \varphi_q \rangle \right\Vert_{H^{-\alpha}(\mathbb{R})} 
         \lesssim  \left\Vert \langle f, \varphi_p \rangle \right\Vert_{H^{-\frac{s}{2}}(\mathbb{R})} \left\Vert \langle g, \varphi_q \rangle \right\Vert_{H^{\frac{1-s}{2}}(\mathbb{R})},
    \end{equation}
    since $-\frac{s}{2} \geq -\alpha$, $\frac{1-s}{2} \geq -\alpha$, and $-\frac{s}{2} + \frac{1-s}{2} - \frac{1}{2} > - \alpha$, and $-\frac{s}{2} + \frac{1-s}{2} > 0$. Hence, in any case, one obtains
    \begin{equation}
        \sum_{n \geq 1} \frac{1}{\lambda_n^{2-2\alpha}} \left\Vert F_n \right\Vert_{H^{-\alpha}(\mathbb{R})}^2 
        \lesssim  \sum_{n \geq 1} \frac{1}{\lambda_n^{2-2\alpha}} \left( \sum_{p, q \geq 1} \left\vert \left\langle \varphi_p \varphi_q, \varphi_n^\prime  \right\rangle \right\vert \left\Vert \langle f, \varphi_p \rangle \right\Vert_{H^{-\frac{s}{2}}(\mathbb{R})} \left\Vert \langle g, \varphi_q \rangle \right\Vert_{H^{1+\sigma(s)}(\mathbb{R})} \right)^2.
    \end{equation}
    By Lemma \ref{lem:product_kind_of_discrete_sobolev}, this gives \eqref{eq:lem:parabolic_estimates_anisotropic_sobolev_1}.
    
    \textbf{Step 2: we prove \eqref{eq:lem:parabolic_estimates_anisotropic_sobolev_2} in the case $p=2$.}
    Let $\beta$ be such that $\frac{1}{4} - s < \beta < \frac{1}{2} - s$ and $\beta > 0$ if $s \in \left(0, \frac{1}{2} \right)$, and $\frac{3s}{2} - 1 < \beta < s - \frac{1}{2}$ if $s \in \left[ \frac{1}{2}, 1 \right)$. Arguing as above, using Lemma \ref{lem:easy_integral_with_exp_lambda} with $\beta \in [\sigma(s), 1+\sigma(s)]$, one finds
    \begin{equation}
        \left\Vert \mathcal{B}(f,g) \right\Vert_{Z_2^s}^2 
        \lesssim \sum_{n \geq 1} \frac{1}{\lambda_n^{2 \beta - 2 \sigma(s) - s}} \left\Vert F_n \right\Vert_{H^{\beta}(\mathbb{R})}^2.
    \end{equation}
    Using  Lemma \ref{lem:product_sobolev_classical} with $1 + \sigma(s) \geq \beta$, $2(1+\sigma(s)) -\frac{1}{2} > \beta$, and $2(1+\sigma(s)) > 0$, one obtains
    \begin{equation}
         \left\Vert \langle f, \varphi_p \rangle \langle g, \varphi_q \rangle \right\Vert_{H^{\beta}(\mathbb{R})} 
         \lesssim  \left\Vert \langle f, \varphi_p \rangle \right\Vert_{H^{1+\sigma(s)}(\mathbb{R})}  \left\Vert \langle g, \varphi_q \rangle \right\Vert_{H^{1+\sigma(s)}(\mathbb{R})},
    \end{equation}
    yielding
    \begin{equation}
        \begin{split}
            & \left\Vert \mathcal{B}(f,g) \right\Vert_{Z_2^s}^2 
            \lesssim  \sum_{n \geq 1} \frac{1}{\lambda_n^{2 \beta - 2 \sigma(s) - s}} \left( \sum_{p, q \geq 1} \left\vert \left\langle \varphi_p \varphi_q, \varphi_n^\prime  \right\rangle \right\vert \left\Vert \langle f, \varphi_p \rangle \right\Vert_{H^{1+\sigma(s)}(\mathbb{R})} \left\Vert \langle g, \varphi_q \rangle \right\Vert_{H^{1+\sigma(s)}(\mathbb{R})} \right)^2.
        \end{split}
    \end{equation}
    Hence, \eqref{eq:lem:parabolic_estimates_anisotropic_sobolev_2} follows from Lemma \ref{lem:product_kind_of_discrete_sobolev} applied with $\alpha := 1 - \beta + \sigma(s) + \frac{s}{2}$, which satisfies $s < \alpha < s + \frac{1}{4}$ if $s \in \left(0, \frac{1}{2} \right)$, and $\frac{1}{2} < \alpha < 1 - \frac{s}{2}$ if $s \in \left[ \frac{1}{2}, 1 \right)$.
    
    \textbf{Step 3: we prove \eqref{eq:lem:parabolic_estimates_anisotropic_sobolev_3} in the case $p=2$.}
    By definition, one has
    \begin{equation}
        \left\vert \left\langle \mathcal{B}(f,g)(T), \varphi_k \right\rangle \right\vert 
        \lesssim  \sum_{p,q \geq 1} \left\vert \left\langle \varphi_p \varphi_q, \varphi_k^\prime \right\rangle \right\vert 
        \left\vert \int_0^T e^{-\lambda_k (T-t)} \langle f(t), \varphi_p \rangle \langle g(t), \varphi_q \rangle \dd t \right\vert.
    \end{equation}
    Using duality and Lemma \ref{lem:product_sobolev_classical}, one finds
    \begin{equation}
        \begin{split}
            \left\vert \int_0^T e^{-\lambda_k (T-t)} \langle f(t), \varphi_p \rangle \langle g(t), \varphi_q \rangle \dd t \right\vert
            & \lesssim \left\Vert \mathbbm{1}_{(0,T)} e^{-\lambda_k (T-\cdot)} \langle f, \varphi_p \rangle \right\Vert_{H^{-\frac{s}{2}}(\mathbb{R})} \left\Vert \mathbbm{1}_{(0,T)} \langle g, \varphi_q \rangle \right\Vert_{H^{\frac{s}{2}}(\mathbb{R})} \\
            & \lesssim \left\Vert \langle f, \varphi_p \rangle \right\Vert_{\widetilde{H}^{-\frac{s}{2}}} \left\Vert \langle g, \varphi_q \rangle \right\Vert_{\widetilde{H}^{\frac{s}{2}}},
        \end{split}
    \end{equation}
    yielding
    \begin{equation}
         \left\vert \left\langle \mathcal{B}(f,g)(T), \varphi_k \right\rangle \right\vert^2 
        \lesssim  \left\Vert g \right\Vert_{Y_2^s}^2  
         \sum_{q \geq 1} \lambda_q^s \left(  \sum_{p \geq 1} \left\vert \left\langle \varphi_p \varphi_q, \varphi_k^\prime \right\rangle \right\vert 
        \left\Vert \langle f, \varphi_p \rangle \right\Vert_{\widetilde{H}^{-\frac{s}{2}}} \right)^2 
    \end{equation}
    by Cauchy-Schwarz. By \eqref{eq:expression_varphi_varphi_varphiprime}, the sum over $p$ contains at most $3$ terms, and in addition, there exists $C_k > 0$ such that for all $p, q \geq 1$, if $\left\langle \varphi_p \varphi_q, \varphi_k^\prime \right\rangle \neq 0$ then $\lambda_q \leq C_k \lambda_p$. This yields
    \begin{equation}
        \left\vert \left\langle \mathcal{B}(f,g)(T), \varphi_k \right\rangle \right\vert^2 
        \lesssim  \left\Vert g \right\Vert_{Y_2^s}^2  
         \sum_{p \geq 1} \lambda_p^s \left\Vert \langle f, \varphi_p \rangle \right\Vert_{\widetilde{H}^{-\frac{s}{2}}}^2 
        =  \left\Vert g \right\Vert_{Y_2^s}^2  \left\Vert f \right\Vert_{X_2^s}^2 .
    \end{equation}
    
    \textbf{Step 4: Sketch of proof in the case $p = \infty$.}
        The proofs are similar so we only sketch it. We rely on the technical result Lemma \ref{lem:product_kind_of_discrete_sobolev_p_infty}, instead of Lemma \ref{lem:product_kind_of_discrete_sobolev}.
        
        To prove \eqref{eq:lem:parabolic_estimates_anisotropic_sobolev_1}, we argue as in Step 1, with the same parameter $\alpha$. It gives
        \begin{equation}
            \begin{split}
                \left\Vert \mathcal{B}(f,g) \right\Vert_{X_\infty^s} 
                & \lesssim \sup_{n \geq 1} \left( \frac{1}{n^{\frac{3}{2}-2\alpha}} \sum_{p, q \geq 1} \left\vert \left\langle \varphi_p \varphi_q, \varphi_n^\prime  \right\rangle \right\vert\left\Vert \langle f, \varphi_p \rangle \right\Vert_{H^{-\frac{s}{2}}(\mathbb{R})} \left\Vert \langle g, \varphi_q \rangle \right\Vert_{H^{1+\sigma(s)}(\mathbb{R})} \right) \\
                & \lesssim \left\Vert f \right\Vert_{X_\infty^s} \left\Vert g \right\Vert_{Z_\infty^s} \sup_{n \geq 1} \frac{I_{n,s}}{n^{\frac{3}{2}-2\alpha}}, 
            \end{split}
        \end{equation}
        where $I_{n,s}$ is defined in Lemma \ref{lem:product_kind_of_discrete_sobolev_p_infty}. Since $\sup_{n \geq 1} \frac{I_{n,s}}{n^{\frac{3}{2}-2\alpha}} \lesssim 1$ by Lemma \ref{lem:product_kind_of_discrete_sobolev_p_infty} and by definition of $\alpha$, this proves the claimed estimate for $\left\Vert \mathcal{B}(f,g) \right\Vert_{X_\infty^s}$. Similarly, one finds     
        \begin{equation}
            \begin{split}
                \left\Vert \mathcal{B}(f,g) \right\Vert_{Y_\infty^s} + \left\Vert \mathcal{B}(f,g) \right\Vert_{L^2((0,T); L^2)}
                & \lesssim \left\Vert f \right\Vert_{X_\infty^s} \left\Vert g \right\Vert_{Z_\infty^s} 
                \left\{ 
                    \sum_{n \geq 1} \frac{I_{n,s}}{n^{\frac{5}{2}-2\alpha}} 
                    + \left( \sum_{n \geq 1} \frac{I_{n,s}^2}{\lambda_n^{2-2\alpha}} \right)^{\frac{1}{2}}
                \right\} \\
                & \lesssim \left\Vert f \right\Vert_{X_\infty^s} \left\Vert g \right\Vert_{Z_\infty^s}. 
            \end{split}
        \end{equation}
        
        To prove \eqref{eq:lem:parabolic_estimates_anisotropic_sobolev_2}, we argue as in the Step 2, with the same parameter $\beta$. It gives 
        \begin{equation}
            \left\Vert \mathcal{B}(f,g) \right\Vert_{Z_\infty^s} \lesssim \left\Vert f \right\Vert_{Z_\infty^s} \left\Vert g \right\Vert_{Z_\infty^s} \sup_{n \geq 1} \left( n^{\frac{1}{2} + s - 2 \beta + 2 \sigma(s) } I_{n,s} \right) .
        \end{equation}
        Using Lemma \ref{lem:product_kind_of_discrete_sobolev_p_infty} and the definition of $\beta$, one obtains \eqref{eq:lem:parabolic_estimates_anisotropic_sobolev_2}.
        
        Finally, the proof of \eqref{eq:lem:parabolic_estimates_anisotropic_sobolev_3} is similar: arguing as in Step 3, one finds
        \begin{equation}
            \left\vert \left\langle \mathcal{B}(f,g)(T), \varphi_k \right\rangle \right\vert 
            \lesssim \left\Vert f \right\Vert_{X_\infty^s} 
            \sum_{p, q \geq 1} \left\vert \left\langle \varphi_p \varphi_q, \varphi_k^\prime  \right\rangle \right\vert p^{-\frac{1}{2} - s} \left\Vert \langle g, \varphi_q \rangle \right\Vert_{\widetilde{H}^{\frac{s}{2}}} 
            \lesssim \left\Vert f \right\Vert_{X_\infty^s} \left\Vert g \right\Vert_{Y_\infty^s}.  
        \end{equation}
\end{proof}

\section{Well-posedness and regularity results for the Burgers equation}\label{sec:appendix_burgers}

\ifarxiv
    \subsection{Well-posedness for the Burgers equation with a rough source profile}\label{sec:lem:burgers_weak_wellposedness}
    
    Here, we prove Lemma \ref{lem:burgers_weak_wellposedness}. The proof is based on Lemma \ref{lem:heat_weak_wellposedness}, and on the following two lemmas.
    
    \begin{lemma}[Schaefer's fixed point theorem]\label{lem:Schaefer}
        Let $E$ be a Banach space, and let $\Gamma \in C^0(E,E)$ be compact. Assume that there exists $R > 0$ such that 
        \begin{equation}\label{eq:lem:Schaefer}
            \left\{ x \in E, \, \exists \lambda \in [0,1], x = \lambda \Gamma(x) \right\} \subset B_E(0, R),
        \end{equation}
        where $B_E(0, R) \subset E$ denotes the ball of center $0$ and of radius $R$. Then $\Gamma$ has a fixed point $x \in B_E(0, R)$.
    \end{lemma}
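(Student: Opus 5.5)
This is a classical result, so I would prove it by reduction to the Leray--Schauder degree. Throughout I assume $R$ is chosen large enough that the a priori set of \eqref{eq:lem:Schaefer} lies in the open ball $B_E(0,R)$, which is how the hypothesis is intended.

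Let $B := B_E(0,R)$. For $\lambda \in [0,1]$, consider the homotopy $H(\lambda, x) := x - \lambda \Gamma(x)$ on $\overline{B}$.

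\textbf{Step 1: admissibility.} The map $(\lambda,x) \mapsto \lambda \Gamma(x)$ is compact on $[0,1] \times \overline{B}$. Indeed, $\Gamma(\overline{B})$ is relatively compact, and its image under multiplication by scalars in $[0,1]$ is the continuous image of the compact set $[0,1] \times \overline{\Gamma(\overline{B})}$. Moreover, no zero of $H(\lambda,\cdot)$ lies on $\partial B$. A zero satisfies $x = \lambda \Gamma(x)$, so by hypothesis it belongs to $B$, which is open.

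\textbf{Step 2: degree computation.} Homotopy invariance of the Leray--Schauder degree gives
\begin{equation*}
\deg(I - \Gamma, B, 0) = \deg(I, B, 0) = 1,
\end{equation*}
since $0 \in B$. The existence property of the degree then yields $x \in B$ with $x = \Gamma(x)$.

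\textbf{Alternative without degree theory.} If one prefers an elementary route, apply Schauder's theorem to the retracted map
\begin{equation*}
\widetilde{\Gamma}(x) := \Gamma(x) \quad \text{if } \Vert \Gamma(x) \Vert \leq R, \qquad \widetilde{\Gamma}(x) := \frac{R\,\Gamma(x)}{\Vert \Gamma(x) \Vert} \quad \text{otherwise}.
\end{equation*}
This map is continuous and compact, and it sends the closed convex set $\overline{B}$ into the closed convex hull of a compact set. That hull is compact by Mazur's theorem, so Schauder gives a fixed point $x$ of $\widetilde{\Gamma}$.

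If $\Vert \Gamma(x) \Vert > R$, then $x = \lambda \Gamma(x)$ with $\lambda = R / \Vert \Gamma(x) \Vert < 1$, and $\Vert x \Vert = R$. This contradicts the hypothesis, which places every such $x$ in the open ball. Hence $\Vert \Gamma(x) \Vert \leq R$, so $x = \Gamma(x)$, and $\Vert x \Vert < R$.

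\textbf{Main obstacle.} The only delicate point is the strict inclusion in the open ball, which prevents zeros on the boundary. If the hypothesis were only the closed inclusion, I would enlarge $R$ to $R+1$ and run the same argument.
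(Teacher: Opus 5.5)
Your argument is correct. The paper does not prove this lemma; it is invoked as standard right after its statement. So there is no competing route to compare against, and both of your arguments are legitimate proofs.

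Of the two, the retraction argument is the classical derivation of Schaefer's theorem from Schauder's theorem (as in Gilbarg--Trudinger or Evans). It needs nothing beyond Schauder. The degree argument is quicker to write, but it imports the whole Leray--Schauder degree theory, which is more than this application requires.

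One precision is needed in the retraction route. Let $r$ be the radial retraction onto $\overline{B}$, and apply Schauder to the restriction of $\widetilde{\Gamma}$ to $K := \overline{\mathrm{conv}}\bigl(r(\overline{\Gamma(\overline{B})})\bigr)$. Mazur's theorem makes $K$ compact, and $K \subset \overline{B}$ because $\overline{B}$ is closed and convex. Hence $\widetilde{\Gamma}$ maps $K$ into itself.

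Both routes use compactness of $\Gamma$ in the sense that $\Gamma(\overline{B})$ is relatively compact, i.e.\ $\Gamma$ maps bounded sets to relatively compact sets. This is the sense relevant to the paper's application: there $\Gamma$ maps bounded subsets of $L^4$ to bounded subsets of $Z_T$, which embeds compactly in $L^4$.

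Your handling of the open/closed ambiguity is also fine. Every fixed point lies in the a priori set (take $\lambda = 1$). So once existence is obtained with radius $R+1$, the conclusion $x \in B_E(0,R)$ holds under either reading.
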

    
    \begin{lemma}[A compact embedding]\label{lem:compact_embedding}
        For $T > 0$, set
        \begin{equation}\label{eq:def_Z_T}
            Z_T := C^0([0, T]; L^2(0,1)) \ \cap \ L^2((0,T); H_0^1(0,1)) \ \cap \ H^1((0,T); H^{-1}(0,1)).
        \end{equation}
        Then, the embedding $Z_T \hookrightarrow L^4((0,T) \times (0,1))$ is compact.
    \end{lemma}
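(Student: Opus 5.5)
We need the compactness of $Z_T \hookrightarrow L^4((0,T)\times(0,1))$. The plan is to combine the Aubin--Lions--Simon lemma with an interpolation inequality.

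First, we apply the Aubin--Lions lemma to the triple $H_0^1(0,1) \Subset L^2(0,1) \hookrightarrow H^{-1}(0,1)$. The first embedding is compact by Rellich. Since $Z_T \subset L^2((0,T);H_0^1) \cap H^1((0,T);H^{-1})$, any bounded sequence $(y_m)$ in $Z_T$ is relatively compact in $L^2((0,T);L^2(0,1))$. In fact, the Simon version with $L^\infty$ in time also gives compactness in $C^0([0,T];H^{-1})$, but $L^2(L^2)$ suffices. We extract a subsequence $y_m \to y$ in $L^2(L^2)$.

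Second, we upgrade this convergence to $L^4$ by interpolation, using the uniform bounds in $L^\infty(L^2)\cap L^2(H^1)$. By the Gagliardo--Nirenberg inequality \eqref{eq:lem:easy_estimate_L1L2_1}, for $w \in H_0^1(0,1)$ we have
\[
\Vert w \Vert_{L^4}^4 \le \Vert w\Vert_{L^\infty}^2 \Vert w \Vert_{L^2}^2 \le C \Vert w\Vert_{H^1}\Vert w\Vert_{L^2}^3 .
\]
Apply this to $w = y_m(t)-y(t)$ and integrate in $t$. Cauchy--Schwarz gives
\[
\Vert y_m - y\Vert_{L^4(L^4)}^4 \le C \Vert y_m-y\Vert_{L^2(H^1)} \Vert y_m-y\Vert_{L^\infty(L^2)}^2 \Vert y_m-y\Vert_{L^2(L^2)} .
\]
The first two factors on the right are uniformly bounded: the limit $y$ lies in $L^2(H_0^1)\cap L^\infty(L^2)$ by weak-$*$ lower semicontinuity, after passing to a further subsequence converging weakly in $L^2(H^1)$ and weak-$*$ in $L^\infty(L^2)$. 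The last factor tends to $0$. Hence $y_m \to y$ in $L^4((0,T)\times(0,1))$.

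The main point to check is the Aubin--Lions step with the endpoint space $H^1((0,T);H^{-1})$; this is the standard $p=2$, $r=2$ case, so it applies directly. One should also make sure the $L^4$ norm is well defined on $Z_T$, which the inequality above confirms. Continuity of the embedding follows from the same estimate.
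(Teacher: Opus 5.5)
Your proof is correct, but it takes a different route from the paper's.

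\textbf{The paper's route.} It applies the Aubin--Lions--Simon lemma with the compact embedding $H^1(0,1)\Subset L^\infty(0,1)$. This gives compactness of $Z_T$ directly in the norm $L^2((0,T);L^\infty(0,1))$, which is strong in space. It then uses an abstract interpolation result for compact maps, combining this with the merely bounded embedding into $L^\infty((0,T);L^2(0,1))$ and the identity $[L^2(L^\infty),L^\infty(L^2)]_{1/2}=L^4(L^4)$.

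\textbf{Your route.} You use the classical Rellich triple $H_0^1\Subset L^2\hookrightarrow H^{-1}$. This gives compactness only in the weaker norm $L^2(L^2)$. You then recover the missing spatial integrability from the explicit inequality $\Vert w\Vert_{L^4(L^4)}^4\le C\Vert w\Vert_{L^2(H^1)}\Vert w\Vert_{L^\infty(L^2)}^2\Vert w\Vert_{L^2(L^2)}$. This inequality follows from the Agmon-type bound \eqref{eq:lem:easy_estimate_L1L2_1} already in the paper.

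\textbf{Comparison.} Both arguments combine strong convergence in one norm with uniform bounds in another. The paper's interpolation step could itself be replaced by the H\"older bound $\Vert f\Vert_{L^4(L^4)}^2\le\Vert f\Vert_{L^2(L^\infty)}\Vert f\Vert_{L^\infty(L^2)}$. The paper's version is shorter to write but relies on two cited theorems. Yours is self-contained, needs only the most standard form of Aubin--Lions, and makes the continuity of the embedding explicit.

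\textbf{A small simplification.} Apply your inequality to $y_m-y_n$ instead of $y_m-y$. This shows directly that the extracted subsequence is Cauchy in $L^4$, so you no longer need the weak and weak-$*$ identification of the limit.
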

    
    Lemma \ref{lem:Schaefer} is standard, and a proof of Lemma \ref{lem:compact_embedding} is given below. 
    We now use Lemmas \ref{lem:heat_weak_wellposedness}, \ref{lem:Schaefer} and \ref{lem:compact_embedding} to prove Lemma \ref{lem:burgers_weak_wellposedness}.
    
    \begin{proof}[Proof of Lemma \ref{lem:burgers_weak_wellposedness}.]
        \textbf{Step 1.} \textit{Existence of the solution.} 
        Set $E = L^4((0,T) \times (0,1))$, and write simply $Z = Z_T$ for the set defined by \eqref{eq:def_Z_T}. For $Y \in E$, we define $\Gamma(Y)$ as the solution $y$ of 
        \begin{equation}\label{eq:burgers_weak_wellposedness_proof_1}
            \left\{
            \begin{array}{cll}
                \partial_t y - \partial_x^2 y = f - \frac{1}{2} \partial_x (Y^2)
                & \quad t \in (0, T), & \quad x \in (0,1), \\
                y(t,0) = y(t,1) = 0
                & & \quad t \in (0, T), \\
                y(0,x)=y_0
                & & \quad x \in (0, 1).
            \end{array}
            \right.
        \end{equation}
        For $Y \in E$, one has $Y^2 \in L^2((0,T); L^2(0,1))$ and hence $\partial_x (Y^2) \in L^2((0, T); H^{-1}(0,1))$, so that Lemma \ref{lem:heat_weak_wellposedness} implies $\Gamma(Y) \in Z$. In addition, $\Gamma: E \rightarrow Z$ is continuous, so that Lemma \ref{lem:compact_embedding} implies that $\Gamma: E \rightarrow E$ is compact. Moreover, note that if $y \in E$ is a fixed point of $\Gamma$, then $y = \Gamma(y) \in Z$, which implies $\frac{1}{2} \partial_x (y^2) = y \partial_x y$, and thus $y$ is a solution of \eqref{eq:burgers_weak_wellposedness}.
    
        We now prove that $\Gamma$ satisfies the assumption \eqref{eq:lem:Schaefer}.
        Let $y \in E$ and $\lambda \in [0,1]$ be such that $y = \lambda \Gamma(y)$. Then $y \in Z$ is the solution of
        \begin{equation}\label{eq:burgers_weak_wellposedness_proof_2}
            \left\{
            \begin{array}{cll}
                \partial_t y - \partial_x^2 y = \lambda f - \lambda y \partial_x y
                & \quad t \in (0, T), & \quad x \in (0,1), \\
                y(t,0) = y(t,1) = 0
                & & \quad t \in (0, T), \\
                y(0,x)= \lambda y_0
                & & \quad x \in (0, 1).
            \end{array}
            \right.
        \end{equation}
        Let $t \in (0,T)$. Note that the first line of \eqref{eq:burgers_weak_wellposedness_proof_2} holds in $L^2((0, T); H^{-1}(0,1))$, and that $y \in L^2((0, T); H_0^1(0,1))$. Hence, one has
        \begin{equation}
            \int_0^t \left\langle \partial_t y - \partial_x^2 y, y \right\rangle_{H^{-1}, H_0^1} \dd s = \lambda \int_0^t \left\langle f, y \right\rangle_{H^{-1}, H_0^1} \dd s - \lambda \int_0^t \left\langle y \partial_x y, y \right\rangle_{H^{-1}, H_0^1} \dd s.
        \end{equation}
        Integrating by parts, one finds
        \begin{equation}
            \int_0^t \left\langle \partial_t y - \partial_x^2 y, y \right\rangle_{H^{-1}, H_0^1} \dd s = \frac{1}{2} \Vert y(t) \Vert_{L^2}^2 - \frac{\lambda^2}{2} \Vert y_0 \Vert_{L^2}^2 + \left\Vert \partial_x y \right\Vert_{L^2((0,t) \times (0,1))}^2.
        \end{equation}
        For almost all $s \in (0,t)$, one has $y(s)^3 \in H_0^1(0,1)$, implying
        \begin{equation}
            \left\langle y(s) \partial_x y(s), y(s) \right\rangle_{H^{-1}, H_0^1} = \frac{1}{3} \int_0^1 \partial_x (y(s)^3) \dd x = 0.
        \end{equation}
        Moreover, there exists an absolute constant $C > 0$ such that 
        \begin{equation}
            \left\vert \int_0^t \left\langle f, y \right\rangle_{H^{-1}, H_0^1} \dd s \right\vert \leq C \Vert f \Vert_{L^2((0,T);H^{-1})} \Vert \partial_x y \Vert_{L^2((0,t) \times (0,1))}. 
        \end{equation}
        Using the elementary estimate $ab \leq \frac{a^2 + b^2}{2}$, this gives
        \begin{equation}\label{eq:burgers_weak_wellposedness_proof_3}
            \frac{1}{2} \Vert y(t) \Vert_{L^2}^2 + \frac{1}{2} \left\Vert \partial_x y \right\Vert_{L^2((0,t) \times (0,1))}^2 \leq \frac{1}{2} \Vert y_0 \Vert_{L^2}^2 + \frac{C^2}{2} \Vert f \Vert_{L^2((0,T);H^{-1})}^2.
        \end{equation}
        Finally, using also the equation satisfied by $y$, one finds
        \begin{equation}\label{eq:burgers_weak_wellposedness_proof_4}
            \Vert y \Vert_{H^{1}((0,T);H^{-1})} \leq C \left( \Vert y_0 \Vert_{L^2} + \Vert f \Vert_{L^2((0,T);H^{-1})} + \Vert y_0 \Vert_{L^2}^2 + \Vert f \Vert_{L^2((0,T);H^{-1})}^2 \right),
        \end{equation}
        for some absolute constant $C>0$. This proves that \eqref{eq:lem:Schaefer} is satisfied with $R$ of the form of the right-hand side of \eqref{eq:burgers_weak_wellposedness_proof_4}. Hence, Lemma \ref{lem:Schaefer} implies that $\Gamma$ has a fixed point $y \in B_E(0,R)$. As explained above, $y \in Z$ and $y$ is a solution of \eqref{eq:burgers_weak_wellposedness}.
    
        \textbf{Step 2.} \textit{Uniqueness of the solution.} 
        Let $y$ and $\widetilde{y}$ be solutions of \eqref{eq:burgers_weak_wellposedness}. Set $z = y + \widetilde{y}$ and $w = y - \widetilde{y}$. Then $w$ is the solution of 
        \begin{equation}
            \left\{
            \begin{array}{cll}
                \partial_t w - \partial_x^2 w = - \frac{1}{2} \partial_x (wz)
                & \quad t \in (0, T), & \quad x \in (0,1), \\
                w(t,0) = w(t,1) = 0
                & & \quad t \in (0, T), \\
                w(0,x)= 0
                & & \quad x \in (0, 1).
            \end{array}
            \right.
        \end{equation}
        Let $t \in (0,T)$. Multiplying by $w$ the equation satisfied by $w$, integrating on $(0, t) \times (0,1)$, and integrating by parts, one finds
        \begin{equation}
            \frac{1}{2} \Vert w(t) \Vert_{L^2}^2 + \int_0^t \left\Vert \partial_x w(s) \right\Vert_{L^2}^2 \dd s = \frac{1}{2} \int_0^t \int_0^1 z w \partial_x w \dd x \dd s.
        \end{equation}
        The embedding $H^1(0,1) \hookrightarrow L^\infty(0,1)$ yields
        \begin{equation}
            \int_0^t \int_0^1 z w \partial_x w \dd x \dd s \leq \frac{1}{2} \int_0^t \left\Vert \partial_x w(s) \right\Vert_{L^2}^2 \dd s + \frac{C}{2} \int_0^t \left\Vert z(s) \right\Vert_{H^1}^2 \left\Vert w(s) \right\Vert_{L^2}^2 \dd s,
        \end{equation}
        for some $C > 0$, and therefore
        \begin{equation}
            \Vert w(t) \Vert_{L^2}^2 \leq \frac{C}{2} \int_0^t \left\Vert z(s) \right\Vert_{H^1}^2 \left\Vert w(s) \right\Vert_{L^2}^2 \dd s.
        \end{equation}
        Note that $\int_0^t \left\Vert z(s) \right\Vert_{H^1}^2 \dd s < + \infty$. Hence, Grönwall's lemma gives $w = 0$.
    \end{proof}
     
    We now prove Lemma \ref{lem:compact_embedding}.
    
    \begin{proof}[Proof of Lemma \ref{lem:compact_embedding}.]
        First, since $H^1(0,1) \hookrightarrow L^\infty(0,1)$ is compact and $L^\infty(0,1) \hookrightarrow L^2(0,1) \hookrightarrow H^{-1}(0,1)$ is continuous, classical compactness results (see, for instance, \cite[Corollary~5]{Simon1986}) imply that 
        \begin{equation}
            L^2((0,T); H_0^1(0,1)) \cap H^1((0,T); H^{-1}(0,1)) 
            \hookrightarrow L^2((0,T); L^\infty(0,1))
        \end{equation}
        is compact. In particular, $Z_T \hookrightarrow L^2((0,T); L^\infty(0,1))$ is compact.
        
        Second, by definition, $Z_T \hookrightarrow L^\infty((0,T); L^2(0,1))$ is continuous. Hence, classical interpolation results (see, for instance, \cite[Theorem~3.8.1]{bergh_interpolation_1976}) imply that 
        \begin{equation}
            Z_T \hookrightarrow 
            \left[ L^2((0,T); L^\infty(0,1)),\, L^\infty((0,T); L^2(0,1)) \right]_{\frac{1}{2}}
            = L^4((0,T); L^4(0,1))
        \end{equation}
        is compact.
    \end{proof}
\fi

\subsection{A continuity estimate for a forced Burgers equation}

We prove the following result, which will be used in the proof of cubic remainder estimates, in the simpler case $\mu \in \mathcal{H}^s_2$ with $s \in [-1, 0]$, and in the proof of the decoupling estimate Lemma \ref{lem:decoupling_estimate}.

\begin{lemma}[Forced Burgers equations]\label{lem:forced_burgers}
    Let $T > 0$, and $k \geq 1$. Let $f \in L^1((0,T); H^1(0,1))$ and $g \in L^2((0,T); H^1(0,1))$. There exist constants $C, \delta_1, \delta_2 > 0$, which may depend only on $k$, such that if $\Vert f \Vert_{L^1((0,T);H^1)} \leq \delta_1$ and $\Vert g \Vert_{L^2((0,T);H^1)} \leq \delta_2$, then the solution $y$ of 
    \begin{equation}\label{eq:forced_Burgers}
        \left\{
        \begin{array}{lll}
            \partial_t y - \partial_x^2 y + y \partial_x y + \partial_x (g y) = \partial_x f
            & \quad t \in (0, T), & \quad x \in (0,1), \\
            y(t,x) = 0
            & \quad t \in (0, T),  & \quad x \in \{0, 1\},\\
            y(0,x)=0
            & & \quad x \in (0, 1),
        \end{array}
        \right.
    \end{equation}
    satisfies
    \begin{equation}\label{eq:lem:forced_burgers_1}
        \left\Vert y \right\Vert_{L^2((0,T);H^1)} \leq C \Vert f \Vert_{L^1(H^1)},
    \end{equation}
    \begin{equation}\label{eq:lem:forced_burgers_2}
        \left\Vert y \right\Vert_{L^2((0,T);L^2)} + \left\Vert y(T) \right\Vert_{H^{-1}} \leq C \left\Vert f \right\Vert_{L^1((0,T);L^2)},
    \end{equation}
    and 
    \begin{equation}\label{eq:lem:forced_burgers_3}
        \begin{split}
            \left\vert \left\langle y(T), \varphi_k \right\rangle \right\vert
            \leq \ & C \left\vert \int_0^T e^{-\lambda_k (T-t)} \left\langle f(t), \varphi_k^\prime \right\rangle \dd t \right\vert \\
            & + C \left( \Vert g \Vert_{L^2((0,T);L^2)} + \left\Vert f \right\Vert_{L^1((0,T);L^2)} \right) \left\Vert f \right\Vert_{L^1((0,T);L^2)}.
        \end{split}
    \end{equation}
\end{lemma}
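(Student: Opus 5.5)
The plan is to treat the linear part $-\partial_x(gy)$ and the nonlinearity $-y\partial_x y = -\frac{1}{2}\partial_x(y^2)$ as additional sources of divergence form, and to work with the Duhamel formulation
\begin{equation*}
    y = -\mathcal{B}(f,1)\text{-type term} \quad\text{i.e.}\quad y(t) = \int_0^t S(t-\tau)\,\partial_x\Big( f - g y - \tfrac{1}{2} y^2 \Big)(\tau) \dd \tau .
\end{equation*}
Write $F := f - gy - \frac{1}{2}y^2$.

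\textbf{Step 1: estimate \eqref{eq:lem:forced_burgers_1}.} I would bound $y$ in $E_t := L^\infty((0,t);L^2)\cap L^2((0,t);H^1)$. By Lemma \ref{lem:well-posedness_heat_L1L2}, applied with source $\partial_x F \in L^1(L^2)$, one gets $\Vert y\Vert_{E_t} \lesssim \Vert \partial_x F\Vert_{L^1(L^2)} \lesssim \Vert f\Vert_{L^1(H^1)} + \Vert g\Vert_{L^2(H^1)}\Vert y\Vert_{L^2(H^1)} + \Vert y\Vert_{L^2(H^1)}^2$, using that $H^1(0,1)$ is an algebra. Taking $\delta_2$ small absorbs the $g$ term. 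Then the same continuity/bootstrap argument as in Step 2 of Proposition \ref{prop:quad_estimates_new} gives $\Vert y\Vert_{E_T}\lesssim \Vert f\Vert_{L^1(H^1)}$ provided $\delta_1$ is small. The solution exists and is unique by the same Schaefer/energy argument as for Lemma \ref{lem:burgers_weak_wellposedness}; I would only sketch this.

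\textbf{Step 2: estimate \eqref{eq:lem:forced_burgers_2}.} I would apply Lemma \ref{lem:well-posedness_heat_derivative_L1L2} with source $\partial_x F$, $F\in L^1(L^2)$. This gives
\begin{equation*}
    \Vert y\Vert_{L^2(L^2)}+\Vert y\Vert_{L^\infty(H^{-1})} \lesssim \Vert f\Vert_{L^1(L^2)} + \Vert g y\Vert_{L^1(L^2)} + \Vert y^2\Vert_{L^1(L^2)}.
\end{equation*}
By \eqref{eq:standard_estimates_on_B_1}, that is Lemma \ref{lem:easy_estimate_L1L2} with the $H^1$ factor placed on $g$ and on $y$ respectively, the last two terms are bounded by $(\Vert g\Vert_{L^2(H^1)}+\Vert y\Vert_{L^2(H^1)})\Vert y\Vert_{L^2(L^2)} \lesssim (\delta_2+\delta_1)\Vert y\Vert_{L^2(L^2)}$, using Step 1. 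These are absorbed for small $\delta$'s.

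\textbf{Step 3: estimate \eqref{eq:lem:forced_burgers_3}.} I would project onto $\varphi_k$. Integrating by parts in $x$ gives
\begin{equation*}
    \langle y(T),\varphi_k\rangle = -\int_0^T e^{-\lambda_k(T-t)}\langle F(t),\varphi_k'\rangle \dd t .
\end{equation*}
The $f$ contribution is exactly the first term of \eqref{eq:lem:forced_burgers_3}. Since $\varphi_k'\in L^\infty$ with a $k$-dependent bound, the rest is bounded by $C_k(\Vert gy\Vert_{L^1(L^1)}+\Vert y^2\Vert_{L^1(L^1)}) \le C_k(\Vert g\Vert_{L^2(L^2)}+\Vert y\Vert_{L^2(L^2)})\Vert y\Vert_{L^2(L^2)}$. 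Inserting Step 2, namely $\Vert y\Vert_{L^2(L^2)}\lesssim\Vert f\Vert_{L^1(L^2)}$, concludes.

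\textbf{Main obstacle.} The only delicate point is the bootstrap in Step 1. It requires continuity of $t\mapsto \Vert y\Vert_{E_t}$ and a priori well-posedness. I expect this to go through verbatim as in Proposition \ref{prop:quad_estimates_new}. I also expect all constants to be independent of $T$, because the heat lemmas used have $T$-independent constants.
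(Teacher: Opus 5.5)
Your proposal is correct, and Steps 2 and 3 match the paper's argument: Lemma \ref{lem:well-posedness_heat_derivative_L1L2} applied to $F = f - gy - \frac{1}{2}y^2$, absorption using Step 1, then projection onto $\varphi_k$ after an integration by parts with $\Vert y\Vert_{L^2(L^2)} \lesssim \Vert f\Vert_{L^1(L^2)}$.

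The genuine difference is in Step 1. The paper does not treat $y\partial_x y$ as a source. It regards $y$ as the solution of the true Burgers equation with source $\partial_x f - \partial_x(gy) \in L^1(L^2)$ and applies Lemma \ref{lem:burgers_strong_wellposedness}. That lemma's $T$-independent energy estimate rests on the cancellation $\int_0^1 y^2 \partial_x y \dd x = 0$. This gives $\Vert y\Vert_{L^2(H^1)} \lesssim \Vert f\Vert_{L^1(H^1)} + \delta_2 \Vert y\Vert_{L^2(H^1)}$ directly. So only the smallness of $g$ is used, there is no continuity argument, and $\delta_1$ enters only in Step 2.

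Your route linearizes completely. You bound $\frac{1}{2}\partial_x(y^2)$ through the algebra property of $H^1(0,1)$, which produces the quadratic term $C\Vert y\Vert_{E_t}^2$. This forces you to use $\delta_1$ already in Step 1, together with the bootstrap. The bootstrap in turn needs $t \mapsto \Vert y\Vert_{E_t}$ to be continuous, i.e.\ $y \in C^0([0,T];L^2)$ known a priori.

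What your version buys is that it uses only the linear heat estimate of Lemma \ref{lem:well-posedness_heat_L1L2}, and it would still work for quadratic divergence-form nonlinearities without the skew-symmetric structure. What the paper's version buys is a shorter, non-perturbative Step 1.

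One small citation point. Lemma \ref{lem:easy_estimate_L1L2}, and hence \eqref{eq:standard_estimates_on_B_1}, requires the factor carrying the $H^1$ norm to lie in $H_0^1$, which is not assumed for $g$. The bound $\Vert gy\Vert_{L^1(L^2)} \lesssim \Vert g\Vert_{L^2(H^1)}\Vert y\Vert_{L^2(L^2)}$ that you need holds anyway by $H^1(0,1)\hookrightarrow L^\infty(0,1)$, which is what the paper uses.
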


\begin{proof}
    In this proof, $\lesssim$ is used for absolute constants.
    
    \textbf{Step 1.} \textit{Proof of \eqref{eq:lem:forced_burgers_1}.}
    Using Lemma \ref{lem:burgers_strong_wellposedness}, one finds 
    \begin{equation}
        \left\Vert y \right\Vert_{L^2(H^1)} \lesssim \left\Vert \partial_x f - \partial_x (g y) \right\Vert_{L^1(L^2)}
        \lesssim \Vert f \Vert_{L^1(H^1)} + \Vert g y \Vert_{L^1(H^1)} .
    \end{equation}
    Using the Cauchy-Schwarz inequality and the fact that $H^1(0,1)$ is an algebra, one finds
    \begin{equation}
        \left\Vert y \right\Vert_{L^2(H^1)} \lesssim \Vert f \Vert_{L^1(H^1)} + \Vert g \Vert_{L^2(H^1)} \Vert y \Vert_{L^2(H^1)} \lesssim \Vert f \Vert_{L^1(H^1)} + \delta_2 \Vert y \Vert_{L^2(H^1)}.
    \end{equation}
    If $\delta_2$ is sufficiently small, this gives \eqref{eq:lem:forced_burgers_1}.
    
    \textbf{Step 2.} \textit{Proof of \eqref{eq:lem:forced_burgers_2}.} 
    Using Lemma \ref{lem:well-posedness_heat_derivative_L1L2}, one finds 
    \begin{equation}
        \left\Vert y \right\Vert_{L^2(L^2)} + \left\Vert y(T) \right\Vert_{H^{-1}} \lesssim \left\Vert f - gy - \frac{y^2}{2} \right\Vert_{L^1(L^2)}.
    \end{equation}
    Using the Cauchy-Schwarz inequality, the Sobolev embedding $H^1(0,1) \hookrightarrow L^\infty(0,1)$, and \eqref{eq:lem:forced_burgers_1}, one obtains
    \begin{equation}
        \begin{split}
            \left\Vert y \right\Vert_{L^2(L^2)} + \left\Vert y(T) \right\Vert_{H^{-1}} 
            \lesssim & \left\Vert f \right\Vert_{L^1(L^2)} + \left( \left\Vert g \right\Vert_{L^2(H^1)} + \left\Vert y \right\Vert_{L^2(H^1)} \right) \left\Vert y \right\Vert_{L^2(L^2)} \\
            \lesssim & \left\Vert f \right\Vert_{L^1(L^2)} + \left( \delta_2 + \delta_1 \right) \left\Vert y \right\Vert_{L^2(L^2)}.
        \end{split}
    \end{equation}
    If $\delta_1 + \delta_2$ is sufficiently small, this gives \eqref{eq:lem:forced_burgers_2}.
    
    \textbf{Step 3.} \textit{Proof of \eqref{eq:lem:forced_burgers_3}.} 
    Let $k \geq 1$. For the rest of the proof, $\lesssim$ denotes constants that may depend only on $k$. Using the Duhamel formula and integration by parts, one finds
    \begin{equation}
        \left\langle y(T), \varphi_k \right\rangle = - \int_0^T e^{-\lambda_k (T-t)} \left\langle f(t) - g(t) y(t) -\frac{y(t)^2}{2} , \varphi_k^\prime \right\rangle \dd t.
    \end{equation}
    Using the Cauchy-Schwarz inequality, one obtains 
    \begin{equation}
        \left\vert \left\langle y(T), \varphi_k \right\rangle \right\vert
        \lesssim  \left\vert \int_0^T e^{-\lambda_k (T-t)} \left\langle f(t), \varphi_k^\prime \right\rangle \dd t \right\vert + \Vert g \Vert_{L^2(L^2)} \left\Vert y \right\Vert_{L^2(L^2)} + \left\Vert y \right\Vert_{L^2(L^2)}^2.
    \end{equation}
    Together with \eqref{eq:lem:forced_burgers_2}, this gives \eqref{eq:lem:forced_burgers_3}.
\end{proof}

\subsection{Additional results for the Burgers equation}

\paragraph{A classical regularity result.}
  
\ifarxiv
    The following result is classical; we include it to verify that the continuity constant is independent of time.
\else
    The following result is classical (see, for instance, \cite[Theorem 2.1]{ShirikyanBurgers}). The fact that the continuity constant is independent of $T$ follows from standard energy estimates.
\fi

\begin{lemma}\label{lem:burgers_strong_wellposedness}
    For $T > 0$, $f \in L^1((0, T), L^2(0,1))$ and $y_0 \in L^2(0,1)$, there exists a unique solution $y \in C^0([0, T]; L^2(0,1)) \cap L^2((0,T); H_0^1(0,1))$ of the Burgers equation 
    \begin{equation}\label{eq:burgers_strong_wellposedness}
        \left\{
        \begin{array}{cll}
            \partial_t y - \partial_x^2 y + y \partial_x y = f
            & \quad t \in (0, T), & \quad x \in (0,1), \\
            y(t,0) = y(t,1) = 0
            & & \quad t \in (0, T), \\
            y(0,x)=y_0
            & & \quad x \in (0, 1).
        \end{array}
        \right.
    \end{equation}
    In addition, there exists an absolute constant $C > 0$ such that
    \begin{equation}\label{eq:lem:Burgers_strong_wellposedness}
        \Vert y \Vert_{L^2((0,T);H^1)} \leq C \left( \Vert y_0 \Vert_{L^2} + \Vert f \Vert_{L^1(L^2)} \right).
    \end{equation}
\end{lemma}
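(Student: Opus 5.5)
The plan is a standard energy-estimate proof: existence and uniqueness come from the results already available, and the only real content is checking that the constant does not depend on $T$. For existence I use Lemma \ref{lem:burgers_weak_wellposedness}. Since $L^2(0,1) \hookrightarrow H^{-1}(0,1)$, one has $L^1((0,T);L^2)$ inside $L^1((0,T);H^{-1})$, but not inside $L^2((0,T);H^{-1})$. So I first approximate $f$ in $L^1((0,T);L^2)$ by $f_m \in L^2((0,T);L^2)$ (for instance by truncation in time). Each $f_m$ gives a solution $y_m$ in the class of Lemma \ref{lem:burgers_weak_wellposedness}. I then pass to the limit using the a priori bound below, which depends only on $\Vert f_m \Vert_{L^1(L^2)}$, together with a difference estimate.

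Uniqueness follows exactly as in Step 2 of the proof of Lemma \ref{lem:burgers_weak_wellposedness}: test the equation for $w = y - \widetilde{y}$ against $w$, use $H^1 \hookrightarrow L^\infty$, and apply Grönwall with the integrable weight $\Vert y + \widetilde{y} \Vert_{H^1}^2$.

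The core step is the energy identity. Testing \eqref{eq:burgers_strong_wellposedness} against $y$ and using $\int_0^1 y^2 \partial_x y \dd x = 0$ (valid since $y(t) \in H_0^1$ for a.e. $t$) gives
\begin{equation}
    \frac{1}{2} \Vert y(t) \Vert_{L^2}^2 + \int_0^t \Vert \partial_x y \Vert_{L^2}^2 \dd s = \frac{1}{2} \Vert y_0 \Vert_{L^2}^2 + \int_0^t \langle f(s), y(s) \rangle \dd s .
\end{equation}
The last term is at most $\Vert f \Vert_{L^1(L^2)} \Vert y \Vert_{L^\infty((0,t);L^2)}$. Write $M(t) := \sup_{s \leq t} \Vert y(s) \Vert_{L^2}$. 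Taking the supremum over $t$ gives $M^2 \leq \Vert y_0 \Vert_{L^2}^2 + 2 \Vert f \Vert_{L^1(L^2)} M$. Solving this quadratic inequality yields $M \lesssim \Vert y_0 \Vert_{L^2} + \Vert f \Vert_{L^1(L^2)}$. Substituting back gives $\Vert \partial_x y \Vert_{L^2(L^2)}^2 \lesssim (\Vert y_0 \Vert_{L^2} + \Vert f \Vert_{L^1(L^2)})^2$.

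The full $H^1$ norm is controlled by $\Vert \partial_x y \Vert_{L^2}$ through the Poincaré inequality on $H_0^1(0,1)$. This step is where $T$-independence is obtained: the Poincaré constant is absolute, and no term $T \Vert y \Vert_{L^\infty(L^2)}^2$ appears. This gives \eqref{eq:lem:Burgers_strong_wellposedness} with an absolute constant. Continuity in time, $y \in C^0([0,T];L^2)$, follows because $\partial_t y \in L^1(L^2) + L^2(H^{-1})$ and $y \in L^2(H_0^1)$, by the usual Lions–Magenes type argument.

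The main obstacle is justifying the energy identity at this level of regularity, since $f \in L^1(L^2)$ only. I will establish the identity for the approximations $f_m$, where it follows from Lemma \ref{lem:burgers_weak_wellposedness}'s regularity ($y \partial_x y$ pairs with $y \in L^2(H_0^1)$ because $y^3 \in H_0^1$). Applying the same energy argument to differences $y_m - y_{m'}$ gives a Cauchy estimate in $L^\infty(L^2) \cap L^2(H^1)$; the Grönwall factor is $\exp(C \Vert y_m + y_{m'} \Vert_{L^2(H^1)}^2)$, which is uniformly bounded by the a priori estimate. The limit $y$ then satisfies the equation and inherits the $T$-independent bound.
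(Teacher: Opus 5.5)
Your proposal is correct and follows the paper's proof. The paper also tests the equation against $y$, bounds $\Vert y \Vert_{L^\infty(L^2)}$ by $\Vert y_0 \Vert_{L^2} + \Vert f \Vert_{L^1(L^2)}$, integrates in time to control $\Vert \partial_x y \Vert_{L^2(L^2)}$, and concludes with the Poincaré inequality. The differences are minor: the paper obtains the $L^\infty(L^2)$ bound from $\frac{\mathrm{d}}{\mathrm{d}t}\Vert y(t) \Vert_{L^2} \leq \Vert f(t) \Vert_{L^2}$ rather than from your quadratic inequality for $M$, and it treats existence and uniqueness as classical, whereas you supply them through the approximation, Cauchy-estimate and Grönwall argument.
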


\ifarxiv
\begin{proof}
    We only provide a short proof of the fact that $C$ is independent of $T$. Multiplying the equation by $y$ and integrating in space, one finds
    \begin{equation}\label{eq:proof:lem:Burgers_strong_wellposedness_1}
        \frac{1}{2} \frac{\mathrm{d}}{\mathrm{d} t} \Vert y(t) \Vert_{L^2(0,1)}^2 + \Vert \partial_x y(t) \Vert_{L^2(0,1)}^2 = \left\langle f(t), y(t) \right\rangle_{L^2(0,1)}.
    \end{equation}
    It implies
    \begin{equation}
        \Vert y(t) \Vert_{L^2(0,1)} \frac{\mathrm{d}}{\mathrm{d} t} \Vert y(t) \Vert_{L^2(0,1)} 
        = \frac{1}{2} \frac{\mathrm{d}}{\mathrm{d} t} \Vert y(t) \Vert_{L^2(0,1)}^2
        \leq  \Vert f(t) \Vert_{L^2(0,1)} \Vert y(t) \Vert_{L^2(0,1)}.
    \end{equation}
    This gives $\frac{\mathrm{d}}{\mathrm{d} t} \Vert y(t) \Vert_{L^2(0,1)} \leq \Vert f(t) \Vert_{L^2(0,1)}$, implying
    \begin{equation}\label{eq:proof:lem:Burgers_strong_wellposedness_2}
        \Vert y \Vert_{L^\infty(L^2)} \leq \Vert f \Vert_{L^1(L^2)} + \Vert y_0 \Vert_{L^2} .
    \end{equation}
    
    Now, coming back to \eqref{eq:proof:lem:Burgers_strong_wellposedness_1} and integrating in time, one obtains
    \begin{equation}
        \Vert \partial_x y \Vert_{L^2((0,T);L^2)}^2 = \left\langle f, y \right\rangle_{L^2((0,T);L^2)} + \frac{\Vert y_0 \Vert_{L^2(0,1)}^2}{2} - \frac{\Vert y(T) \Vert_{L^2(0,1)}^2}{2} .
    \end{equation}
    Using \eqref{eq:proof:lem:Burgers_strong_wellposedness_2}, one finds
    \begin{equation}
        \Vert \partial_x y \Vert_{L^2((0,T);L^2)}^2 \leq \Vert f \Vert_{L^1(L^2)} \left( \Vert f \Vert_{L^1(L^2)} + \Vert y_0 \Vert_{L^2} \right) + \frac{\Vert y_0 \Vert_{L^2(0,1)}^2}{2} .
    \end{equation}  
    Together with the Poincaré inequality, this gives
    \begin{equation}
        \Vert y \Vert_{L^2((0,T);H^1)}^2 \leq C \left( \Vert f \Vert_{L^1(L^2)}^2  + \Vert y_0 \Vert_{L^2}^2 \right) ,
    \end{equation}    
    for some absolute constant $C > 0$.
\end{proof}
\fi

\paragraph{Free evolution of a Burgers equation with small initial data.}
In the proof of the controllability obstructions, we need to estimate $y(T;y_0,0)$ for small initial data $y_0\in L^2(0,1)$. For finite-time obstructions, we use \eqref{eq:lem:free_evolution_Burgers_1}, and for small-time obstructions, we use \eqref{eq:lem:free_evolution_Burgers_2}.

\begin{lemma}\label{lem:free_evolution_Burgers}
    There exists $C > 0$ such that for all $T > 0$ and $y_0 \in L^2(0, 1)$, one has 
    \begin{equation}\label{eq:lem:free_evolution_Burgers_1}
        \left\Vert y(T; y_0, 0) - S(T) y_0 \right\Vert_{H^{-1}} \leq C T^{\frac{3}{4}} \Vert y_0 \Vert_{L^2}^2,
    \end{equation}
    and
    \begin{equation}\label{eq:lem:free_evolution_Burgers_2}
        \left\Vert y(T; y_0, 0) - y_0 \right\Vert_{H^{-1}} \leq C T^{\frac{3}{4}} \Vert y_0 \Vert_{L^2}^2 + C T^{\frac{1}{2}} \Vert y_0 \Vert_{L^2},
    \end{equation}
    where $S$ is the heat semigroup.
\end{lemma}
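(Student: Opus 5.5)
Set $z(t) := y(t;y_0,0)$, which solves the free Burgers equation $\partial_t z - \partial_x^2 z + z\partial_x z = 0$ with $z(0)=y_0$. The plan is to compare $z$ with the heat flow through Duhamel's formula:
\begin{equation*}
    z(T) - S(T)y_0 = -\frac{1}{2}\mathcal{B}(z,z)(T).
\end{equation*}
This reduces \eqref{eq:lem:free_evolution_Burgers_1} to an $H^{-1}$ bound on a bilinear term that carries a power of $T$.

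\textbf{First estimate.} We write
\begin{equation*}
    \Vert \mathcal{B}(z,z)(T)\Vert_{H^{-1}}^2 = \sum_{n\ge1} \frac{1}{\lambda_n}\left|\int_0^T e^{-\lambda_n(T-t)}\langle z(t)^2,\varphi_n'\rangle\,\mathrm{d}t\right|^2 .
\end{equation*}
Since $\left(\varphi_n'/\sqrt{\lambda_n}\right)$ is orthonormal, Minkowski's inequality (as in the proof of Lemma \ref{lem:well-posedness_heat_derivative_L1L2}) bounds this norm by $\int_0^T \Vert z(t)^2\Vert_{L^2}\,\mathrm{d}t = \int_0^T \Vert z\Vert_{L^4}^2\,\mathrm{d}t$.

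By Lemma \ref{lem:easy_estimate_L1L2}, $\Vert z\Vert_{L^4}^2 \le \Vert z\Vert_{L^\infty}\Vert z\Vert_{L^2} \lesssim \Vert z\Vert_{H^1}^{1/2}\Vert z\Vert_{L^2}^{3/2}$. Hölder's inequality in time then gives
\begin{equation*}
    \int_0^T \Vert z\Vert_{L^4}^2\,\mathrm{d}t \lesssim T^{3/4}\,\Vert z\Vert_{L^2(H^1)}^{1/2}\,\Vert z\Vert_{L^\infty(L^2)}^{3/2}.
\end{equation*}
The energy identity for the free Burgers equation (the nonlinear term integrates to zero) gives $\Vert z\Vert_{L^\infty(L^2)} \le \Vert y_0\Vert_{L^2}$ and $\Vert z\Vert_{L^2(H^1)} \lesssim \Vert y_0\Vert_{L^2}$, with absolute constants. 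Combining these bounds yields $C T^{3/4}\Vert y_0\Vert_{L^2}^2$ with $C$ independent of $T$, which is \eqref{eq:lem:free_evolution_Burgers_1}.

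\textbf{Second estimate.} By the triangle inequality it suffices to show
\begin{equation*}
    \Vert S(T)y_0 - y_0\Vert_{H^{-1}} \le C T^{1/2}\Vert y_0\Vert_{L^2}.
\end{equation*}
In the eigenbasis,
\begin{equation*}
    \Vert S(T)y_0 - y_0\Vert_{H^{-1}}^2 = \sum_n \lambda_n^{-1}(1-e^{-\lambda_n T})^2\langle y_0,\varphi_n\rangle^2 .
\end{equation*}
Using $(1-e^{-x})^2 \le \min(1,x^2) \le x$ for $x\ge0$, each weight satisfies $\lambda_n^{-1}(1-e^{-\lambda_n T})^2 \le T$. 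Hence the sum is at most $T\Vert y_0\Vert_{L^2}^2$, giving the factor $T^{1/2}$.

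\textbf{Main obstacle.} The only delicate step is obtaining the exact exponent $T^{3/4}$ with a constant uniform in $T$ in the first estimate. The plan above rests entirely on $T$-independent energy bounds and Gagliardo–Nirenberg, and does not use parabolic smoothing constants that would degrade for large $T$. Note also that the energy identity must be justified for weak solutions; Lemma \ref{lem:burgers_weak_wellposedness} gives enough regularity for this.
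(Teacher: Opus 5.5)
Your proposal is correct and follows the paper's proof essentially step for step. Both arguments use the Duhamel representation with the Minkowski-type bound of Lemma \ref{lem:well-posedness_heat_derivative_L1L2}, then the interpolation of Lemma \ref{lem:easy_estimate_L1L2} combined with H\"older in time to get $T^{\frac{3}{4}}$, and finally the eigenbasis computation with $1-e^{-x}\le x$ for the second estimate.

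One small advantage of your version: you use the energy identity directly, so the bounds are linear in $\Vert y_0 \Vert_{L^2}$. The paper instead cites Lemma \ref{lem:burgers_weak_wellposedness}, whose stated right-hand side is $\Vert y_0 \Vert_{L^2}+\Vert y_0 \Vert_{L^2}^2$. Your route therefore gives the claimed $T^{\frac{3}{4}}\Vert y_0 \Vert_{L^2}^2$ for all $y_0$ without having to look inside that lemma's proof.
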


\begin{proof}
    In this proof, $\lesssim$ is used for absolute constants. 
    First, by definition, $z(t) := y(t; y_0, 0) - S(t) y_0$ is the solution of 
    \begin{equation}
        \left\{
        \begin{array}{lll}
            \partial_t z - \partial_x^2 z = - \frac{1}{2} \partial_x (y^2) 
            & \quad t \in (0, T), & \quad x \in (0,1), \\
            z(t,x) = 0
            & \quad t \in (0, T),  & \quad x \in \{0, 1\},\\
            z(0,x)=0
            & & \quad x \in (0, 1).
        \end{array}
        \right.
    \end{equation}
    Hence, Lemma \ref{lem:well-posedness_heat_derivative_L1L2} and Lemma \ref{lem:easy_estimate_L1L2} imply
    \begin{equation}
        \left\Vert z \right\Vert_{L^\infty((0, T);H^{-1})} \lesssim \Vert y^2 \Vert_{L^1((0, T); L^2)} \lesssim \Vert y \Vert_{L^2((0, T); H^1)}^{\frac{1}{2}} \Vert y \Vert_{L^2((0, T); L^2)}^{\frac{3}{2}}.
    \end{equation}
    Together with Lemma \ref{lem:burgers_weak_wellposedness}, this gives
    \begin{equation}
        \left\Vert z \right\Vert_{L^\infty((0, T);H^{-1})} \lesssim T^{\frac{3}{4}} \Vert y \Vert_{L^2((0, T); H^1)}^{\frac{1}{2}} \Vert y \Vert_{L^\infty((0, T); L^2)}^{\frac{3}{2}} \lesssim T^{\frac{3}{4}} \Vert y_0 \Vert_{L^2}^2,
    \end{equation}
    which is \eqref{eq:lem:free_evolution_Burgers_1}. 
    
    Second, to prove \eqref{eq:lem:free_evolution_Burgers_2}, it suffices to show
    \begin{equation}
        \left\Vert S(T) y_0 - y_0 \right\Vert_{H^{-1}} \lesssim T^{\frac{1}{2}} \Vert y_0 \Vert_{L^2}. 
    \end{equation}
    By definition, one has
    \begin{equation}
        \begin{split}
            \left\Vert S(T) y_0 - y_0 \right\Vert_{H^{-1}}^2 
            & = \sum_{j \geq 1} \frac{1}{\lambda_j} \left\vert e^{-\lambda_j T} - 1 \right\vert^2 \left\langle y_0, \varphi_j \right\rangle^2 
            \leq \sum_{j \geq 1} \frac{1}{\lambda_j} \left\vert e^{-\lambda_j T} - 1 \right\vert \left\langle y_0, \varphi_j \right\rangle^2 \\
            & \leq T \sum_{j \geq 1} \left\langle y_0, \varphi_j \right\rangle^2 
            = T \Vert y_0 \Vert_{L^2}^2,
        \end{split}
    \end{equation}
    where we have used the elementary estimate $0 \leq 1 - e^{-x} \leq x$, for $x \geq 0$. 
\end{proof}

\section*{Acknowledgments}

The author acknowledges support from the Fondation Simone et Cino Del Duca - Institut de France, the Fondation Université de Rennes and from grant ANR-11-LABX-0020 (Labex Lebesgue). The author is deeply grateful to Karine Beauchard and Frédéric Marbach for introducing him to the subject and for their constant support throughout this work.

\bibliographystyle{plain}
\bibliography{biblio}

\noindent
\textsc{Perrin Thomas:} \texttt{thomas.perrin@ens-rennes.fr}

\end{document}